\theoremstyle{plain}
\newtheorem{theorem}{Theorem}
\newtheorem{definition}[theorem]{Definition}
\newtheorem{proposition}[theorem]{Proposition}
\newtheorem{lemma}[theorem]{Lemma}
\newtheorem{corollary}[theorem]{Corollary}
\newtheorem{example}[theorem]{Example}
\newtheorem{remark}[theorem]{Remark}
\newtheorem{question}[theorem]{Question}
\newtheorem{claim}{Claim}[theorem]
\newenvironment{claimproof}[1]{\par\noindent\underline{Proof of claim:}\space#1}{\hfill $\blacksquare$}
\DeclareMathOperator{\Inv}{Inv}
\DeclareMathOperator{\Simple}{\mathbf{Simple}}
\DeclareMathOperator{\FL}{FL}
\DeclareMathOperator{\Csp}{CSP}
\DeclareMathOperator{\Pol}{Pol}
\DeclareMathOperator{\ar}{ar}
\DeclareMathOperator{\Lin}{Lin}
\DeclareMathOperator{\Eq}{Eq}
\DeclareMathOperator{\id}{id}
\DeclareMathOperator{\Sup}{Sup}
\DeclareMathOperator{\Mut}{Mut}
\DeclareMathOperator{\SimpleLocal}{\mathbf{SimpleLocal}}
\DeclareMathOperator{\Solve}{Solve}
\DeclareMathOperator{\Cons}{Cons}
\DeclareMathOperator{\Full}{Full}
\DeclareMathOperator{\Reduce}{Reduce}
\DeclareMathOperator{\Mod}{Mod}
\DeclareMathOperator{\Lclass}{L}
\DeclareMathOperator{\Trunk}{Trunk}
\DeclareMathOperator{\NReduce}{NReduce}
\newlist{primenumerate}{enumerate}{1}
\setlist[primenumerate,1]{label={(\alph*$'$)}}
\newcommand{\ignore}[1]{}
\newcommand{\bA}{\mathfrak A}
\newcommand{\bB}{\mathfrak B}
\newcommand{\bC}{\mathfrak C}
\newcommand{\bI}{\mathfrak I}
\newcommand{\bE}{\mathfrak E}
\newcommand{\bS}{\mathfrak S}
\newcommand{\bT}{\mathfrak T}
\newcommand{\bP}{\mathfrak P}
\newcommand{\fA}{{\bf A}}
\newcommand{\fC}{{\bf C}}
\newcommand{\fR}{{\bf R}}
\newcommand{\fB}{{\bf B}}
\DeclareMathOperator{\Clo}{Clo}
\tikzset{myStyle/.style={baseline=(center.base), font=\small,
    every node/.style={inner sep=0.25em} }}
\newcommand{\SquareUnwrapped}[4]{ 
  \node (center) at (0.5,-0.5) {\phantom{$\cdot$}}; 
  \path (0,0)  node (nw) {$#2$}
      ++(1,0)  node (ne) {$#4$}
      ++(0,-1) node (se) {$#3$}
      ++(-1,0) node (sw) {$#1$};
  \draw (nw) -- (ne) -- (se) -- (sw) -- (nw);
}  
\NewDocumentCommand{\SquareXY}{ O{} O{} O{} O{} O{1} O{1} }{ 
  \begin{tikzpicture}[myStyle, xscale=#5*1, yscale=#6*1 ]
    \SquareUnwrapped{#1}{#2}{#3}{#4}
  \end{tikzpicture}
}  
\NewDocumentCommand{\Square}{ O{} O{} O{} O{} O{1} }{ 
  \SquareXY[#1][#2][#3][#4][#5][#5]
}
\title{Conservative Maltsev Constraint Satisfaction Problems}
\author{Manuel Bodirsky 
\and Andrew Moorhead\thanks{Institut f\"ur Algebra, TU Dresden. Email: $\{$manuel.bodirsky,andrew\textunderscore paul.moorhead$\}$@tu-dresden.de. Both authors received funding from the ERC (Grant Agreement no. 101071674, POCOCOP). Views and opinions expressed are however
those of the authors only and do not necessarily reflect those of the European Union or the European Research
Council Executive Agency.}}
\date{}
\begin{document}

\maketitle

\begin{abstract}
       One of the central open problems to classify the computational complexity of finite-domain constraint satisfaction problems within P is to prove better algorithmic results 
    for CSPs with a Maltsev polymorphism; we do not even know whether these CSPs are in NC. Relatedly, the descriptive complexity of these problems is open as well.
    An important special case, previously studied by Carbonell from the perspective of uniform polynomial time-algorithms, are CSPs with a \emph{conservative} Maltsev polymorphism. 
    We show that for every finite structure $\mathfrak B$ with a conservative Maltsev polymorphism,
    the CSP for $\mathfrak B$ can be solved by a \emph{symmetric linear ${\mathbb Z}_2$-Datalog program},
    and in particular is in the complexity class $\oplus L$. Previously, the best known algorithms just showed containment in P. In our proof we develop a structure theory for conservative Maltsev algebras which might be of independent interest. 
\end{abstract}

\ignore{
\begin{abstract}
    We show that for every finite structure $\mathfrak B$ with a conservative Maltsev polymorphism,
    the constraint satisfaction problem for $\mathfrak B$ can be solved by a symmetric linear ${\mathbb Z}_2$-Datalog program,
    and in particular is in the complexity class $\oplus L$. 
    The proof has two steps:
    we first present the result 
    for a certain subclass 
     whose  polymorphism algebras are hereditarily subdirectly irreducible. 
    We then show that every other structure in our class can be primitively positively constructed from one of the structures in the subclass. The second step requires different techniques and will be presented in a companion article.  
\end{abstract}
}


\tableofcontents


\section{Introduction} 
For many years, the P-versus-NP-complete dichotomy conjecture of Feder and Vardi~\cite{FederVardi} has been the holy grail of constraint satisfaction research in several research communities, e.g., graph homomorphisms, finite model theory, parts of theoretical artificial intelligence, and universal algebra. After the conjecture has been confirmed~\cite{Zhuk20} (announced in 2017 independently by Bulatov~\cite{BulatovFVConjecture} and by Zhuk~\cite{ZhukFVConjecture}), 
there are three important and closely related research directions in finite-domain constraint satisfaction with numerous problems that remain wide open: 
\begin{enumerate}
    \item Find a simpler algorithm or simpler correctness proofs of the existing algorithms. Both proofs of the dichotomy theorem are complicated and have been verified only by a handful of researchers. 
    Moreover, it is not known whether 
    there exists an algorithm 
    which is 
    \emph{uniform polynomial-time} in the sense that it has polynomial running time in the input size even if the template structure that defines the CSP is part of the input.
    \item The second direction is to classify the computational complexity of finite-domain CSPs within P up to logspace reductions. In particular, this can clarify which CSPs can be parallelised efficiently, and for which CSPs we may not hope to find parallel algorithms (e.g., if they are P-complete). 
    It is well-known that finite-domain CSPs can be complete for the complexity classes L, NL, Mod$_p$L for some prime $p$, P, and NP. A complete classification is known for CSPs over two-element domains~\cite{AllenderSchaefer}, but for larger domains our knowledge is very limited.\footnote{A complexity classification within P is trivial for undirected graphs~\cite{HellNesetril} and more generally for smooth digraphs~\cite{BartoKozikNiven}, but already open for orientations of trees~\cite{BulinSmallestHardTriad}.} 
    \item The third research direction is to clarify the \emph{descriptive} complexity of finite-domain CSPs: express polynomial-time tractable CSPs in logics that are designed to capture polynomial-time computation, such as Fixed Point Logic and its various extensions, and Choiceless Polynomial Time; see~\cite{DawarPado} for a recent survey. Also expressiveness in more restrictive logics (e.g., in linear Datalog, or symmetric linear Datalog, etc.) is interesting and has been studied intensively~\cite{LinearDatalog,DalmauLarose08,Kazda-n-permute,CarvalhoDalmauKrokhin}.  The search for logics that capture P or subclasses of P is an important theme in finite model theory, and CSPs provide a fruitful and rich microcosm to test conjectures in this area. There are numerous connections to the previous research direction~\cite{LaroseTesson,EgriLaroseTessonLogspace}.  
\end{enumerate} 

Most of the central open problems in these research directions
are already open for a certain important
subclass of the class of finite-domain CSPs, namely the class of CSPs for structures with a \emph{Maltsev polymorphism}; we will call them \emph{Maltsev CSPs} for short. If a relational structure has a Maltsev polymorphism, then this has quite remarkable mathematical consequences. An important source of such structures comes from finite groups; but there are many structures with a Maltsev polymorphism that do not come from groups. 
These structures played an important role in history of the study of the CSP, with Maltsev CSPs coming from groups and generalisations already appearing in the Feder-Vardi paper~\cite{FederVardi}.
Maltsev CSPs can be solved in polynomial time, by the famous 
 Bulatov-Dalmau algorithm~\cite{Maltsev}. 
This algorithm has later been generalised, culminating in the algorithm for CSPs with few subpowers~\cite{IMMVW}, one of the black-box ingredients for Bulatov's approach to the Feder-Vardi dichotomy conjecture~\cite{BulatovFVConjecture}.  
Zhuk's algorithm~\cite{ZhukFVConjecture} solves Maltsev constraints as well, but with an algorithmically fundamentally different approach.
All of the known algorithms for Maltsev CSPs are \emph{not} uniformly polynomial in the sense of the first research direction. 
Maltsev CSPs are also interesting for the second research direction mentioned above, 
because the known algorithms do not even show containment in the complexity class NC (which contains all the classes mentioned above, except for P and NP). At the same time, it appears to be unlikely that Maltsev CSPs can be P-hard
(e.g., they can provably not simulate the P-complete HornSAT problem via gadget-reductions~\cite{wonderland,DalmauOprsalLocal}; all other known finite-domain CSPs that are P-hard can simulate HornSAT via such reductions). 
It is also open whether Maltsev CSPs can be expressed in the extension of Fixed Point Logic with Rank Operators, or in Choiceless Polynomial Time (some other weaker logics fail to capture some basic Maltsev CSPs~\cite{AtseriasBulatovDawar,DawarPado,Lichter,LichterPagoSeppelt}). 
Interestingly, the algorithm of Zhuk~\cite{Zhuk20} does not rely on the Bulatov-Dalmau algorithm,
but rather solves repeatedly linear equations over finite fields, and hence provides some hope that with a better analysis of this algorithm we can make progress in the mentioned research directions. 

Another historically important restriction of the class of all finite-domain CSPs is the class of \emph{conservative CSPs}: here we assume that in instances of the CSP, we can restrict variables to take values from an arbitrary subset of the domain. 
The name comes from the observation that the corresponding finite structures are precisely those where all polymorphisms are \emph{conservative}, i.e., satisfy $f(x_1,\dots,x_n) \in \{x_1,\dots,x_n\}$, for all elements $x_1,\dots,x_n$ of the structure. 
Conservativity turns many easy CSPs into hard ones, and makes the complexity classification task significantly easier. Note that the class of conservative CSPs includes the class of  \emph{list homomorphism problems} from the graph theory community~\cite{FederHH99,DalmauEHLR15}. 
The P-versus-NP-complete dichotomy for conservative CSPs was shown by Bulatov~\cite{Conservative} already in 2003 (also 
see~\cite{Bulatov-Conservative-Revisited,Barto-Conservative}). 
Carbonell~\cite{Carbonnel16} showed that the tractability condition for conservative CSPs given by Bulatov can be tested in polynomial time, and he points out that the open question whether there is a uniform algorithm for tractable finite-domain CSPs is 
already open for conservative CSPs. 

The mentioned research questions are open even if we combinte the two restrictions, i.e., for the class of \emph{conservative Maltsev CSPs}; however, there is one exception: a uniform polynomial-time algorithm for such CSPs has been discovered by Carbonell~\cite{Carbonnel16b}. His algorithm uses the arc-consistency algorithm as a sub-procedure;
since arc-consistency solves P-complete problems such as Horn-SAT, the algorithm is not suited to analyse the complexity of such problems within P.


\subsection*{Our contributions}
We show that conservative Maltsev CSPs can be expressed in an extension of symmetric linear Datalog by operators for testing feasability of linear equations over the two-element field $\mathbb Z_2$. 
In particular, they can be expressed in the already mentioned Least Fixed Point Logic with a rank operator for ${\mathbb Z}_2$. 
Every problem that can be expressed in this variant of Datalog is in the complexity class $\oplus$L; hence, our results show that conservative Maltsev CSPs are in the complexity class $\oplus$L.
This complexity class contains all problems that can be solved by a Turing machine with a work tape of logarithmic size 
in the sense that the positive instances of the problem are precisely those where the machine has an even number of accepting computation paths. 
In fact, conservative Maltsev CSPs are either in L (deterministic logspace) or they are complete for $\oplus$L, so we have a full complexity classification up to logspace reductions (Theorem~\ref{thm:dicho}). 
Our proof has several parts: 

\begin{enumerate}
    \item In the first part we 
develop some general structure theory for finite structures with a conservative Maltsev polymorphism.
It is known that 
every such structure also has a (conservative) minority polymorphism; we show that algebras generated by a single conservative minority operation 
can be decomposed in a tree-like fashion (Section~\ref{sect:tree-reps}). 
\item 
Building on our results for algebras with a single conservative minority operation, we show that any structure with a conservative Maltsev polymorphism has a primitive positive definition in a structure with a specific set of at most ternary relations (Section~\ref{sect:inv-rels}).
\item We isolate an important `basic' subclass of these structures, where the mentioned trees of their polymorphism algebras are essentially paths, decorated by some algebras that are hereditarily simple and particularly well-behaved  (Section~\ref{sec:algebrasP_nk}), and we present an algorithm for their CSPs (Section~\ref{sect:alg}). 
\item We show that every finite structure with a conservative Maltsev polymorphism can be primitively positively constructed from the mentioned basic ones (Section~\ref{sect:MainPP}).
\item We introduce \emph{Cyclic Group Datalog}, an extension of Datalog
by a mechanism to solve systems of linear equations. We show that 
symmetric linear ${\mathbb Z}_2$-Datalog programs can be evaluated in the complexity $\oplus$L, and implement our algorithm in symmetric linear ${\mathbb Z}_2$-Datalog (Section~\ref{sect:descr}). 
Expressibility in symmetric linear Datalog and symmetric linear ${\mathbb Z}_2$-Datalog is preserved 
by primitive positive constructions (for symmetric linear Datalog, see~\cite{StarkeDiss}), and hence our main results follow.  
\end{enumerate}

\ignore{
\subsection*{Acknowledgements}
This project has received funding from the European Union
(Project POCOCOP, ERC Synergy Grant 101071674).
Views and opinions expressed are however those of the author(s) only and do not
necessarily reflect those of the European Union or the European Research
Council Executive Agency. Neither the European Union nor the granting
authority can be held responsible for them.
}

\section{Overview}\label{sect:overview}

The comparison of some well-known facts from the literature of fixed template finite domain CSPs and Universal Algebra can serve as a point of entry to an overview of our investigation. 
Dalmau and Larose showed that if a fixed template CSP is solvable with a \emph{Datalog program} 
and the template has a Maltsev polymorphism, then the CSP is solvable with a \emph{symmetric linear Datalog program}~\cite{DalmauLarose08}. 
Datalog can be viewed as an existential-positive fragment of fixed point logic;
every problem that can be expressed in this logic can be solved in polynomial time (we refer to~\cite{Libkin} for an introduction to finite model theory). Problems in symmetric linear Datalog can even be solved in L (deterministic logarithmic space). 

Larose and Z{\' a}dori, building on results of Feder and Vardi~\cite{FederVardi}, showed that if a fixed template CSP can simulate linear equations over a finite ring (here, `simulate' is meant in a specific technically precise sense, but we refrain from going into the details, because they are not relevant later) then it cannot be solved by Datalog~\cite[proof of Theorem 4.2]{LaroseZadori} (also see~\cite{AtseriasBulatovDawar} for an even stronger result). 
Barto and Kozik later proved that the converse also holds~\cite{BartoKozikFOCS09}. 
Hence, if a Maltsev CSP is not solvable with symmetric linear Datalog, then 
it can express linear equations.

Therefore, we arrive at an intuition that finite domain Maltsev CSPs should be solvable with an extension of symmetric linear Datalog which is capable of interleaving local consistency checking with some linear system consistency queries. The obstacle to formalizing this intuition into a concrete complexity classification is that the ways that linear systems manifest in CSPs can be quite complicated, even for the Maltsev case. However, when we further impose the condition of conservativity on the template, we are able to obtain a manageable characterization of the kinds of linear systems that arise and how they connect to the other relations in the template. This characterization is then refined into an exact characterization for a specific class of structures from which all other conservative Maltsev templates can be pp-constructed. It is for this special class that we exhibit a concrete $\oplus$L algorithm, which we implement in an extension of Datalog that we call $\mathbb{Z}_2$-Datalog. We provide in this section a high level summary of these results, for which detailed proofs are provided in Sections~\ref{sect:cons-mino},~\ref{sect:MainPP},~\ref{sect:alg}, and~\ref{sect:descr}.

\subsection{Section~\ref{sect:cons-mino}: The structure of conservative minority algebras}

 At the outset, we invoke Lemma~\ref{lem:Carbonnel}, due to Carbonnel~\cite{Carbonnel16b}, which states that every conservative Maltsev function clone contains a conservative minority operation. As a consequence, we may assume that a template $\mathfrak{A}$ can pp-define all relations which are invariant under a particular conservative minority operation and we henceforth focus our attention on building a structure theory for such templates. We call an algebra with a single basic conservative minority operation a \emph{conservative minority algebra}. 

The homomorphism kernels (which are called \emph{congruences} in Universal Algebra) of conservative minority algebras satisfy an extremely strong property, which we describe in Proposition~\ref{prop:blocksinsideblocks}. Roughly, equivalence classes of congruences of conservative minority algebras can be collapsed independently. Hence, understanding congruences of conservative minority algebras is reduced to understanding the congruences with at most one nontrivial class, which we call \emph{block} congruences. Actually, Proposition~\ref{prop:blocksinsideblocks} asserts an even stronger property, which is that this block decomposition property is transitive, in the sense that a class of a congruence of the subalgebra determined by a congruence class also determines a congruence of the global algebra. We then show in Proposition~\ref{prop:blockintersection} that congruence classes $[a]_{\theta_1}$ and $[b]_{\theta_2}$ (from potentially distinct $\theta_1$ and $\theta_2$) cannot have nontrivial symmetric difference, which has the consequence that every conservative minority algebra possesses a unique maximal congruence~(Corollary~\ref{corollary:maximalcongruence}).

So, instead of working with congruences via their lattice structure as is customary in Universal Algebra, we should encode congruences as finite trees, which are constructed iteratively using the classes of the unique maximal congruence guaranteed by Corollary~\ref{corollary:maximalcongruence}. If we keep track of the quotient structure at each stage of this construction, we obtain a tree whose vertices are decorated with the structure of various simple conservative minority algebras, which we call \emph{local} algebras. Turning this around, we provide in Definition~\ref{def:consmintree} a definition of a \emph{conservative minority tree} $\mathcal{T}$ and define in ~\ref{def:leafalgebra} the method to obtain a conservative minority algebra $\mathbf{A}_\mathcal{T}$ defined over the leaves of $\mathcal{T}$, which we call the algebra \emph{represented} by $\mathcal{T}$. We prove in Theorem~\ref{thm:representwithtreealgebras} that every finite conservative minority algebra can be represented in this fashion. 

Hence, we are able to work exclusively with conservative minority trees and the algebras they represent. The reason to adopt this perspective is purely practical, since a conservative minority tree provides a unified environment which encodes both an algebra and the structure of the congruences, and almost all of our results depend on our ability to articulate precise features of congruences. We go on to develop several technical features of conservative minority trees. Definition~\ref{def:treevertexcongruences} and Lemma~\ref{lemma:congruencesoftreealgebras} establish how certain block congruences of $\mathbf{A}_\mathcal{T}$ are encoded in $\mathcal{T}$, while Lemma~\ref{lem:subalgebrasuccessor} and Lemma~\ref{lem:CongruencesofSubalgebras} establish a connection between subalgebras of $\mathbf{A}_\mathcal{T}$ and $\mathcal{T}$. 

In Section~\ref{sec:saplings}, we develop some even more specialized notation and definitions for the purpose of identifying and manipulating subtrees of a conservative minority tree $\mathcal{T}$ which represent the \emph{subdirectly irreducible subalgebras} of $\mathbf{A}_\mathcal{T}$ (see Section~\ref{sect:congruences} for a definition). Subdirectly irreducible algebras play an important role in characterizing how linear systems can occur among the invariant relations of an algebra. We established during our inquiry into the properties of block congruences that the congruences of subdirectly irreducible conservative minority algebras are linearly ordered under inclusion (see Proposition~\ref{prop:subdirectlyirreduciblearelinearchains}), which means that only those subtrees of $\mathcal{T}$ whose nontrivially branching vertices form a linearly ordered chain can represent subdirectly irreducible algebras of $\mathbf{A}_\mathcal{T}$ (see Lemma~\ref{lem:saplingsrepresentsubdirectirreducible}). We call such conservative minority trees \emph{saplings}. The remainder of the Section~\ref{sect:tree-reps} develops more technical machinery for conservative minority trees, which we exclude from this overview.

In Section~\ref{sect:inv-rels}, we turn to the task of understanding the invariant relations of conservative minority algebras. We are assisted in this task by the theory of the \emph{critical} relations of an algebra $\mathbf{A}$ (see Definition~\ref{def:criticalrelation}), which are easily seen to primitively positively define all of $\Inv(\mathbf{A})$. Hence, much of this section is devoted to performing a distillation of the critical relations of a conservative minority algebra, which is achieved through applying properties of conservative algebras, block congruences, and \emph{rectangular} relations (see Definition~\ref{sect:rectangularrelations}). These efforts culminate in the explicit list of relations provided in Theorem~\ref{thm:consminorityrelbasis}. For a conservative minority algebra $\mathbf{A}$ with domain $A$, aside from the obvious unary relations, there are three flavors of at most ternary relations which, taken together with unary relations, primitively positively define $\Inv(\mathbf{A})$. The relations with flavors \emph{(b$'$)} and \emph{(c$'$)} are certain endomorphism graphs and isomorphism graphs between subdirectly irreducible subalgebras. We remark that, since these binary relations are also preserved by a majority function, it follows from Dalmau and Larose (see~\ref{thm:DL}) that any template which is primitively positively definable from them has a CSP solvable in symmetric linear Datalog. Hence, it is the third nontrivial flavor of relation \emph{(d$'$)} which prevents solvability by a Datalog program. Roughly, these ternary `linear' relations listed in \emph{(d$'$)} of Theorem~\ref{thm:consminorityrelbasis} are maximal affine $\mathbb{Z}_2$-subspaces which have been enclosed by the ternary equality relation on some subdirectly irreducible subalgebra of $\mathbf{A}$. Arbitrary systems of $\mathbf{Z}_2$-linear equations can be primitively positively defined with such linear relations, and they can interact in nontrivial ways with the other relations. Thus, the remainder of the paper is focused on disentangling these interactions. 

Our next task is describing a class of conservative minority structures that is robust enough to primitively positively construct the class of all finite conservative Maltsev templates, while also tame enough to allow us to produce an ergonomic description of the relations of its members. We describe this class algebraically in ~\ref{sec:algebrasP_nk}, where we define for each $n \geq 2$ and $k \geq 1$ a subdirectly irreducible algebra $\mathbf{P}^{n,k}$, and then relationally in ~\ref{sect:structureP_nk}, where we define for each $\mathbf{P}^{n,k}$ its companion structure $\mathfrak{P}^{n,k}$. Section~\ref{sect:minimal-taylor} characterizes the \emph{minimal Taylor} conservative minority algebras (see Definition~\ref{def:minTaylorclone} and Theorem~\ref{prop:MinTayConsMalChar}). Since our special class of algebras consists of minimal Taylor algebras, reducing to the minimal Taylor setting is an important step towards our final reduction, although we remark that the minimal Taylor reduction could be performed after our main primitive positive construction (see Theorem~\ref{thm:FinalPPConstructionTheorem}). In any case, it is in Section~\ref{sect:minimal-taylor} that we define the \emph{projection minority} algebras $\mathbf{P}_n$, for each $n \geq 2$, which we argue are exactly the \emph{hereditarily simple} conservative minority algebras (Proposition~\ref{prop:MinTayConsMalChar}). Projection minority algebras are the simple quotients of the algebras $\mathbf{P}^{n,k}$ built in Section~\ref{sec:algebrasP_nk}. 
We conclude the section with Corollary~\ref{cor:minTaylorConsMinTreeConstOthers} and Lemma`\ref{lem:SaplingTreesConstructGeneralOnes}, each of which further restricts the class of conservative minority trees we need to consider in the pursuit of our main primitive positive construction result. 

\subsection{Section~\ref{sect:MainPP}: Main primitive positive construction}

Before describing the contents of this section, we remark that we will be somewhat informal in our exposition here, since the machinery at this point in the paper has become quite technical. With that said, let us describe our main primitive positive construction. Essentially, we provide a recursive scheme to replace `problematic' simple algebras which decorate the vertices of a given conservative minority tree $\mathcal{T}$, where a `problematic' simple algebra is one which is not hereditarily simple. The idea is to isolate a particular such algebra, say $\mathbf{B}$, among the local algebras of $\mathcal{T}$ and replace it with a new algebra which encloses each maximal proper subalgebra of $\mathbf{B}$ in a distinguished congruence class, so that the resulting quotient is hereditarily simple. We also include a new special congruence class which encloses a hereditarily simple algebra whose elements are in bijection with the elements of $\mathbf{B}$. 

If we perform this replacement for all occurrences of $\mathbf{B}$ in $\mathcal{T}$, we obtain a tree $\mathcal{T}^{\mathbf{B} \downarrow}$, which we call the \emph{$\mathbf{B}$-unpacking} of $\mathcal{T}$ (see Definition~\ref{def:splicing}, Figure~\ref{fig:unpacking1}, and Figure~\ref{fig:unpacking2}). The reason this works is that we have exposed the subalgebras of $\mathbf{B}$, which means that we can apply a refinement procedure on $\mathcal{T}^{\mathbf{B}\downarrow}$ which does not alter the isomorphism type of the algebra it represents (see Section~\ref{sec:treetransformations}). Any relations of $\mathbf{A}_\mathcal{T}$ which rely on these congruences have counterparts in $\mathbf{A}_{\mathcal{T}^{\mathbf{B} \downarrow}}$, and the extra congruence class which encloses a hereditarily simple algebra handles the other possibilities. 

We can now use our understanding of the invariant relations of conservative minority algebras to prove that the invariant relations of $\mathbf{A}_{\mathcal{T}^{\mathbf{B}}}$ have a primitive positive construction in the invariant relations of $\mathbf{A}_{\mathcal{T}^{\mathbf{B}\downarrow}}$. We do so by exhibiting a \emph{minion homomorphism} (see Section~\ref{sect:minhomppconstructions}) from the clone of polymorphisms of $\mathbf{A}_{\mathcal{T}^{\mathbf{B} \downarrow}}$ into the clone of polymorphisms of $\mathbf{A}_\mathcal{T}$ (see Theorem~\ref{thm:DeltaisMinionHom}). Although the tree structure of the refinement of $\mathcal{T}^{\mathbf{B} \downarrow}$ is more complicated than that of $\mathcal{T}$, its set of local algebras is now closer to consisting entirely of hereditarily simple algebras. Hence, we can iterate this construction until all problematic simple algebras are eliminated and our main primitive positive construction result follows~\ref{thm:FinalPPConstructionTheorem}.

\subsection{Section~\ref{sect:alg}: Solving conservative Maltsev CSPs and Section~\ref{sect:descr}: Descriptive complexity}

Our efforts up to this point have culminated in the reduction of conservative Maltsev CSP to the CSP for structures $\mathfrak{P}^{n,k}$, for each $n \geq 2$ and $k \geq 1$ (see the beginning of Section~\ref{sect:structureP_nk}). Algorithm~\ref{alg:solve} solves the CSP for members of this class by proceeding recursively along the $k$-many congruences of the respective polymorphism algebras $\mathbf{P}^{n,k}$. The basis of the recursion solves $\Csp(\mathfrak{P}^{n,1})$, where the polymorphism algebra $\mathbf{P}^{n,1}$ is hereditarily simple. In this case, we create from an instance $\mathfrak{I}$ instances $\mathfrak{E}$ and $\mathfrak{T}$, which we call the \emph{system of $\mathbb{Z}_2$-linear equations associated to $\mathfrak{I}$} and the \emph{permutational system associated to $\mathfrak{I}$}, respectively. 

The idea is that connected components of $\mathfrak{I}$ either involve the linear constraint $\Lin$, or they do not. In the former case, a satisfiability equivalent $\mathbb{Z}_2$-system is obtained by propagation of the unary and binary constraints of $\mathfrak{P}^{n,1}$ along paths which connect variables within the scope of $\Lin$. In the latter case, there are no linear equations, and so $\mathfrak{I}$ restricted to such a component is solvable in symmetric linear Datalog (recall our discussion at the beginning of this overview~\ref{sect:overview}). We collect together the connected components of $\mathfrak{I}$ which involve linear equations into the $\mathbb{Z}_2$-linear system for $\mathfrak{I}$ and the others are collected together to form the permutational system for $\mathfrak{I}$. If either of these systems is unsatisfiable, the algorithm rejects $\mathfrak{I}$. 

For larger values of $k$, we invoke another recursive algorithm $\Reduce_{n,k}$~\ref{alg:reduce}, which produces a satisfiability equivalent instance of $\Csp(\mathfrak{P}^{n,k-1})$ from an instance of $\Csp(\mathfrak{P}^{n,k})$. Essentially, the procedure works by analyzing instances obtained by restricting the constraints of certain \emph{uniform connected components}~\ref{def:components} to the maximal congruence block of $\mathbf{P}^{n,k}$, and then reducing the obtained instances with recursive calls to $\Reduce_{n,k-1}$. The various reduced instances that are obtained are then combined with the instances on the components from which they came, and then some final further connections are formed using a recursively formed function we call the \emph{reduction atlas}~\ref{def:(k-1)reduction}:. We provide in Figure~\ref{fig:alg} a dependency diagram for our two main classes of procedures $\Solve_{n,k}$ and $\Reduce_{n,k}$. 

Finally, in Section~\ref{sect:descr} we define for a cyclic group $\mathbb{G}$ an extension of Datalog which we call $\mathbb{G}$-Datalog~\ref{sect:RDatalog}, provide a $\mathbb{Z}_2$-Datalog implemententation of Algorithm~\ref{alg:solve} (Propostion~\ref{prop:reduce-symlin}), and argue that any problem which is solved by a $\mathbb{Z}_2$-Datalog program belongs to the class $\oplus$L (Theorem~\ref{thm:Z2Datalog}).

\section{Preliminaries}
In this section, we recall some basic terminology and concepts from model theory and universal algebra. This section can probably be skipped by many readers, and consulted if needed later.

If $v = (v_1,\dots,v_n)$ and $w = (w_1,\dots,w_m)$ are finite sequences of elements from some set $A$, then 
$v^\frown w$ denotes the concatenation of $v$ and $w$, i.e., the finite sequence $(v_1,\dots,v_n,w_1,\dots,w_m)$.  
We write $(a^k)$ for the constant tuple $(\underbrace{a, \dots, a}_k)$. Hence, 
\[
(a^k)^\frown(b^j) = (\underbrace{a, \dots,a}_k, \underbrace{b, \dots, b}_j).
\]
In the specific situation where $a=0$ we also write $0^j$ instead of $(0^j)$ since there is no danger of confusion.

If $f \colon A \to B$ is a function, then the relation $\{(a,f(a)) \mid a \in A\} \subseteq A \times B$ will be called the \emph{graph} of $f$. The graph of a bijection will also be called \emph{bijection graph}.
If $R \subseteq A^n$ is a relation and $i \in \{1,\dots,n\}$, then $i$ is called a \emph{dummy coordinate of $R$} if for all $b \in A$ and 
$(a_1,\dots,a_n) \in R$ the tuple $(a_1,\dots,a_{i-1},b,a_{i+1},\dots,a_n)$ is contained in $R$ as well. 
For $i_1,\dots,i_k \in \{1,\dots,n\}$ such that $i_1 < \dots < i_k$, 
the \emph{projection of $R$ onto $(i_1,\dots,i_k)$} is the relation 
$$\pi_{i_1,\dots,i_k}(R) := \{(t_{i_1},\dots,t_{i_k}) \mid (t_1,\dots,t_n) \in R\}.$$

\subsection{Structures}
A \emph{signature} $\tau$ is a set of relation and function symbols, each equipped 
with an arity $\ar(f) \in {\mathbb N}$. 
A \emph{$\tau$-structure} $\bA$ is a set $A$ (the \emph{domain} of $\bA$) 
together with 
\begin{itemize}
    \item an operation $f^{\bA} \colon A^k \to A$ for each
$f \in \tau$ of arity $k$ (also called a \emph{basic operation} of $\bA$), 
    \item a relation $R^{\bA} \colon A^k \to A$  for each $R \in \tau$ of arity $k$ (also called a \emph{basic relation} of $\bA$).
\end{itemize}
Sometimes, we use the same letter $R$ to denote a symbol $R \in \tau$ and the respective relation $R^{\bA}$ or, to denote a symbol $f \in \tau$
and the respective operation $f^{\bA}$. 
Structures are then specified as tuples $\bA := (A;R_1,R_2,\dots,f_1,f_2,\dots)$. 
A structure is called \emph{finite}
if its domain is finite. 
Note that function symbols of arity $0$ are allowed; they are also called \emph{constant symbols}. The operations denoted by constant symbols in $\fA$ can be identified with elements of $A$, and will be called \emph{constants}. 

If $f \colon A \to B$ is a function, and $t = (t_1,\dots,t_k) \in A^k$ is a $k$-tuple, then $f(t)$ denotes the $k$-tuple $(f(t_1),\dots,f(t_k))$.
A \emph{homomorphism} from a 
$\tau$-structure $\bA$ to a $\tau$-structure $\bB$ is a map $h$ from $A$ to $B$ such that 
\begin{itemize}
    \item for every relation symbol $R \in \tau$, if $a \in R^{\bA}$, then $h(a) \in R^{\bB}$. 
    \item for every function symbol $R \in \tau$ of arity $k$ and $a \in A^k$ we have $f^{\bA}(a) = f^{\bB}(h(a))$. 
\end{itemize}
A bijective homomorphism $f \colon \bA \to \bB$ whose inverse is a homomorphism as well is called an \emph{isomorphism} between $\bA$ and $\bB$; if there exists an isomorphism between $\bA$ and $\bB$, we say that $\bA$ and $\bB$ are \emph{isomorphic}, and write $\bA \cong \bB$. An \emph{embedding} of $\bA$ into $\bB$ is an injective homomorphism which is an isomorphism between $\bA$ and a substructure of $\bB$.

If $\bA$ and $\bB$ are $\tau$-structures, then the \emph{product} $\bA \times \bB$ of $\bA$ and $\bB$ is the $\tau$-structure  with domain $A \times B$ such that
\begin{itemize}
    \item for every relation symbol $R \in \tau$ of arity $k$
    $$R^{\bA \times \bB} = \{((a_1,b_1),\dots,(a_k,b_k)) \mid (a_1,\dots,a_k) \in R^{\bA}, (b_1,\dots,b_k) \in R^{\bB} \}.$$
    \item for every function symbol $R \in \tau$ of arity $k$ and $(a_1,b_1),\dots,(a_k,b_k) \in A \times B$ we have 
    $f^{\bA \times \bB}(((a_1,b_1),\dots,(a_k,b_k)) = (f^{\bA}(a_1,\dots,a_k),  f^{\bB}(b_1,\dots,b_k))$.
\end{itemize}
We will not deal with signatures which have both function and relation symbols. A signature $\tau$ is called \emph{relational} if it only contains relation symbols and no function symbols, and in this situation a $\tau$-structure is called a \emph{relational structure}. Often, we refer to relational structures as simply \emph{structures}, in contrast to \emph{algebras}, which are understood to be structures in a purely functional signature. 

If $\tau$ is a finite relational signature
and $\bB$ is a $\tau$-structure, then 
the \emph{constraint satisfaction problem for $\bB$}, denoted by 
$\Csp(\bB)$, is the 
the class of finite
$\tau$-structures $\bA$ that have a homomorphism to $\bB$. This can be viewed as a computational problem: given a finite $\tau$-structure $\bA$, decide whether there exists a homomorphism to $\bB$. 
The tuples in the relations of $\bA$ are referred to as \emph{constraints}.

A first-order $\tau$-formula is called \emph{primitive positive} 
if it is of the form
$$ \exists x_1,\dots,x_n (\psi_1 \wedge \cdots \wedge \psi_m)$$
where $\psi_1,\dots,\psi_m$ are atomic $\tau$-formulas. 
In other words, disjunction, universal quantification, and negation from first-order logic are not permitted. 
If $\mathcal R$ is a set of relations over a common domain $A$, then we say that $R$ is \emph{primitively positively definable from ${\mathcal R}$} if $R$ has a primitive positive definition in a relational structure with domain $A$ obtained from ${\mathcal R}$ by assigning each of the relations of ${\mathcal R}$ a relation symbol. 


(Symmetric linear) Datalog reductions will be introduced in Section~\ref{sect:descr}. 
However, we state already here the following to motivate primitive positive definability. 

\begin{lemma}[\cite{StarkeDiss}]
If $\bA$ and $\bB$ are structures with finite relational signatures, and every relation of $\fA$ has a primitive positive definition in $\bB$, then there is a 
symmetric linear Datalog reduction reduction from $\Csp(\bA)$ to $\Csp(\bB)$. 
\end{lemma}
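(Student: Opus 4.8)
The plan is to use the standard \emph{gadget replacement} associated with a system of primitive positive definitions, and then to check that the resulting instance transformation has the syntactic shape required of a symmetric linear Datalog reduction (as introduced in Section~\ref{sect:descr}). Write $\tau$ and $\sigma$ for the signatures of $\bA$ and $\bB$ (so that $\bA$ and $\bB$ share the same domain), and fix for each $R \in \tau$ of arity $n$ a primitive positive $\sigma$-formula $\phi_R(x_1,\dots,x_n) = \exists y_1,\dots,y_k\,(\psi^R_1 \wedge \dots \wedge \psi^R_\ell)$ defining $R^{\bA}$ in $\bB$, where each $\psi^R_i$ is an atomic $\sigma$-formula or an equality among the variables $x_1,\dots,x_n,y_1,\dots,y_k$. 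Given an instance $\bI$ of $\Csp(\bA)$, build a $\sigma$-structure $\bJ$ as follows: start from the set $I$; for every constraint of $\bI$, that is, every $R \in \tau$ and every tuple $t = (t_1,\dots,t_n) \in R^{\bI}$, add $k$ pairwise distinct fresh elements $y^{R,t}_1,\dots,y^{R,t}_k$ and, for each $i$, the fact obtained from $\psi^R_i$ by substituting $t_j$ for $x_j$ and $y^{R,t}_j$ for $y_j$; equality atoms are discharged by identifying the two elements involved (equivalently, by taking a quotient at the end). This is computable in logarithmic space, and $\bJ$ has size linear in that of $\bI$.

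I would then verify that $\bI \to \bA$ if and only if $\bJ \to \bB$. If $h \colon \bI \to \bA$ is a homomorphism, then for every constraint $t \in R^{\bI}$ we have $(h(t_1),\dots,h(t_n)) \in R^{\bA}$, hence $\bB \models \phi_R(h(t_1),\dots,h(t_n))$; choosing witnesses for $y_1,\dots,y_k$ and assigning them to $y^{R,t}_1,\dots,y^{R,t}_k$ satisfies all atoms contributed by this constraint, and since the fresh elements belong to a single gadget this defines a homomorphism $\bJ \to \bB$. Conversely, a homomorphism $g \colon \bJ \to \bB$ satisfies every atom of every gadget, so for each constraint the elements $g(y^{R,t}_1),\dots,g(y^{R,t}_k)$ witness $\bB \models \phi_R(g(t_1),\dots,g(t_n))$, whence $(g(t_1),\dots,g(t_n)) \in R^{\bA}$; thus the restriction $g|_I$ is a homomorphism $\bI \to \bA$.

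The remaining, and main, point is to see that the map $\bI \mapsto \bJ$ qualifies as a symmetric linear Datalog reduction in the sense of Section~\ref{sect:descr}: the construction is \emph{local} and \emph{positive}, in that every fact of $\bJ$ is produced by one atom of one gadget attached to one tuple of one relation of $\bI$, with no disjunction, negation, or universal quantification involved. Hence a symmetric linear Datalog program solving $\Csp(\bB)$ can be pulled back to one solving $\Csp(\bA)$ by replacing, in each rule body, every $\sigma$-atom by the finitely many $\tau$-atoms (together with the attendant fresh-variable bookkeeping) that could generate it, and every use of equality in a gadget by an identification of variables; each rule becomes a finite set of rules, extensional atoms get replaced by extensional atoms rather than by recursive subgoals, and the symmetry of rules is respected, so that both linearity and symmetry survive. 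The delicate part is precisely this last verification — accounting for the existential gadget variables so that the pulled-back program can refer to them without leaving the Datalog fragment, handling equality atoms, and confirming that the output is \emph{symmetric} linear Datalog and not merely linear Datalog — which is exactly what is established in~\cite{StarkeDiss}.
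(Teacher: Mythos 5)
The paper does not actually prove this lemma; it is imported wholesale from~\cite{StarkeDiss}, so there is no internal argument to compare yours against. Your proposal correctly identifies the standard gadget-replacement strategy, and the verification that $\bI \to \bA$ if and only if $\bJ \to \bB$ is fine — but that part is routine. The entire content of the lemma is the claim that this reduction can be realised as a \emph{symmetric linear Datalog} reduction, and on exactly that point your write-up first asserts the conclusion and then concedes that the delicate part ``is exactly what is established in~\cite{StarkeDiss}''. A proof that defers its only nontrivial step to the reference being proved is not a proof.

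Two concrete gaps remain. First, a Datalog transformation in the sense of Section~\ref{sect:descr} works over a fixed domain: the output structure is the $\rho$-reduct of the fixed point $\bI^*$, whose domain is $I$. Your map $\bI \mapsto \bJ$ introduces fresh elements $y^{R,t}_j$, so it is not literally a Datalog transformation of any flavour. To repair this one must encode each gadget variable as a tuple over $I$ (say, $y^{R,t}_j$ as the tuple $t$ tagged by the pair $(R,j)$), which forces every IDB of a program solving $\Csp(\bB)$ to be replaced by a finite family of higher-arity IDBs, one for each assignment of ``sorts'' (original element versus tagged tuple) to its argument positions. Second, once this re-sorting and the replacement of $\sigma$-atoms by the $\tau$-atoms that could generate them are carried out, one must check that every transformed rule still has at most one IDB in its body (linearity survives because the EDB rewriting does not introduce IDBs), \emph{and} that the symmetric partner of every transformed recursive rule is again among the transformed rules. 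The latter is the genuinely delicate point: the transformation of a rule depends on the chosen sort assignment and on which gadget atom realises each EDB conjunct, so one must verify that the head IDB and the body IDB are rewritten consistently, in such a way that swapping them produces a rule that the construction also generates. Neither verification appears in your argument, so the proposal establishes the easy equivalence but leaves the lemma itself unproved.
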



\subsection{Algebras}
A \emph{functional signature} is a signature $\tau$ which only contains function symbols and no relation symbols. In this case, 
a $\tau$-structure $\fA$ is also called a \emph{$\tau$-algebra}. 
In this case, substructures of $\fA$ 
are also called \emph{subalgebras} of $\fB$; we then write $A \leq \fB$ (and $\fA \leq \fB)$. The smallest subalgebra of $\fB$ which contains $A \subseteq B$ is called the \emph{subalgebra of $\fB$ generated by $A$}. 
A class of $\tau$-algebras ${\mathcal C}$ is called a \emph{pseudo-variety} if it is closed under homomorphic images, subalgebras, and finite products, and a \emph{variety} if it is additionally closed under infinite products. The smallest pseudo-variety (variety) that contains a class ${\mathcal C}$ of $\tau$-algebras is 
called the \emph{pseudo-variety} (or \emph{variety}, respectively) \emph{generated by ${\mathcal C}$}.

We write 
$\Inv(\fA)$ for the set of all \emph{invariant} relations $R \subseteq A^k$, for some $k \in {\mathbb N}$, i.e., if $t_1,\dots,t_k \in R$ and $f \in \tau$ has arity $k$, then $f^{\fA}(t_1,\dots,t_k) \in R$.
Note that this is precisely the case when $R \leq \fA^k$. 
In this case we also say that $f^{\fA}$ \emph{preserves} $R$,
and that $R$ is \emph{compatible} with $f^{\fB}$.

If $t := t(x_1,\dots,x_n)$ is a $\tau$-term (as in first-order logic (we refer to~\cite{Hodges}) over the variables $x_1,\dots,x_n$,
and $\bA$ is a $\tau$-structure, 
then the \emph{term operation} 
is defined to be the operation 
$t^{\fA} \colon A^k \to A$ 
where $(a_1,\dots,a_k) \in A^k$
is mapped to the element of $A$ obtained by evaluating $t$ at $(a_1,\dots,a_k)$ in the usual way. 
The set of all term operations of 
$\fA$ is a \emph{clone}, i.e., it is closed under composition 
and contains the projections, and it is denoted by $\Clo(\fA)$. 

\begin{example}
    If $\fA$ is an algebra with the empty signature, then $\Clo(\fA)$ just contains the projections; this clone is called the \emph{clone of projections}. 
\end{example}

If $t_1(x_1,\dots,x_k),\dots,t_k(x_1,\dots,x_k)$ are $\tau$-terms over the common variables $x_1,\dots,x_k$, then we write
$\fA \models t_1 \approx \cdots \approx t_k$ if 
$t_1^{\fA}(a_1,\dots,a_k) = \cdots = t_k^{\fA}(a_1,\dots,a_k)$ for all $a_1,\dots,a_k \in A$, and say that
$\fA$ \emph{satisfies} the identity $t_1 \approx \cdots \approx t_k$. 
For example, $({\mathbb Z_2};+) \models x+x+y \approx y+x+x$. 
The following identities are important for this article. 
\begin{align*}
    m(x,x,y) & \approx m(y,x,x) \approx y &&  \text{($m$ is a Maltsev operation)} \\
    f(x,x,y) & \approx f(x,y,x) \approx f(y,x,x) \approx y &&
    \text{($f$ is a minority operation)} \\
    g(x,x,y) & \approx g(x,y,x) \approx g(y,x,x) \approx y && \text{($g$ is a majority  operation).}
\end{align*}

An operation $f$ is called \emph{idempotent} if $f(x,\dots,x) \approx x$; note that Maltsev, minority, and majority operations are idempotent. Clearly, conservative operations are idempotent. A clone is called \emph{idempotent} (\emph{conservative}) if all of its operations are idempotent (conservative, respectively).

A \emph{polymorphism} of a relational structure $\bA$ is an operation $f \colon A^k \to A$, for some $k \geq 1$, which preserves all relations of $\bA$. In other words, it is a homomorphism from $\bA^k$ to $\bA$. 
The set of all polymorphisms of $\bA$ is denoted by $\Pol(\bA)$; it is easy to see that $\Pol(\bA)$ is a clone as well. 
Previously, we have defined $\Inv(\bA)$ for an algebra $\bA$; we also use the same notation $\Inv({\mathcal F})$ 
if ${\mathcal F}$ is a set of operations on a set $A$, and we write
$\Inv({\mathcal F})$ 
for the set of all relations on $A$ that are preserved by all operations from ${\mathcal F}$. Note that
$\Inv(\bA) = \Inv(\Clo(\bA))$. 
We need the following fundamental fact.

\begin{theorem}[\cite{Geiger,BoKaKoRo}]\label{thm:inv-pol}
    Let $\bA$ be a finite relational structure.
    Then a relation is primitively positively definable in $\bA$ if and only if it is preserved by all polymorphisms of $\bA$. 
\end{theorem}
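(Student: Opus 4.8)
\emph{Proof proposal.} The plan is to prove the two implications separately; only the direction from preservation to primitive positive definability needs an argument, and it is the classical argument of Geiger and of Bodnarchuk--Kaluzhnin--Kotov--Romov, which I would recall rather than reinvent. For the easy direction, I would argue by induction on the structure of a primitive positive $\tau$-formula $\phi$ defining a relation $R$. A polymorphism $f$ of $\bA$ of arity $k$ preserves every basic relation of $\bA$ by definition; the relation defined by an atomic formula $S(x_{i_1},\dots,x_{i_k})$ is obtained from the basic relation $S^{\bA}$ by identifying and permuting coordinates, which preserves compatibility with $f$; and $f$ preserves the equality relation since applying $f$ coordinatewise to tuples of the form $(a,a)$ again yields such a tuple. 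Finally, the intersection of two relations preserved by $f$ is preserved by $f$ (this handles conjunction), and the projection of a relation preserved by $f$ onto a subset of its coordinates is preserved by $f$ (lift witnesses, apply $f$, project; this handles existential quantification). Hence every relation with a primitive positive definition in $\bA$ lies in $\Inv(\Pol(\bA))$.

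For the converse, suppose $R \subseteq A^n$ is preserved by every polymorphism of $\bA$. Put $m := |R|$ and enumerate $R = \{t_1,\dots,t_m\}$ with $t_i = (t_i^1,\dots,t_i^n)$; if $R = \emptyset$ I treat it as a degenerate case below. Form the $n$ ``columns'' $c_j := (t_1^j,\dots,t_m^j) \in A^m$ for $j \in \{1,\dots,n\}$. The key observation is the identity
\[
R = \bigl\{\, (f(c_1),\dots,f(c_n)) : f \in \Pol^{(m)}(\bA) \,\bigr\}.
\]
The inclusion ``$\subseteq$'' holds because taking $f$ to be the $i$-th $m$-ary projection (a polymorphism) recovers $t_i$. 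For ``$\supseteq$'', given any $m$-ary polymorphism $f$, the tuple $(f(c_1),\dots,f(c_n))$ is exactly $f$ applied coordinatewise to $t_1,\dots,t_m \in R$, hence lies in $R$ since $R$ is preserved by $f$. Because an $m$-ary polymorphism of $\bA$ is by definition a homomorphism $\bA^m \to \bA$, this says $R$ is precisely the set of images of the distinguished tuples $c_1,\dots,c_n$ under homomorphisms from $\bA^m$ to $\bA$.

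It then remains to express this set by a primitive positive formula. I would write down the ``canonical query'' of the power structure $\bA^m$ relative to $c_1,\dots,c_n$: introduce a variable $z_a$ for each element $a$ of $A^m$, identify $z_{c_j}$ with the free variable $x_j$, existentially quantify the remaining variables $z_a$, and conjoin the atomic formula $S(z_{b_1},\dots,z_{b_k})$ for every relation symbol $S \in \tau$ of arity $k$ and every tuple $(b_1,\dots,b_k) \in S^{\bA^m}$. A satisfying assignment of this formula extending $x_j \mapsto a_j$ is exactly a homomorphism $\bA^m \to \bA$ sending $c_j \mapsto a_j$ for all $j$, so by the identity above the formula defines $R$, as desired.

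I do not expect a genuine obstacle here, since this is a standard result; the two points that need care in a writeup are the bookkeeping in the correspondence between $m$-ary polymorphisms, homomorphisms out of $\bA^m$, and satisfying assignments of the canonical query (note that $\bA^m$ has exponentially many elements, so the produced formula is large, but its size is irrelevant for definability), and the degenerate case $R = \emptyset$, which one disposes of by admitting the unsatisfiable formula $\bot$ as primitive positive (equivalently, $R = \emptyset$ is definable whenever $\bA$ has a basic relation that is not full, and if all basic relations of $\bA$ are full the statement is vacuous or handled by convention). Alternatively, since Theorem~\ref{thm:inv-pol} is classical, one may simply cite~\cite{Geiger,BoKaKoRo}.
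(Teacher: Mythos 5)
The paper offers no proof of this theorem: it is stated as a classical fact with a citation to Geiger and to Bodnarchuk--Kaluzhnin--Kotov--Romov, so there is no in-paper argument to compare yours against. Your proposal is the standard proof of that classical result and is essentially correct: the easy direction by induction on the shape of the primitive positive formula (closure of $\Inv(\Pol(\bA))$ under identification and permutation of coordinates, intersection, and projection), and the converse via the identity $R = \{(f(c_1),\dots,f(c_n)) : f \in \Pol^{(m)}(\bA)\}$ with $m = |R|$, realized by the canonical query of $\bA^m$ relative to the columns $c_1,\dots,c_n$ (finiteness of $\bA$ is what makes this query a finite formula). The only inaccuracy is in your parenthetical on the degenerate case: it is \emph{not} true that $\emptyset$ is primitively positively definable whenever some basic relation of $\bA$ fails to be full. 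For instance, over $(\{0,1\};\leq)$ the relation $\leq$ is not full, yet every primitive positive formula is satisfied by the all-zero assignment, so no empty relation is definable, while $\emptyset$ is vacuously preserved by every polymorphism. This is the well-known cosmetic caveat of the classical statement: one either restricts to nonempty relations or adopts the convention that the formula ``false'' is primitive positive. It does not affect the main argument, but the specific claim you made to dispose of the case is wrong as stated.
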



The set of all subalgebras 
of an algebras $\fA$ form a lattice,
where the meet $A_1 \wedge A_2$ of $A_1,A_2 \leq \fA$ is the subalgebra of $\fA$ with domain $A_1 \cap A_2$, and where the join $A_1 \vee A_2$ of $A_1,A_2 \leq \fA$ is the 
subalgebra of $\fA$ generated by $A_1 \cup A_2$. 
An element $x$ of a lattice $(L,\wedge, \vee)$ is called 
\emph{meet irreducible} 
if for all $a,b \in L$ such that $(a \wedge b)=x$ we have 
$x = a$ or $x = b$.  
If $a,b \in L$, we write $a \leq b$ if $(a \wedge b) = a$.
If $L$ is a finite lattice, it has a largest element, i.e., an element $t$ such that $a \leq t$ for all $a \in L$.
Dually, it has a smallest element, defined analogously. 
A \emph{cover} of an element $x$
is an element $a$ such that $x \leq a$ and for every $b \in L$ such that $x \leq b \leq a$ we have $b=x$ or $b=a$. 
We later need that if $|A| \geq 2$, then a subalgebra $R \leq {\fA}^n$ is meet irreducible (in the lattice of subalgebras of $\fA^n$) if 
 and only if $R$ has a unique upper cover among subalgebras of $\fA^n$, which we then denote by $R^* = \bigcap \{ S \in \Inv(\fA): R \subsetneq S \}$.

\subsection{Primitive positive constructions and minion homomorphisms}\label{sect:minhomppconstructions}
In order to describe finite structures with a conservative Maltsev polymorphism we need a more powerful concept than 
primitive positive definitions 
to related structures on different domains, namely \emph{primitive positive constructions}~\cite{wonderland}.
Two structures $\bA$ and $\bB$ are \emph{homomorphically equivalent} if 
there exists a homomorphism from $\bA$ to $\bB$ and vice versa.
A \emph{$d$-th pp-power} of $\bA$, for $d \in {\mathbb N}$, is a structure with domain $A^d$ whose $k$-ary relations, when viewed as $dk$-ary relations over $A$, are pp-definable in $\bA$. A structure $\bB$ is called \emph{pp-constructible} in $\bA$ if it is homomorphically equivalent to a pp-power of $\bA$.

Similarly as pp-definability, it is possible to characterise pp-constructibilty of finite structures algebraically. 
For this we need the concept of a \emph{minion homomorphism}. 
If $f \colon A^k \to A$ is an operation and $\alpha \colon \{1,\dots,k\} \to \{1,\dots,\ell\}$ is a function, then 
$f_\alpha \colon A^{\ell} \to A$ denotes the operation 
given by $$f_{\alpha}(x_1,\dots,x_{\ell}) \mapsto f(x_{\alpha(1)},\dots,x_{\alpha(\ell)}),$$ 
which is also called a \emph{minor} of $f$. If $\mathcal C$ and $\mathcal D$ are clones, then a map $\xi \colon {\mathcal C} \to {\mathcal D}$ 
is called a \emph{minion homomorphism} if 
\begin{itemize}
    \item $\xi$ \emph{preserves arities}, i.e., maps operations in ${\mathcal C}$ of arity $k$ to operations of arity $k$ in ${\mathcal D}$, and 
    \item $\xi$ \emph{preserves minors}, i.e., if $f \in {\mathcal C}$ has arity $k$ and $\alpha \colon \{1,\dots,k\} \to \{1,\dots,\ell\}$ is a map, then the operation $f_{\alpha}$ is an operation in ${\mathcal D}$. 
\end{itemize}

\begin{example}\label{expl:hsp}
It is well-known and easy to see that if $\fB$ is a $\tau$-algebra and
$\fA$ is in the variety generated by $\{\fB\}$, 
then 
there exists a minion homomorphism from 
$\Clo(\fB)$ to $\Clo(\fA)$.
\end{example} 

\begin{theorem}[\cite{wonderland}]\label{thm:pp-constr}
Let $\bA$ and $\bB$ be finite structures. Then the following are equivalent. 
\begin{itemize}
    \item $\bA$ pp-constructs $\bB$. 
    \item $\Pol(\bA)$ has a minion homomorphism to $\bB$.  
\end{itemize}
 \end{theorem}

\subsection{Minimal Taylor}

A function $f \colon B^n \to B$, for $n \geq 2$, 
is called a \emph{Taylor operation}
if for every $i \in [n]$ there are $\alpha,\beta \colon [n] \to [2]$ such that $f_\alpha = f_\beta$ and $\alpha(i) \neq \beta(i)$. It is well-known that a finite structure $\bA$ such that $\Pol(\bA)$ is idempotent has \emph{no} minion homomorphism to the clone of projections on a set of two elements if and if it has a Taylor polymorphism (see, e.g.,~\cite{GraphHomomorphisms}).

\begin{definition}\label{def:minTaylorclone}
    An clone is called \emph{minimal Taylor} if it is Taylor and every proper subclone is not Taylor. 
    An algebra is called \emph{minimal Taylor} if its clone of term operations is minimal Taylor. 
\end{definition}

Clearly, every operation that generates a Taylor minimal clone must be Taylor. We also call such an operation a \emph{minimal Taylor operation}. 
The following is shown in~\cite{MinimalTaylor}, using the cyclic terms theorem~\cite{Cyclic}. 

\begin{theorem}
\label{thm:taylor-min}
Every Taylor clone ${\mathscr C}$ over a finite domain contains a minimal Taylor clone 
${\mathscr C}'$. 
\end{theorem}

\begin{example}\label{expl:mino}
    The clone $\mathcal C$ generated by the Boolean minority operation is minimal Taylor, because every proper subclone of ${\mathcal C}$ only contains the projections. 
\end{example}

\begin{example}
    The operation $f \colon \{0,1\}^3 \to \{0,1\}$ given by $f(x,x,y) = f(y,x,x) = y$ and
    $f(x,y,x) = x$ is Maltsev, and therefore Taylor, but not minimal Taylor.   
    To see this, note that $h(x,y,z) := f(x,f(x,y,z),z)$ is a majority operation.  It is easy to prove by induction on the generation process in the clone that a majority operation cannot generate a Maltsev operation. 
    This shows that $f$ is not minimal Taylor.
\end{example}

\begin{lemma}[Proposition 5.4 in~\cite{MinimalTaylor}]
\label{lem:minTaylorSubAlg}
    Every subalgebra of a minimal Taylor algebra is minimal Taylor. 
\end{lemma}

\subsection{Congruences}\label{sect:congruences}
A good reference for the material in this section is \cite{McKenzieMcNultyTaylor}. 
For an algebra $\fA$, an equivalence relation $\theta \in \Inv(\fA)$ is called a \emph{congruence}.
The \emph{congruence generated by $S \subseteq A^2$} is 
the smallest congruence $\theta$ of $\fA$ containing $S$; 
if $S = \{(a,b)\}$ we also say that \emph{$\theta$ is generated by $(a,b)$}.
It is well-known and easy to see that an  equivalence relation $\theta$ 
on $A$ is a congruence of $\fA$ 
if and only if $\theta$ is preserved by all unary term functions in the algebra obtained from $\fA$ by adding all constants. 

Given a congruence $\theta$ and an element $a \in A$, we denote by $[a]_\theta$ the set of all $b$ such that $(a,b) \in \theta$ and call this set a \emph{block}, or a \emph{$\theta$-class}. A block is called \emph{trivial} if it contains only one element. A congruence is called \emph{nontrivial} if it has at least one nontrivial class. It is a well-known and fundamental fact that for any homomorphism $h \colon \fA \to \fB$, the set 
\[
\ker(h) := \{ (a,b) \in A^2 \mid h(a) = h(b) \}
\]
is a congruence of $\fA$. Furthermore, given a congruence $\theta$ of $\fA$, one can define a \emph{quotient algebra} $\fA/\theta$ with underlying set equal to all $\theta$-classes by setting 
\[
f([a_1]_\theta, \dots, [a_n]_\theta) = [f(a_1, \dots, a_n)]_\theta
\]
for a basic operation $f$ of $\fA$. Since $\theta$ is an equivalence relation belonging to $\Inv(\fA)$, it is easy to see that the quotient algebra is well-defined. The \emph{first isomorphism theorem} of universal algebra states that 
\[
\fA/\theta \simeq h(\fA),
\]
where $\theta$ is the kernel of $h$. Therefore, the isomorphism types of the homomorphic images of an algebra are totally determined by its congruences. We denote by $\eta_\theta \colon \fA \to \fA / \theta$ the \emph{natural map}, which is the homomorphism which maps each element of $\fA$ to the $\theta$-class containing it.  

If $\theta$ and $\theta'$ are congruences such that $\theta \subseteq \theta'$, then we say that $\theta$ is \emph{finer} than $\theta'$, and that
$\theta'$ is \emph{coarser} than $\theta$.
Every algebra $\fA$ has at least two congruences, namely the graph of the equality relation $\{(a,a): a \in A \} $ and the full relation $A \times A$. We will denote these two congruences as $0$ and $1$, respectively, and they will be called the \emph{trivial} congruences of $\fA$. The collection of all congruences of an algebra forms 
a lattice, where the meet of two congruences $\theta_1$ and $\theta_2$ equals $\theta_1 \cap \theta_2$, and the joint of $\theta_1$ and $\theta_2$ equals the smallest congruence of $\fA$ that contains $\theta_1 \cup \theta_2$.
This lattice has 
the largest element $1$ and the smallest element $0$.
An algebra is called \emph{simple} if it has exactly these two congruences and no others. 
A congruence of $\fA$ is \emph{maximal} if is is distinct from 
$A \times A$ and maximal with respect to inclusion. 
Note that a congruence $\theta$ of $\fA$ is maximal if and only if $\fA/\theta$ is simple.

A congruence $\theta$ is called \emph{atomic} if it is non-trivial and every finer non-trivial congruence of $\fA$ equals $\theta$. An algebra is called \emph{subdirectly irreducible} if it has exactly one atomic congruence, which is then called the \emph{monolith} of the algebra. The \emph{correspondence theorem} of universal algebra states that the lattice of congruences of a quotient algebra $\fA/ \theta$ is isomorphic to the sublattice of congruences of $\fA$ that contain $\theta$.

If $\alpha_1 \colon A_1 \to B_1,\dots,\alpha_n \colon A_n \to B_n$ are maps, then $\alpha_1 \times \dots \times \alpha_n$ denotes the map with domain $A_1 \times \cdots \times A_n$  and range $B_1 \times \cdots \times B_n$ given by $(a_1,\dots,a_n) \mapsto (\alpha_1(a_1),\dots,\alpha_n(a_n))$. Given two sequences of algebras $\fA_1, \dots, \fA_n$ and congruences $\theta_1, \dots, \theta_n$ so that $\theta_i$ is a congruence of $\fA_i$ for each $1 \leq i \leq n$, we denote by $\theta_1 \times \dots \times \theta_n$ the \emph{product congruence} of $\fA_1 \times \dots \times \fA_n$ which relates a pair of elements if and only if they are related in the $i$-th coordinate by $\theta_i$, for each $1 \leq i \leq n$. It is immediate that the kernel of the product of the natural maps $\eta_{\theta_1} \times \dots \times \eta_{\theta_n} \colon A_1 \times \dots \times A_n \to (A_1/ \theta_1) \times \dots \times (A_n/\theta_n)$ is equal to $\theta_1 \times \dots \times \theta_n$.


\section{The structure of conservative minority algebras}
\label{sect:cons-mino}
In this section we will investigate finite algebras $\fA = (A; f)$ where $f \colon A^3 \to A$ is a conservative minority operation. 
We call such algebras \emph{conservative minority algebras}. The following lemma, due to Carbonnel~\cite{Carbonnel16b}, implies 
that this also solves the case where we just assume that $f$ is a conservative Matlsev operation.


\begin{lemma}[Carbonnel~\cite{Carbonnel16b} (Lemma 1)]\label{lem:Carbonnel}
    If $f \colon A^3 \to A$ is a conservative Maltsev operation, then $\Clo(A;f)$ contains 
    a conservative minority operation. 
\end{lemma}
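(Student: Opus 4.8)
The plan is to reduce the claim to a question about two-element algebras. Since $f$ is conservative, every subset of $A$ is a subalgebra of $(A;f)$; in particular, every two-element subset $B=\{a,b\}$ is a subalgebra, and $f|_B$ is a conservative Maltsev operation on the two-element set $B$. Now I claim it is enough to find a ternary term operation $p\in\Clo(A;f)$ whose restriction to every two-element subalgebra of $(A;f)$ is a minority operation. Indeed, $p$ is conservative (a term operation of a conservative algebra is conservative, by induction on the term), so two-element inputs have two-element outputs; and each instance of the identities $p(x,x,y)\approx p(x,y,x)\approx p(y,x,x)\approx y$ involves only inputs lying in a set with at most two elements, hence holds as soon as $p$ is a minority operation on every two-element subalgebra (for one-element inputs this is just idempotency, which follows from conservativity). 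So the whole task is to construct such a $p$.

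Second, I would work out the two-element case. On $\{a,b\}$ the Maltsev identities pin down a conservative Maltsev operation at every triple except $(a,b,a)$ and $(b,a,b)$, so there are exactly four of them, indexed by $(f(a,b,a),f(b,a,b))\in\{a,b\}^2$: the minority operation (whose diagonal $(x,y)\mapsto h(x,y,x)$ is the second projection), the operation whose diagonal is the first projection, and two ``semilattice'' operations whose diagonal is a meet, respectively a join. For each of these four two-element algebras one checks by an explicit composition that its clone already contains the minority operation: for instance, for the first-projection type the term $q(x,y,z):=f(x,f(x,y,z),z)$ is a majority operation, and then $q(f(x,y,z),f(y,z,x),f(z,x,y))$ is the minority operation; the remaining three types are handled by similar short computations. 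This settles the case $|A|=2$ and, more importantly, supplies the local ingredients.

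The remaining step — producing a single term of $\Clo(A;f)$ that is minority on all two-element subalgebras simultaneously — is the heart of the matter and, I expect, the main obstacle: the per-pair fixes cannot simply be composed, because nesting (one-sided) Maltsev terms tends either to break the one-sided identities $p(x,x,y)\approx p(y,x,x)\approx y$ or to undo the progress already made on other pairs (an outer minority term becomes ``transparent'' on a pair once its three arguments collapse to one value). I would attack this with an iteration that uses finiteness of $A$. Start with $p_0:=f$ and maintain a ternary term $p_i\in\Clo(A;f)$ satisfying $p_i(x,x,y)\approx p_i(y,x,x)\approx y$, together with its defect set $D_i:=\{(a,b)\in A^2 : a\neq b,\ p_i(a,b,a)=a\}$. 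Using the two-element analysis above — and, to keep the one-sided identities alive, applying a fixed composition scheme to $p_i$ and iterating that scheme until it stabilises (a stable point exists since $A$ is finite, and such a fixed point can satisfy the one-sided identities even when the intermediate iterates do not) — construct $p_{i+1}$ with $D_{i+1}\subsetneq D_i$. Finiteness of $A$ forces this to terminate at some $p_i$ with empty defect; by the reduction in the first paragraph this $p_i$ is a conservative minority operation in $\Clo(A;f)$, as required.
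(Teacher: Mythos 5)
The paper does not prove this lemma at all---it is imported verbatim from Carbonnel's work with a citation---so there is no internal proof to compare against; your proposal has to stand on its own. Its first two steps do. The reduction to two-element subalgebras is correct: conservativity makes every pair $\{a,b\}$ a subalgebra, term operations restrict to subalgebras and remain conservative, and the minority identities only ever involve at most two distinct values, so a term that is a minority on every two-element subalgebra is a conservative minority operation on all of $A$. The classification of the four two-element conservative Maltsev operations by the pair $(f(a,b,a),f(b,a,b))$ is also correct, your one explicit computation checks out ($f(x,f(x,y,z),z)$ is indeed a majority for the first-projection type), and the formula $q(f(x,y,z),f(y,z,x),f(z,x,y))$ is in fact a minority for \emph{any} Maltsev $f$ and \emph{any} majority $q$, which is a nice reusable ingredient.

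The gap is the gluing step, and you have correctly identified it as the heart of the matter without actually carrying it out. You never exhibit the ``fixed composition scheme,'' and the two claims on which the iteration rests---that iterating the scheme stabilises at a term which recovers the one-sided identities $p(x,x,y)\approx p(y,x,x)\approx y$, and that the defect set strictly shrinks at each round---are precisely the difficult assertions; stating that a fixed point ``can satisfy'' the identities is not an argument. As written, the proof terminates exactly where the real work begins. Note also that your defect-set bookkeeping is heavier than necessary and points to the way out: since the restriction of any term to $\{a,b\}$ depends only on the isomorphism type of $(\{a,b\};f|_{\{a,b\}})$, and there are only four such types, the defect set is always a union of type classes. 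The entire combination problem therefore reduces to a single finite question: exhibit one ternary term over $f$ that evaluates to the minority simultaneously in all four two-element conservative Maltsev algebras, i.e.\ a single term operation of their $16$-element product that is the minority in each coordinate. Until you produce that term (or an equivalent finite verification), the proof is incomplete.
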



\subsection{Block congruences}

We will call a congruence with at most one nontrivial class a \emph{block congruence}. In the setting of conservative minority algebras, block congruences have some nice properties.

\begin{proposition}\label{prop:blocksinsideblocks}
Let $f \colon  A^3 \to A$ be a conservative minority operation. 
Let $\alpha$ be a congruence of $\fA = (A;f)$ and let $B$ be a congruence class. 
Let $\theta$ be a congruence
of the subalgebra of $\fA$ with domain $B$.
Then for each $b \in B$, the equivalence relation $\theta_b$ on $A$ defined
by
\[
\theta_b := \{(x,y) \in A^2 \mid  x,y \in [b]_\theta \text{ or } x=y \}
\]
is a block congruence of $\fA$.
\end{proposition}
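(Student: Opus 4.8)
The plan is to reduce the whole statement to checking that $f$ preserves $\theta_b$. Since $f$ is conservative, every nonempty subset of $A$ is closed under $f$; in particular the $\alpha$-class $B$ is a subalgebra of $\fA$, so $\theta$ is meaningful and $\theta_b$ is well defined. It is immediate that $\theta_b$ is an equivalence relation and that its only class which can fail to be trivial is $T := [b]_\theta$. Hence $\theta_b$ is a block congruence as soon as it is a congruence of $\fA$, i.e. as soon as $f$ preserves it. Along the way I will use the two inclusions $\theta_b \subseteq \alpha$ (a $\theta_b$-related pair is either diagonal or lies inside $T \subseteq B$, and $B$ is an $\alpha$-class) and $\theta_b \cap B^2 \subseteq \theta$ (same reasoning, since $T = [b]_\theta$).

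The main device is that the class of conservative minority algebras is closed under quotients: if $\sigma$ is a congruence of such an algebra, the induced operation on the quotient is again conservative, since $[f(a,b,c)]_\sigma \in \{[a]_\sigma,[b]_\sigma,[c]_\sigma\}$, and still satisfies the minority identities, these being equational. I will apply this to $\fA/\alpha$ and to $(\fA|_B)/\theta$, where $\fA|_B$ denotes the subalgebra on $B$. To verify preservation, fix $(x_1,y_1),(x_2,y_2),(x_3,y_3) \in \theta_b$ and partition $\{1,2,3\}$ into $E := \{i : x_i \neq y_i\}$, where necessarily $x_i,y_i \in T$, and $D := \{1,2,3\}\setminus E$, where $x_i = y_i =: d_i$. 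If $E = \emptyset$ the two output triples coincide, and if $|E| = 3$ both input triples lie in $T^3$, so conservativity places both outputs in $T$; in both cases we are done. The remaining case $1 \le |E| \le 2$, in which $D \neq \emptyset$, is the heart of the argument.

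Here I would split on whether every $d_i$ $(i \in D)$ lies in $B$. If so, both input triples lie in $B^3$, and I pass to $(\fA|_B)/\theta$: there the two triples have the same sequence of $\theta$-classes (the class $T$ in the coordinates of $E$, the class $[d_i]_\theta$ in the coordinates of $D$), so the two outputs land in one common $\theta$-class $K$, which by conservativity of the quotient operation is one of those input classes. If $K = T$, both outputs lie in $T$. If $K \neq T$, then $K = [d_j]_\theta$ for some $j \in D$; each output, being a coordinate selected by the conservative $f$ and not lying in $T$, must equal some $d_i$ with $[d_i]_\theta = K$; and since the quotient operation is a minority operation, two coordinates of $D$ carrying the same $\theta$-class would force the output into the class $T$ of the $E$-coordinate, contradicting $K \neq T$, so at most one $d_i$ lies in $K$ and both outputs equal it. If instead some $d_i \notin B$, I run the same bookkeeping inside $\fA/\alpha$ (now the coordinates of $E$ all have $\alpha$-class $B$, while at least one coordinate of $D$ has $\alpha$-class different from $B$), concluding by the same conservativity-plus-minority reasoning that the two outputs are either both equal to a common $d_i$ with $[d_i]_\alpha \neq B$, or both equal to values of $E$-coordinates and hence both in $T$. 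In every configuration $(f(x_1,x_2,x_3),f(y_1,y_2,y_3)) \in \theta_b$.

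The point that requires care — the would-be obstruction — is excluding outputs $f(x_1,x_2,x_3) = d_j$ and $f(y_1,y_2,y_3) = d_k$ with $d_j \neq d_k$ lying in a common congruence class outside $T$: the ambient congruence by itself only certifies that the two outputs are congruent, which is compatible with this. What rules it out is that the quotient operation is still \emph{conservative}, so it must return one of its three coordinates, and distinct elements $d_j \neq d_k$ lie in distinct congruence classes, so a congruence-class-valued conservative selection can hit at most one of them; the minority identity supplies the complementary fact that once two of the $D$-coordinates agree in their class, the output is forced into the class of the remaining $E$-coordinate (namely $T$, resp. $B$). Assembling the cases shows $f$ preserves $\theta_b$; thus $\theta_b \in \Inv(\fA)$ is an equivalence relation with at most one nontrivial class, which is exactly a block congruence.
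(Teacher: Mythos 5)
Your proof is correct. It takes a genuinely different route from the paper's. The paper reduces the verification to the basic translations $x \mapsto f(x,c_1,c_2)$ (and the analogous substitutions in the other coordinates), and then, for a pair $u \neq v$ in $[b]_\theta$, runs an explicit case analysis on where the constants $c_1,c_2$ sit relative to $[b]_\theta$, $B$, and $A \setminus B$, using the Maltsev/minority identities pointwise (e.g.\ comparing $f(u,c_1,c_2)$ with $f(u,c_1,c_1)=u$ or $f(u,u,c_2)=c_2$ modulo $\theta$ or $\alpha$) together with conservativity to pin the image pair down to $\{(u,v),(c_1,c_1),(c_2,c_2)\}$. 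You instead check preservation on three arbitrary $\theta_b$-pairs simultaneously, and your organizing device is the observation that quotients of conservative minority algebras are again conservative minority algebras: passing to $(\fA|_B)/\theta$ or to $\fA/\alpha$ determines the common congruence class of the two outputs, conservativity on $A$ then forces each output to be one of its coordinates lying in that class, and the minority identity in the quotient rules out two distinct $D$-coordinates sharing that class (which is exactly the scenario that could break $\theta_b$-relatedness, as you correctly flag). The combinatorial core — conservativity plus the minority identities — is the same in both arguments, but your packaging of the "which class does the output land in" step as a statement about a quotient algebra is a clean alternative to the paper's reduction to unary polynomials; the paper's version is slightly more elementary in that it never forms quotients, at the cost of a longer element-level case analysis.
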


\begin{proof}
It is well known that to check the compatibility of an equivalence relation with a function, it suffices to check the compatibility with the basic translations of that function, i.e., those unary functions obtained by substituting all but a single variable with a constant from $A$. We show the argument for when the second two variables of $f$ are set to constants. The other cases are analogous.

So, let us take $(u,v) \in \theta_b$ and consider its image under the unary mapping $x \mapsto f(x,c_1, c_2)$ for constants $c_1,c_2 \in A$. If $u=v$ then of course $f(u,c_1, c_2) = f(v, c_1, c_2) \in \theta_b$. If $u \neq v$, then they both belong to $[b]_\theta$. If also both $c_1, c_2 \in [b]_\theta$, then compatibility is immediate because $[b]_\theta$ is a subuniverse of $\fA$.
So let us assume that $c_1,c_2$ do not both belong to $[b]_{\theta}$. 
We now consider cases corresponding to the other possible configurations of $c_1, c_2$ with respect to $\theta_b$-classes. Notice that since $\fA$ is conservative, it must be that $(f(u,c_1, c_2), f(v, c_1, c_2)) \in \{u, c_1, c_2 \} \times \{v, c_1, c_2 \}$. We show that actually $(f(u,c_1, c_2), f(v, c_1, c_2)) \in \{(u,v), (c_1, c_1), (c_2, c_2) \}$.
\begin{itemize}
\item Suppose that $(c_1, c_2) \in \theta$. Because $(u,u) = (u,f(u,c_1,c_1)) \in \theta$, we obtain $(u,f(u,c_1, c_2)) \in \theta$. Because $\fA$ is conservative, $f(u, c_1, c_2) \in \{u, c_1, c_2\}$. Since 
$(c_1,c_2) \in \theta$, and one of $c_1$ and $c_2$ does not belong to $[b]_{\theta}$, they both 
do not belong to $[b]_\theta$. This forces $f(u, c_1, c_2) = u$. This argument works for $v$ as well, so we conclude that $$(f(u, c_1, c_2), f(v, c_1, c_2) ) = (u, v) \in \theta_b.$$

\item Suppose that $(c_1, c_2) \notin \theta$, and precisely one of $c_1, c_2$ belongs to $[b]_\theta$. Without loss, suppose that $c_1 \in [b]_\theta = [u]_{\theta}$. Suppose additionally that $c_2 \in B$. Since $(c_2,c_2) = (c_2,f(u,u,c_2)) \in \theta$, we obtain $(c_2,f(u,c_1, c_2)) \in \theta$. This forces $f(u, c_1, c_2) = c_2$. This argument works for $v$ as well, so we conclude that 
$$(f(u, c_1, c_2), f(v, c_1, c_2) ) = (c_2, c_2) \in \theta_b.$$
If $c_2 \in A \setminus B$, the same argument works with $\alpha$ playing the role of $\theta$.

\item Suppose that $(c_1, c_2) \notin \theta$, $c_1, c_2 \in B$, 
and neither 
$c_1$ nor $c_2$ belongs to $[b]_\theta$. It follows that the classes $[b]_\theta$, $[c_1]_\theta$, and $[c_2]_\theta$ are pairwise distinct, so in this case the only pairs from the product $ \{u, c_1, c_2 \} \times \{v, c_1, c_2 \}$ that are $\theta$-related are $(u,v), (c_1, c_1), (c_2,c_2)$. Therefore, 
\[
(f(u, c_1, c_2), f(v, c_1, c_2) ) \in 
\{
(u,v), (c_1, c_1), (c_2, c_2)
\} \subseteq \theta_b. 
\]
\item Suppose that $(c_1, c_2) \notin \theta$, $c_1\in B$, and $ c_2 \in A \setminus B$. Since $(c_2,c_2) = (c_2,f(u,u,c_2)) \in \alpha$, we obtain $(c_2,f(u,c_1, c_2)) \in \alpha$. This forces $f(u, c_1, c_2) = c_2$. This argument works for $v$ as well, so we conclude that 
$$(f(u, c_1, c_2), f(v, c_1, c_2) ) = (c_2, c_2) \in \theta_b.$$ 
\end{itemize}
We finish by remarking that this argument works for the other basic translations of $f$ because we assume that it is a  conservative minority operation. Therefore, the use of the Maltsev identities in the cases above is permissible also for the unary polynomial $f(c_1, x, c_2)$.
\end{proof}

\begin{corollary}\label{cor:blocksarecongruences}
Let $f \colon A^3 \to A$ be a conservative minority operation, let $\theta$ be a congruence of $\fA = (A;f)$, and let $a \in A$. Then the equivalence relation 
$\theta_{a} := \{(x,y) \in A^2 \mid x,y \in [a]_\theta \text{ or } x =y \}$ is a block congruence of $\fA$.
\end{corollary}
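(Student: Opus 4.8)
The plan is to obtain this as an immediate specialization of Proposition~\ref{prop:blocksinsideblocks}. That proposition already manufactures block congruences of $\fA$ of the form $\theta_b$, starting from four pieces of data: a congruence $\alpha$ of $\fA$, a single $\alpha$-class $B$, a congruence $\theta$ of the subalgebra $\fA|_B$, and an element $b \in B$. To recover the corollary I would simply choose these parameters appropriately. Take $\alpha$ to be the congruence $\theta$ named in the corollary, take $B := [a]_\theta$ (which is by definition an $\alpha$-class), take the full relation $B \times B$ as the congruence of the subalgebra $\fA|_B$, and set $b := a$. With these choices, the equivalence relation $\theta_b$ produced by the proposition is $\{(x,y) \in A^2 \mid x,y \in [a]_{B \times B} \text{ or } x = y\}$, and since $[a]_{B \times B} = B = [a]_\theta$ this is precisely the relation $\theta_a$ appearing in the statement of the corollary. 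Hence $\theta_a$ is a block congruence of $\fA$.

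The only points deserving a sentence of justification are that the full relation $B \times B$ is genuinely a congruence of any algebra on domain $B$ (it is one of the two trivial congruences), and that the $a$-block with respect to $B \times B$ agrees with the $a$-block with respect to the original $\theta$ — both equal $B = [a]_\theta$. I do not expect any real obstacle here: once Proposition~\ref{prop:blocksinsideblocks} is available, the corollary is just a matter of instantiating its hypotheses correctly. (One could equivalently use the restriction $\theta \cap (B \times B)$ in place of $B \times B$ as the congruence of $\fA|_B$, since it has the same block at $a$ and therefore yields the same conclusion.)
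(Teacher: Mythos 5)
Your proof is correct and is essentially the paper's argument: both reduce the corollary to a one-line instantiation of Proposition~\ref{prop:blocksinsideblocks}. The paper chooses the dual parametrization ($\alpha := A^2$, so $B = A$, with $\theta$ as the congruence of the subalgebra and $b := a$), whereas you take $\alpha := \theta$, $B := [a]_\theta$, and the full relation on $B$ as the inner congruence; both instantiations are valid and yield the same relation $\theta_a$.
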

\begin{proof}
Apply Proposition~\ref{prop:blocksinsideblocks} for $\alpha := A^2$ (so that $\fB = \fA$) and $b := a$.
\end{proof}

\begin{corollary}
Let $f \colon A^3 \to A$ be a conservative minority operation. The atomic congruences of $\fA = (A;f)$ are exactly the nontrivial block congruences $\theta$ for which the 
subalgebra whose domain is the nontrivial congruence class is simple. 
\end{corollary}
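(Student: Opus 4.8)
The plan is to derive both directions from Corollary~\ref{cor:blocksarecongruences} and Proposition~\ref{prop:blocksinsideblocks}, together with the standard fact that the restriction of a congruence of $\fA$ to a subalgebra is again a congruence of that subalgebra.

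For the direction ``nontrivial block congruence with simple nontrivial class $\Rightarrow$ atomic'', I would start with a nontrivial block congruence $\theta$ whose unique nontrivial class $B$ carries a simple subalgebra $\fA \restriction B$, and take an arbitrary congruence $\theta'$ with $0 \subsetneq \theta' \subseteq \theta$. Since $\theta' \neq 0$ there is a pair $(x,y) \in \theta'$ with $x \neq y$; as $\theta' \subseteq \theta$ and $x \neq y$, this pair lies in the only nontrivial $\theta$-class, so $x,y \in B$. Hence $\theta' \cap B^2$ is a nontrivial congruence of the simple algebra $\fA \restriction B$, and therefore equals $B^2$. Consequently every pair of distinct elements of $B$ belongs to $\theta'$; but every pair of distinct $\theta$-related elements also lies in $B$, so $\theta \subseteq \theta'$ and thus $\theta' = \theta$. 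This shows $\theta$ is atomic.

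For the converse I would take an atomic congruence $\theta$ and fix a nontrivial class $B = [a]_\theta$. Corollary~\ref{cor:blocksarecongruences} produces a nontrivial block congruence $\theta_a \subseteq \theta$ with nontrivial class $B$, so atomicity of $\theta$ forces $\theta = \theta_a$, i.e.\ $\theta$ is a block congruence. To see that $\fA \restriction B$ is simple, suppose it had a congruence $\psi$ with $0 \subsetneq \psi \subsetneq B^2$, and pick $b \in B$ with $[b]_\psi$ a nontrivial $\psi$-class, so $|[b]_\psi| \geq 2$ and $[b]_\psi \subsetneq B$. Applying Proposition~\ref{prop:blocksinsideblocks} with the outer congruence taken to be $\theta$, its class $B$, the inner congruence $\psi$, and the element $b$, I obtain a nontrivial block congruence of $\fA$ whose unique nontrivial class is $[b]_\psi$; since $[b]_\psi \subseteq B$ this block congruence is contained in $\theta$, so by atomicity it equals $\theta$. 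But then its nontrivial class $[b]_\psi$ would equal $B$, contradicting $[b]_\psi \subsetneq B$. Hence $\fA \restriction B$ is simple.

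I do not expect a real obstacle here: the substantive work — that restricting a congruence to one block of a (possibly larger) congruence of a conservative minority algebra again yields a congruence of $\fA$ — has already been carried out in Proposition~\ref{prop:blocksinsideblocks} and Corollary~\ref{cor:blocksarecongruences}. The only points needing care are the bookkeeping of which congruence plays the role of $\alpha$ when invoking the proposition, and the elementary observation that a pair of distinct elements related by a congruence contained in a block congruence must sit inside that block's nontrivial class.
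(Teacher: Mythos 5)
Your proof is correct and follows essentially the same route as the paper: the forward direction uses Corollary~\ref{cor:blocksarecongruences} to force $\theta$ to be a block congruence and Proposition~\ref{prop:blocksinsideblocks} to rule out a nontrivial congruence on the block, and the converse restricts a finer congruence to $B^2$ and invokes simplicity, exactly as in the paper's argument. Your write-up is somewhat more detailed in the bookkeeping, but there is no substantive difference.
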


\begin{proof}
Suppose that $\theta$ is an atomic congruence of $\fA$. By Corollary~\ref{cor:blocksarecongruences}, $\theta$ cannot have more than one nontrivial class. By Proposition \ref{prop:blocksinsideblocks}, the nontrivial block of $\theta$ cannot have a nontrivial congruence. 
Conversely, suppose that $\theta$ is a non-trivial block congruence with non-trivial block $B$. 
and that the subalgebra $\fB$ of $\fA$ with domain $B$ is simple. 
Let $\theta'$ be a finer congruence than $\theta$; then $\theta' \cap B^2$ is a congruence of $\fB$, which by the simplicity of $\fB$ equals $0$. Since $\theta$ is a block congruence with non-trivial block $B$, and $\theta' \subseteq \theta$, it follows that $\theta' = 0$ as well, which shows that $\theta$ is atomic.
\end{proof}

\begin{proposition}\label{prop:blockintersection}
Let $f \colon A^3 \to A$ be a conservative Maltsev operation and let $\theta_1, \theta_2$ be block congruences of $(A,f)$. If two classes $[a]_{\theta_1}$ and $[b]_{\theta_2}$ have a nonempty intersection, then $[a]_{\theta_1} \subseteq [b]_{\theta_2}$ or vice versa.
\end{proposition}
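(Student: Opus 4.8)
The plan is to first dispose of a trivial case and then derive a contradiction from assuming the two blocks are incomparable, using only the two Maltsev identities $f(x,x,y)\approx y$ and $f(y,x,x)\approx y$ together with the defining property of a block congruence that each of $\theta_1,\theta_2$ has at most one nontrivial class.

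First I would observe that if $[a]_{\theta_1}$ is a singleton, then since $[a]_{\theta_1}\cap[b]_{\theta_2}\neq\emptyset$ we must have $a\in[b]_{\theta_2}$, so $[a]_{\theta_1}=\{a\}\subseteq[b]_{\theta_2}$ and we are done; symmetrically if $[b]_{\theta_2}$ is a singleton. Hence it remains to treat the case where $B_1:=[a]_{\theta_1}$ and $B_2:=[b]_{\theta_2}$ are the unique nontrivial classes of $\theta_1$ and $\theta_2$ respectively, with $B_1\cap B_2\neq\emptyset$; the key point of this reduction is that every element lying outside $B_i$ sits in a singleton $\theta_i$-class.

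Next, arguing by contradiction, I would assume $B_1\not\subseteq B_2$ and $B_2\not\subseteq B_1$ and pick $x\in B_1\setminus B_2$, $y\in B_2\setminus B_1$, and $z\in B_1\cap B_2$. I would then compute $f(x,z,y)$ in two ways. Applying $\theta_1$ coordinatewise to the tuples $(x,z,y)$ and $(z,z,y)$ — legitimate since $(x,z),(z,z)\in\theta_1$ (all three entries in $B_1$) and $(y,y)\in\theta_1$ — gives $(f(x,z,y),f(z,z,y))\in\theta_1$, and the Maltsev identity yields $f(z,z,y)=y$; since $y\notin B_1$ its $\theta_1$-class is trivial, forcing $f(x,z,y)=y$. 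Symmetrically, applying $\theta_2$ coordinatewise to $(x,z,y)$ and $(x,y,y)$ — legitimate since $(z,y)\in\theta_2$ (both in $B_2$) — and using $f(x,y,y)=x$ together with the triviality of $[x]_{\theta_2}$ forces $f(x,z,y)=x$. Hence $x=y$, contradicting $x\notin B_2\ni y$.

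There is essentially no hard part here; the only things to get right are the bookkeeping of which chosen element lies in which nontrivial block (so that the ``off-block'' elements genuinely sit in singleton classes) and the orientation of the two Maltsev identities in the two applications. I would also remark that conservativity of $f$ is not actually needed in this argument — only the Maltsev equations and the block-congruence hypothesis enter — but the statement is phrased for conservative Maltsev operations for uniformity with the surrounding development.
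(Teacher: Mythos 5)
Your proof is correct and follows essentially the same route as the paper's: the paper also reduces to representatives $a\notin[b]_{\theta_2}$, $b\notin[a]_{\theta_1}$, $c$ in the intersection, and evaluates $d:=f(a,c,b)$ against both congruences via the two Maltsev identities to force $a=d=b$, a contradiction. Your explicit singleton-case reduction and the remark that conservativity is not needed are minor presentational differences only.
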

\begin{proof}
Let $c \in [a]_{\theta_1} \cap [b]_{\theta_2}$. 
If $[a]_{\theta_1} \subseteq [b]_{\theta_2}$ is not true, we may suppose
that $a \notin [b]_{\theta_2}$,
and if $[b]_{\theta_2} \subseteq [a]_{\theta_1}$ is not true, we may suppose
that $b \notin [a]_{\theta_1}$. 
Thus, we suppose for contradiction 
that $a \notin [b]_{\theta_2}$
and that $b \notin [a]_{\theta_1}$. 
Define $d := f(a,c,b)$.
Then 
$(f(c,c,b),f(a,c,b)) = (b,d) \in \theta_1$ and $(f(a,c,b),f(a,b,b)) = (d,a) \in \theta_2$. 
We have $d \notin [a]_{\theta_1}$ since
otherwise $(a,b) \in \theta_1$,
contrary to our assumptions. 
Since $\theta_1$ is a block congruence, 
$(b,d) \in \theta_1$ thus implies that 
$b=d$. 
Likewise, $d \notin [b]_{\theta_2}$ since
otherwise $(a,b) \in \theta_2$,
contrary to our assumptions. 
Since $\theta_2$ is a block congruence, 
$(d,a) \in \theta_2$ thus implies that 
$d=a$. Hence, $a=d=b$, a contradiction.
\end{proof}

\begin{proposition}\label{prop:subdirectlyirreduciblearelinearchains}
Let $f \colon A^3 \to A$ be a conservative Maltsev operation and let $\fA = (A;f)$. Then the  following are equivalent:
\begin{enumerate}
\item
$\fA $ is subdirectly irreducible.
\item The congruences of $\fA$ are linearly ordered by the subset relation.
\item 
Every congruence of $\fA$ is a block congruence. 
\end{enumerate}
\end{proposition}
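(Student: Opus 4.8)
The plan is to prove the cycle of implications $(1)\Rightarrow(3)\Rightarrow(2)\Rightarrow(1)$; throughout I assume $|A|\geq 2$, since for $|A|=1$ there is nothing to discuss. The two workhorses are Corollary~\ref{cor:blocksarecongruences} (which turns any congruence class into a block congruence) and Proposition~\ref{prop:blockintersection} (laminarity of the nontrivial blocks of block congruences), together with two standard facts about finite congruence lattices: every nontrivial congruence lies above an atom, and the lattice join of congruences is the transitive closure of their union.

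For $(1)\Rightarrow(3)$, assume $\fA$ is subdirectly irreducible with monolith $\mu$, so that $\mu$ is contained in every nontrivial congruence. Suppose toward a contradiction that some congruence $\theta$ has two distinct (hence disjoint) nontrivial classes $B_1,B_2$. By Corollary~\ref{cor:blocksarecongruences}, each $\theta_{B_i}:=\{(x,y)\mid x,y\in B_i\text{ or }x=y\}$ is a nontrivial block congruence, so $\mu\subseteq\theta_{B_1}\cap\theta_{B_2}$; but $B_1\cap B_2=\emptyset$ forces $\theta_{B_1}\cap\theta_{B_2}$ to be the equality relation, whence $\mu=0$, a contradiction. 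Hence every congruence has at most one nontrivial class, i.e., is a block congruence.

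For $(3)\Rightarrow(2)$, let $\theta_1,\theta_2$ be congruences; by $(3)$ both are block congruences, and we may assume both are nontrivial, with nontrivial blocks $B_1,B_2$ respectively. If $B_1\cap B_2\neq\emptyset$, Proposition~\ref{prop:blockintersection} gives $B_1\subseteq B_2$ or $B_2\subseteq B_1$, and since a block congruence is monotone in and determined by its nontrivial block, this yields $\theta_1\subseteq\theta_2$ or $\theta_2\subseteq\theta_1$. If instead $B_1\cap B_2=\emptyset$, a short case analysis (using reflexivity of $\theta_1,\theta_2$) shows that $\theta_1\cup\theta_2$ is already transitive, hence is itself the lattice join $\theta_1\vee\theta_2$; but $\theta_1\cup\theta_2$ has the two nontrivial classes $B_1$ and $B_2$, so it is not a block congruence, contradicting $(3)$. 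Thus any two congruences are comparable.

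Finally, $(2)\Rightarrow(1)$ is immediate: since $\fA$ is finite and $0\neq 1$, the congruence lattice is a finite chain with more than one element, which has a unique cover of its bottom element; this is the unique atom, hence the unique atomic congruence, so $\fA$ is subdirectly irreducible. I expect $(3)\Rightarrow(2)$ to require the most care, specifically the verification that the join of two block congruences with disjoint nontrivial blocks is just their union together with the diagonal and hence not a block congruence; the other two implications are essentially bookkeeping once the cited results and the standard congruence-lattice facts are in hand.
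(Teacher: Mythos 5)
Your proposal is correct and follows the same cycle of implications $(1)\Rightarrow(3)\Rightarrow(2)\Rightarrow(1)$ with the same key ingredients (Corollary~\ref{cor:blocksarecongruences} and Proposition~\ref{prop:blockintersection}) as the paper; the only cosmetic difference is that in $(1)\Rightarrow(3)$ you derive the contradiction by intersecting the two block congruences down to the diagonal so that the monolith would be trivial, whereas the paper picks atomic congruences below each of them, and these are interchangeable formulations. Your extra care in $(3)\Rightarrow(2)$ about the join being the transitive closure of the union is exactly the point the paper compresses into one sentence, so nothing is missing.
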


\begin{proof}
\emph{1.} implies \emph{3.}
Suppose that there exists a congruence $\theta$ of $\fA$ that is not a block congruence. Then there exist two nontrivial $\theta$-classes, call them $[a]_\theta$ and $[b]_\theta$. In this situation there are two nontrivial block congruences $\theta_a$ and $\theta_b$ whose nontrivial blocks are disjoint. 
We may choose these congruences to be atomic, and 
hence $\fA$ is not subdirectly irreducible. 

\emph{3.} implies \emph{2.}
If every congruence of $\fA$ is a block congruence, then any two congruences $\theta_1, \theta_2$ are comparable, because otherwise Proposition \ref{prop:blockintersection} indicates that their respective nontrivial blocks are disjoint, from which it follows that $\theta_1 \vee \theta_2 = \theta_1 \cup \theta_2$ is not a block congruence.

\emph{2.} implies \emph{1.} 
This is obvious.
\end{proof}

\begin{proposition}\label{prop:uniquemaxblock}
Let $f \colon A^3 \to A$ be a conservative minority operation. For every $a\in A$, there exists a unique maximal proper subset $B_a \subseteq A$ that contains $a$ and is a block of a congruence. 
\end{proposition}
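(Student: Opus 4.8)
The plan is to collect all candidate sets into a single family and show that it is a finite chain. For $a \in A$, let $\mathcal F_a$ denote the collection of all sets $B$ with $a \in B \subsetneq A$ such that $B$ is a class of some congruence of $\fA$. We may assume $|A| \ge 2$, since otherwise $A$ has no proper subset containing $a$ and there is nothing to prove. Under this assumption $\mathcal F_a$ is nonempty, because $\{a\}$ is a class of the equality congruence $0$. If I can show that any two members of $\mathcal F_a$ are comparable under inclusion, then, $A$ being finite, $\mathcal F_a$ is a finite nonempty totally ordered set and hence has a greatest element; this greatest element is the unique maximal member of $\mathcal F_a$, and it is the desired $B_a$.

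So the work is entirely in the comparability claim. Take $B_1, B_2 \in \mathcal F_a$ and write $B_i = [b_i]_{\theta_i}$ for congruences $\theta_i$ of $\fA$ and elements $b_i \in A$. To bring Proposition~\ref{prop:blockintersection} to bear, I first replace each $\theta_i$ by a genuine block congruence with the same class $B_i$: by Corollary~\ref{cor:blocksarecongruences}, the equivalence relation $(\theta_i)_{b_i} = \{(x,y) \mid x,y \in [b_i]_{\theta_i} \text{ or } x = y\}$ is a block congruence of $\fA$, and its $b_i$-class is exactly $[b_i]_{\theta_i} = B_i$. Since $a \in B_1 \cap B_2 \ne \emptyset$, Proposition~\ref{prop:blockintersection}, applied to the block congruences $(\theta_1)_{b_1}$ and $(\theta_2)_{b_2}$, yields $B_1 \subseteq B_2$ or $B_2 \subseteq B_1$, as required.

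I do not expect a genuine obstacle here. The only subtlety is that the proposition speaks of an arbitrary block of a congruence, whereas Proposition~\ref{prop:blockintersection} is phrased for block congruences, and Corollary~\ref{cor:blocksarecongruences} bridges precisely this gap; once $\mathcal F_a$ is known to be totally ordered by inclusion, finiteness of $A$ finishes the argument. (The hypothesis that $f$ is a minority operation is used only through Corollary~\ref{cor:blocksarecongruences}; Proposition~\ref{prop:blockintersection} already holds for conservative Maltsev operations.)
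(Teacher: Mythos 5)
Your proof is correct and follows essentially the same route as the paper's: both arguments pass from the given congruence classes to block congruences via Corollary~\ref{cor:blocksarecongruences} and then invoke Proposition~\ref{prop:blockintersection} to get comparability, with finiteness of $A$ supplying the unique maximal element. Your write-up is merely a bit more explicit about nonemptiness of the family and the chain structure.
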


\begin{proof}
Suppose that $B_1,B_2 \subseteq A$ are congruence classes of $\theta_1, \theta_2$, respectively, with $a\in B_1 \cap B_2$. By Corollary~\ref{cor:blocksarecongruences}, both $(\theta_1)_a$ and 
$(\theta_2)_a$ are block congruences. By Proposition \ref{prop:blockintersection}, either $B_1 \subseteq B_2$ or vice versa. If both $B_1, B_2$ are proper and maximal with respect to the subset relation, then they are equal. 
\end{proof}

\begin{corollary}\label{corollary:maximalcongruence}
Let $A$ be a finite set and $f \colon A^3 \to A$ a conservative minority operation. Then there exists a unique maximal proper congruence of $(A, f)$.
\end{corollary}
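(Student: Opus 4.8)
The plan is to exhibit a single proper congruence $\sigma$ of $(A,f)$ that contains every other proper congruence; once the poset of proper congruences has a greatest element, that element is automatically its unique maximal element, which is exactly the statement. (One assumes $|A|\geq 2$ so that proper congruences exist at all; the case $|A|=1$ is degenerate.)

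To build $\sigma$, I would start from the sets $B_a$ $(a\in A)$ supplied by Proposition~\ref{prop:uniquemaxblock}: $B_a$ is the largest proper subset of $A$ that contains $a$ and occurs as a block of some congruence. The first thing to pin down is that the $B_a$ partition $A$, which I would phrase as: $y\in B_x$ holds if and only if $B_x=B_y$. The ``if'' direction is trivial since $y\in B_y$; the ``only if'' direction is the substantive half — if $y\in B_x$, then $B_x$ is a proper block-of-a-congruence containing $y$, so $B_x\subseteq B_y$ by maximality of $B_y$, and then $x\in B_x\subseteq B_y$ forces $B_y\subseteq B_x$ by maximality of $B_x$. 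Having this, I define $\sigma$ by declaring $x\mathrel\sigma y$ whenever $B_x=B_y$; this is an equivalence relation whose classes are precisely the distinct sets $B_a$.

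The heart of the argument is checking that $\sigma$ is compatible with $f$. As is standard in this section, it suffices to check compatibility with the unary translations of $f$, i.e.\ the maps obtained by substituting constants for all but one coordinate. For this, apply Corollary~\ref{cor:blocksarecongruences} to obtain, for each $a$, the block congruence $\theta_a=\{(x,y):x,y\in B_a\text{ or }x=y\}$; the partition observation gives $\theta_a\subseteq\sigma$. Now if $(x,y)\in\sigma$ with $x\neq y$, then $x,y\in B_x$, hence $(x,y)\in\theta_x$, and since $\theta_x$ is a congruence any translation $t$ of $f$ maps this pair back into $\theta_x\subseteq\sigma$. Chaining the three coordinate translations and using transitivity of $\sigma$ then gives compatibility with $f$ itself. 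I expect this compatibility check to be the main (though not really difficult) point of the proof; the remaining steps are bookkeeping with Proposition~\ref{prop:uniquemaxblock}.

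It remains to see that $\sigma$ is proper and absorbs every proper congruence. Properness is clear: if $\sigma=A\times A$ then $A$ itself would be one of the $B_a$, contradicting that each $B_a$ is a proper subset. For absorption, let $\theta$ be any proper congruence and take $(x,y)\in\theta$ with $x\neq y$; then $[x]_\theta$ is a block of the congruence $\theta$, it is a proper subset of $A$ (otherwise $\theta=A\times A$), and it contains $x$, so $[x]_\theta\subseteq B_x$ by the maximality in Proposition~\ref{prop:uniquemaxblock}. Hence $y\in B_x$, so $(x,y)\in\sigma$ by the partition observation, and $\theta\subseteq\sigma$. Thus $\sigma$ is the greatest proper congruence of $(A,f)$, and in particular its unique maximal one.
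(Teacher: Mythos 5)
Your proof is correct and follows the route the paper intends: the paper simply declares the corollary an immediate consequence of Proposition~\ref{prop:uniquemaxblock}, and your argument fills in exactly the details that are being elided (the sets $B_a$ partition $A$, the resulting equivalence relation is a proper congruence, and it absorbs every proper congruence).
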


\begin{proof}
This follows immediately from Proposition \ref{prop:uniquemaxblock}. 
\end{proof}


\subsection{Tree representations of conservative minority algebras}
\label{sect:tree-reps}

In this section we apply the insights of the previous section to represent a conservative minority algebra with a tree equipped with additional structure. The additional structure specifies for the set of children of every vertex a conservative minority algebra. There is a natural multiplication that can then be defined on the set of leaves of such a tree and in doing so one obtains a conservative minority algebra. On the other hand, we will see that every conservative minority algebra has such a tree representation.

\begin{definition}\label{def:consmintree}
    Let $\mathcal{T} = (T, \leq_\mathcal{T}, S)$ be a triple whose entries satisfy the following condition:
    \begin{enumerate}
        \item $S = \{ \mathbf{A}_1, \dots, \mathbf{A}_s \}$ is a finite collection of conservative minority algebras $\mathbf{A}_k = (A_k;m_k)$, for each $1 \leq k \leq s$,  which satisfies the property that the minority operations $m_k$ and $m_l$ agree on $A_k \cap A_l$, for all $1 \leq k \leq l \leq s$.
        \item $T \subseteq  \left( \bigcup_{1 \leq k \leq s} A_k \right)^{< \omega}$ is a set of finite tuples which is closed under prefixes (the empty tuple $\epsilon$ is a prefix of every tuple). 
        \item $\leq_\mathcal{T}$ is the tree order defined on $T$ by $w_1 \leq_\mathcal{T}  w_2$ if and only if $w_2$ is a prefix of $w_1$.
        \item For every $v \in T$ that is not a leaf, there exists $k \in \{1,\dots,s\}$ and $D \subseteq A_k$ such that the set $C_v$ of children of $v$ equals
        \[
        v^\frown D := \{ v^\frown (a): a \in D \}.
        \]
    \end{enumerate}
    We call such an $S$ a collection of \emph{local} algebras and we call such a $\mathcal{T}$ a \emph{conservative minority tree (over $S$)}. 

\end{definition}

\begin{remark}
    When working with a conservative minority tree $\mathcal{T}$ over a set of local algebras $S$, we will often only refer to $\mathcal{T}$, but include the possibility of emphasizing that $\mathcal{T}$ is a conservative minority tree `over $S$' for situations where it is important to say something specific about the local algebras that are being used. 
\end{remark}

To avoid excessive notation, we will usually just refer to $\mathcal{T}$ instead of $(T; \leq_\mathcal{T})$ when working with the underlying tree structure and we will often write $\leq$ instead of $\leq_\mathcal{T}$ when the context permits it. We will also allow ourselves to write $v \in \mathcal{T}$ and $X \subseteq \mathcal{T}$, respectively, when $v$ is a vertex belonging to $T$ and $X$ is a subset of vertices of the underlying tree $T$ of $\mathcal{T}$.

Now, we introduce some auxiliary notation and definitions for conservative minority trees. 

\begin{itemize}
    \item We denote for a set of vertices $X \subseteq \mathcal{T}$ the longest common prefix among its elements by $\bigvee X$, i.e. the least upper bound or join in the poset $(T, \leq_\mathcal{T})$. 
    \item We denote by $A_\mathcal{T}$ the set of leaves of the underlying tree of $\mathcal{T}$. 
    \item In \emph{4.}\ of Definition~\ref{def:consmintree}, the set of children of a vertex $v$ which is not a leaf, which we denote $C_v$, is formally the set of tuples which extend $v$ by one extra entry. Since this set is equal to $v^\frown D$ a subalgebra domain $D \subseteq A_k$ for some $1\leq k \leq s$, the set $C_v$ implicitly has the structure of the conservative minority algebra $\mathbf{D} \leq \mathbf{A}_k$. It is actually more convenient to work directly with the algebra $\mathbf{D}$, which we call the \emph{$v$-successor algebra} (in $\mathcal{T}$), and denote it by $\mathbf{v}^{+_\mathcal{T}}$, or just write $v^{+_\mathcal{T}}$ for the domain $A_k$. In some uses of this successor notation, the tree we are working in is clear from the context in which it is used, and so we may write $+$ instead of $+_\mathcal{T}$, e.g.\ $v^{+}$ instead of $v^{+_\mathcal{T}}$.
    \item Given distinct vertices $w,v \in \mathcal{T}$ such that $w \leq v$, we define the \emph{$v$-successor value above $w$} to be the unique $v$-successor value $a \in v^{+_\mathcal{T}}$ such that $w \leq v^\frown (a)$, and denote this element by $w^{v^+}$. 
    \item Given a set of leaves $D \subseteq A_\mathcal{T}$ of $\mathcal{T}$ over $S$, we denote by $\mathcal{T}_D$ the conservative minority \emph{subtree} of $\mathcal{T}$ (over $S$) whose underlying tree structure is obtained by closing $D$ under prefixes. We say that, for a vertex $v \in \mathcal{T}_D$, that $\mathcal{T}_D$ is \emph{full in $\mathcal{T}$ at $v$} if $v^{+_{\mathcal{T}_D}}=v^{+_{\mathcal{T}}}$, that is, the $v$-successor values in $\mathcal{T}_D$ are equal to the $v$-successor values in $\mathcal{T}$ (note that $v^{+_{\mathcal{T}_D}} \subseteq v^{+_{\mathcal{T}}}$ always). 
     \item We say that a vertex $v \in \mathcal{T}$ is \emph{full} if $\mathbf{v}^+ \in S$. We say that $\mathcal{T}$ is \emph{full} if every vertex which is not a leaf is full. 
    \item We say that the conservative minority tree $\mathcal{T}$ (over $S$) is \emph{simple} (or \emph{hereditarily simple}) if every successor algebra $\mathbf{v}^+$ for $v \in \mathcal{T}$ is simple (or hereditarily simple, respectively).
    \item We say that $\mathcal{T}$ is \emph{reduced} if every vertex in $\mathcal{T}$ which is not a leaf has at least two children.

\end{itemize}

\begin{definition}\label{def:leafalgebra}

Let $\mathcal{T} = (T, \leq, S)$ be a conservative minority tree over $S = \{\mathbf{A}_1, \dots, \mathbf{A}_s\}$ and let $A_\mathcal{T}$ be the set of leaves of the underlying tree of $\mathcal{T}$. 
We define $\mathbf{A}_\mathcal{T} := (A_\mathcal{T}, m_{\mathcal{T}})$ to be the conservative minority algebra whose minority operation $m_\mathcal{T}$ is specified as follows. Suppose that $w,u,z \in A_\mathcal{T}$ are leaves of $\mathcal{T}$. Let $v = \bigvee \{w,u,z\}$ in $\mathcal{T}$. Note that if $v$ is equal to $w$, $u$, or $z$, then $v = w = u = z$, so in this case we set $m_\mathcal{T}(w,u,z)$ to the common value $w=u=z$. Otherwise, $v$ is a proper prefix of all three leaves $w,u,z$, and is therefore not a leaf itself. Hence, $\mathbf{v}^+$ exists and is equal to $\mathbf{D} \leq \mathbf{A}_k$ for some $1 \leq k \leq s$ . We then define 
\[
m_\mathcal{T}(w,u,z) = w,u, \text{ or } z \text{ if } m_k(w^{v^+},u^{v^+},z^{v^+}) = w^{v^+}, u^{v^+}, \text{ or } z^{v^+}, \text{ respectively.}
\]

If $\mathbf{A}$ is a conservative minority algebra and $\mathcal{T}$ is a conservative minority tree such that $\mathbf{A}_\mathcal{T}$ is isomorphic to $\mathbf{A}$, we say that $\mathcal{T}$ \emph{represents} $\mathbf{A}$. 
\end{definition}
\begin{remark}
It is straightforward to check that $m_\mathcal{T}$ is a well-defined conservative minority operation and we leave these details to the reader. 
\end{remark}

We say that a conservative minority algebra $\mathbf{B}$ is \emph{represented} by a conservative minority tree $\mathcal{T}$ if $\mathbf{B}$ is isomorphic to $\mathbf{A}_\mathcal{T}$. Given a conservative minority tree $\mathcal{T}$, we denote by $\mathfrak{S}_{\mathbf{A}_\mathcal{T}}$ the structure whose invariant relations are those given by \emph{2.} of Theorem~\ref{thm:consminorityrelbasis} for $\mathbf{A}_\mathcal{T}$ (see Definition~\ref{def:relationalbasisStructureNamed}). 

\begin{lemma}\label{lem:subalgebrarepresentingtree}
    Let $\mathcal{T}$ be a conservative minority tree and let $D \subseteq A_\mathcal{T}$. Then $D$ is the domain of a subalgebra $\mathbf{D} \leq \fA_\mathcal{T}$ which is represented by $\mathcal{T}_D$.
\end{lemma}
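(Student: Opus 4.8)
The first assertion is essentially free: since $m_\mathcal{T}$ is conservative, $m_\mathcal{T}(a,b,c) \in \{a,b,c\}$ for all leaves $a,b,c$, so every subset $S \subseteq L_\mathcal{T}$ is closed under $m_\mathcal{T}$ and hence the domain of a subalgebra $\mathbf{S} \leq \fA_\mathcal{T}$. The real task is to see that $\mathcal{T}_S$ represents $\mathbf{S}$, and I would prove the sharper statement that $\fA_{\mathcal{T}_S} = \mathbf{S}$ on the nose, i.e. that $\fA_{\mathcal{T}_S}$ has domain $S$ and that $m_{\mathcal{T}_S}$ is the restriction of $m_\mathcal{T}$ to $S^3$.

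For the domain, I would first unpack that $T_S$, being the subtree spanned by $S$ together with the root, is the union of the branches from the elements of $S$ up to the root, i.e. $T_S = \{v \in T \mid s \leq_\mathcal{T} v \text{ for some } s \in S\}$. Each $s \in S$ is a leaf of $\mathcal{T}$, hence minimal in $(T;\leq_\mathcal{T})$ and therefore still minimal in $(T_S;\leq_{\mathcal{T}_S})$; conversely a minimal element $v$ of $T_S$ lies above some $s \in S$, and minimality forces $v = s$. This gives $L_{\mathcal{T}_S} = S$, so $\fA_{\mathcal{T}_S}$ has domain $S$.

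For the operations, both $m_{\mathcal{T}_S}$ and $m_\mathcal{T}|_{S^3}$ are idempotent, so I would fix $a,b,c \in S$ not all equal and set $d := \bigvee_\mathcal{T}\{a,b,c\}$, noting $d \notin L_\mathcal{T}$. The key point is that the subtree inherits the data used to compute the leaf operation at $a,b,c$: the common upper bounds of $\{a,b,c\}$ in $\mathcal{T}$ form the chain running from $d$ up to the root, every vertex of which lies above $a \in S$ and hence in $T_S$, so $\bigvee_{\mathcal{T}_S}\{a,b,c\} = d$ as well; and the children $a_{d^\rightarrow}, b_{d^\rightarrow}, c_{d^\rightarrow}$ of $d$ on the branches to $a,b,c$ all lie above elements of $S$, hence in $T_S$, and remain children of $d$ in $\mathcal{T}_S$ because no vertex lies strictly between them and $d$. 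Therefore these three vertices lie in $C_{d,\mathcal{T}_S}$, on which $m_{d,\mathcal{T}_S}$ is by definition the restriction of $m_{d,\mathcal{T}}$; so the two computations select the same $x \in \{a,b,c\}$, i.e. $m_{\mathcal{T}_S}(a,b,c) = m_\mathcal{T}(a,b,c)$. Hence $m_{\mathcal{T}_S} = m_\mathcal{T}|_{S^3}$ and $\fA_{\mathcal{T}_S} = \mathbf{S}$. I do not anticipate a genuine obstacle: the only thing to handle carefully is this bookkeeping that the spanned subtree $\mathcal{T}_S$ carries over the relevant joins and parent--child relations from $\mathcal{T}$, which comes down to the elementary fact that in a tree the common upper bounds of a finite set form a chain sitting above their join.
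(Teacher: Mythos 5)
Your proof is correct and follows the same route as the paper: conservativity gives the subalgebra, and the representation claim is verified by checking that $L_{\mathcal{T}_S}=S$, that joins of subsets of $S$ are preserved in the spanned subtree, and that the local computation of $m_{\mathcal{T}}$ at $\bigvee\{a,b,c\}$ only involves vertices of $\mathcal{T}_S$. The paper states these three points as immediate; you have simply supplied the bookkeeping details.
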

\begin{proof}
    Since $\fA_\mathcal{T}$ is conservative, it is immediate that $D$ is the domain of a subalgebra $\mathbf{D} \leq \fA_\mathcal{T}$. It is also immediate that $\mathcal{T}_D$ represents $\mathbf{D}$, since 

    \begin{itemize}
    \item
    the leaves of $\mathcal{T}_D$ are exactly the elements of $D$, 
    \item $\mathcal{T}_D$ contains all least upper bounds (maximal common prefixes) of subsets of $S$, and 
    \item for $a,b,c \in A_{\mathcal{T}_S}$ and $v = \bigvee \{a,b,c\}$, the values $a^{v^{+}}, b^{v^{+}}$, and $c^{v^{+}}$ are intrinsic to $a,b,c$, hence
    \[
    m_{\mathcal{T}}(a,b,c) = m_{\mathcal{T}_D}(a,b,c). 
    \qedhere 
    \]
    \end{itemize}
\end{proof}

\subsubsection{Existence of representations}

We establish in this section that every conservative minority algebra can be represented by a simple reduced full conservative minority tree (Theorem~\ref{thm:representwithtreealgebras}). We then observe some key connections between the tree structure of a conservative minority tree $\mathcal{T}$ and the block congruences of the subalgebras of the represented algebra $\mathbf{A}_\mathcal{T}$ (Lemma~\ref{lem:CongruencesofSubalgebras}).

\begin{theorem}\label{thm:representwithtreealgebras}
Let $\fA = (A;m)$ be a conservative minority algebra. Then there exists a simple reduced full conservative minority tree $\mathcal{T}$ over a collection $S$ of pairwise nonisomorphic (simple) conservative minority algebras such that $\mathcal{T}$ represents $\mathbf{A}$. 
\end{theorem}
\begin{proof}
Let $X = \{[a]_\theta: \text{$a \in A$ and $\theta$ is a block congruence of $\fA$}
\}$. It follows from Proposition \ref{prop:blockintersection} that $X$ ordered by set inclusion is a tree. Indeed, the equality relation is a block congruence with no nontrivial classes, hence the minimal elements of $X$ are exactly the singleton subsets of $A$. The full relation is also a block congruence, with a single nontrivial class equal to $A$. Proposition \ref{prop:blockintersection} indicates that two classes of a block congruence must either have empty intersection or be comparable in the subset order, so there is a unique path 
in $X$ from a given $[a]_\theta \in X$ to the root $A \in X$, which we denote by $v_{[a]_\theta}$.

Now we define 
\[
T':= \{ v_{[a]_\theta}: a \in A \text{ and } \theta \text{ is a block congruence of $\mathbf{A}$} \} 
\]
to be the tree of all such paths in $X$, now ordered by the prefix order we use for conservative minority trees. Let $\theta$ be a nontrivial block congruence of $\mathbf{A}$ with nontrivial class $[a]_\theta$. Clearly, $v_{[a]_\theta}$ is not a leaf in $T$. Since $\mathbf{A}$ is conservative, $[a]_\theta$ is the domain of a subalgebra $\mathbf{[a]_\theta} \leq \mathbf{A}$. If we denote by $\lambda$ the unique maximal congruence of $\mathbf{[a]_\theta}$, then the set of successors of $v$ in $T$ is exactly 
\[
v_{[a]_\theta}^+ = \{ [b]_{\lambda}: b \in [a]_\theta \},
\]
and this successor set is naturally equipped with the quotient structure of $\mathbf{[a]_\theta} /\lambda$, which is simple, since $\lambda$ is a maximal congruence. Hence, we define for such $[a]_\theta$ the successor algebra $\mathbf{v_{[a]_\theta}^+} := \mathbf{[a]_\theta} /\lambda$ and denote by $S'$ the collection of all such successor algebras. 

Taken all together, we have specified a full simple conservative minority tree $\mathcal{T}' = (T', \leq, S')$. We claim that $\mathcal{T}'$ represents $\mathbf{A}$, which we will demonstrate by arguing that the mapping $\phi \colon \fA \to \fA_{\mathcal{T}'}$ defined by $\phi(a) := v_{\{a\}}$ is an isomorphism. Indeed, take $a,b,c \in A$. We want to show that $\phi(m(a,b,c)) = m_{\mathcal{T}'}(v_{\{a\}},v_{\{b\}},v_{\{c\}})$. This is obvious if $a = b = c$, so suppose without loss of generality that $a \neq b$. In this case, there exists a minimal nontrivial block congruence $\theta$ with $[a]_\theta = [b]_\theta = [c]_\theta$ (apply Corollary~\ref{cor:blocksarecongruences} to the least congruence of $\mathbf{A}$ which collapses $\{a,b,c\}$ ). Observe that the nontrivial class of $\theta$ is precisely equal to the least upper bound of the set $ \{\{a\},\{b\},\{c\}\}$ in $X$, or equivalently, that \[v_{[a]_\theta} = \bigvee \{ v_{\{a\}},v_{\{b\}},v_{\{c\}} \} \] in $\mathcal{T}'$. Let $\lambda$ be the maximal congruence of the subalgebra $\mathbf{[a]_\theta} \leq \fA$. It follows readily from the definitions that 
for  $x \in \{a,b,c\}$: 
\begin{align*}
m(a,b,c) &= x \\
\iff m([a]_\lambda, [b]_\lambda, [c]_\lambda) &= [x]_\lambda
\\
\iff m_{\mathcal{T}'}(v_{\{a\}},v_{\{b\}},v_{\{c\}}) &= v_{\{x\}}. 
\end{align*}

Finally, it is obvious that $\mathcal{T}$ is reduced, because $[a]_\theta$ is not a leaf in $X$ if and only if $|[a]_\theta| > 1$ if and only if $[a]_\theta$ has a maximal proper congruence, which guarantees that all vertices of $\mathcal{T}$ that are not leaves have at least two children.

To finish the proof of the lemma, we choose a representative of each isomorphism class among the local algebras $S'$ of $\mathcal{T}'$, collect these representatives in a set of simple algebras $S$, and relabel the entries of the sequences in $T'$ with their counterparts in the algebras among $S$ to produce a tree $T$. Now set $\mathcal{T} = (T, \leq, S)$. Obviously, $\mathbf{A}_{\mathcal{T}'}$ and $\mathbf{A}_\mathcal{T}$ are isomorphic, but now the simple algebras in $S$ are pairwise nonisomorphic, as desired. 
\end{proof}

We now take a moment to discuss the unsurprising limitations of Theorem~\ref{thm:representwithtreealgebras}. Its  utility is that it gives a recipe for building nonsimple conservative minority algebras from simple conservative minority algebras. One might hope that the class of simple conservative minority algebras itself is easy to describe, but this is unfortunately not the case, since any algebra with congruences is quite easy to hide as subalgebra of a simple conservative minority algebra. This means that there is a limit to the amount of congruence information that can be stored in a conservative minority tree over a collection of simple algebras, since these simple algebras themselves can possess subalgebras with a richer collection of congruences. 

We make no attempt to understand all of the different ways that a conservative minority algebra can be embedded into a simple conservative minority algebra. Rather, we produce a recipe for one such embedding in the proof of the following proposition, which is only included to satisfy the curious reader and is not needed for the main results.

\begin{proposition}\label{prop:AnythingEmbedsInSimple}
    Let $n \geq 2$ and let $\mathbf{A} = (A, f)$ be a conservative minority algebra of size $n$. There exists a simple conservative minority algebra $\mathbf{B} = (B, \overline{f})$ of size $n+1$ such that $\mathbf{A} \leq \mathbf{B}$.
\end{proposition}

\begin{proof}
    The proof proceeds by induction on $n \geq 2$. We first prove the basis. Let $\mathbf{A}$ be the unique two element minority algebra. Let $\mathbf{P}_3 : = (\{0,1,2\} ; p_3)$ be the three element projection minority algebra, where $p_3(x,y,z) = y$ for injective triples $(x,y,z)$. We prove in Lemma~\ref{lem:brady1ishsimple} that $\mathbf{P}_3$ is simple, and since $\mathbf{A}$ embeds in any conservative minority algebra, it certainly embeds in $\mathbf{P}_3$. Now suppose the proposition is true for $n \geq 2$. Let $B = A \cup \{z\}$ be the domain of $A$ along with an additional element $z$. The idea is to extend the definition of $f$ to all three-element subsets of the form $X \cup \{z\}$, where $X \subseteq A$ has cardinality two, in such a way to prohibit nontrivial congruences. 

    So, let $\theta$ be the unique maximal proper congruence of $\mathbf{A}$ which is guaranteed by Corollary~\ref{corollary:maximalcongruence} (note that $\theta$ could be the equality relation and is not the full relation), let $S_1, \dots, S_k$ be the $\theta$-classes and let $\mathbf{S}_1, \dots, \mathbf{S}_k$ be the respective subalgebras on these classes. We define an extension $\overline{f}:B^3 \to B$ of $f$ as follows. 

    \begin{itemize}
        \item For each $1 \leq j \leq k$, define $\overline{f}$ so that its restriction to $S_j':=S_j \cup {z}$ gives a simple conservative minority algebra $\mathbf{S}_j'$ (note that here we apply the induction hypothesis).
        \item On all other subsets $\{a,b\ \} \cup \{z\}$ for which $\overline{f}$ has not yet been specified (this is the same thing as saying $(a,b) \notin \theta$), define it so that the resulting three-element subalgebra is isomorphic to $\mathbf{P}_3$.
    \end{itemize}
    Now set $\mathbf{B} = (B, \overline{f})$. Obviously, $\mathbf{A} \leq \mathbf{B}$. 
    
    We claim that $\mathbf{B}$ is simple. Suppose towards a contradiction that $\lambda$ is a congruence of $\mathbf{B}$ which has a nontrivial class $C$. First, we note that for each $1\leq j \leq k$,
    \[
    | C \cap S_j'| \in \{0,1, |S_j'|\}
    \]
    holds, as otherwise $\lambda$ restricted to $S_j'$ is a nontrivial congruence of $\mathbf{S}_j'$, which contradicts simplicity. We now analyze the following cases. 
    \begin{itemize}
        \item Suppose there exist distinct $1 \leq j_1 \neq j_2 \leq k$ so that 
            $| C \cap S_{j_1}'| = |S_{j_1}'|$ and $| C \cap S_{j_2}'| = |S_{j_2}'|$ (note this implies that $z \in C$). Then $\lambda$ restricted to $A$ is a congruence $\eta$ with a block which contains $S_{j_1} \cup S_{j_2}$. It is therefore impossible that $\eta$ is a proper congruence, since $\theta$ is the unique maximal proper congruence, so we deduce that $\eta$ must be the full congruence on $\mathbf{A}$. It follows that $A \subseteq C$. Since $z \in C$ in this case also, we obtain that $\lambda$ has only one class $C = A \cup \{z\} = B$, which is contrary to the assumption that $\lambda$ is proper. 

        \item Suppose there is exactly one $1\leq j \leq k$ such that $ | C \cap S_j'| = |S_j'|$. It follows that $z \in C$. Let $a \in S_j$. Then $a \in C$ also, hence $(a,z) \in \lambda$. Pick $b \in A$ so that $(a,b) \notin \theta$. Then $b \notin C$, as otherwise $(a,b) \in \lambda$, which implies that $(a,b) \in \theta$, because $\lambda$ restricted to $A$ is contained in $\theta$. So, the situation is that $\lambda$ restricted to the three-element set $\{a,b,z\}$ is a nontrivial congruence of the associated subalgebra of $\mathbf{B}$, which is impossible, since this subalgebra is isomorphic to $\mathbf{P}_3$. 

        \item Suppose that there is no $1\leq j \leq k$ such that $ | C \cap S_j'| = |S_j'|$, i.e.\ 
        $
    | C \cap S_j'| \in \{0,1\}.
    $
    for every $1 \leq j \leq k$. Observe that $z \notin C$. Indeed, if $z \in C$, then since $C$ is a nontrivial block of $\lambda$, there exists $a \in C$ distinct from $z$. Let $1 \leq j \leq k$ be such that $a \in S_j$. By assumption, $\lambda$ restricted to $S_j'$ is nontrivial, which contradicts the simplicity of $\mathbf{S}_j'$. On the other hand, it is impossible that $z \notin C$. Indeed, $z\notin C$ implies that $C \subseteq A$, from which it follows that $C$ is a class of $\lambda$ restricted to $A$, which is the congruence $\eta$ of $\mathbf{A}$. Since $\eta$ is nontrivial ($C$ has more than one element), we deduce that either $\eta \subseteq \theta$, or $\eta$ is the full relation. Either one of these possibilities contradicts the assumption that $C$ can only intersect $\theta$-classes in at most one element. \qedhere
    \end{itemize}
\end{proof}

Hence, simple conservative minority algebras can possess subalgebras with a rich collection of congruences. Luckily, the congruences of subalgebras of conservative minority algebras which are not simple can be understood in terms of the congruences of subalgebras of the local algebras of a representing tree. We finish this section with some discussion of subalgebras of represented algebras and their congruences. Recall Lemma~\ref{lem:subalgebrarepresentingtree}, which states that given $\mathcal{T}$ a conservative minority tree and a subalgebra $\mathbf{D} \leq \mathbf{A}_\mathcal{T}$, the algebra $\mathbf{D}$ is represented by $\mathcal{T}_D$.

\begin{definition}\label{def:treevertexcongruences}
Let $\mathcal{T}$ be a conservative minority tree. We define the following two equivalence relations on $A_\mathcal{T}$.
\begin{enumerate}
\item 
For $v \in \mathcal{T}$, we define 
$
\sim_v \; := \{ (w,u) \in (A_\mathcal{T})^2: \text{ $w =u$ or both $ w \leq v$ and $u \leq v$ }\}.
$
\item 
For $v \in \mathcal{T}$ not a leaf, we  define 
$
\sim_{C_v} \; := \bigcup_{w \in C_v} \sim_w .
$
\end{enumerate}
\end{definition}

\begin{lemma}\label{lemma:congruencesoftreealgebras}
Let $\mathcal{T}$ be a conservative minority tree over $S = \{\mathbf{A}_1, \dots, \mathbf{A}_s\}$. The following hold.
\begin{enumerate}
\item 
For every vertex $v \in \mathcal{T}$, the equivalence relation 
$
\sim_v 
$
is a block congruence of $\fA_\mathcal{T}$.
\item 
For every $v \in \mathcal{T}$ not a leaf, the equivalence relation
$
\sim_{C_v}
$
is a congruence of $\mathbf{A}_\mathcal{T}$ and is equal to the join of the set 
$
\{ \sim_w: w \in C_v\}
$
in the congruence lattice of $\mathbf{A}_{\mathcal{T}}$
\end{enumerate}
\end{lemma}

\begin{proof}
We prove \emph{1.} first. It is clear that $\sim_v$ is an equivalence relation. Indeed, if there are distinct $a,b,c \in A_\mathcal{T}$ with $(a,b), (a,c) \in \; \sim_v$, then it is immediate that $(a,c) \in \; \sim_v$. Let us show that $\sim_v$ is preserved by $m_\mathcal{T}$. To do so, it suffices to show that all basic translations of $m_\mathcal{T}$ preserve $\sim_v$, i.e., those unary functions obtained from $m_\mathcal{T}$ by evaluating two arguments at constants.
Take $a,b,c,d \in A_{\mathcal{T}}$
so that $(a,b) \in \; \sim_v$. We argue that $(m_{\mathcal{T}}(a,c,d), m_{\mathcal{T}}(b,c,d)) \in \; \sim_v$. If $a=b$, there is nothing to prove, so we assume $a\neq b$. There are two cases to consider. 
\begin{itemize}
    \item Suppose that $c \leq v$ and $d \leq v$. It follows that $m_\mathcal{T}(a,c,d) \leq v$ and $m_\mathcal{T}(b,c,d) \leq v$, hence $(m_{\bT}(a,c,d), m_{\bT}(b,c,d)) \in \; \sim_v$.
    \item Suppose that at least one of $c$ or $d$ does not have $v$ as an upper bound. It follows that $\bigvee \{a,c,d\} = \bigvee \{b,c,d\} = \bigvee \{v,c,d\} =: w$. We have that $v \leq w$ and $v \neq w$, so $a^{w^+} = b^{w^+}$. In this case we have 
    \[
    m_{k}(a^{w^+}, c^{w^+}, d^{w^+}) = m_{k}(b^{w^+}, c^{w^+}, d^{w^+}),
    \]
    where $1\leq k \leq s$ is an index of a local algebra $\mathbf{A}_k$ such that $\mathbf{v}^+ \leq \mathbf{A}_k$.
    It follows from Definition~\ref{def:leafalgebra} that $(m_\mathcal{T}(a,c,d), m_\mathcal{T}(b,c,d)) \in \{(a,b),(c,c),(d,d)\} \subseteq \; \sim_v$.
\end{itemize}

The reasoning we just gave does not depend on the coordinates of $m_{\mathcal{T}}$ that are evaluated at constants, so we have shown that all basic translations of $m_{\mathcal{T}}$ preserve $\sim_v$. It follows that $m_{\mathcal{T}}$ preserves $\sim_v$. 

Now we prove that \emph{2.} holds. To see that $\sim_{C_v}$ is the least congruence of $\mathbf{A}_\mathcal{T}$ which contains each $\sim_w$, for $w \in C_v$. In general, the least congruence $\alpha$ containing a collection of congruences $\{\alpha_1, \dots, \alpha_l\}$ is given by the iterated relational composition
\[
\alpha = \bigcup_{i \in \mathbb{N}} (\alpha_1 \circ \dots \circ \alpha_l)^i.
\]
Because each of the $\sim_w$ is a block congruences whose nontrivial block is disjoint from the others, the above iterated relational composition cannot produce any new pairs other than those already identified by a particular nontrivial $\sim_w$-class and the result follows. 

\end{proof}
\begin{remark}
Throughout this article we include pictures of conservative minority trees (which are drawn as trees) and the algebras that they represent (which are drawn in the colloquial `potato diagram' style). The reader can consult Figure \ref{fig:treesandpotatoes} for an illustration of the relationship between the two kinds of pictures. Notice how the circles in the potato diagram correspond exactly to the nontrivial classes of the block congruences $\sim_v$ for a vertex $v$ that is not a leaf.
\end{remark}

\begin{figure}
    \centering
    \includegraphics[width=0.5\linewidth]{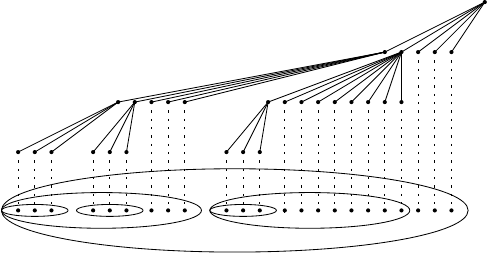}
    \caption{A tree and a potato diagram.}
    \label{fig:treesandpotatoes}
\end{figure}

\begin{lemma}\label{lem:subalgebrasuccessor}
    Let $\mathcal{T}$ be a conservative minority tree over $S = \{ \mathbf{A}_1, \dots, \mathbf{A}_S \}$. Let $\mathbf{D} \leq \mathbf{A}_\mathcal{T}$ be a subalgebra with underlying set $D \subseteq A_\mathcal{T}$. Let $d$ be the least upper bound of $D$ in $\mathcal{T}$, i.e.\ $d$ is the maximal prefix common to all elements of $D$. The function 
    \begin{align*}
    \phi: D &\to d^{+_{\mathcal{T}_D}}\\
     w &\mapsto w^{d^+}
    \end{align*}
    is a homomorphism from $\mathbf{D}$ onto $\mathbf{d^{+_{\mathcal{T}_D}}}$ and the kernel of $\phi$ is $\sim_{C_d}$ restricted to $D$.
\end{lemma}

\begin{proof}
    We show that $\phi$ is a homomorphism. Indeed, take $v,w,u \in D$. Since $d$ is an upper bound of $D$, it follows that $d$ is an upper bound of $z = \bigvee \{v, w,u\}$. Let $1\leq l \leq s$ be the index such that $\mathbf{d}^+ = \mathbf{A}_l = (A_l, m_l)$ and let $1\leq k \leq s$ be an index such that $\mathbf{z}^+ \subseteq \mathbf{A}_k = (A_k, m_k)$. There are two cases 
    \begin{itemize}
        \item Suppose that $z \neq d$, so $z$ is a proper prefix of $d$. Then $z^{d^+}=v^{d^+} = w^{d^+}=u^{d^+}$, hence $\phi$ maps $u,v,w$ to the same value $z^{d^+} \in D^{d^+}$. On the other hand, 
        $
        m_\mathcal{T}(v,w,u)^{d^+} = z^{d^+},
        $
         hence
        \[
        \phi(m_\mathcal{T}(v,w,u)) =  m_\mathcal{T}(v,w,u)^{d^+} = z^{d^+} = m_l( z^{d^+} , z^{d^+} , z^{d^+} )=
        m_l(\phi(v), \phi(w), \phi(u)).
        \]
        \item Suppose that $z=d$. In this case, the homomorphism property for $\phi$ follows directly from the definition of the minorty operation $m_\mathcal{T}$. 
    \end{itemize}
    That the kernel of $\phi$ is $\sim_{C_d}$ restricted to $D$ follows immediately from the definitions. 
\end{proof}
\begin{figure}
    \centering
    \includegraphics[width=.82\linewidth]{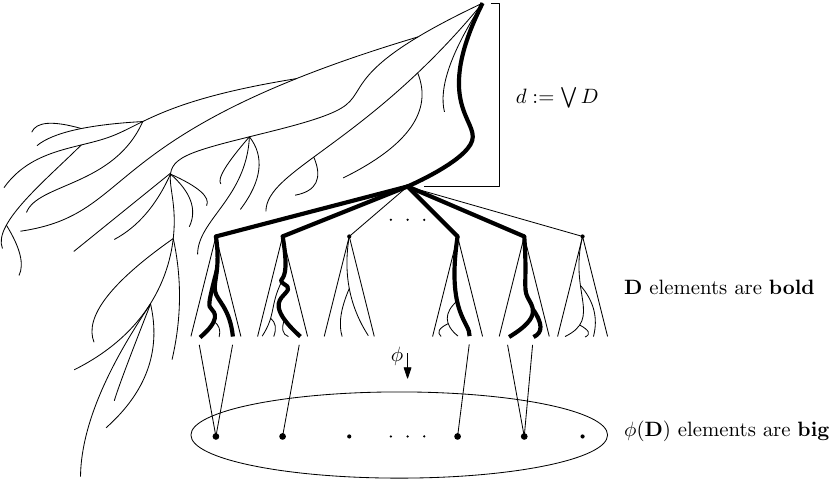}
    \caption{Subalgebra prefix successor map}
    \label{fig:prefixhom}
\end{figure}

We provide in Figure~\ref{fig:prefixhom} a general picture of the situation described by Lemma~\ref{lem:subalgebrasuccessor}. We call the map $\phi$ given in the lemma statement the \emph{subalgebra prefix successor map} for $\mathbf{D}$. The following technical lemma is crucially used in the proof of Lemma~\ref{lem:MutationsofBlockCongruencesAreInvariant}, which is applied to one of our main results, Theorem~\ref{thm:DeltaisMinionHom}.
\begin{lemma}\label{lem:CongruencesofSubalgebras}
    Let $\mathcal{T}$ be a conservative minority tree over a collection of (not necessarily simple) algebras $S = \{\mathbf{A}_1, \dots, \mathbf{A}_s \}$.
    Suppose that $\theta$ is an equivalence relation on a set $U \subseteq A_\mathcal{T}$ with exactly one nontrivial class $D$. Let
    \[
    d = \bigvee D
    \]
    be the maximal prefix common to all elements of $U$. Then $\theta$ is a block congruence of the subalgebra $\mathbf{U} \leq \mathbf{A}_\mathcal{T}$ with domain $U$ if and only if $d^{+_{\mathcal{T}_D}}$ is the nontrivial class of a block congruence $\overline{\theta}$ on the subalgebra $\mathbf{d^{+_{\mathcal{T}_U}}}\leq \mathbf{A}_l$, where $1\leq l \leq s$ is such that $\mathbf{u}^+ = \mathbf{A}_l$.

\end{lemma}

\begin{proof}
     Let $U' = \{ u\in U: u \leq_{\mathcal{T}} d \}$ be the domain of the subalgebra $\mathbf{U}' \leq \mathbf{A}_\mathcal{T}$ consisting of those leaves of $\mathcal{T}$ which have $d$ as a prefix. Then, the least upper bound of $U'$ is also $d$, since $D \subseteq U'$. Let $\theta'$ be the restriction of $\theta$ to $U'$. 
     Observe that $d^{+_{\mathcal{T}_D}}$ contains at least two elements, since otherwise $d$ is not the least upper bound of $D$. By Lemma~\ref{lem:subalgebrasuccessor}, the subalgebra prefix successor map
     \begin{align*}
    \phi: U' &\to d^{+_{\mathcal{T}_{U}}}\\
     w &\mapsto w^{d^+}
    \end{align*}
    is a homomorphism with kernel $\sim_{C_d}$ restricted to $U'$, which we will call $\sim_{C_u}'$. 
    
     We first prove the forward direction and suppose that $\theta$ is a block congruence of $\mathbf{U}$. Then $\theta'$ is a nontrivial block congruence of $\mathbf{U}'$ with nontrivial class $D$. We then define the congruence
    \[
    \overline{\theta} := \phi(\theta ' \vee \sim_{C_u}'),
    \]
    and observe that $\overline{\theta}$ is a block congruence of $\phi(\mathbf{U}') = {d^{+_{\mathcal{T}_U}}}$ whose nontrivial block is $\phi(U) = d^{+_{\mathcal{T}_D}}$.

    For the other direction, suppose that $\overline{\theta}$ is a congruence of $\mathbf{d}^{+_{\mathbf{T}_U}}$ with nontrivial class $d^{+_{\mathcal{T}_D}}$. Then $\phi^{-1}(\overline{\theta})$ is a congruence of $\mathbf{U}'$, hence it decomposes as the join of nontrivial block congruences, so in particular $D$ is the nontrivial class of a block congruence on $\mathbf{D'}$, since $D=\phi^{-1}(d^{+_{\mathcal{T}_D}})$. Since $U'$ is the nontrivial class of the congruence $\sim_u$ restricted to $\mathbf{D}$, it follows from Proposition~\ref{prop:blocksinsideblocks} that $D$ is also the nontrivial class of a block congruence on $\mathbf{U}$, which evidently is $\theta$.  
\end{proof}

\begin{corollary}\label{cor:blockcongofSimpleTreeAlgebrasAreSim}
    If $\mathcal{T}$ is a simple conservative minority tree, then every block congruence of $\mathbf{A}_\mathcal{T}$ is of the form $\sim_v$ for some $v \in \mathcal{T}$. 
\end{corollary}
\begin{proof}
    In view of Lemma~\ref{lem:CongruencesofSubalgebras}, this is immediate. Indeed, let $\theta$ be a block congruence on $\mathbf{A}_\mathcal{T}$. Let $D$ be the nontrivial $\theta$-class and let $d$ be the least upper bound of $D$ in $\mathcal{T}$. We conclude that $d^{+_{\mathbf{T}_D}}$ is the nontrivial class of a congruence of the successor algebra $\mathbf{d}^{+_\mathcal{T}}$. Since this successor algebra is assumed to be simple, it must be that $d^{+_{\mathbf{T}_D}} = d^{+_\mathcal{T}}$. Suppose towards a contradiction that $\sim_d$ is not equal to $\theta$. Then there exists $b \in \mathcal{T}$ such that $b \leq d$ but $b \notin D$. Since $d^{+_{\mathbf{T}_D}} = d^{+_\mathcal{T}}$, there exists $c \in D$ such that $z= c^{d^+} = b^{d^+}$. Consider now the vertex $v= d^\frown(z)$ and the congruence $\sim_v$. Now $c$ belongs to the nontrivial $\sim_v$-class and also $c$ belongs $D$, so it follows from Proposition~\ref{prop:blockintersection} that the nontrivial $\sim_v$-class is contained in $D$. This contradicts our choice of $b$. 
\end{proof}

The reader can consult Figure~\ref{fig:congruencesofsubalgebras} for an illustration of the main idea behind Lemma~\ref{lem:CongruencesofSubalgebras}. The same conservative minority tree picture that is used in Figure~\ref{fig:prefixhom} is used here, but now, although the least upper bound of $D$ is the same, the set $D$ is smaller. The branches leading to leaves which belong to $U$ are drawn with bold, while the elements of $D$ are drawn in heavier bold. The two elements of $\phi(U')$ which are circled are the elements of the nontrivial block of the congruence $\overline{\theta}$ of $\mathbf{d}^{+_{\mathcal{T}_U}}$.

So, the simplicity of all successor algebras of a conservative minority tree $\mathcal{T}$ need not imply the simplicity of the successor algebras of the conservative minority tree $\mathcal{T}_D$ which represents a subalgebra $\mathbf{D} \leq \fA_\mathcal{T}$. Indeed, there might exist a successor algebra $\mathbf{v}^{+_{\mathcal{T}_S}} \leq \mathbf{v}^{+_{\mathcal{T}}}$ which is no longer simple and this situation happens whenever $\mathbf{v}^{+_{\mathcal{T}}}$ is simple but not hereditarily simple. Lemma~\ref{lem:CongruencesofSubalgebras} provides a complete description of the interaction between congruences of subalgebras of $\mathbf{A}_\mathcal{T}$ and congruences of subalgebras of the local algebras $S$ of $\mathcal{T}$. 

The fact that successor algebra simplicity is not always inherited by subtrees highlights the complexity of the structures that we wish to understand and in general there is no hope to obtain from the congruences of an algebra a complete description of its invariant relations. However,  simplicity of successor algebras \emph{is} inherited by subtrees of conservative minority trees whose local algebras are all hereditarily simple. This follows directly from the definitions and we record this fact in the following remark. 

\begin{remark}\label{rem:subtreesofhsimpletreesarehsimple}
If  $\mathcal{T}$ is a hereditarily simple conservative minority tree, then so is $\mathcal{T}_D$, for any subalgebra $\mathbf{D} \leq \mathbf{A}_\mathcal{T}$.
\end{remark}

\begin{figure}
    \centering
    \includegraphics[width=0.64\linewidth]{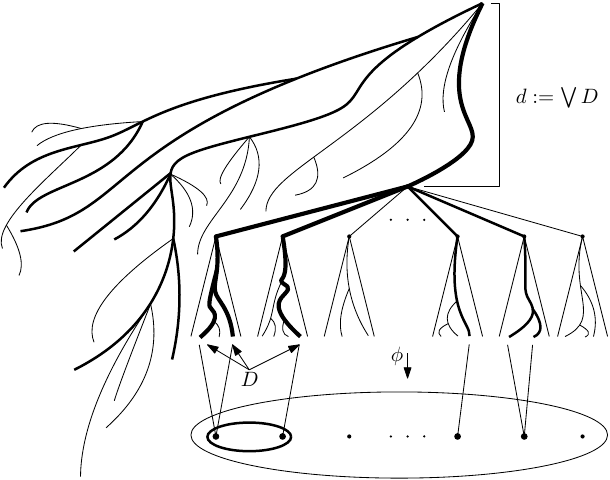}
    \caption{Block congruence of a subalgebra corresponds to block congruence of successor algebra subalgebra (cf.\ Lemma~\ref{lem:CongruencesofSubalgebras}).}
    \label{fig:congruencesofsubalgebras}
\end{figure}


\subsubsection{Saplings}\label{sec:saplings}

The subdirectly irreducible subalgebras of conservative minority algebras play a crucial role in this paper. Here we apply Proposition \ref{prop:subdirectlyirreduciblearelinearchains} to obtain a characterization of those subtrees of conservative minority trees which represent subdirectly irreducible subalgebras of the leaf algebra. We call a conservative minority tree $\mathcal{T}$ a \emph{sapling} if for all $v \in T$, there is at most one $w \in C_v$ so that $\{ u : u \leq_\mathcal{T} w \}$ is not a linearly ordered chain.
A \emph{sapling of $\mathcal{T}$} is a conservative minority subtree of $\mathcal{T}$ which is a sapling. If $\mathcal{T}$ is a conservative minority tree, we say that a sapling $\mathcal{T}_S$ of $\mathcal{T}$ is 
\begin{itemize}
    \item \emph{full} if every $\mathcal{T}_S$ is full in $\mathcal{T}$ at any vertex $v$ which satisfies $|v^{+_{\mathcal{T}_S}}| \geq 2$. 
\item \emph{maximal} if there is no 
sapling of $\mathcal{T}$ which has 
$\mathcal{T}_S$ as a proper subtree.
\end{itemize}
Note that for a sapling $\mathcal{T}_S$, the set of vertices $\{v: |v^{+_{\mathcal{T}_S}}| \geq 2\}$ is linearly ordered by $\leq_\mathcal{T}$. We call this linearly ordered set of vertices the \emph{trunk} of the sapling $\mathcal{T}_S$ and denote this set by $\Trunk(\mathcal{T}_S)$. When the sapling representing some subdirectly irreducible conservative minority algebra $\mathbf{A}$ is clear, we may simply write $\Trunk(\fA)$ instead. Note that the isomorphism class of the algebra represented by a full sapling is uniquely determined by its trunk. We define the \emph{height} of a subdirectly irreducible conservative minority algebra $\fA$ to be the size of the trunk of any tree representing it (note that this is an invariant of the algebra). If a subalgebra $\mathbf{S} \leq \mathbf{A}_\mathcal{T}$ is represented by a full sapling $\mathcal{T}_S$, we say that $\mathbf{S}$ is a \emph{full subalgebra}. An isomorphism graph $G \leq \mathbf{S}_1 \times \mathbf{S}_2$ is called a \emph{full isomorphism graph} if both $\mathbf{S}_1$ and $\mathbf{S}_2$ are full. 
Figure \ref{fig:saplings} provides some pictures of these definitions. 

\begin{remark}\label{rem:emptytrunk}
    We emphasize that a sapling $\mathcal{T}$ can have empty trunk, in which case its underlying tree structure is a linearly ordered chain. Obviously, a subdirectly irreducible algebra represented by a sapling with empty trunk is a trivial one element algebra. 
\end{remark}

\begin{figure}
    \centering
    \includegraphics[width=0.8\linewidth]{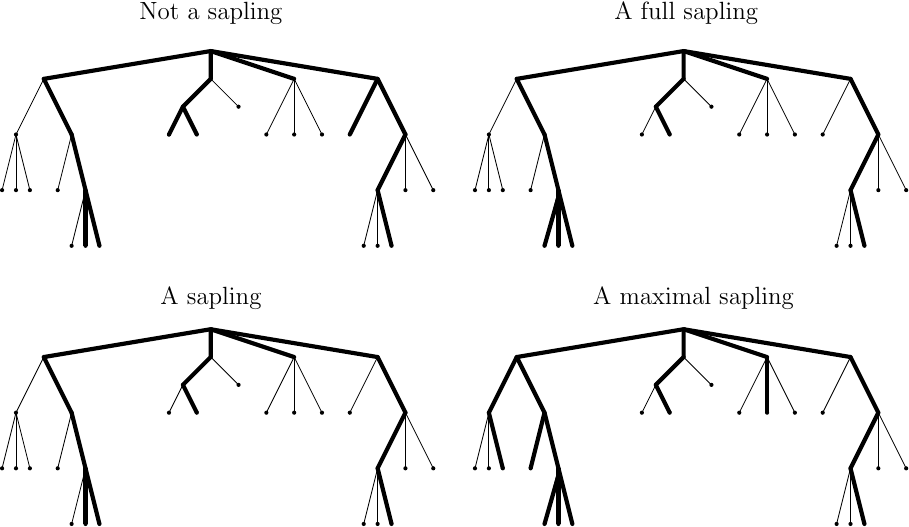}
    \caption{(Full and maximal) Saplings.}
    \label{fig:saplings}
\end{figure}

\begin{lemma}\label{lem:saplingsrepresentsubdirectirreducible}
Let $\mathcal{T}$ be a conservative minority tree, let $\mathbf{D} \leq \fA_\mathcal{T}$, and let $\mathcal{T}_D$ be the conservative minority subtree of $\mathcal{T}$ which represents $\mathbf{D}$. The following hold:
\begin{enumerate}
\item If $\mathbf{D}$ is subdirectly irreducible, then $\mathcal{T}_D$ is a sapling.
\item If $\mathcal{T}$ is a simple conservative minority tree, then every full sapling of $\mathcal{T}$  represents a subdirectly irreducible algebra.
\item If $\mathcal{T}$ is a hereditarily simple conservative minority tree, then every subtree of $\mathcal{T}$ is a hereditarily simple conservative minority tree and every sapling of $\mathcal{T}$  represents a subdirectly irreducible algebra. 
\end{enumerate}
\end{lemma}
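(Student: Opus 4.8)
The three items proceed by combining the structural results of the previous subsections with the definitions of sapling, full sapling, and trunk. For item~1, suppose $\mathbf{S} \leq \fA_\mathcal{T}$ is subdirectly irreducible. By Lemma~\ref{lem:subalgebrarepresentingtree}, $\mathcal{T}_S$ represents $\mathbf{S}$, so I may as well work with $\mathcal{T}_S$ directly. By Proposition~\ref{prop:subdirectlyirreduciblearelinearchains}, the congruences of $\mathbf{S}$ are linearly ordered by inclusion, equivalently every congruence of $\mathbf{S}$ is a block congruence. Now suppose, for contradiction, that $\mathcal{T}_S$ is not a sapling: there is a vertex $v \in T_S$ with two distinct children $w_1, w_2 \in C_{v,\mathcal{T}_S}$ each of which sits above a branching, i.e., $\{u : u \leq_{\mathcal{T}_S} w_i\}$ is not a chain for $i=1,2$. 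Tracing down from $w_1$ to a branching vertex $v_1$ (with $\geq 2$ children in $\mathcal{T}_S$) and likewise from $w_2$ to $v_2$, the block congruences $\sim_{v_1}$ and $\sim_{v_2}$ of $\fA_{\mathcal{T}_S}$ (Lemma~\ref{lemma:congruencesoftreealgebras}) are both nontrivial; moreover, since $v_1$ lies below $w_1$ and $v_2$ below $w_2$, and $w_1 \neq w_2$ are both $\leq_{\mathcal{T}_S} v$, the nontrivial blocks of $\sim_{v_1}$ and $\sim_{v_2}$ are disjoint. Two block congruences with disjoint nontrivial blocks are incomparable, contradicting linear orderedness (or one can invoke Proposition~\ref{prop:blockintersection} directly). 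Hence $\mathcal{T}_S$ is a sapling.

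For item~2, let $\mathcal{T}$ be a simple conservative minority tree and let $\mathcal{T}_S$ be a full sapling of $\mathcal{T}$; write $\mathbf{S}$ for the algebra it represents. By fullness, $\mathbf{C}_{v,\mathcal{T}_S} = \mathbf{C}_{v,\mathcal{T}}$ for every $v \in \mathcal{T}_S$ with $|C_v| \geq 2$, and $\mathcal{T}$ is simple, so every such local algebra of $\mathcal{T}_S$ is simple; the remaining local algebras (those with $|C_{v,\mathcal{T}_S}| = 1$) are trivially simple. Thus $\mathcal{T}_S$ is a simple conservative minority tree, and by Lemma~\ref{lemma:congruencesoftreealgebras} every block congruence of $\mathbf{S} = \fA_{\mathcal{T}_S}$ is of the form $\sim_v$ for some $v \in \mathcal{T}_S$. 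Because $\mathcal{T}_S$ is a sapling, the vertices $v$ with $|C_v| \geq 2$ — i.e., the vertices giving rise to nontrivial block congruences $\sim_v$ — form the trunk, which is a chain under $\leq_{\mathcal{T}_S}$; consequently the block congruences of $\mathbf{S}$ are linearly ordered. By Proposition~\ref{prop:subdirectlyirreduciblearelinearchains} (the equivalence of items~1 and 3), together with Corollary~\ref{cor:blocksarecongruences} (which guarantees that every congruence of a conservative minority algebra, in particular an atomic one, is a block congruence, so that ``every congruence is a block congruence'' can be tested via block congruences), $\mathbf{S}$ is subdirectly irreducible.

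For item~3, first recall Remark~\ref{rem:subtreesofhsimpletreesarehsimple}: if $\mathcal{T}$ is a projection conservative minority tree then every subtree $\mathcal{T}_S$ is again a projection conservative minority tree, since the local algebras of $\mathcal{T}_S$ are subalgebras of projection minority algebras, and projection minority algebras are hereditarily simple (Lemma~\ref{lem:brady1ishsimple}); in fact a subalgebra of $\mathbf{P}_n$ is again a projection minority algebra on its (arbitrary) domain. In particular every sapling $\mathcal{T}_S$ of $\mathcal{T}$ is a projection, hence simple, conservative minority tree, and all its local algebras equal the corresponding local algebras of $\mathcal{T}$ on their restricted domains — but for the argument it is enough that $\mathcal{T}_S$ is simple and a sapling. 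The same reasoning as in item~2 then applies: by Lemma~\ref{lemma:congruencesoftreealgebras} every block congruence of the represented algebra is $\sim_v$ for a vertex $v$, the branching vertices form a chain because $\mathcal{T}_S$ is a sapling, so the block congruences are linearly ordered, and Proposition~\ref{prop:subdirectlyirreduciblearelinearchains} gives subdirect irreducibility.

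\textbf{Main obstacle.} The one point requiring care is the passage in items~2 and~3 from ``block congruences are linearly ordered'' to ``$\mathbf{S}$ is subdirectly irreducible'': Proposition~\ref{prop:subdirectlyirreduciblearelinearchains} is phrased in terms of \emph{all} congruences, so I must first invoke Corollary~\ref{cor:blocksarecongruences} to reduce comparability of arbitrary congruences to comparability of block congruences (every congruence $\theta$ and every $a$ yields a block congruence $\theta_a \subseteq \theta$, and two congruences with incomparable associated block congruences are themselves incomparable), and then note that a simple sapling's block congruences $\sim_v$ with $v$ ranging over the trunk are a chain. Everything else is bookkeeping about trees and restrictions, and ``follows directly from the definitions'' as the surrounding text already advertises.
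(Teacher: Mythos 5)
Your proof is correct and follows essentially the same route as the paper's: the contrapositive via two incomparable block congruences for item~1, simplicity of the (full) sapling plus Lemma~\ref{lemma:congruencesoftreealgebras} and Proposition~\ref{prop:subdirectlyirreduciblearelinearchains} for item~2, and Remark~\ref{rem:subtreesofhsimpletreesarehsimple} reducing item~3 to item~2. The one place you are more explicit than the paper — using Corollary~\ref{cor:blocksarecongruences} to pass from ``the block congruences form a chain'' to ``every congruence is a block congruence'' before invoking the proposition — is a genuine (if small) gap in the paper's write-up that your argument correctly fills.
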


\begin{proof}
    To establish \emph{1.}, suppose that $\mathcal{T}_D$ is not a sapling. Then there exists $v \in T_D$ with children $w_1, w_2$ so that the each of the sets $\{u : u \leq w_1 \}$ and $\{ u: u \leq w_2\}$ is not a linearly ordered chain. This implies that each  of $\sim_{w_1}$ and $\sim_{w_2}$ is a nontrivial block congruence of $\fA_{\mathcal{T}_D}$, but they are incomparable because $w_1$ and $w_2$ are incomparable.  Proposition \ref{prop:subdirectlyirreduciblearelinearchains} indicates that the congruences of $\mathbf{D}$ are block congruences that are linearly ordered by inclusion, so $\mathbf{D}$ is not subdirectly irreducible. 

    To establish \emph{2.}, note that if $\mathcal{T}$ is a simple conservative minority tree, then so is any full sapling $\mathcal{T}_S$, because by definition every successor algebra $\mathbf{v}^{+_{\mathcal{T}_S}}$ is either a simple one element algebra or the simple algebra $\mathbf{v}^{+_{\mathcal{T}}}$. By Corollary~\ref{cor:blockcongofSimpleTreeAlgebrasAreSim}, the block congruences of $\mathbf{S}$ are all of the form $\sim_v$ for $v \in T_S$. Note that $\sim_v$ is the equality relation if $\{u: u \leq v\}$ is a linearly ordered chain. Since $\mathcal{T}_S$ is a sapling, it follows that the set of congruences $\{\sim_v: v \in T_S\}$ is linearly ordered by inclusion. Proposition \ref{prop:subdirectlyirreduciblearelinearchains} now indicates that $\mathbf{S}$ is subdirectly irreducible. 

    To establish \emph{3.}, take a sapling $\mathcal{T}_D$ of $\mathcal{T}$. By Remark \ref{rem:subtreesofhsimpletreesarehsimple}, we know that $\mathcal{T}_D$ is a hereditarily simple conservative minority tree. Since $\mathcal{T}_D$ is a full sapling of itself, it follows from 2. above that $\fA_{\mathcal{T}_S}$ subdirectly irreducible.
\end{proof}

\subsubsection{Tree transformations}\label{sec:treetransformations}
In this section we describe some basic ways to locally manipulate trees. We can obtain a simple reduced tree representing a given conservative minority algebra $\fA$ 
from any other representing tree by applying a sequence of local transformations.

The first transformation is identifying an only child and its parent. Let $\mathcal{T} = (T, \leq, S)$ be a conservative minority tree and suppose that there exist vertices $v, w$ of $\mathcal{T}$ with $w$ the unique child of $v$, so that $w = v^\frown(a)$ and $v^{+_\mathcal{T}} = \{a\}$. Let 
\[
T^{v=w}: \{ u \in T: u \nleq w\} \cup \{ v^\frown r: v^\frown (a)^\frown r  \in T\}
\]
and the set $\mathcal{T}^{v=w} = (T^{v=w},\leq, S)$. 
Intuitively, we are deleting a trivial one element successor algebra with domain $v^+ = \{a\}$ which doesn't provide any new information. It is straightforward to check that $\fA_{\mathcal{T}^{v=w}} $ is isomorphic to $\fA_\mathcal{T}$.

The second transformation refines a given $\mathcal{T}$ by introducing additional vertices representing classes of a nontrivial proper congruence of a successor algebra. Suppose that $v$ is a vertex of $\mathcal{T}$ such that $v^{+_\mathcal{T}}$ is not a simple algebra. Let $\lambda$ be a nontrivial proper congruence of $\mathbf{v}^{+_\mathcal{T}}$ with classes $W_1, \dots, W_k$ for some $ k \geq 2$. We define a new set of tree vertices 
\[
T^{\lambda \to v} := \{ u \in T: u \nless v\} \cup \{ v^\frown(W_i)^\frown (a)^\frown z:  v^\frown (a)^\frown z \in T \text{ and } a \in W_i \},
\]
update $S$ to $S^{\lambda \to v} = S \cup \{\mathbf{v}^{+_\mathcal{T}}/ \lambda \} \cup \{\mathbf{W}_1, \dots, \mathbf{W}_k\}$, and then set $\mathcal{T}^{\lambda \to v}:= (T^{\lambda \to v} , \leq , S^{\lambda \to v})$. 
It is also a routine application of the definitions to check that this transformation also does not impact the isomorphism type of the represented algebra.  
\begin{remark}\label{rem:NormofSdoesNotIncrease}
We observe that the sizes of the new local algebras $\{\mathbf{v}^{+_\mathcal{T}}/ \lambda \} \cup \{\mathbf{W}_1, \dots, \mathbf{W}_k\}$ added in the second transformation above are all strictly less than $|\mathbf{v}^{+_\mathcal{T}}|$. This becomes important in one of our main results. 
\end{remark}

\subsection{Invariant relations}\label{sect{invariantrelations}}
\label{sect:inv-rels}
In this section we will produce for an arbitrary finite conservative minority algebra $\fA$ a  finite set of relations whose primitive positive closure equals $\Inv(\fA)$. 

\begin{remark}
Kearnes and Szendrei~\cite{KearnesSzendrei-Parallologram} apply some of the deeper aspects of the modular commutator theory to obtain a nice set of relations for an algebra with a parallelogram term, which is a generalization of a Maltsev operation which characterizes the property of having `few subpowers'~\cite{IMMVW}. 
In fact, it is possible to combine the Kearnes and Szendrei structure theorem with our analysis of the congruences of conservative minority algebras to obtain the relational basis that we present here. Indeed, this is how we first obtained these results. However, this line of reasoning relies on the Freese notion of a `joint similarity' and the complexity of this theory prohibits us from presenting a self-contained explanation. Therefore, to make the presentation as accessible as possible, we prefer to present a development which avoids the commutator and the notion of joint similarity, replacing these parts instead with a direct analysis of conservative minority algebras. We emphasize that commutator theory is `lurking in the background' throughout and that we will use important ideas from the Kearnes and Szendrei paper on critical relations. 
\end{remark}

\subsubsection{Rectangularity and the linkedness congruence}\label{sect:rectangularrelations}
Let $k \geq 1$ and $\fA_1,\dots,\fA_k$ be algebras with the same signature. 
Then a relation $R \leq \fA_1 \times \cdots \times \fA_k$ is called \emph{subdirect}
if $\pi_i(R) = A_i$ for every $i \in \{1,\dots,k\}$. 
A relation $R \subseteq A \times B$
is called \emph{rectangular} if
$(a,b) \in R$, $(c,b) \in R$, and 
$(c,d) \in R$ implies that $(a,d) \in R$. Note that if $R \leq \fA \times \fB$ and $\fA \times \fB$ contains a Maltsev term, then $R$ is rectangular. 
The following is well known and added for the convenience of the reader.

\begin{lemma}\label{lem:congr}
Let $\fA$ and $\fB$ be algebras with the same signature $\tau$ such that $\fA \times \fB$ is Maltsev and let 
$R \leq \fA \times \fB$ be sudirect. 
Then $C_1 := R \circ R^{-1}$ is a congruence of $\fA$ and $C_2 := R \circ R^{-1}$ is a congruence of $\fB$, and 
the algebras $\fA' := \fA/C_1$ and $\fB' := \fB/C_2$ are isomorphic. 

\end{lemma}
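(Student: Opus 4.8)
\textbf{Proof proposal for Lemma~\ref{lem:congr}.}

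The plan is to verify the three assertions (that $C_1 = R \circ R^{-1}$ is a congruence of $\fA$, that $C_2 = R^{-1} \circ R$ is a congruence of $\fB$, and that $\fA/C_1 \cong \fB/C_2$) by exploiting subdirectness together with the Maltsev term. First I would set up the composition notation: $R \circ R^{-1} = \{(a,a') \in A^2 \mid \exists b \in B,\ (a,b) \in R \text{ and } (a',b) \in R\}$, and dually $R^{-1} \circ R \subseteq B^2$. Reflexivity of $C_1$ follows from subdirectness ($\pi_1(R) = A$ gives, for each $a$, a witness $b$), and symmetry is immediate from the definition. Transitivity is where rectangularity enters: if $(a,a') \in C_1$ via witness $b$ and $(a',a'') \in C_1$ via witness $b'$, then $(a,b), (a',b), (a',b') \in R$, so rectangularity of $R$ (which holds because $\fA \times \fB$ has a Maltsev term) gives $(a,b') \in R$, hence $(a,a'') \in C_1$. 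So $C_1$ is an equivalence relation; that it is invariant under the basic operations is routine, since $R \leq \fA \times \fB$ and composition of invariant relations is invariant. The argument for $C_2$ is symmetric.

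For the isomorphism, I would define a map $\psi \colon \fA/C_1 \to \fB/C_2$ as follows: given a $C_1$-class $[a]_{C_1}$, pick any $b$ with $(a,b) \in R$ (possible by subdirectness) and set $\psi([a]_{C_1}) := [b]_{C_2}$. The main thing to check is well-definedness: if $(a,b), (a,b') \in R$ then $(b,b') \in R^{-1} \circ R = C_2$, so the choice of witness does not matter; and if $(a,a') \in C_1$ with witnesses $(a,b), (a',b) \in R$, then both map to $[b]_{C_2}$. Injectivity: if $\psi([a]_{C_1}) = \psi([a']_{C_1})$, say via $(a,b) \in R$ and $(a',b') \in R$ with $(b,b') \in C_2$, then there is $a''$ with $(a'',b), (a'',b') \in R$; combining $(a,b),(a'',b),(a'',b') \in R$ rectangularly gives $(a,b') \in R$, and then $(a,b'),(a',b') \in R$ give $(a,a') \in C_1$. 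Surjectivity is again subdirectness applied to $\pi_2(R) = B$. Finally, $\psi$ is a homomorphism: the natural maps $\eta_{C_1} \times \eta_{C_2}$ applied to $R$ yield a subdirect relation in $(\fA/C_1) \times (\fB/C_2)$ that is in fact the graph of $\psi$ (this is essentially the content of the well-definedness and injectivity checks), and a relation that is the graph of a bijection and is invariant under the basic operations is precisely the graph of an isomorphism.

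I expect the main obstacle — though it is more bookkeeping than genuine difficulty — to be the careful use of rectangularity in the transitivity and injectivity steps: one must be disciplined about which triples lie in $R$ and in what order the rectangularity pattern $(a,b),(c,b),(c,d) \Rightarrow (a,d)$ is instantiated, and to remember that rectangularity is being used for $R$ viewed as a binary relation $R \subseteq A \times B$, which is legitimate precisely because $\fA \times \fB$ has a Maltsev polymorphism (as remarked just before the lemma). Everything else — reflexivity, symmetry, compatibility with basic operations, surjectivity — is a direct appeal to subdirectness or to the fact that composition and inverse of invariant relations remain invariant. I would also note that $C_1$ and $C_2$ need not be block congruences or anything special here; the lemma is the general ``rectangularity yields linked quotients'' fact, stated for the convenience of later sections.
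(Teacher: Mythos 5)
Your proposal is correct and follows essentially the same route as the paper: transitivity of $C_1$ (and $C_2$) via the rectangularity that the Maltsev term guarantees, and the isomorphism $[a]_{C_1} \mapsto [b]_{C_2}$ for any $(a,b) \in R$, with well-definedness again checked by rectangularity. If anything, your write-up is more complete than the paper's, which omits the explicit injectivity check that you carry out.
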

\begin{proof}
Clearly, $C_1$ is a subuniverse of $\fA^2$, is reflexive and symmetric, so we are left with showing transitivity. Let $a_1,a_2,a_3 \in A$ be such that
$(a_1,a_2) \in C_1$ and $(a_2,a_3) \in C_1$. By the definition of $C_1$, there are $b_1,b_2 \in B$ such that $(a_1,b_1),(a_2,b_1),(a_2,b_2),(a_2,b_3) \in R$. 
Then rectangularity implies that
$(a_1,b_2)\in R$, and hence
$(a_1,a_3) \in C_1$. The argument for $C_2$ is symmetric. 

    Let $a \in A$; since $R$ is subdirect, there exists $b \in B$ such that $(a,b)$. 
    Then the map $\eta$ that sends 
    the congruence class $[a]_{C_1}$
    to $[b]_{C_2}$ is well-defined, and an isomorphism between 
    $\fA' := \fA/C_1$ and $\fB' := \fB/C_2$. Indeed, if $a' \in A$ 
    is such that $(a,a') \in C_1$,
    and $b' \in B$ is such that 
    $(a',b') \in R$, 
    they by the rectangularity of $R$ we have $(a,b') \in R$,
    and hence $(b,b') \in C_2$, 
    so $\eta(a) = [b]_{C_2} = [b']_{C_2} = \eta(a')$. 
    Finally, since $R$ is a subalgebra 
of $\fA \times \fB$, for every $f \in \tau$ of arity $k$ we have 
that $\eta(f^{\fA}(a_1,\dots,a_k)) = f^{\fB}(\eta(a_1),\dots,\eta(a_2))$,
and hence $\eta$ is an isomorphism between $\fA$ and $\fB$. 
\end{proof}

\subsubsection{Critical relations}
Critical relations have been introduced by Kearnes and Szendrei~\cite{KearnesSzendrei-Parallologram} and have been used by Zhuk~\cite{ZhukCritical}. 
The definition given below (unlike~\cite{KearnesSzendrei-Parallologram}, we assume that the underlying algebras is finite) is well-known to be equivalent to the one given in~\cite{KearnesSzendrei-Parallologram} (the equivalence follows from Lemma~\ref{lem:criticaltuple} below). 

\begin{definition}\label{def:criticalrelation}
    Let $\fA$ be a finite algebra and let $R \leq \fA^n$ be an element of $\Inv(\fA)$. 
 Then $R$ is called \emph{critical} if it has no dummy coordinates and is meet irreducible in the lattice of subalgebras of $\fA^n$. 
\end{definition}
Note that the set of critical relations of an algebra $\fA$ primitively positively define all of $\Inv(\fA)$.


\begin{lemma}[c.f.\ Lemma 2.1 of \cite{KearnesSzendrei-Parallologram}]\label{lem:criticaltuple}
Let $\fA$ be a finite algebra and let $R \in \Inv(\fA)$ be of arity $n \geq 2$. Then $R$ is critical if and only if 
there exists a tuple $s \in A^n$ (called a \emph{critical tuple}) such that 
\begin{enumerate}
    \item $R$ is maximal among the $n$-ary members of $\Inv(\fA)$ such that $s \notin R$, and
    \item for every $1 \leq i \leq n$, there exists a tuple $t \in R$ such that $s_j = t_j$ for all $ j\neq i$ (we call such a tuple $t$ an \emph{$i$-approximation} of $s$).
\end{enumerate}
Furthermore, if $R$ is critical and $R^*$ is its unique upper cover
in the lattice of $n$-ary members of $\Inv(\fA)$, then every tuple $s \in R^* \setminus R$ has the above two properties. 
\end{lemma}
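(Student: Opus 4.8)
## Proof Proposal for Lemma~\ref{lem:criticaltuple}

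\textbf{Overall strategy.} The plan is to prove the two directions separately, using the characterization of meet-irreducibility recorded at the end of the subsection on lattices: for $|A| \geq 2$, a subalgebra $R \leq \fA^n$ is meet irreducible in the lattice of $n$-ary invariant relations if and only if it has a unique upper cover $R^* = \bigcap\{S \in \Inv(\fA) : R \subsetneq S\}$. Throughout, the ambient lattice is the (finite, since $\fA$ is finite) lattice of $n$-ary members of $\Inv(\fA)$; finiteness guarantees that covers and the intersection defining $R^*$ behave well. The ``no dummy coordinates'' half of the critical condition will correspond precisely to condition~(2) (existence of $i$-approximations for every coordinate $i$), and the meet-irreducibility half will correspond to condition~(1) (maximality among invariant relations omitting a fixed tuple $s$).

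\textbf{From critical to a critical tuple.} Suppose $R$ is critical. Since $R$ is meet irreducible, let $R^*$ be its unique upper cover and pick any $s \in R^* \setminus R$; this set is nonempty because $R \subsetneq R^*$. I first check condition~(1): if $S \in \Inv(\fA)$ is $n$-ary with $s \notin S$ and $R \subseteq S$, then $S \neq \fA^n$, so $S$ lies in the sublattice of relations containing $R$; hence either $S = R$ or $S \supseteq R^*$, but the latter forces $s \in S$, a contradiction, so $S = R$. Thus $R$ is maximal among $n$-ary invariant relations omitting $s$ — and in fact this argument shows \emph{every} $s \in R^* \setminus R$ works, which also gives the ``Furthermore'' clause. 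For condition~(2), fix a coordinate $i$ and consider the relation $R_i$ obtained from $R$ by making coordinate $i$ a dummy coordinate, i.e.\ $R_i := \{(a_1,\dots,a_n) : (a_1,\dots,a_{i-1},b,a_{i+1},\dots,a_n) \in R \text{ for some } b\}$ — more carefully, the subalgebra of $\fA^n$ generated by $R$ together with the ``$i$-th coordinate free'' closure; since $\fA$ has no dummy coordinate at $i$, we have $R \subsetneq R_i$, hence $R_i \supseteq R^*$, so $s \in R_i$, which by unwinding the definition means exactly that some tuple $t \in R$ agrees with $s$ off coordinate $i$. I would want to present $R_i$ cleanly as $\pi_{\hat{i}}(R)$ re-expanded with a dummy at $i$, and verify it is invariant.

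\textbf{From a critical tuple to critical.} Conversely, assume $s \in A^n$ satisfies (1) and (2). Condition~(1) says $R$ is maximal among $n$-ary invariant relations not containing $s$. To see $R$ is meet irreducible, suppose $R = S_1 \cap S_2$ with $S_1, S_2 \in \Inv(\fA)$ both strictly above $R$; then $s \notin R$ forces $s \notin S_1$ or $s \notin S_2$, say $s \notin S_1$, contradicting maximality of $R$ since $R \subsetneq S_1$ and $S_1$ omits $s$. (Equivalently: any invariant relation strictly containing $R$ must contain $s$, hence their intersection — the would-be $R^*$ — is well defined and is the unique upper cover.) For ``no dummy coordinates'': if coordinate $i$ were a dummy coordinate of $R$, then from the $i$-approximation $t \in R$ (which agrees with $s$ off coordinate $i$) we could replace the $i$-th entry of $t$ by $s_i$ and stay in $R$, giving $s \in R$, a contradiction. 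Hence $R$ has no dummy coordinate, so $R$ is critical.

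\textbf{Main obstacle.} The routine-looking but genuinely delicate point is the construction and invariance of the relation $R_i$ used in the forward direction, and making the correspondence ``dummy coordinate at $i$ $\Leftrightarrow$ failure of an $i$-approximation'' fully precise: one must argue that the smallest invariant relation containing $R$ and having $i$ as a dummy coordinate is exactly $\{(a_1,\dots,a_n) : \exists b,\ (a_1,\dots,b,\dots,a_n)\in R\}$ and that this is already in $\Inv(\fA)$ (it is, since projections and adding dummy coordinates preserve being a subpower). Also, a small amount of care is needed for the degenerate case $|A| = 1$ (where the statement is vacuous or trivial) and to confirm that the hypothesis $n \geq 2$ is only needed to exclude pathologies with $n \leq 1$; everything else is bookkeeping in the finite subalgebra lattice.
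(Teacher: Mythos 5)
Your proposal is correct and follows essentially the same route as the paper: both directions hinge on the observation that the relation obtained from $R$ by making coordinate $i$ dummy (your $R_i$, which for $i=1$ is exactly the paper's $A \times \pi_{2,\dots,n}(R)$) is an invariant relation properly containing $R$, hence containing $R^*$ and the chosen tuple $s$, which yields the $i$-approximation; the converse direction (meet-irreducibility from maximality, and no dummy coordinates from the existence of $i$-approximations) is argued identically. The invariance worry you flag is harmless since $R_i$ is primitively positively definable from $R$, exactly as you suggest.
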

\begin{proof}
    Suppose that $R$ is critical and let $R^*$ be its unique upper cover among the $n$-ary members of $\Inv(\fA)$. Choose $s \in R^* \setminus R$. Obviously, $R$ is maximal with respect to the property that $s \notin R$. Let us show that $R$ contains a $1$-approximation to $s$. The arguments for the other coordinates is similar. We claim that $\pi_{2, \dots, n} (R) = \pi_{2, \dots, n} (R^*)$. One containment is obvious, since $R \subseteq R^*$. To see the other containment, notice that $R^* \subseteq A \times \pi_{2, \dots, n} (R)$, because the first coordinate of $ A \times \pi_{2, \dots, n} (R)$ is dummy. From this we obtain that $\pi_{2, \dots, n}(R^*) \subseteq \pi_{2, \dots, n}(\fA \times \pi_{2, \dots, n} (R)) = \pi_{2, \dots, n} (R)$. It now follows that there exists a tuple $(t_1, t_2, \dots, t_n) \in R$ so that $(t_2, \dots, t_n) = (s_2, \dots, s_n)$. Observe that we have proven also the final statement of the lemma. 

    We now show that $R$ is critical when \emph{1.}\ and \emph{2.}\ are satisfied for some tuple $s \in A^n$. Indeed, suppose that $R$ is meet reducible, so that there exist $R_1$ and $R_2$ that each properly contain $R$ and $R = R_1 \cap R_2$. Since $R$ is maximal with respect to omitting $s$, it follows that $s$ belongs to $R_1$ and $R_2$, but then $s \in R$, which is a contradiction. Therefore, $R$ is meet irreducible. Suppose that the first coordinate of $R$ is dummy, i.e., $R = A \times \pi_{2, \dots, n} (R)$. We assume that there exists $(t_1, s_2, \dots, s_n) \in R$, but this is impossible because we obtain $(s_1, s_2, \dots, s_n) \in R$. This argument works for any coordinate $1 \leq i \leq n$, so we conclude that $R$ has no dummy coordinates. Thus, the lemma is proved.
\end{proof}

\subsubsection{Multisorted critical relations}
Notice that the definition of an $n$-ary critical relation of an algebra $\fA$ refers to the subalgebra lattice of $\fA^n$. It will be convenient to relax this assumption. In view of Lemma~\ref{lem:criticaltuple}, we make the following definition. 

\begin{definition}\label{def:multisortedcritical}
    Let $\fA_1, \dots, \fA_n$ be algebras of the same signature. A relation $R \leq \fA_1 \times \dots \times \fA_n$ is called \emph{multisorted critical} if there exists a tuple $s \in A_1 \times \dots \times A_n$ (again called the critical tuple) such that 

    \begin{enumerate}
    \item $R$ is maximal among the subalgebras of $\fA_1 \times \dots \times \fA_n$ with respect to the property that $s \notin R$, and
    \item for every $1 \leq i \leq n$, there exists a tuple $t \in R$ such that $s_j = t_j$ for all $ j\neq i$ (we again call such a tuple $t$ an \emph{$i$-approximation} of $s$).
\end{enumerate}
\end{definition}

\begin{definition}
A relation $R \subseteq A^n$ is called \emph{functional} if 
for every $i \in \{1,\dots,n\}$ we have 
$a_i = b_i$ whenever 
\begin{align*}
    (a_1, \dots, a_{i-1}, a_i, a_{i+1}, \dots, a_n) & \in R \\
    \text{and } (a_1, \dots, a_{i-1}, b_i, a_{i+1}, \dots, a_n) & \in R. 
    \end{align*}
\end{definition}

\begin{lemma}\label{criticalfunctionalfactorsareSI}
    Let $\fA_1, \dots, \fA_n$ be finite conservative minority algebras and let $R \leq \fA_1 \times \dots \times \fA_n$ be a subdirect, functional, and multisorted critical relation of arity $n \geq 2$. Then $ \fA_i$ is subdirectly irreducible for every $i \in \{1,\dots, n\}$. Furthermore, if $s$ is a critical tuple for $R$ and $t$ is an $i$-approximation of $s$, then the monolith $\mu_i$ of $\fA_i$ equals the congruence generated by the pair $(s_i, t_i)$. 
\end{lemma}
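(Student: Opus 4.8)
The plan is to reduce both assertions to a single observation: enlarging $R$ by ``fattening'' one coordinate with a non-zero congruence $\theta$ of the corresponding factor produces a subalgebra strictly larger than $R$, which by the maximality clause in Definition~\ref{def:multisortedcritical} must therefore contain the critical tuple $s$; functionality then forces the pair $(s_i,t_i)$ to lie in $\theta$. I will prove the two statements together, fixing an index $i$; after permuting coordinates I may assume $i=n$.

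Fix a critical tuple $s\notin R$ and an $n$-approximation $t\in R$, so $t_j=s_j$ for $j<n$. Since $t\in R$ but $s\notin R$ we must have $s_n\neq t_n$, so $\fA_n$ is nontrivial and the congruence $\mu_n$ of $\fA_n$ generated by $(s_n,t_n)$ is non-zero. I claim that $(s_n,t_n)\in\theta$ for \emph{every} non-zero congruence $\theta$ of $\fA_n$. Granting the claim, $\mu_n$ is contained in every non-zero congruence of $\fA_n$; being itself non-zero, $\mu_n$ is then the least non-zero congruence, so $\fA_n$ is subdirectly irreducible with monolith $\mu_n$ --- which is exactly the conclusion for the index $i=n$, and the same argument applied to each coordinate in turn settles all $i$.

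To prove the claim, fix a non-zero congruence $\theta$ of $\fA_n$ and set
\[
R_\theta:=\{(a_1,\dots,a_{n-1},c)\mid \exists\,c'\colon (a_1,\dots,a_{n-1},c')\in R \text{ and } (c,c')\in\theta\}.
\]
A routine check shows $R_\theta$ is a subalgebra of $\fA_1\times\dots\times\fA_n$ (it is obtained from $R$ and $\theta$ by conjunction and a projection), and obviously $R\subseteq R_\theta$. The inclusion is strict: pick $(c,c')\in\theta$ with $c\neq c'$, and by subdirectness of $R$ pick a tuple $(b_1,\dots,b_{n-1},c)\in R$; functionality of $R$ in the last coordinate rules out $(b_1,\dots,b_{n-1},c')\in R$, whereas $(b_1,\dots,b_{n-1},c')\in R_\theta$ with witness $c$. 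Thus $R_\theta$ is a subalgebra properly containing $R$, so by maximality of $R$ among subalgebras omitting $s$ we get $s\in R_\theta$: there is $c''$ with $(s_1,\dots,s_{n-1},c'')\in R$ and $(s_n,c'')\in\theta$. But the $n$-approximation also gives $(s_1,\dots,s_{n-1},t_n)\in R$, so functionality of $R$ in the last coordinate forces $c''=t_n$, and hence $(s_n,t_n)=(s_n,c'')\in\theta$, proving the claim.

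I do not foresee a real obstacle; the only step that needs genuine care is the strictness $R\subsetneq R_\theta$, which is precisely where subdirectness and functionality of $R$ are both used, together with keeping the indexing of coordinates straight. (An alternative more in the spirit of Lemma~\ref{lem:congr} is to note that functionality makes $R$ the graph of a surjection $\pi_{1,\dots,n-1}(R)\to A_n$, so the linkedness congruence $R^{-1}\circ R$ on $\fA_n$ is trivial and $\fA_n$ is a quotient of $\pi_{1,\dots,n-1}(R)$; but this does not by itself yield subdirect irreducibility, so I would still run the fattening argument to identify the monolith.)
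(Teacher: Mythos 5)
Your proof is correct and follows essentially the same route as the paper: your $R_\theta$ is exactly the paper's $R^{\psi'}$ (the union of $\theta$-saturated classes meeting $R$ in one coordinate), and the strictness-via-functionality, maximality-forces-$s\in R_\theta$, and functionality-pins-down-$t_i$ steps are all the same. No gaps.
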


\begin{proof}
    We will argue that $\fA_1$ is subdirectly irreducibly. The argument is analogous for the other coordinates. Let $s = (s_1, \dots, s_n) $ be a critical tuple for $R$ and let $t = (t_1, s_2, \dots, s_n) \in R$ be a $1$-approximation of $s$. Let $\psi$ be a nontrivial congruence of $\fA_1$. We will show that $(s_1, t_1) \in \psi$, which will establish that $\fA_1$ is subdirectly irreducible, since it implies that every nontrivial congruence of $\fA_1$ contains the congruence generated by $(s_1, t_1)$.

    Let $\psi' = \psi \times 0 \times \dots \times 0 $ be the congruence of $\fA_1 \times \dots \times \fA_n$ which identifies tuples $(x_1, \dots, x_n)$ and $(y_1, \dots, y_n)$ if and only if $(x_1, y_1) \in \psi $ and $x_i = y_i $ for $2 \leq i \leq n$. Let $$R^{\psi'} := \bigcup \{ [(x_1, \dots, x_n)]_{\psi'}: (x_1, \dots, x_n) \in R \} $$ be the union of $\psi'$-classes which have nonempty intersection with $R$. Since $\psi'$ is a congruence, its equivalence classes are compatible with the conservative minority operation for $\fA_1 \times \dots \times \fA_n$, so it follows that $R^{\psi'} \leq \fA_1 \times \dots \times \fA_n$. It is clear that $R \subseteq R^{\psi'}$. We claim that the containment is proper. Indeed, since $\psi$ is a nontrivial congrence, there exists $(a_1, b_1) \in \psi$ with $a_1 \neq b_1$. Since $R$ is subdirect in $\fA_1 \times \dots \times \fA_n$, there is $(a_2,\dots,a_n) \in A_2 \times \cdots \times A_n$ such that
    $(a_1, a_2, \dots, a_n) \in R$. It follows from the definitions of $\psi'$ and $R^{\psi'}$ that $(b_1, a_2, \dots, a_n) \in R^{\psi'}$. If $R = R^{\psi'}$, then $a_1 = b_1$, since $R$ is functional, but this contradicts the choice of $a_1$ and $b_1$. 

    It follows that $s \in R^{\psi}$. Hence, there exists $(c_1, \dots, c_n) \in R$ that is $\psi$-related to $(s_1, \dots, s_n)$, i.e., $c_i = s_i$ for $2 \leq i \leq n$. Since $R$ is functional, we deduce that $c_1 = t_1$. We conclude that $(s_1, t_1) \in \psi$, which is what we wanted to show. 
\end{proof}

For any subdirect $R \leq \fA_1 \times \dots \times \fA_n$ and $1\leq i \leq n$, we may consider $R$ as a binary relation $R^i \leq \pi_i(R) \times \pi_{1,\dots,i-1,i+1,\dots,k}(R).$ We then define the relation 
\[
\theta_i = R^i \circ (R^i)^{-1},
\]
which is a congruence of $\fA_i$ by Lemma \ref{lem:congr}. Following Kearnes and Szendrei in~\cite{KearnesSzendrei-Parallologram}, we call $\theta_i$ the \emph{$i$-th coordinate kernel (of $R$)}. 
 Let $\fA_1,\dots,\fA_n$ be algebras and $\theta_i$ a congruence of $\fA_i$, for every $i \in \{1,\dots,n\}$. Then $\theta := \theta_1 \times \cdots \times \theta_n$ denotes the binary relation $$\{((a_1,\dots,a_n),(b_1,\dots,b_n)) \mid (a_1,b_1) \in \theta_1, \dots, (a_n,b_n) \in \theta_k\}.$$ Note that $\theta$ is a congruence of $\fA_1 \times \cdots \times \fA_n$.
 We write $\eta_\theta \colon \fA_1 \times \dots \times \fA_n \to (\fA_1 \times \dots \times \fA_n) / \theta$ for  the natural map which maps a tuple to the $\theta$-class containing it. 

\begin{lemma}\label{lemma:relationsaresaturated}
Let $\fA$ be a finite conservative minority algebra and let $R \leq \fA^n$ for $n \geq 2$. Let $\fA_i = \pi_i(R)$ for $1 \leq i \leq n$; so in particular $R \leq \fA_1 \times \dots \times \fA_n$ is subdirect. Let $\theta = \theta_1 \times \dots \times \theta_n$ be the product of the coordinate kernels of $R$. Then the following hold.
\begin{enumerate}
    \item If $((a_1, \dots, a_k),(b_1, \dots, b_n)) \in \theta$, then $(a_1, \dots, a_n) \in R$ implies $(b_1, \dots, b_n ) \in R$. In particular, $R = \eta_{\theta}^{-1}(\eta_{\theta}(R))$.
    \item $\eta_\theta(R)$ is functional and subdirect.
    \item If $R$ is a critical relation, the $\eta_\theta(R)$ is a multisorted critical relation.
\end{enumerate}

\end{lemma}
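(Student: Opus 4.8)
The plan is to treat the three items in order, using throughout that a conservative minority operation is in particular a Maltsev operation; hence $\fA$, each factor $\fA_i=\pi_i(R)$, and all their products are Maltsev, so every invariant relation of these algebras is rectangular (as recorded just before Lemma~\ref{lem:congr}). I will also use freely that $R^i\le \pi_i(R)\times\pi_{1,\dots,i-1,i+1,\dots,n}(R)$ and that, by definition, $\theta_i = R^i\circ(R^i)^{-1}$.

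For item 1 the idea is to change one coordinate at a time. Assuming $(a_1,\dots,a_n)\in R$ and $(a_i,b_i)\in\theta_i$ for every $i$, I would prove by induction on $k$ that $(b_1,\dots,b_k,a_{k+1},\dots,a_n)\in R$. For the inductive step, unfolding $\theta_k = R^k\circ(R^k)^{-1}$ produces a tuple $w$ on the coordinates different from $k$ such that both the extension of $w$ with $k$-th coordinate $a_k$ and the extension with $k$-th coordinate $b_k$ lie in $R$; now viewing $R$ as the binary relation $R^k$ and applying rectangularity to these two tuples together with the induction hypothesis yields the desired tuple with $k$-th coordinate $b_k$. The ``in particular'' clause is then immediate: $\eta_\theta^{-1}(\eta_\theta(R))$ is exactly the union of those $\theta$-classes meeting $R$, item 1 says each such class is contained in $R$, and the reverse inclusion $R\subseteq\eta_\theta^{-1}(\eta_\theta(R))$ is trivial.

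For item 2, subdirectness of $\eta_\theta(R)$ follows from subdirectness of $R$, since $\pi_i(\eta_\theta(R)) = \{[a_i]_{\theta_i}: a\in R\} = \{[a_i]_{\theta_i}: a_i\in A_i\} = A_i/\theta_i$. For functionality, suppose two tuples of $\eta_\theta(R)$ agree on all coordinates except possibly the $i$-th; choosing representatives $u,u'\in R$ and using item 1 to replace $u'$ by the tuple obtained from $u$ by overwriting its $i$-th coordinate with $u'_i$ (this new tuple is $\theta$-related to $u'$, hence in $R$), we obtain two elements of $R$ agreeing off coordinate $i$, so $(u_i,u'_i)\in R^i\circ(R^i)^{-1}=\theta_i$, i.e.\ the two $i$-th coordinate classes coincide. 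For item 3, let $s$ be a critical tuple for $R$ with $i$-approximations $t^{(i)}\in R$ (Lemma~\ref{lem:criticaltuple}); I claim $\bar s:=\eta_\theta(s)$ witnesses that $\eta_\theta(R)$ is multisorted critical in the sense of Definition~\ref{def:multisortedcritical}. First $\bar s\notin\eta_\theta(R)$, for otherwise some $\theta$-class containing $s$ meets $R$, which by item 1 forces $s\in R$. Second, $\eta_\theta(R)$ is maximal among subalgebras of $\fA_1/\theta_1\times\dots\times\fA_n/\theta_n$ omitting $\bar s$: given $S'\supsetneq\eta_\theta(R)$ with $\bar s\notin S'$, the preimage $S:=\eta_\theta^{-1}(S')\le\fA^n$ properly contains $R$ (because $R$ is a union of $\theta$-classes by item 1) and omits $s$, contradicting the maximality of $R$ among $n$-ary members of $\Inv(\fA)$ omitting $s$. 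Third, $\eta_\theta(t^{(i)})\in\eta_\theta(R)$ agrees with $\bar s$ on every coordinate except the $i$-th, so it is an $i$-approximation of $\bar s$.

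The main obstacle is item 1, and within it the bookkeeping of the one-coordinate-at-a-time rectangularity argument; once item 1 is established, items 2 and 3 are essentially formal, the only point of care in item 3 being the observation that every subalgebra lying above $R$ (and more generally the preimage of any subalgebra of the quotient) is automatically a union of $\theta$-classes.
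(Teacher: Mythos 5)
Your proposal is correct and follows essentially the same route as the paper: item~1 by changing one coordinate at a time (your appeal to rectangularity of $R^k$ is just the paper's explicit application of the Maltsev operation to the three tuples $(a_1,a_2,\dots,a_n)$, $(a_1,c_2,\dots,c_n)$, $(b_1,c_2,\dots,c_n)$, packaged as the rectangularity fact recorded before Lemma~\ref{lem:congr}), item~2 by using item~1 to overwrite a single coordinate and then reading off membership in the coordinate kernel, and item~3 by pushing the critical tuple and its $i$-approximations through $\eta_\theta$ and pulling back any larger subalgebra omitting $\eta_\theta(s)$. No gaps.
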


\begin{proof}

We prove the items in order.
The proof of \emph{1.}\ proceeds by an induction on the number of coordinates in which $(a_1, \dots, a_n)$ and $(b_1, \dots, b_n)$ differ. The basis of the induction is trivial, because if the two tuples do not differ at any coordinates, then they are equal, hence $(b_1, \dots, b_n)= (a_1, \dots, a_n) \in R$. 
For the inductive step, suppose that $(a_1, \dots, a_n)$ and $(b_1, \dots, b_n)$ differ at $(i+1)$-many coordinates for some $ 0 \leq i < n$. Without loss of generality, we may assume that $a_1 \neq b_1$. We assume that $((a_1, \dots, a_n), (b_1, \dots, b_n)) \in \theta$, so by definition we have that $(a_1, b_1) \in \theta_1$. By the definition of $\theta_1$, there exists a tuple $(c_2, \dots, c_n) \in A_2 \times \dots \times A_n$ so that $(a_1, c_2, \dots, c_n) \in R$ and $(b_1, c_2, \dots, c_n) \in R$. Let $m$ be the Maltsev operation of the product $\fA_1 \times \dots \times \fA_n$. We have that
\[
m
\left(
(a_1, a_2, a_3, \dots, a_n),
(a_1, c_2, c_3, \dots, c_n),
(b_1, c_2, c_3, \dots, c_n)
\right) = (b_1, a_2, \dots, a_n) 
\in R,
\]
because each of the three input tuples belongs to $R$. 

Now $(b_1,a_2, \dots, a_n)$ and $(b_1, \dots, b_n)$
differ in $i$-many coordinates, so the induction hypothesis applies. We conclude that $(b_1, \dots, b_n) \in R$. It is now easy to see that $R = \eta_\theta^{-1}(R)$.

To prove \emph{2.}, let $\overline{R} = \eta_\theta(R)$. Suppose that $([a_1]_{\theta_1}, \dots, [a_n]_{\theta_n}) \in \overline{R}$. We argue that \[ ([b_1]_{\theta_1}, [a_2]_{\theta_2} \dots, [a_k]_{\theta_n}) \in \overline{R} \] implies $[b_1]_{\theta_1} = [a_1]_{\theta_1}$. 
Let $(a_1, \dots, a_n), (b_1, \dots, b_n) \in R$ be such that $(a_i,b_i) \in \theta_i$ for all $2 \leq i \leq n$. 
We have to show that $(a_1, b_1) \in \theta_1$. Indeed, the pair $((a_1, a_2, \dots, a_n), (a_1, b_2, \dots, b_n))$ satisfies the hypothesis of \emph{1.}, so we conclude that $(a_1, b_2, \dots, b_k) \in R$. Now the pair $((a_1, b_2, \dots, b_n), (b_1, b_2, \dots, b_n))$ witnesses that $(a_1, b_1)$ is contained in the first coordinate kernel $\theta_1$. The above reasoning does not depend on which projection onto $(n-1)$-many coordinates is chosen. It is obvious that $\eta_\theta(R)$ is subdirect, hence the proof of \emph{2.} is finished.

Now we prove \emph{3.} of the lemma. Let $R^*$ be the upper cover of $R$ in the lattice of subalgebras of $\fA^n$, let $s \in R^* \setminus R$ be a critical tuple for $R$, let $\theta= \theta_1 \times \dots \times \theta_n$ be the product of the coordinate kernels for $R$, let $\eta_\theta$ be the natural map, let $\overline{R} = \eta_{\theta}(R)$ be the reduced representation of $R$, and let $\overline{R^*} = \eta_\theta(R^*)$. In the proof of Lemma \ref{lem:criticaltuple} we argued that $\pi_{1, \dots,i-1, i+1, \dots n}(R) = \pi_{1, \dots,i-1, i+1, \dots n}(R^*)$, so it follows that $\fA_i = \pi_i(R) = \pi_i(R^*)$ for all $1 \leq i \leq n$. Hence, $R^* \leq \fA_1 \times \dots \times \fA_n$, so in particular $\eta_\theta$ is defined for every element of $R^*$.

We argue that $\overline{R}$ is maximal among relations $S \leq (\fA_1 /\theta_1) \times \dots \times (\fA_n/ \theta_n)$ with respect to omitting $\eta_\theta(s)$ and that for each $i$-approximation $t$ of $s$ for $R$, the tuple $\eta_\theta(t)$ is an $i$-approximation to $\eta_\theta(s)$ for $\overline{R}$. First, we establish that $\eta_\theta(s) \notin \overline{R}$. Indeed, by Lemma ~\ref{lemma:relationsaresaturated}, $R = \eta_\theta^{-1}(\overline{R})$, so if $\eta_\theta(s) \in \overline{R}$, then $s \in R$, which is false. For $t \in R$ an $i$-approximation of $s$, it is immediate that $\eta_\theta(t) \in \overline{R}$ and is an $i$-approximation to $\eta_\theta(s)$. Suppose that there is $S \leq (\fA_1 / \theta_1) \times \dots \times (\fA_n / \theta_n)$ such that $\overline{R} \subseteq S$ and $\eta_\theta(s) \notin S$. Then $s \notin \eta_\theta^{-1}(S)$, but $R \subseteq \eta_\theta^{-1}(S)$, hence $R = \eta_\theta^{-1}(S)$. It follows that $\overline{R} = S$, hence $\overline{R}$ is maximal among subalgebras of $(\fA_1 /\theta_1) \times \dots \times (\theta_n / \theta_n)$ with respect to omitting $\eta_\theta(s)$. Hence, any $S$ that properly contains $\overline{R}$ must contain $\eta_\theta(s)$ and so for such $S$ we obtain that $R^* \subseteq \nu_\theta^{-1}(S)$, from which it follows that $\overline{R^*} =\eta_\theta(R^*)$ is the upper cover of $\overline{R}$ among subalgebras of $(\fA_1 /\theta_1) \times \dots \times (\fA_n / \theta_n)$.
\end{proof}

\subsubsection{Reduced representations}
We call the image of $R$ under $\eta_\theta$ its \emph{reduced representation} and denote this image by $\overline{R}$. Next, we argue that for each $1 \leq i \leq n$ there exists a subalgebra $\fB_i \leq \fA_i$ that is isomorphic to $\fA_i / \theta_i$ (this relies on the conservativity of $\fA$, since for an arbitrary algebra it need not hold that quotients of subalgebras are isomorphic to some subalgebra). This allows us to encode each natural map $\eta_{\theta_i} \colon \fA_i \to \fA_i / \theta_i$ as a binary relation $S_i \leq \fA_i \times \fB_i$, which will in turn allow us to recover a relation $R$ from its reduced representation $\overline{R}$.

\begin{definition}\label{def:transversalendograph}
   Let $\fA$ be a conservative minority algebra and let $\fB \leq \fA$. We say that $S \leq \fA \times \fB$ is a \emph{transversal endomorphism graph} if $S$ is the graph of an endomorphism from $\fA$ onto $\fB$ with the property that $S \subseteq \theta$, where $\theta = S \circ S^{-1}$ (i.e., $\theta$ is the kernel of the endomorphism associated to $S$). 
\end{definition}

\begin{lemma}\label{lem:quotientsaresubalgebras}
Let $f \colon A^3 \to A$ be a conservative minority operation, let $\fA = (A; f)$, and let $\theta$ be a congruence of $\fA$. Then there exists a subalgebra $\fB \leq \fA$ such that $\fA/ \theta \simeq \fB$. In this situation, the natural map $\eta \colon \fA \to \fA/ \theta$ can be factored into the composition of an isomorphism with an endomorphism of $\fA$, the graph of which is a transversal endomorphism graph $S \leq \fA \times \fB$. 
\end{lemma}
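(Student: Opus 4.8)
The plan is to build $\fB$ as a \emph{transversal} of $\theta$ and let conservativity do all the work. Concretely, I would first pick an arbitrary system of representatives for the $\theta$-classes, i.e.\ a map $r\colon A\to A$ with $r(a)\in[a]_\theta$ for all $a$ and $r\circ r=r$, and set $B:=r(A)$ and $S:=\{(a,r(a)):a\in A\}$. Two facts then need checking, both essentially immediate from $f$ being conservative: (i) $B$ is a subuniverse of $\fA$, since for $b_1,b_2,b_3\in B$ we have $f(b_1,b_2,b_3)\in\{b_1,b_2,b_3\}\subseteq B$, so $B$ is the domain of a subalgebra $\fB\leq\fA$; and (ii) $r$ is a homomorphism, which is the only point requiring an argument.

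For (ii): given $a_1,a_2,a_3\in A$, write $b_i:=r(a_i)\in B$. Since $(a_i,b_i)\in\theta$ for each $i$ and $\theta$ is a congruence, $(f(a_1,a_2,a_3),f(b_1,b_2,b_3))\in\theta$; and by conservativity $f(b_1,b_2,b_3)\in B$. Hence $f(b_1,b_2,b_3)$ is the \emph{unique} element of $B$ lying in the class $[f(a_1,a_2,a_3)]_\theta$, which by definition of $r$ is precisely $r(f(a_1,a_2,a_3))$. Thus $r(f(a_1,a_2,a_3))=f(r(a_1),r(a_2),r(a_3))$, so $r$ is an endomorphism of $\fA$ with image $\fB$. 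Its graph $S\leq\fA\times\fB$ then visibly satisfies $S\subseteq\theta$ (because $r(a)\in[a]_\theta$) and $S\circ S^{-1}=\ker r=\theta$, so $S$ is a transversal endomorphism graph in the sense of Definition~\ref{def:transversalendograph}. Finally, the first isomorphism theorem gives $\fB=r(\fA)\cong\fA/\ker r=\fA/\theta$ via the restriction $\phi\colon\fB\to\fA/\theta$, $b\mapsto[b]_\theta$ (injective because $B$ meets each class once, surjective because it meets each class), and $\eta=\phi\circ r$ exhibits the natural map as the composition of the endomorphism $r$ with the isomorphism $\phi$, as required.

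I do not expect a genuine obstacle here: the content of the lemma is just the standard fact that a quotient of a conservative algebra is isomorphic to one of its subalgebras, and the only care needed is to observe that an \emph{arbitrary} choice of representatives already works, so no clever selection of the transversal is required. It is worth noting that the argument uses nothing beyond conservativity of $f$ and the hypothesis that $\theta$ is a congruence; the minority identity plays no role in this particular lemma.
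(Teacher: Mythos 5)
Your proof is correct and follows essentially the same route as the paper: take an arbitrary transversal $B$ of $\theta$, use conservativity to see that $B$ is a subuniverse, and factor the natural map through the retraction onto representatives. You even spell out the one step the paper dismisses as "easy to see" (that the retraction $r$ is a homomorphism), and your argument for it is sound.
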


\begin{proof}
    Let $B \subseteq A$ be any \emph{transversal} of $\theta$,
    i.e., a set of representatives of the classes of $\theta$. Then $\fB = (B; f)$ is a subalgebra of $\fA$, because $\fA$ is conservative. It is easy to see that the function which maps each equivalence class of $\fA / \theta $ to its unique representative in $\fB$ is an isomorphism and we denote this isomorphism by $\nu$. We define the binary relation $S \leq \fA \times \fB$ to be the graph of $\mu := \nu \circ \eta$. 
    So $\eta = \nu^{-1} \circ \mu$, as we wanted to show.
\end{proof}

\begin{lemma}\label{lem:connectingreducedreptoR}
Let $\fA = (A; f)$ be a finite conservative minority operation, let $\fR \leq \fA^n$, let $\fA_i = \pi_i(R)$ for each $1\leq i \leq n$, and let $\theta_i$ be the $i$-th coordinate kernel for $R$ for each $1\leq i \leq n$. For each $1\leq i \leq n$, let $\fB_i \leq \fA_i$ be a subalgebra of representatives of the $\theta_i$-classes. Let $\nu_i \colon \fA_i/ \theta_i \to \fB_i$ be the canonical isomorphism mapping each $\theta_i$-class to its representative in $\fB_i$, let $\nu = \nu_1 \times \dots \times \nu_n$ and let $\widetilde{R} = \nu(\overline{R})$.   Let $S_i \leq \fA_i \times \fB_i$ be the transversal endomorphism graph $\mu_i = \nu_i \circ \eta_{\theta_i}$. Then the  following hold.
\begin{enumerate}
    \item $\overline{\fR} \simeq \widetilde{\fR}$. 
    \item $R$ is defined by the formula
    $
    \exists y_1, \dots, y_n
    \left(
    \widetilde{R}(y_1, \dots, y_n) \wedge
    \bigwedge_{1\leq i \leq n}S_i(x_i,y_i)
    \right). 
    $
    \item If $R$ is a critical relation, then $\widetilde{R}$ is a subdirect, functional, multisorted critical relation, and each $\fB_i$ is subdirectly irreducible.
\end{enumerate}
\end{lemma}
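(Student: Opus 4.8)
The plan is to derive all three items essentially from Lemma~\ref{lemma:relationsaresaturated}, exploiting throughout that $\nu = \nu_1 \times \dots \times \nu_n$ is an isomorphism of the ambient product algebras that carries $\overline{R}$ onto $\widetilde{R}$ and therefore transports every structural property of $\overline{R}$ we need. (We may assume $n \geq 2$; for $n = 1$ there is nothing of substance.) For item~1, it suffices to note that $\nu$ is an isomorphism from $(\fA_1/\theta_1) \times \dots \times (\fA_n/\theta_n)$ onto $\fB_1 \times \dots \times \fB_n$ and that $\overline{R}$ is a subalgebra of its domain, so $\nu$ restricts to an isomorphism from $\overline{\fR}$ onto $\nu(\overline{\fR}) = \widetilde{\fR}$.

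For item~2, the key observation is that $S_i$ is the graph of the \emph{function} $\mu_i = \nu_i \circ \eta_{\theta_i} \colon \fA_i \to \fB_i$, so the atom $S_i(x_i, y_i)$ forces $y_i = \mu_i(x_i) = \nu_i([x_i]_{\theta_i})$; the existential quantifiers therefore collapse to the substitution $y_i := \mu_i(x_i)$. After this substitution the formula asserts $(\mu_1(x_1), \dots, \mu_n(x_n)) \in \widetilde{R}$, which---applying $\nu^{-1}$---is equivalent to $\eta_\theta(x_1, \dots, x_n) \in \overline{R} = \eta_\theta(R)$, and by Lemma~\ref{lemma:relationsaresaturated}(1) (that is, $R = \eta_\theta^{-1}(\eta_\theta(R))$) this holds if and only if $(x_1, \dots, x_n) \in R$. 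Hence the displayed formula defines $R$.

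For item~3, assume that $R$ is critical. By Lemma~\ref{lemma:relationsaresaturated}(2) the relation $\overline{R}$ is subdirect and functional, and by Lemma~\ref{lemma:relationsaresaturated}(3) it is multisorted critical. I would then transport these properties along $\nu$: subdirectness holds because each $\nu_i$ is surjective, functionality because each $\nu_i$ is injective, and multisorted criticality because $\nu$ induces an isomorphism between the subalgebra lattice of $(\fA_1/\theta_1) \times \dots \times (\fA_n/\theta_n)$ and that of $\fB_1 \times \dots \times \fB_n$, under which a critical tuple $s$ for $\overline{R}$ passes to a critical tuple $\nu(s)$ for $\widetilde{R}$ (both the maximality of the relation among those omitting the tuple and the existence of $i$-approximations are preserved by a coordinatewise isomorphism). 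Each $\fB_i \leq \fA_i \leq \fA$ is a finite conservative minority algebra, so $\widetilde{R} \leq \fB_1 \times \dots \times \fB_n$ being subdirect, functional, and multisorted critical, Lemma~\ref{criticalfunctionalfactorsareSI} applies and yields that each $\fB_i$ is subdirectly irreducible.

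The argument is essentially bookkeeping, so I do not expect a genuine obstacle; the one place deserving a little care is making the transport of ``multisorted critical'' along $\nu$ precise---verifying that $\nu$ sends the unique upper cover of $\overline{R}$ to that of $\widetilde{R}$, and $i$-approximations of $s$ to $i$-approximations of $\nu(s)$---but this is routine once one recalls that $\nu$ is a coordinatewise isomorphism of the ambient products. Everything else is immediate from the definitions together with Lemma~\ref{lemma:relationsaresaturated}.
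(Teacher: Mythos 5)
Your proof is correct and follows essentially the same route as the paper's: all three items are reduced to Lemma~\ref{lemma:relationsaresaturated} together with the fact that $\nu$ is a coordinatewise isomorphism, with item~2 amounting to the identity $R = (\nu\circ\eta_\theta)^{-1}(\widetilde{R})$ and item~3 concluding via Lemma~\ref{criticalfunctionalfactorsareSI}. Your write-up is somewhat more explicit than the paper's about transporting subdirectness, functionality, and criticality along $\nu$, but the substance is identical.
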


\begin{proof}
    Obviously the function $\nu$
    is an isomorphism between $(\fA_1 \times \cdots \times \fA_n)/(\theta_1 \times \cdots \times \theta_n)$ and $\fB_1 \times \dots \times \fB_n$. This establishes \emph{1.}\ of the lemma. To see that the formula given in \emph{2.}\ defines $R$, notice that it follows from Lemma \ref{lemma:relationsaresaturated} that $R = \eta^{-1}\circ \eta (R)$. Since $\nu$ is an isomorphism, we deduce that
    \[R = \eta^{-1}\circ \eta (R) = (\nu \circ \eta)^{-1} \circ (\nu \circ \eta)(R)  = (\nu \circ \eta)^{-1}(\nu(\bar{R})) = (\nu \circ \eta)^{-1}(\widetilde{R}). 
    \]
    Now the tuples $(a_1, \dots, a_n)$ that belong to the set $(\eta \circ \nu)^{-1}(\widetilde{R})$ are easily seen to be exactly the tuples that satisfy the formula given in the lemma statement. For Item \emph{3.},
    note that if $R$ is critical, then $\widetilde{R} = \nu(\eta_{\theta}(R))$ is subdirect, functional, and multisorted critical by Lemma~\ref{lemma:relationsaresaturated}
    and the fact that $\nu$ is an isomorphism, 
    and in this case $\fB_1,\dots,\fB_n$ are subdirectly irreducible by Lemma~\ref{criticalfunctionalfactorsareSI}.  
\end{proof}

\begin{lemma}\label{lem:preliminarybasis}
    Let $\fA = (A; m)$ be a finite conservative minority algebra. The following are equivalent.
    \begin{enumerate}
        \item $U \in \Inv(\fA)$, and 
        \item $U$ has a primitive positive definition in the following set of relations:
        \begin{enumerate}
            \item all unary relations $X \subseteq A$,
            \item all subdirect transversal endomorphism graphs $S \leq \fA' \times \fB$ for $\mathbf{B} \leq \mathbf{A}' \leq \mathbf{A}$ and $\mathbf{B}$ subdirectly irreducible, and
            \item subdirect functional multisorted critical relations $T \leq \fA_1 \times \dots \times \fA_n$, for $n \geq 2$ and subdirectly irreducible $\fA_1, \dots, \fA_n \leq \fA$.
        \end{enumerate}
    \end{enumerate}
\end{lemma}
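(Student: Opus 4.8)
The plan is to establish the nontrivial implication $(1) \Rightarrow (2)$; the converse is immediate since each of the listed relations belongs to $\Inv(\fA)$ (unary relations and bijection graphs are always invariant, and the critical/transversal relations are invariant by construction, being subalgebras of powers or products of subalgebras of $\fA$). For the forward direction, the key observation is that by Theorem~\ref{thm:inv-pol} it suffices to show that the relations listed in (2a)--(2c) together primitively positively define every member of $\Inv(\fA)$, and since the critical relations of $\fA$ primitively positively define all of $\Inv(\fA)$ (noted right after the definition of critical relations), it is enough to produce a primitive positive definition of an arbitrary critical relation $R \leq \fA^n$ from the relations in (2a)--(2c).

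First I would dispose of the low-arity cases: if $n = 1$ then $R$ is a unary relation and already appears in (2a). So assume $n \geq 2$ and let $R \leq \fA^n$ be critical. Set $\fA_i := \pi_i(R)$, so that $R \leq \fA_1 \times \dots \times \fA_n$ is subdirect, and let $\theta_i$ be the $i$-th coordinate kernel of $R$ and $\theta = \theta_1 \times \dots \times \theta_n$. Apply Lemma~\ref{lem:quotientsaresubalgebras} to each $\fA_i$: using conservativity, choose a transversal $\fB_i \leq \fA_i \leq \fA$ of the $\theta_i$-classes, with canonical isomorphism $\nu_i \colon \fA_i/\theta_i \to \fB_i$ and transversal endomorphism graph $S_i \leq \fA_i \times \fB_i$ for the map $\mu_i = \nu_i \circ \eta_{\theta_i}$. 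Now invoke Lemma~\ref{lem:connectingreducedreptoR}: with $\widetilde{R} = \nu(\overline{R})$ we obtain the primitive positive definition
\[
R(x_1,\dots,x_n) \;\equiv\; \exists y_1,\dots,y_n \left( \widetilde{R}(y_1,\dots,y_n) \wedge \bigwedge_{1 \leq i \leq n} S_i(x_i,y_i) \right),
\]
and moreover, since $R$ is critical, part~3 of that lemma guarantees that $\widetilde{R}$ is a subdirect, functional, multisorted critical relation over the subdirectly irreducible algebras $\fB_1,\dots,\fB_n \leq \fA$, i.e.\ $\widetilde{R}$ is precisely of the form listed in (2c). The remaining loose end is the atom $S_i(x_i,y_i)$: $S_i$ is a transversal endomorphism graph with $S_i \leq \fA_i \times \fB_i$, $\fB_i \leq \fA_i \leq \fA$, and $\fB_i$ subdirectly irreducible, so it is of the form listed in (2b) provided $S_i$ is subdirect — which it is, since $\mu_i$ is surjective onto $\fB_i$ and $\fA_i = \pi_1(S_i)$. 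Chaining these primitive positive definitions (and noting that unary relations of (2a) let us cut down domains if needed, e.g.\ to express $\fA_i$ inside $\fA$) yields a primitive positive definition of $R$ over (2a)--(2c).

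The main obstacle I anticipate is purely bookkeeping rather than conceptual: one must be careful that every relation appearing as an atom genuinely lies in the advertised families — in particular that the $\widetilde{R}$ produced is over subdirectly irreducible factors (handled by Lemma~\ref{criticalfunctionalfactorsareSI} via Lemma~\ref{lem:connectingreducedreptoR}.3), that each $S_i$ really is subdirect, and that working with $\fB_i \leq \fA_i \leq \fA$ rather than with $\fA$ directly is harmless, which follows because $\Inv(\fA)$ contains all unary relations and hence all restrictions to subsets, so a primitive positive formula over subalgebras can be absorbed into one over $\fA$. All the genuine content has already been extracted in Lemmas~\ref{lemma:relationsaresaturated}, \ref{lem:quotientsaresubalgebras}, and \ref{lem:connectingreducedreptoR}; the present lemma is essentially their assembly together with the reduction of arbitrary invariant relations to critical ones.
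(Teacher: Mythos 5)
Your proposal is correct and follows essentially the same route as the paper: reduce to critical relations, then apply Lemma~\ref{lem:quotientsaresubalgebras} and Lemma~\ref{lem:connectingreducedreptoR} to express a critical $R$ via the transversal endomorphism graphs $S_i$ (family (b)) and the reduced representation $\widetilde{R}$ (family (c)), with criticality and Lemma~\ref{criticalfunctionalfactorsareSI} guaranteeing subdirect irreducibility of the factors. The extra bookkeeping you flag (subdirectness of each $S_i$, restriction to subalgebras via the unary relations) is sound and matches the paper's argument.
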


\begin{proof}
    The direction \emph{2.}\ implies \emph{1.}\ is trivial, since $m$ preserves any relation that has a primitive positive definition in a set of its invariant relations. It preserves the relations in \emph{(a)}, because it is conservative, and it preserves 
    the relations in \emph{(b)} and \emph{(c)} by definition. 
    
    For the nontrivial direction \emph{1.}\ implies \emph{2.}\ it suffices to show that every critical relation $R \in \Inv(\fA)$ is definable from the listed relations. This is obvious if $R$ is unary. If the arity of $R$ is $n \geq 2$, let $\fA_i = \pi_i(R)$ for $1 \leq i \leq n$, and let $\theta = \theta_1 \times \dots \times \theta_n$ be the product of the coordinate kernels of $R$. For each $1 \leq i \leq n$, choose a subalgebra $\mathbf{B}_i \leq \fA_i$ of representatives of $\theta_i$-classes and let $S_i \leq \fA_i \times \mathbf{B}_i$ be the transversal endomorphism graph which maps each element to the representative of its $\theta_i$-class. Lemma~\ref{lem:connectingreducedreptoR} applies to exactly such a collection of data, hence we obtain 
    from \emph{2.} of Lemma~\ref{lem:connectingreducedreptoR} that $R$ can be defined primitive positively from $\widetilde{R}$ and the transversal endomorphism graphs $S_1, \dots, S_n$, which are each listed above in \emph{(b)}. Since $R$ is critical, it follows from 
    \emph{3.}\ of Lemma~\ref{lem:connectingreducedreptoR} that 
    $\widetilde{R}$ is a subdirect, functional,  multisorted critical relation 
    and that each $\mathbf{B}_i$ is subdirectly irreducible. 
    Therefore, $\widetilde{R}$ is listed above in \emph{(c)}.
\end{proof}

\subsubsection{Characterizing subdirect functional multisorted critical relations}
In Lemma~\ref{lem:preliminarybasis} we gave a description of the invariant relations of a finite conservative minority algebra in terms of a finite number of invariant relations and additionally subdirect functional multisorted critical relations.
In this section we further analyze subdirect functional multisorted critical relations to show that overall a finite number of invariant relations primitively positively defines all others.

\begin{lemma}\label{lem:criticalmonolithblockssizetwo}
    Let $\fA = (A,f)$ be a conservative minority algebra, let $\fA_1, \dots, \fA_n \leq \fA$ be subdirectly irreducible for $n \geq 3$, and let $R \leq \fA_1 \times \dots \times \fA_n$ be a subdirect functional multisorted critical relation for $n \geq 3$. For each $1\leq i \leq n$, let $B_i$ be the nontrivial block of the monolith $\mu_i$ of $\fA_i$. Then $|B_i| = 2$, for each $1 \leq i \leq n$.
\end{lemma}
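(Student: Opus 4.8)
The plan is to fix a coordinate, say $i=1$, and show $|B_1| = 2$; the argument will be symmetric in all coordinates. By Lemma~\ref{criticalfunctionalfactorsareSI} we know each $\fA_j$ is subdirectly irreducible and, writing $s$ for a critical tuple and $t^{(j)}$ for a $j$-approximation of $s$, the monolith $\mu_j$ is the congruence generated by $(s_j, t^{(j)}_j)$. Since the local algebras are conservative minority algebras, Corollary~\ref{cor:blocksarecongruences} and Proposition~\ref{prop:subdirectlyirreduciblearelinearchains} tell us that $\mu_j$ is a \emph{block} congruence whose unique nontrivial block $B_j$ carries a \emph{simple} subalgebra of $\fA$; moreover $B_j$ is, by Proposition~\ref{prop:uniquemaxblock}-type reasoning, the smallest nontrivial block containing $s_j$, and it contains $t^{(j)}_j$.

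The main idea is to produce, assuming $|B_1| \geq 3$, a third element $c \in B_1 \setminus \{s_1, t^{(1)}_1\}$ and then use the minority operation $f$ together with the approximation tuples to manufacture a new tuple that ought to lie in $R$ (because $R$ is closed under $f$) but whose membership contradicts either functionality of $R$ or maximality of $R$ with respect to omitting $s$. Concretely: first I would apply $f$ coordinatewise to $t^{(1)}$, $s$ restricted appropriately, and a tuple obtained from $t^{(1)}$ by moving its first coordinate to $c$ — here one uses that the subalgebra on $B_1$ is simple, so (as in the proof of Lemma~\ref{lem:brady1ishsimple}, item~2) $f$ acting on $\{s_1, t^{(1)}_1, c\}$ realizes all of $B_1$ via translations, and in particular there is a term/translation sending the pair $(s_1, t^{(1)}_1)$ to a pair involving $c$. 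The key leverage is that $n \geq 3$: with at least three coordinates we have enough room that fixing all-but-one coordinate at the values of $s$ (using different approximations $t^{(1)}, t^{(2)}, \dots$) and combining with $f$ keeps most coordinates pinned at $s_j$ while the first coordinate ranges over a larger set than $\{s_1, t^{(1)}_1\}$; functionality then forces two distinct first-coordinate values to coincide, a contradiction.

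More precisely, the step I expect to carry the weight is: take the $1$-approximation $t = (t_1, s_2, \dots, s_n)$ and a $2$-approximation $u = (s_1, u_2, s_3, \dots, s_n)$, both in $R$, and the tuple $s' := $ (apply a suitable basic translation of $f$ in the first coordinate, identity elsewhere) — one shows the translation can be chosen so the first coordinate of the resulting tuple is the desired $c$ while coordinates $3, \dots, n$ stay at $s_j$ and coordinate $2$ stays controllably at $s_2$ or $u_2$. Then $(c, s_2, s_3, \dots, s_n)$ and $(t_1, s_2, \dots, s_n)$ both lie in $R$ with $c \neq t_1$, contradicting functionality in the first coordinate. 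Thus $B_1 \setminus \{s_1, t_1\} = \emptyset$, i.e. $|B_1| = 2$.

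\textbf{The main obstacle} I anticipate is making the translation-cooking precise: I need a single unary polynomial of $\fA$ (a composition of basic translations of the conservative minority $f$, with constants drawn from $A$, not just from $B_1$) that moves the first coordinate of an element of $R$ off $\{s_1, t_1\}$ while provably \emph{not} disturbing the other coordinates' membership constraints. Conservativity (so that the output of each translation on coordinate $j$ stays within the three inputs, hence within $\fA_j$) plus the fact that $\mu_j$ is a block congruence with simple block — which restricts severely how $f$ can act on $B_j$ versus outside it (exactly as catalogued in the case analysis of Proposition~\ref{prop:blocksinsideblocks}) — are the tools that should make this go through; the delicate bookkeeping is ensuring the constants I feed in keep coordinates $2, \dots, n$ either constant or confined to a set on which functionality still bites. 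An alternative route, if the direct translation argument proves too fiddly, is to argue via maximality of $R$: show that if $|B_1| \geq 3$ then $R \cup \{s\}$ is still closed under $f$ (using that the extra element of $B_1$ lets $f$ "reach" $s$ from tuples already in $R$), contradicting criticality; but I expect the functionality argument to be cleaner.
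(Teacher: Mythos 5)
Your high-level shape matches the paper's: assume some $|B_1|\geq 3$, exhibit a third element $c$ of $B_1$, and derive a violation of functionality. But the step that carries all the weight --- producing a tuple of $R$ whose first coordinate is $c$ while the remaining coordinates are pinned at (or near) the $s_j$ --- is exactly where your argument has a genuine gap, and the device you propose for it does not exist. A ``basic translation of $f$ in the first coordinate, identity elsewhere'' is not a legal way to generate new tuples of $R$: translations certify compatibility of \emph{unary} relations and congruences, but to produce a new element of the $n$-ary relation $R$ you must apply $f$ to three full tuples of $R$, and then every coordinate moves according to $f$ simultaneously. Since $f$ restricted to the two-element set $\{s_1,t_1\}$ never leaves that set (any triple from a two-element set has a repetition, so the minority identities apply), no amount of composing $f$ over the approximation tuples alone can make the first coordinate reach $c$; and the tuple supplied by subdirectness with first coordinate $c$ has uncontrolled remaining coordinates. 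Note also that $s\notin R$, so ``applying $f$ coordinatewise to $t^{(1)}$, $s$, \dots'' is not available either.

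The paper closes this gap with a mechanism absent from your plan. First it observes that, after labelling, $R$ contains the whole affine solution set $L$ of $x_1+\dots+x_n=1$ over the blocks (generated from the critical tuple's approximations). Then it projects $R$ onto all coordinates \emph{except the second}; because $n\geq 3$, the projection of $L$ onto the remaining $n-1\geq 2$ coordinates is the full Boolean cube, so the resulting binary relation between coordinate $1$ and the rest is linked, its link congruence $\alpha$ of $\fA_1$ is nontrivial, hence contains the monolith and relates $0_1$ to $b$. Rectangularity (Lemma~\ref{lem:congr}) then yields a genuine tuple $(b,c,0_3,\dots,0_n)\in R$ with the tail pinned at $0_j$, functionality forces $c\notin\{0_2,1_2\}$, and one final application of $f$ (to this tuple and two members of $L$, using a $b$ chosen so that $f$ does not preserve the partition $\{\{0_1,1_1\},\{b\}\}$, which exists because $B_1$ is the minimal nontrivial block) produces a functionality violation in the \emph{second} coordinate. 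This linked-projection/rectangularity step is where $n\geq 3$ actually does its work --- your stated use of $n\geq 3$ (``enough room to pin all but one coordinate'') does not capture it --- and without it, or some substitute of comparable strength, the proof does not go through. Your fallback via maximality of $R$ (showing $R\cup\{s\}$ is $f$-closed) is likewise undeveloped and faces the same reachability problem.
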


\begin{proof}
Let $s \in \fA_1 \times \dots \times \fA_n$ be a critical tuple for $R$ (see Lemma~\ref{lem:criticaltuple}). We showed in the proof of Lemma~\ref{criticalfunctionalfactorsareSI} that the pair $(s_i, t_i)$ generates $\mu_i$, for any $i$-approximation of $s$. Since $\mu_i$ is a block congruence, this forces $s_i, t_i \in B_i$, because $B_i$ is the only nontrivial class of $\mu_i$. Therefore, we label the elements of the factors so that $(s_1, \dots, s_n) = (0_1, \dots, 0_n)$ 
is the critical tuple and $(0_1, \dots, 0_{i-1}, 1_i, 0_{i+1}, \dots, 0_n)$ is the $i$-approximation to $s$, for each $1 \leq i \leq n$. Under this labeling convention,
the algebra $\fA_1 \times \dots \times \fA_n$ restricted to $\{0_1,1_1\} \times \dots \times \{0_n, 1_n\}$ becomes a $\mathbb{Z}_2$-vector space, with the Maltsev operation on $\fA_1 \times \dots \times \fA_n$ equal to $f(x,y,z) = x-y+z$ on the restriction. The critical tuple is the origin of this vector space and the $i$-approximations are standard basis vectors. It is easy to see now that $R$ must contain the affine subspace spanned by the $i$-approximations, which is the set $L$ of solutions to the $\mathbb{Z}_2$-linear equation $x_1 + \dots + x_n = 1$.

Our goal is to show that each $B_i = \{0_i,1_i\}$. If this is not the case, then after reordering coordinates we may assume that $|B_1| \geq 3$. Notice that there must exist $b \in B_1 \setminus \{0_1,1_1\}$ so that $f(u,v,w) \in \{0_1,1_1\}$ for distinct $u,v,w \in \{0_1,1_1,b\}$, since otherwise $\{0_1,1_1\}$ is a nontrivial congruence of $\fA$ restricted to $B_1$, which in view of Lemma \ref{prop:blocksinsideblocks} contradicts that $B_1$ is a minimal nontrivial block of a congruence of $\fA$. Going forward, we suppose without loss of generality that $f(0_1,1_1,b) = 1_1$. The arguments for all other cases are similar.

Let $S = \pi_{1,3, \dots, n}(R)$ be the projection of $R$ onto all coordinates except the second and let $S' = \{ (x_1, (x_3, \dots, x_n)) : (x_1, \dots, x_n)  \in S \}$ be the interpretation of $S$ as a binary relation between its projection onto the first coordinate and its projection onto the remaining coordinates. Since $R$ contains the solutions to the equation $x_1 + \dots + x_n = 1$, it follows that $S$ restricted to $B_1 \times B_3 \times \dots \times B_n$ contains $\{0_1,1_1\} \times \{0_3, 1_3 \} \times \dots \times \{0_n, 1_n \}$. By Lemma \ref{lem:congr}, the relation $\alpha = S' \circ (S')^{-1}$ of $\fA_1$ is a congruence and it follows from what we just observed that this congruence is not the equality relation. Therefore, all elements of $B_1$ are collapsed by $\alpha$, so $(0_1,b) \in \alpha$. By definition of $\alpha$, there exist tuples $(0_1,b_3, \dots, b_n), (b, b_3, \dots, b_n) \in S$. Since the tuple $(0_1,0_3, \dots, 0_{n}) \in S$, we obtain that from the rectangularity of $S'$ that $(b, 0_3, \dots, 0_{n}, ) \in S$. By definition of $S$, this means there exists $c \in A_2$ with $(b,c,0_3, \dots, 0_{n}) \in R$. Notice that $c \notin \{0_2,1_2\}$, because otherwise the functionality of $R$ would force $b \in \{0_1,1_1\}$. Since $(1_1, 0_2, 0_3,  \dots, 0_n)$ and $(0_1,0_2,1_3,0_4, \dots, 0_n) \in R $ and $f$ preserves $R$, we deduce that $(1_1,c,1_3, 0_4 \dots, 0_n) \in R$. This contradicts that $R$ is functional, because $(1_1,1_2,1_3, 0_4\dots, 0_n)$ already belongs to $R$. 
\end{proof}

\begin{lemma}\label{lem:tuplesdontcrossminimalboundary}
    Let $\fA$ be a finite conservative minority algebra and let $\fA_1, \dots, \fA_n \leq \fA$ be subdirectly irreducible subalgebras of $\fA$. For each $i \in \{1,\dots,n\}$, let $\mu_i$ be the monolith of $\fA_i$ and suppose that the nontrivial block of each $\mu_i$ has exactly two elements $a_i$ and $b_i$. Suppose that $R \leq \fA_1 \times \dots \times \fA_n$ satisfies the following properties:
    \begin{enumerate}
    \item $R$ is functional, and
    \item 
    $R$ restricted to $\mu_1 \times \dots \times \mu_n$ is the solution set to the $\mathbb{Z}_2$-linear equation $x_1 + \dots + x_n = 1$, for some labeling of the elements of the nontrivial block of $\mu_i = \{a_i, b_i\}$ with $0$ and $1$, for each $1 \leq i \leq n$.
    \end{enumerate}
    Then each tuple $(c_1, \dots, c_n) \in R$ satisfies the following implication:
    \begin{itemize}
    \item
    if there is an $i \in \{1,\dots, n\}$ such that $c_i \in \{a_i, b_i\}$, then $c_j \in \{a_j, b_j\}$ for every $j \in \{1,\dots,n\}$. 
    \end{itemize}
\end{lemma}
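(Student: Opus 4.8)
The plan is to show that $R$ contains no \emph{mixed} tuple --- one having some coordinate in the corresponding monolith block and some coordinate not --- which is the assertion of the lemma. Fix the labelling of $\{a_i,b_i\}$ by $0_i, 1_i$ as in the second hypothesis, so that $R \cap (\{0_1,1_1\} \times \dots \times \{0_n,1_n\}) = L$, where $L := \{x : x_1 + \dots + x_n = 1\}$; in particular $L \subseteq R$. For $c \in R$ set $I_c := \{i : c_i \in \{0_i,1_i\}\}$ and $O_c := \{1,\dots,n\} \setminus I_c$, so that a mixed tuple is one with $\emptyset \neq I_c \neq \{1,\dots,n\}$. Assuming for contradiction that a mixed tuple exists, I would choose one, $c$, with $|O_c|$ minimal, and split into the cases $|O_c| = 1$ and $|O_c| \geq 2$.

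If $|O_c| = 1$, say $O_c = \{n\}$, then $c_1,\dots,c_{n-1}$ lie in their monolith blocks; setting $\sigma := c_1 + \dots + c_{n-1} \in \mathbb{Z}_2$, the tuple obtained from $c$ by replacing $c_n$ with the element of $\{0_n,1_n\}$ representing $1-\sigma$ lies in $L \subseteq R$, agrees with $c$ on all coordinates but the $n$-th, and differs there because $c_n \notin \{0_n,1_n\}$. This contradicts the functionality of $R$.

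If $|O_c| \geq 2$ (so $n \geq 3$), pick $p \in I_c$ and $q \in O_c$ and relabel coordinates so that $p = 1$ and $q = n$; the cases $c_1 = 0_1$ and $c_1 = 1_1$ are symmetric, so assume $c_1 = 0_1$. Choose $\ell^1 \in L$ with $\ell^1_1 = 0_1$, and then $\ell^2 \in L$ with $\ell^2_1 = 1_1$ and $\ell^2_i = \ell^1_i$ for $2 \leq i \leq n-1$; the coordinate $\ell^2_n$ is forced by the equation, and a parity check gives $\ell^1_n \neq \ell^2_n$, hence $\{\ell^1_n,\ell^2_n\} = \{0_n,1_n\}$. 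Put $d := f(c,\ell^1,\ell^2)$, computed coordinatewise. Since $f$ is a minority operation, $d_1 = f(0_1,0_1,1_1) = 1_1$ and $d_i = f(c_i,\ell^1_i,\ell^1_i) = c_i$ for $2 \leq i \leq n-1$; since $f$ is conservative and $c_n, \ell^1_n, \ell^2_n$ are three distinct elements, $d_n \in \{c_n\} \cup \{0_n,1_n\}$. If $d_n = c_n$, then $d$ and $c$ differ only in the first coordinate while $d_1 = 1_1 \neq 0_1 = c_1$, contradicting functionality. If $d_n \in \{0_n,1_n\}$, then $d \in R$ is mixed with $I_d = I_c \cup \{n\}$, so $|O_d| = |O_c| - 1$, contradicting the minimality of $|O_c|$.

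The case $|O_c| \geq 2$ is where I expect the real difficulty. Functionality forbids changing a single coordinate of a tuple of $R$, so lowering $|O_c|$ requires altering at least two coordinates at once with full control of the outcome; moreover the value of $f$ on a triple of three distinct elements is unconstrained by the hypotheses, so the coordinate one wants to push into $\{0_n,1_n\}$ cannot be read off directly. The construction circumvents this by making $\ell^1$ and $\ell^2$ agree on all the \emph{passenger} coordinates (so $f$ reproduces the $c$-value there through the minority law) and disagree exactly on coordinate $n$ and on one coordinate of $I_c$ used to meet the parity constraint defining $L$; the uncontrolled value $d_n \in \{c_n\} \cup \{0_n,1_n\}$ then forces the clean dichotomy: either $d = c$, a functionality violation, or $d$ is a strictly less mixed tuple of $R$, which powers the minimal-counterexample argument.
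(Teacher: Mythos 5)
Your proof is correct, and it takes a genuinely different route from the paper's. The paper argues in one shot: it forms the subalgebra $S$ generated by the mixed tuple together with $L$, reduces to a cleaner multi-sorted picture $S \leq \mathbf{B}^s \times \mathbf{C}^{n-s}$, and then shows that $(c,\dots,c)$ paired with \emph{every} tuple of $\{0,1\}^{n-s}$ lies in $S$, which kills functionality immediately. The computation $m(y_i,z_i,c) = c$ there is justified by the fact that $\{a_i,b_i\}$ is a congruence block of the three-element subalgebra $\{0,1,c\}$ (inherited from the monolith $\mu_i$), so the paper genuinely uses the monolith hypothesis beyond identifying $L$. You instead run a minimal-counterexample induction on the number of coordinates outside the blocks, and in the key step $|O_c| \geq 2$ you only need conservativity to confine $d_n$ to $\{c_n,0_n,1_n\}$ and the minority identities to control the passenger coordinates; both branches of the resulting dichotomy (functionality violation versus a strictly less mixed tuple) are handled cleanly, and the existence of $\ell^1,\ell^2$ and the parity check $\ell^1_n \neq \ell^2_n$ are all in order. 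What your approach buys is that it is more elementary and slightly more general --- it never uses that $\{a_i,b_i\}$ is a congruence block, only hypotheses 1 and 2 plus conservativity and the minority law --- at the cost of an induction where the paper gets the whole cube at once.
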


\begin{proof}
    Take such an $R$ and suppose that there exists $(c_1, \dots, c_n)$ which fails the implication. We will show that $R$ cannot be functional, which is a contradiction. By reordering the coordinates for $R$, we may assume that there is $s \in \{1,\dots,n-1\}$ so that $c_i \notin \{a_i, b_i \}$ for $i \in \{1,\dots,s\}$ and $c_i \in \{a_i, b_i \}$ for all $i \in \{s+1,\dots,n\}$. 
    Let $L$ denote the restriction of $R$ to $\{a_1, b_1\} \times \dots \times \{a_n, b_n\}$. Let $S \leq R$ be the subalgebra generated by $\{(c_1, \dots, c_n)\} \cup L$. Since $R$ is functional, so is $S$. Since each $\fA_i$ is conservative, it follows that $\pi_i(S)$ has either three or two elements, depending on whether $ i \leq s$ or $s < i$, respectively. In the former case, $\{a_i, b_i\}$ is the nontrivial block of a congruence of $\pi_i(S)$. In the latter case, we of course obtain the two element minority algebra with underlying set $\{a_i, b_i\}$. Therefore, by relabeling elements, we may assume that we are working with an $S \leq \underbrace{ \mathbf{B} \times \dots \times \mathbf{B}}_{s} \times \underbrace{ \mathbf{C} \times \dots \times  \mathbf{C} }_{n-s}$, where 
    \begin{itemize}
        \item $S$ is functional, 
        \item $\mathbf{B}$ has underlying set $B = \{0,1,c\}$, and $\{0,1\}$ is a block of a congruence,  
        \item $\mathbf{C}$ has underlying set $\{0,1\}$, 
        \item $S$ contains the solution set to the $\mathbb{Z}_2$-linear equation $x_1 + \dots + x_n =1$, and 
        \item $S$ contains the tuple $( \underbrace{ c, \dots, c }_s , z_{s+1}, \dots, z_n)$, for some $z_{s+1}, \dots, z_n \in \{0,1\}$.
    \end{itemize}

    Let $S'$ be the representation of $S$ obtained by interpreting tuples $(x_1, \dots, x_n) \in S$ as pairs $((x_1, \dots, x_s), (x_{s+1}, \dots, x_n)) \in T\times U$, where $T = \pi_{1, \dots, s}(R)$ and $U = \pi_{s+1, \dots, n}(R)$. Notice that $\{0,1\}^{n-s} \subseteq U$, since any tuple of $0$'s and $1$'s of length strictly less than $n$ can be extended to a solution of $x_1 + \dots + x_n = 1$. We now argue that for any $(y_{s+1}, \dots, y_{n}) \in \{0,1 \}^{n-s}$
    $$((\underbrace{c, \dots, c}_s) , (y_{s+1}, \dots, y_n) )\in S'.$$

    Indeed, we may extend $(y_{s+1}, \dots, y_{n})$ to a solution $((y_1, \dots, y_s), (y_{s+1}, \dots, y_{n})) \in S'$ of the linear equation $x_1 + \dots + x_n$. We do the same with the tuple $(z_{s+1}, \dots, z_n)$ to obtain the tuple $((z_1, \dots, z_s), (z_{s+1}, \dots, z_n))$. Since the conservative minority operation $m$ for the algebra $\fA$ preserves $S'$, we obtain that
    \begin{align*}
    ( (m(y_1, z_1, c), \dots, m(y_s, z_s, c)), (m(y_{s+1}, z_{s+1}, z_{s+1}), \dots, m(y_{n}, z_n, z_n)) &= \\
    ( (c, \dots, c), (y_{s+1}, \dots, y_n))  &\in S',
    \end{align*}
    where we are using that $\{a_i, b_i \}$ is collapsed by a congruence of $\mathbf{B}$ to see that $m(y_i, z_i, c) = c$ for each $1 \leq i \leq s$.
    It follows that $S'$ is not functional, since for example, both $((c, \dots, c), (0, \dots, 0, 0)) \in S'$ and $((c, \dots, c), (0, \dots, 0, 1)) \in S'$.
\end{proof}

\begin{proposition}\label{prop:characterizesdfunctmscritical}
    Let $\fA$ be a conservative minority algebra, let $\mathbf{A}_1, \dots, \mathbf{A}_n \leq \fA$ be subdirectly irreducible subalgebras of $\fA$ with respective monoliths $\mu_1, \dots, \mu_n$ and nontrivial blocks $B_1, \dots, B_n$. Let $R \leq \mathbf{A}_1 \times \dots \times \mathbf{A}_n$, for $n \geq 3$, be a subdirect functional multisorted critical relation. Let $R_1$ be the restriction of $R$ to $B_1 \times \dots \times B_n$ and let $R_2$ be the restriction of $R$ to $(A_1 \setminus B_1) \times \dots \times (A_n \setminus B_n)$. The following hold.
    \begin{enumerate}
        \item The subalgebra $\fB_i \leq \fA_i$ with domain is $B_i$ is a two element minority algebra, for each $1 \leq i \leq n$.
        \item For some choice of a labeling $B_i = \{0_i, 1_i\}$
        for each $i \in \{1,\dots, n\}$, the relation $R_1$ is equal to the set of solutions $L$ to the $\mathbb{Z}_2$-linear equation $x_1 + \dots + x_n = 1$.
        \item $R$ is the disjoint union of $R_1$ and $R_2$.
        \item The projection of $R_2$ onto any pair of distinct coordinates is a bijection graph.
        
        \item $\fA_i$ and $\fA_j$ are isomorphic, for all $i,j \in \{1,\dots,n\}$.
    \end{enumerate}
\end{proposition}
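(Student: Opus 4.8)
The plan is to dispatch items 1--3 quickly from the preceding lemmas and then do the real work in items 4 and 5.

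\emph{Items 1--3.} Item 1 is exactly Lemma~\ref{lem:criticalmonolithblockssizetwo}, which gives $|B_i|=2$; since there is only one minority operation on a two-element set (the identities pin it down to $\mathbb{Z}_2$-addition), $\fB_i$ is \emph{the} two-element minority algebra. For item 2 I would fix the labelling $B_i=\{0_i,1_i\}$ used in the proof of Lemma~\ref{lem:criticalmonolithblockssizetwo}, where the critical tuple is $(0_1,\dots,0_n)$ and the $i$-approximations are the standard basis vectors of the $\mathbb{Z}_2$-space $\prod_iB_i$. In that proof it is shown that $R$ contains the affine hyperplane $L=\{x:\sum_kx_k=1\}$; hence $R_1=R\cap\prod_iB_i$ is a subalgebra of $\prod_i\fB_i\cong(\mathbb{Z}_2^n;x+y+z)$, i.e.\ an affine subspace, which contains the hyperplane $L$. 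The only affine subspaces containing a hyperplane are that hyperplane and the whole cube, and the cube is not functional while $R_1$ is (a restriction of the functional relation $R$), so $R_1=L$. Items 1 and 2 supply the hypotheses of Lemma~\ref{lem:tuplesdontcrossminimalboundary}, whose conclusion is precisely that no tuple of $R$ has some but not all coordinates inside the $B_i$; this is item 3.

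\emph{Item 4.} Fix $i\neq j$. By item 3, $R=R_1\sqcup R_2$, so $\pi_{ij}(R)=\pi_{ij}(L)\sqcup\pi_{ij}(R_2)$ with $\pi_{ij}(L)\subseteq B_i\times B_j$ and $\pi_{ij}(R_2)\subseteq(A_i\setminus B_i)\times(A_j\setminus B_j)$; using $n\geq3$ one checks $\pi_{ij}(L)=B_i\times B_j$, and subdirectness of $R$ gives $\pi_i(R_2)=A_i\setminus B_i$, so $\pi_{ij}(R_2)$ is subdirect in $(A_i\setminus B_i)\times(A_j\setminus B_j)$. By Lemma~\ref{lem:congr}, $\beta_i:=\pi_{ij}(R)\circ\pi_{ij}(R)^{-1}$ is a congruence of $\fA_i$; it contains $(0_i,1_i)$ (witness $0_j$), so $\beta_i\supseteq\mu_i$, as $\mu_i$ is generated by $(0_i,1_i)$ (Lemma~\ref{criticalfunctionalfactorsareSI}). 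I claim $\beta_i=\mu_i$: by Proposition~\ref{prop:subdirectlyirreduciblearelinearchains}, $\beta_i$ is a block congruence; if its block strictly contained $B_i$, pick $a$ in it with $a\notin B_i$, giving $(0_i,a)\in\beta_i$ with some witness $b\in A_j$; then $(0_i,b)\in\pi_{ij}(R)$ forces $b\in B_j$ by item 3, whence $(a,b)\in\pi_{ij}(R)$ with $a\notin B_i$, $b\in B_j$, contradicting item 3. Thus $\beta_i=\mu_i$, so $\pi_{ij}(R_2)\circ\pi_{ij}(R_2)^{-1}=\beta_i\cap(A_i\setminus B_i)^2=0$; hence $\pi_{ij}(R_2)$ is functional in coordinate $i$, by symmetry in coordinate $j$ too, and a subdirect relation functional in both coordinates is a bijection graph.

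\emph{Item 5.} If $R_2=\emptyset$ then $\pi_k(R_2)=A_k\setminus B_k=\emptyset$ for all $k$, so every $\fA_k$ is the two-element minority algebra and the claim holds. Otherwise each $A_k\setminus B_k\neq\emptyset$, and by item 4 the bijection graph $\pi_{ij}(R_2)$ is the graph of an isomorphism $\phi_{ij}\colon\fA_i|_{A_i\setminus B_i}\to\fA_j|_{A_j\setminus B_j}$ (its inverse is a homomorphism because $\pi_{ji}(R_2)$ is again a bijection graph). I would extend $\phi_{ij}$ to $\psi_{ij}\colon A_i\to A_j$ by $\psi_{ij}(0_i)=0_j$, $\psi_{ij}(1_i)=1_j$, and check $\psi_{ij}(m(x,y,z))=m(\psi_{ij}x,\psi_{ij}y,\psi_{ij}z)$ by cases: the all-outside case uses that $\phi_{ij}$ is a homomorphism; the all-inside case uses uniqueness of the two-element minority algebra; the mixed cases use the monolith identity $m(x,0_i,1_i)=x$ for $x\notin B_i$ (which holds since $m(x,0_i,1_i)\equiv_{\mu_i}m(x,0_i,0_i)=x$ and $[x]_{\mu_i}=\{x\}$), the same identity in $\fA_j$, and, for triples like $m(x,y,0_i)$ with $x,y\notin B_i$, a short computation: apply $m$ to the unique $R_2$-tuples through $x$ and $y$ together with an approximation tuple $t^{(k)}\in L$ with $k\notin\{i,j\}$ (available since $n\geq3$), and read off the $j$-coordinate using item 3. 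This yields $\fA_i\cong\fA_j$.

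\emph{Where the difficulty lies.} Items 1--3 are essentially bookkeeping on top of Lemmas~\ref{lem:criticalmonolithblockssizetwo} and \ref{lem:tuplesdontcrossminimalboundary}, and item 4 drops out cleanly from Lemma~\ref{lem:congr} and the linear ordering of congruences of subdirectly irreducible conservative minority algebras. The genuinely delicate step is item 5: upgrading the coordinate-wise isomorphisms $\phi_{ij}$ between the ``large'' parts $\fA_i|_{A_i\setminus B_i}$ to full isomorphisms $\fA_i\cong\fA_j$ forces one to extract, from the critical relation $R$, precise information about how $m$ behaves on triples straddling the monolith block $B_i$ and its complement, and it is exactly the interplay of $R_2$ (a pure cross of bijection graphs) with the $\mathbb{Z}_2$-affine part $L$, mediated by the minority operation and item 3, that makes this possible --- with $n\geq3$ entering essentially through the existence of a spare coordinate $k\notin\{i,j\}$.
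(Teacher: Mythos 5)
Your proposal is correct, and for items 1--4 it follows essentially the paper's own route: item 1 is Lemma~\ref{lem:criticalmonolithblockssizetwo}, item 3 is Lemma~\ref{lem:tuplesdontcrossminimalboundary}, and item 4 rests, as in the paper, on the coordinate-kernel congruence $\pi_{i,j}(R)\circ\pi_{i,j}(R)^{-1}$ together with item 3 (you organize it as ``show $\beta_i=\mu_i$ exactly'' where the paper runs a contradiction, but the ingredients are identical). For item 2 your argument (affine subspace containing a hyperplane is the hyperplane or the cube; the cube is not functional) is a harmless variant of the paper's (the cube would contain the critical tuple, contradicting criticality). The genuine divergence is item 5. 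The paper quotients by $\mu=\mu_1\times\cdots\times\mu_n$, observes that $\pi_{i,j}(\eta_\mu(R))$ is a bijection graph so that $\fA_i/\mu_i\cong\fA_j/\mu_j$, and then lifts this to $\fA_i\cong\fA_j$ by exhibiting both algebras as leaf algebras of isomorphic height-2 conservative minority trees. You instead build the isomorphism directly, gluing the bijection $\phi_{ij}$ encoded by $\pi_{ij}(R_2)$ to the canonical map $0_i\mapsto 0_j$, $1_i\mapsto 1_j$, and verifying the homomorphism property by cases; the mixed cases are handled by the monolith congruence (to reduce $1_i$ to $0_i$) and by applying $m$ coordinatewise to the two unique $R_2$-tuples through $x$ and $y$ together with a basis tuple $e_k\in L$ for a spare coordinate $k\notin\{i,j\}$, then reading off coordinate $j$ via item 3 and the functionality of $R_2$. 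This is more computational but entirely self-contained, whereas the paper's version leans on the tree-representation machinery already developed; both are valid, and your identification of item 5 (and the role of $n\geq 3$ in supplying the spare coordinate) as the delicate point matches where the substance of the proposition lies.
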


\begin{proof}
    The first item \emph{1.} is just Lemma \ref{lem:criticalmonolithblockssizetwo}. In the proof of Lemma \ref{lem:criticalmonolithblockssizetwo}, we showed that $L \subseteq R_1$ for some choice of labeling each of the blocks $B_1, \dots, B_n$ with the set $\{0,1\}$, where the critical tuple for $R$ becomes the tuple $(0, \dots, 0)$. Since $L$ is a maximal affine $\mathbb{Z}_2$-subspace of $\mathbb{Z}_2^n$, it must be that $L = R_1$, as otherwise the critical tuple $(0,\dots, 0) \in R$. Thus, \emph{2.} holds.

    It follows from Lemma~\ref{lem:tuplesdontcrossminimalboundary} that \emph{3.}\ holds.
    To prove that \emph{4.}\ holds, suppose that there are $i,j \in \{1,\dots,n\}$ with $i<j$ so that $\pi_{i,j}(R_2)$ is not a bijection graph. Since $R$ is subdirect, it follows from \emph{3.} that $\pi_{i,j}(R_2) \subseteq (A_i\setminus B_i) \times (A_j \setminus B_j)$ is subdirect.
    Since $\pi_{i,j}(R_2)$ is not a bijection graph, we may suppose that there exist $(a,b), (a,c) \in \pi_{i,j}(R_2)$ with $b \neq c$, where $a \notin B_i$ and $b,c \notin B_j$ (otherwise, change the order of the arguments of $R$). Since $(a,b), (a,c) \in \pi_{i,j} (R)$ as well, this implies that the congruence $\alpha = \pi_{i,j}(R) \circ (\pi_{i,j}(R))^{-1}$ is not the equality relation. Since $b,c \notin B_j$, it follows that $\mu_j$ is properly contained in $\alpha$. We deduce that $(0, b) \in \alpha$. It follows that there exist tuples $$(d_1, \dots, d_{j-1}, 0, d_{j+1}, \dots, d_n), (e_1, \dots, e_{j-1}, b, e_{j+1}, \dots, e_{n}) \in R$$ such that $d_i = e_i$. It follows from Lemma~\ref{lem:tuplesdontcrossminimalboundary} that $d_i \in B_i$ and $e_i \in A_i \setminus B_i$, which is a contradiction. 


    Finally, to see \emph{5.}, let $\mu = \mu_1 \times \dots \times \mu_n$ be the product of the monoliths of $\fA_1, \dots, \fA_n$. It follows from \emph{3.}\ and \emph{4.}\ that $\pi_{i,j}(\eta_\mu(R) )$
    is a bijection graph, for any distinct $1\leq i, j \leq n$. Hence, $\fA_i/ \mu_i $ is isomorphic to $\fA_j / \mu_j$ for any $1 \leq i, j \leq n$. From this we can deduce that $\fA_i$ is isomorphic to $\fA_j$, because $\mu_i$ and $\mu_j$ are each block congruences whose nontrivial block has exactly two elements. Since there is only one two element minority algebra, there is only one possible isomorphism type for $\fA_i$ whose quotient modulo $\mu_i$ produces the isomorphism type of $\fA/ \mu_i$. More formally, since $\mu_i$ is a block congruence with nontrivial class $B_i = \{0_i, 1_i\}$, the domain of $\fA_i / \mu_i$ is equal to $D_i = \{ B_i \} \cup \{ \{a\} : a \in A_i \setminus B_i \}$. Let $\mathcal{T}_i$ be the height 2 conservative minority  tree over the set $D_i \cup B_i$ with vertices 
    \[
    T_i = \{(B_i, 0_i), (B_i, 1_i) \} \cup \{ (\{a \}) : a \in A_i \setminus B_i \}.
    \]
    and local algebras $C_\epsilon = \fA / \mu_i$ and $C_{(B_i)}$ is the two element minority algebra. It is straightforward to check that $\fA_{\mathcal{T}_i}$ is isomorphic to $\fA_i$ and that $\mathcal{T}_i$ and $\mathcal{T}_j$ are isomorphic conservative minority trees, hence $\fA_{\mathcal{T}_i}$ is isomorphic to $\fA_{\mathcal{T}_j}$. The result now follows by composing isomorphisms. 
\end{proof}

\begin{theorem}\label{thm:consminorityrelbasis}
Let $\fA = (A;f)$ be a finite conservative minority algebra. Then for any $R \subseteq A^k$ the following are equivalent.
\begin{enumerate}
    \item $R$ is preserved by $f$. 
    
    \item $R$ has a primitive positive definition from the following finite set of at most ternary relations: 
    \begin{primenumerate}
    \item All unary relations $X \subseteq A$, 
            
    \item All subdirect transversal endomorphism graphs $S \leq \fB \times \mathbf{C}$ for $\mathbf{C} \leq \mathbf{B} \leq \mathbf{A}$ and $\mathbf{C}$ subdirectly irreducible,
            
     \item All isomorphism graphs $G \leq \fA_1 \times \fA_2$ where $\fA_1, \fA_2 \leq \fA$ are subdirectly irreducible,
    \item For each subdirectly irreducible $\fC \leq \fA$ with a monolith whose nontrivial class has exactly two elements, the ternary relation
\begin{align*}
\Lin_{\mathbf{C}}  := \big \{ & (x,y,z) \in \{0,1\}^3 \mid  \text{$(x,y,z)$ is a solution to the $\mathbb{Z}_2$-linear equation $x+y+z = 1$}
\big \} \\
& \cup \big \{(c,c,c): c\in C \setminus \{0,1\} \big \},
\end{align*} 
where $\{0,1\}$ is some labeling of the nontrivial class of the monolith of $\mathbf{C}$.
    \end{primenumerate}
\end{enumerate}
\end{theorem}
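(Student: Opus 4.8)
The plan is to bootstrap from Lemma~\ref{lem:preliminarybasis}, which already writes every $U\in\Inv(\fA)$ as a primitive positive formula over the unary relations, the subdirect transversal endomorphism graphs, and the \emph{subdirect functional multisorted critical} relations $R\le\fA_1\times\dots\times\fA_n$ with $\fA_1,\dots,\fA_n\le\fA$ subdirectly irreducible. Items \emph{(a$'$)} and \emph{(b$'$)} already contain the first two families, so it suffices to give a primitive positive definition of each such $R$ from \emph{(a$'$)}--\emph{(d$'$)}. (The implication \emph{2.}$\Rightarrow$\emph{1.}\ is routine: $f$ is conservative, hence preserves all unary relations, and the relations in \emph{(b$'$)}, \emph{(c$'$)}, \emph{(d$'$)} are subalgebras of powers of $\fA$; for $\Lin_{\fC}$ one additionally uses that the two-element monolith block of $\fC$ is a congruence block, so each unary polynomial $x\mapsto m(a,b,x)$ maps it into a single monolith class.)

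If $n=2$, then $R$ is the graph of a bijection, and since $R$ is a subalgebra of $\fA_1\times\fA_2$ this bijection is a homomorphism; a bijective homomorphism is always an isomorphism, so $R$ is an isomorphism graph as in \emph{(c$'$)}. Assume now $n\ge3$ and apply Proposition~\ref{prop:characterizesdfunctmscritical}: there is a labelling $B_i=\{0_i,1_i\}$ of the two-element nontrivial monolith block of each $\fA_i$ so that $R=R_1\sqcup R_2$, where $R_1$ is the solution set of $x_1+\dots+x_n=1$ inside $B_1\times\dots\times B_n$ and $R_2\le(A_1\setminus B_1)\times\dots\times(A_n\setminus B_n)$ has all binary projections equal to bijection graphs. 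By the same observation those bijection graphs are isomorphism graphs, and since every subset of a conservative algebra is a subuniverse it follows that $R_2=\{(a,f_2(a),\dots,f_n(a))\mid a\in A_1\setminus B_1\}$ for isomorphisms $f_i$ between $A_1\setminus B_1$ and $A_i\setminus B_i$ (which carry subalgebras of $\fA$, by conservativity), with $f_1=\id$.

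The crucial step is that the bijection $h_i\colon A_1\to A_i$ agreeing with $f_i$ on $A_1\setminus B_1$ and with $0_1\mapsto 0_i$, $1_1\mapsto 1_i$ is an isomorphism $\fA_1\to\fA_i$. To see that $h_i$ is a homomorphism one extracts it from the invariance of $R=R_1\sqcup R_2$: for $u,v\in A_1\setminus B_1$ and $w\in B_1$, pick a tuple of $R_1$ with first entry $w$ and $i$-th entry the element of $B_i$ carrying the label of $w$ (possible because $n\ge3$), and apply $m$ to this tuple and to the two tuples of $R_2$ indexed by $u$ and $v$; the outcome lies in $R_1\sqcup R_2$, and reading off coordinates $1$ and $i$ yields $h_i(m(u,v,w))=m(h_i(u),h_i(v),h_i(w))$; the cases with more than one argument inside a monolith block are analogous. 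Consequently each graph $H_i$ of $h_i$ is an isomorphism graph of type \emph{(c$'$)}, and pulling $R$ back along $(h_1,\dots,h_n)$ gives the relation $R^{(n)}\le\fA_1^n$ equal to $\{\bar x\in B_1^n\mid\textstyle\sum_i x_i=1\}\sqcup\{(a,\dots,a)\mid a\in A_1\setminus B_1\}$, with $R$ recovered as $\exists\bar y\,(R^{(n)}(\bar y)\wedge\bigwedge_i H_i(y_i,x_i))$.

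Finally $R^{(n)}$ is obtained from $\Lin_{\fA_1}$ (which is of type \emph{(d$'$)}, as $\fA_1$ is subdirectly irreducible with two-element monolith block) by the standard chain of $\mathbb{Z}_2$-constraints: $\exists y_2,\dots,y_{n-1}\,(\Lin_{\fA_1}(x_1,x_2,y_2)\wedge\Lin_{\fA_1}(y_2,x_3,y_3)\wedge\dots\wedge\Lin_{\fA_1}(y_{n-2},x_{n-1},y_{n-1}))$ conjoined with a final binary constraint between $y_{n-1}$ and $x_n$ that is either $y_{n-1}=x_n$ or the graph of the automorphism of $\fA_1$ transposing $0_1,1_1$ and fixing all other elements (this transposition is an automorphism, as one sees from a tree representation of $\fA_1$, hence its graph is of type \emph{(c$'$)}), the choice of final constraint being dictated by the parity of $n$; if the labelling given by Proposition~\ref{prop:characterizesdfunctmscritical} disagrees with the one used to fix $\Lin_{\fA_1}$, this same transposition absorbs the discrepancy. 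Each $\Lin_{\fA_1}$-atom restricts, on the monolith-block part of its coordinates, to a $\mathbb{Z}_2$-equation and, on $A_1\setminus B_1$, to the equality of its three coordinates; no mixed tuples survive, so the conjunction defines exactly $R^{(n)}$. I expect the real obstacle to be the verification that $h_i$ is a homomorphism, since it has to be squeezed out of the invariance of the \emph{single} relation $R=R_1\sqcup R_2$ via a careful analysis of how the minority operation acts on triples straddling the boundary of a monolith block.
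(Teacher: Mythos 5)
Your proposal is correct and follows essentially the same route as the paper: both reduce via Lemma~\ref{lem:preliminarybasis} to the subdirect functional multisorted critical relations, dispose of $n=2$ as an isomorphism graph, invoke Proposition~\ref{prop:characterizesdfunctmscritical} for $n\ge 3$, normalize all coordinates onto a single subdirectly irreducible factor using isomorphism graphs of type \emph{(c$'$)}, and then define the resulting relation $U$ by chaining ternary $\Lin$ constraints. The only (cosmetic) difference is that you build the coordinate-aligning isomorphisms $h_i$ in one step from $R_1\sqcup R_2$ and verify the homomorphism property directly from the invariance of $R$, whereas the paper first transports $R$ along arbitrary isomorphism graphs $G_i$ and then corrects each coordinate with the automorphism $T=Z\cup\{(0,0),(1,1)\}$ read off the binary projections — the same "quotient plus monolith block determines the algebra" observation underlies both.
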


\begin{proof}
This theorem is a refinement of Lemma~\ref{lem:preliminarybasis}. The argument that \emph{2.}\ implies \emph{1.}\ is similarly easy as the corresponding implication in Lemma~\ref{lem:preliminarybasis}. To show that \emph{1.}\ implies \emph{2.}, we use our characterization of the subdirect functional multisorted critical relations (the relations listed in \emph{(c)} of Lemma~\ref{lem:preliminarybasis}). Notice that the relations listed in \emph{(a)} and \emph{(b)} of Lemma~\ref{lem:preliminarybasis} are the same as the relations listed in \emph{(a$'$)} and \emph{(b$'$)} here. Therefore, we need only show that the relations listed here can primitively positively define the relations listed in \emph{(c)} of Lemma~\ref{lem:preliminarybasis}. 

So, let $R \leq \fA_1 \times \dots \times \fA_n$ be a subdirect functional multisorted critical relation for subdirectly irreducible $\fA_1, \dots, \fA_n \leq \fA$ and $n \geq 2$. We treat two cases. Suppose that $n = 2$. Since $R$ is subdirect and functional, it is clear that $R$ is a bijection graph, i.e., the graph of an isomorphism between $\fA_1$ and $\fA_2$. Hence, $R$ is definable from the set of listed relations, because it belongs to the relations listed in \emph{(c$'$)}.

Suppose that $n \geq 3$. Item \emph{5.}\ of Lemma~\ref{prop:characterizesdfunctmscritical} indicates that $\fA_1, \dots, \fA_n$ are isomorphic. Let $\mathbf{C} = \fA_1$ and let $G_i \leq \fA_i \times \mathbf{C}$ be an isomorphism graph for each $1 \leq i \leq n$. Define $S \leq \mathbf{C}^n$ by the formula 
\[
\exists y_1, \dots, y_n 
\bigg(
R(y_1, \dots, y_n) \wedge \bigwedge_{1\leq i \leq n} G_i(y_i, x_i) 
\bigg) .
\]
It is easy to see that $\fA_1 \times \dots \times \fA_n$ and $\mathbf{C}^n$ are isomorphic, that $R$ and $S$ are isomorphic (as algebras), and that $R$ can be defined from $S$ by a similar formula. Since $G_i$ is among the relations listed in \emph{(c$'$)} for all $1 \leq i \leq n$, we have reduced the relations listed in \emph{(c)} of Lemma~\ref{prop:characterizesdfunctmscritical} to the subdirect functional multisorted critical $S \leq \mathbf{C}^n$ for subdirectly irreducible $C \leq \fA$.

We further restrict the relations we consider by analyzing the binary projections of such $S$. Let $\mu$ be the monolith of $\mathbf{C}$ and let the nontrivial class of $\mu$ be labeled $B = \{0,1\}$. It follows from the characterization of $S$ given in Proposition~\ref{prop:characterizesdfunctmscritical} that $\pi_{1,i}(S)$ is the disjoint union of the graph of a bijection $Z= \pi_{1,2}(S) \setminus B^2$ and the full relation on $\{0,1\}$. Since $B$ is the nontrivial class of a congruence on $\mathbf{C}$, it is easy to see that the bijection $T = Z \cup \{ (0,0), (1,1) \}$ is an automorphism of $\mathbf{C}$. Hence, we define $S'$ by the formula $\exists y_2  \big ( S(x_1, y_2, \dots, x_n) \wedge T(x_2, y_2) \big)$ to obtain another subdirect functional multisorted critical relation, but now the first two coordinates of any tuple with values outside of $B$ are equal. Iterating over the other coordinates produces a subdirect functional multisorted critical relation 
\begin{align}
U  := \big \{ & (x_1, \dots, x_n) \in \{0,1\}^3 \mid  \text{$(x,y,z)$ is a solution to the $\mathbb{Z}_2$-linear equation $x_1 + \dots + x_n = 1$}
\big \} \nonumber \\
& \cup \big \{(c, \dots, c): c\in C \setminus \{0,1\} \big \}. \label{eq:U}
\end{align} 
It is easy to see that this relation can be defined from those where $n=3$, which are exactly the relations listed in \emph{(d$'$)} of the theorem. This finishes the proof.
\end{proof}

\begin{definition}\label{def:relationalbasisStructureNamed}
    Let $\mathbf{A} = (A;f)$ be a conservative minority algebra. We define $\mathfrak{S}_\mathbf{A}$ to be the structure with domain $A$ whose relations are exactly those listed for $\mathbf{A}$ in Theorem~\ref{thm:consminorityrelbasis}.
\end{definition}

\begin{remark}\label{rem:polclo}
    Note that $\Pol(\mathfrak{S}_\mathbf{A})=\Clo(\mathbf{A})$. 
\end{remark}

\begin{corollary}
    Let $\mathfrak{A}$ be a finite structure with a conservative Maltsev polymorphism. There exists a conservative minority algebra $\mathbf{B} = (A;f)$ such that $\mathfrak{S}_\mathbf{B}$ pp-defines $\mathfrak{A}$.
\end{corollary}

\begin{proof}
    By Lemma~\ref{lem:Carbonnel}, $\bA$ also has a conservative minority polymorphism $f$. Each relation $R$ of $\bA$ is preserved by $f$, and by Theorem~\ref{thm:consminorityrelbasis} $R$ is pp-definable from the relations of $\mathfrak{S}_{\mathbf{B}}$.
\end{proof}

\subsection{Minimal Taylor conservative minority algebras}
\label{sect:minimal-taylor}

To simplify the presentation going forward, it is convenient to work exclusively with minimal Taylor algebras. We now provide some justification of this and then characterize such algebras by specifying their three-element subalgebras. We point out that such a characterization of \emph{all} conservative minimal Taylor algebras will appear in~\cite{Barto3Elements}, and we include this special case for the sake of completeness.

Let us first discuss the general idea. Suppose that $\mathbf{A} = (A;f)$ is a conservative minority algebra. Since $f$ is a ternary operation, it is determined by its behavior on the three-element subsets of $A$, and since $f$ is conservative, it must actually determine a subalgebra on each of the three-element subsets of $A$. Hence, any conservative minority algebra $\mathbf{A}$ is specified by a collection of three-element conservative minority algebras, one for each three-element subset of $A$. Proposition~\ref{prop:MinTayConsMalChar} provides a characterization of which among these operations are minimal Taylor. We first characterize the minimal Taylor conservative minority algebras on a three-element domain and then lift this to a characterization of any conservative minority algebra. 

\begin{lemma}\label{lem:minimalTaylorminorityisminority}
    Let $g$ be a minimal Taylor operation which is generated by a conservative minority operation $f$ on a set $A$. Then $g$ is also a conservative minority operation. 
\end{lemma}

\begin{proof}
    Since $g$ is generated by $f$, it is conservative. By Lemma~\ref{lem:minTaylorSubAlg}, it follows that $g$ restricted to any two-element subset $X \subseteq A$ is minimal Taylor. Since this restriction of $g$ to $X$ is generated by the restriction of $f$ to $X$, it follows that $g$ restricted to $X$ is a minimal Taylor operation on a two-element subset that is generated by the minority operation on two elements. Since the only minimal Taylor operation which can be generated by the minority on two elements is the minority on two elements, it follows that $g$  equals the minority operation on every two-element subset $X \subseteq A$, which is the same thing as saying that $g$ is a minority operation. 
\end{proof}

\begin{corollary}\label{cor:minimalTconstructsCM}
     Let $\mathfrak{A}$ be a finite structure with a conservative Maltsev polymorphism. There exists a minimal Taylor conservative minority algebra $\mathbf{B} = (B;f)$ such that $\mathfrak{S}_\mathbf{B}$ pp-defines $\mathfrak{A}$.
\end{corollary}

We now define an important class of conservative minority algebras whose members will be used later as building blocks for a class of algebras whose corresponding structures pp-construct all finite conservative Maltsev structures (Theorem~\ref{thm:FinalPPConstructionTheorem}). 
 Let $\mathbf{P}^{(2)}_n$ be the algebra 
with the 
domain $P_n := \{0,\dots,n-1\}$
and a single operation 
$p^{(2)}_n \colon P_n^3 \to P_n$ which is defined to be the 
minority operation that returns the second argument on injective triples. For example, $p^{(2)}_4(1,0,0) = 1$ and $p^{(2)}_4(1,2,3) = 2$.
Notice that we could have just as well defined $p_n$ to return the first or third argument on an injective triple, and we call these two other algebras the \emph{$n$-element first} and the \emph{$n$-element third projection minority algebras} and denote them by $\mathbf{P}_n^{(1)}$ and $\mathbf{P}_n^{(3)}$, respectively. We emphasize that for the bulk of the paper, we will only consider the $n$-element second projection minority algebra and use the notation $\mathbf{P}_n$, but here we distinguish all three with superscripts. 

\begin{lemma}\label{lem:brady1ishsimple}
    Let $n \geq 1$ and $1 \leq i \leq 3$ The following statements about $\mathbf{P}^{(i)}_n$ hold.
    \begin{enumerate}
    \item The following relations belong to $\Inv(\mathbf{P}^{(i)}_n)$:
    \begin{enumerate}
        \item all unary relations $X \subseteq \{0, \dots, n-1\}$,
        \item all bijection graphs $G \subseteq n^2$, and 
        \item $L = \{ (x,y,z) \in \{0,1\}^3 : \text{$(x,y,z)$ is a solution to the $\mathbb{Z}_2$-linear equation $x+ y+ z = 1$}\}$.
    \end{enumerate}
    \item $\mathbf{P}_n$ is \emph{hereditarily simple}, i.e., each subalgebra of $\mathbf{P}_n$ has exactly two congruences: the equality relation and the full relation. 
    \end{enumerate}
    
\end{lemma}
\begin{proof}
    Item \emph{1.} can be easily checked by the reader. To see \emph{2.}, let $\mathbf{B} \leq \mathbf{P}_n$. 
    Suppose that $\theta$ is a congruence of $\mathbf{P}_n$ and there is $(a,b) \in \theta$ with $a \neq b$. It follows that $(c,b) \in \theta$ for any $c \in B \setminus \{a,b\}$, because $(a,a), (a,b), (c,c) \in \theta$, so $(p_n(a,a,c), p_n(a,b,c)) = (c,b) \in \theta$. Therefore, any congruence which is not the equality relation is the full relation on $B$ and the lemma is proved. 
\end{proof}

We remark that any relation in $\Inv(\mathbf{P}_n)$ has a primitive positive definition in the set of relations listed in Lemma \ref{lem:brady1ishsimple}. This fact is a special case of Proposition~\ref{prop:hsimpletreerecursivebasis}. So, on a three-element set we have defined three algebras $\mathbf{P}_3^{(1)}$, $\mathbf{P}_3^{(2)}$, and $\mathbf{P}_3^{(3)}$. Now, suppose that a conservative minority algebra $\mathbf{A} = ([3]; f)$ on a three-element domain has a nontrivial congruence $\theta$. Then $\theta$ has exactly one nontrivial class with two elements, say $\{1,2\}$. Since the quotient $\mathbf{A}/\theta$ is a minority algebra, this forces 
\[
f(1,2,3)=f(2,1,3)= f(1,3,2) = f(2,3,1) = f(3,1,2) = f(3,2,1) = 3.
\]
It is easy to check that $\theta$ is the only nontrivial congruence of $\mathbf{A}$. We denote this particular $\mathbf{A}$, as we have just defined it, by $\mathbf{C}_3$. 

\begin{lemma}\label{lem:char3elementMinTaylor}
    Let $\mathbf{A} = ([3];f)$ be a conservative minority algebra on a three-element domain. Then $\mathbf{A}$ is a minimal Taylor algebra if and only if $\mathbf{A}$ is isomorphic to one of $\mathbf{P}_3^{(1)}$, $\mathbf{P}_3^{(2)}$, $\mathbf{P}_3^{(3)}$, or $\mathbf{C}_3$.
\end{lemma}

\begin{proof}
    If $\mathbf{A}$ has a nontrivial congruence $\theta$, then we just argued above that $\mathbf{A}$ is isomorphic to $\mathbf{C}_3$. Hence, there is exactly one conservative minority algebra on the three-element domain that is not simple, so it must be minimal Taylor.

    So, we just need to understand the conservative minority algebras $\mathbf{A}$ on three-element domains that are simple. To do this, it is convenient to argue relationally and apply Theorem~\ref{thm:consminorityrelbasis}. For $\mathbf{A}$ a simple conservative minority algebra on the three-element domain $[3]$, denote by $\Gamma_\mathbf{A}$ the relations listed in \emph{(a')}, \emph{(b')}, \emph{(c')}, and \emph{(d')} of Theorem~\ref{thm:consminorityrelbasis}. Now, $\mathbf{A}$ and all of its subalgebras are simple, hence subdirectly irreducible, and the only subdirectly irreducible subalgebras whose monolith has a nontrivial class with exactly two elements are the two-element subalgebras. 
    
    It follows that the only distinction that can occur between $\Gamma_\mathbf{A}$ and $\Gamma_\mathbf{B}$ for two simple conservative minority algebras on the domain $[3]$ is among the relations listed in \emph{(c')} of the theorem, i.e., $\mathbf{A}$ and $\mathbf{B}$ have different automorphism groups. Since every permutation of $\{0,1,2\}$ is an automorphism of $\mathbf{P}_3^{(1)}$, $\mathbf{P}_3^{(2)}$, and $\mathbf{P}_3^{(3)}$, it follows that these are exactly the simple minimal Taylor conservative minority algebras on $[3]$. It is easy to see that one of these projection minority operations can be obtained from the others by a permutation of variables. 
\end{proof}

\begin{proposition}\label{prop:MinTayConsMalChar}
    Let $\mathbf{A} = (A;f)$ be a conservative minority algebra. The following hold. 
    
    \begin{enumerate}
        \item $\mathbf{A}$ is a minimal Taylor algebra if and only if there exists $1\leq i \leq 3$ such that every three-element subalgebra of $\mathbf{A}$ is either isomorphic to $\mathbf{C}_3$ or $\mathbf{P}_3^{(i)}$. 
        \item $\mathbf{A}$ is minimal Taylor and hereditarily simple if and only if it is isomorphic to $\mathbf{P}_n^{(i)}$ for some $1\leq i \leq 3$. 
    \end{enumerate}
\end{proposition}
\begin{proof}
    To prove \emph{1.}, suppose that $\mathbf{A}$ is a minimal Taylor algebra. By Lemma~\ref{lem:minTaylorSubAlg}, every three-element subalgebra of $\mathbf{A}$ is also a minimal Taylor algebra, and so we can apply Lemma~\ref{lem:char3elementMinTaylor} to conclude that each three-element subalgebra of $\mathbf{A}$ is isomorphic to one of $\mathbf{P}_3^{(1)}$, $\mathbf{P}_3^{(2)}$, $\mathbf{P}_3^{(3)}$, or $\mathbf{C}_3$. 
    
    Define $g(x,y,z) := f(f(y,x,z), f(x,y,z), f(x,z,y))$. Clearly, $g \in \Clo(\mathbf{A})$ and it is readily checked that $g$ is a minority operation. Suppose that $\mathbf{B} \leq \mathbf{A}$ is a three-element subalgebra with domain $B$ which is isomorphic to $\mathbf{P}_3^{(1)}$. Let $(a,b,c) \in B^3$ be an injective triple. Then $$g(a,b,c) = f(f(b,a,c), f(a,b,c), f(a,c,b)) = f(b,a,a) = b.$$ 
    This shows that the algebra $(B;g\vert_B)$ is isomorphic to $\mathbf{P}^{(2)}_3$. A similar argument show that $(B;g\vert_B)$ is also isomorphic to $\mathbf{P}^{(2)}_3$ when $\mathbf{B}$ is isomorphic to either $\mathbf{P}^{(2)}_3$ or $\mathbf{P}^{(3)}_3$. It is similarly easy to check that $g\vert_B = f\vert_B$ when $\mathbf{B}$ is isomorphic to $\mathbf{C}_3$. 

    Therefore, $\Clo(\mathbf{A})$ contains an operation $g$ so that a three-element subalgebra $(B;g\vert_B)$ of the algebra $(A;g)$ is isomorphic to $\mathbf{C}_3$ if and only if $(B; f\vert_B)$ is, while a three-element subalgebra $(B;g\vert_B)$ is isomorphic to $\mathbf{P}^{(2)}_3$ if and only if $(B;f\vert_B)$ is isomorphic to one of $\mathbf{P}_3^{(1)}$, $\mathbf{P}_3^{(2)}$, $\mathbf{P}_3^{(3)}$. Since $f$ is a minimal Taylor operation, it must be that $f$ belongs to the clone generated by $g$. This cannot happen if there are three-element subalgebras $\mathbf{B}_1$ and $\mathbf{B}_2$ of $\mathbf{A}$ respectively isomorphic to $\mathbf{P}^{(i)}_3$ and $\mathbf{P}^{(j)}_3$, for $i \neq j$. This is because the composition tree for a function $h(x,y,z)$ built from $g$ makes no reference to an underlying set, so $h\vert_{B_1}$ is the $i$-th projection minority on $B_1$ if and only if $h\vert_{B_2}$ is also the $i$-th projection minority on $B_2$, for any $B_1$ and $B_2$ which are subsets of cardinality three.  

    So, we have proved the forward implication of \emph{1.}\ of the proposition. To prove the reverse implication, we let $\mathbf{A} = (A;f)$ be an algebra with ternary basic operation $f$ defined on a finite set $A$ which satisfies the condition that every three-element subset of $A$ is a subuniverse of a subalgebra isomorphic to either $\mathbf{C}_3$ or $\mathbf{P}^{(i)}_3$, for some $1\leq i \leq 3$. Obviously, such an $\mathbf{A}$ is a conservative minority algebra, hence $f$ generates a minimal Taylor conservative minority operation $g$ by Lemma~\ref{lem:minimalTaylorminorityisminority}. We just argued that if there are distinct three-element $B_1$ and $B_2$ along with distinct $1 \leq i \neq j \leq 3$ so that $g$ restricted to $B_1$ and $B_2$, respectively, produces the $i$-th and $j$-th projection minority, then $g$ cannot be minimal Taylor. Since the projection minority operations $p_3^{(1)}$, $p_3^{(2)}$, and $p_3^{(3)}$ can each be obtained from the others by a permutation of variables, it follows that $\mathbf{A}$ must already be a minimal Taylor algebra. 

    The proof of \emph{2.} is easy. Suppose that $\mathbf{A}$ is a minimal Taylor conservative minority algebra. We apply \emph{1.}\ and note that if $\mathbf{A}$ has a three-element subalgebra isomorphic to $\mathbf{C}_3$, then it is obviously not hereditarily simple. If there is no such three-element subalgebra, then again by \emph{1.}\ we have that there exists $1\leq i \leq 3$ so that $f$ is isomorphic to $\mathbf{P}^{(i)}_n$. 
\end{proof}

\begin{remark}\label{rem:onlyoneTypeOfMinTaylorConsMin}
In view of \emph{1.}\ of Proposition~\ref{prop:MinTayConsMalChar}, we can now speak about the \emph{type} of a minimal Taylor conservative minority algebra $\mathbf{A}$, which is the unique $1\leq i \leq 3$ such that all three-element subalgebras of $\mathbf{A}$ which are isomorphic to one of $\mathbf{P}^{(1)}_2$, $\mathbf{P}^{(2)}_3$, or $\mathbf{P}^{(3)}_3$ are isomorphic to $\mathbf{P}^{(i)}_3$. However, we note that creating this distinction between algebras makes no difference from a clone or relational point of view. Henceforth, we assume that all minimal Taylor conservative minority algebras are of type $i=2$ and use the notation $\mathbf{P}_n$ instead of $\mathbf{P}_n^{(i)}$.
\end{remark}

For the remainder of this section, we specialize some of the tree machinery from earlier to the context of minimal Taylor conservative minority algebras. 
\begin{corollary}\label{cor:representwithtreealgebrasMinTaylor}
Let $\fA = (A;m)$ be a minimal Taylor conservative minority algebra. There exists a simple reduced full conservative minority tree $\mathcal{T}$ over a collection $S$ of pairwise nonisomorphic minimal Taylor conservative minority algebras such that $\mathcal{T}$ represents $\mathbf{A}$. 
\end{corollary}
\begin{proof}
   This is a straightforward specialization of the proof of Theorem~\ref{thm:representwithtreealgebras}. If $\mathbf{A}$ is minimal Taylor, then every block congruence and quotient used to construct the representing tree $\mathcal{T}$ will also be minimal Taylor. This follows from Lemma~\ref{lem:minTaylorSubAlg} and our earlier observation that quotients are already represented as subalgebras in the conservative setting (cf. Lemma~\ref{lem:quotientsaresubalgebras}). 
\end{proof}

\begin{corollary}\label{cor:minTaylorConsMinTreeConstOthers}
    Let $\mathfrak{A}$ be a finite structure with a conservative Maltsev polymorphism. There exists a simple reduced full conservative minority tree  $\mathcal{T} = (T, \leq, S)$, where $S$ is a collection of pairwise nonisomorphic minimal Taylor conservative minority algebras, such that $\mathfrak{S}_{\mathbf{A}_\mathcal{T}}$ pp-constructs $\mathfrak{A}$.
\end{corollary}

\begin{proof}
    This is an immediate consequence of Corollaries~\ref{cor:minimalTconstructsCM}and~\ref{cor:representwithtreealgebrasMinTaylor}.
\end{proof}

Actually, we can sharpen the pp-construction described by Corollary~\ref{cor:minTaylorConsMinTreeConstOthers} so that the conservative minority tree is a sapling. We formulate this statement separately, because it serves as an important link in our final recursive pp-construction and there it is important to keep track of the local algebras in use. 

\begin{lemma}\label{lem:SaplingTreesConstructGeneralOnes}
    Let $\mathcal{T}$ be a conservative minority tree over $S = \{\mathbf{A}_1, \dots, \mathbf{A}_s \}$. There exists a sapling conservative minority tree $\mathcal{T}'$ over the same $S$ such that $\mathfrak{S}_{\mathbf{A}_{\mathcal{T}'}}$ pp-constructs $\mathfrak{S}_{\mathbf{A}_\mathcal{T}}$. If $\mathcal{T}$ is full, then so is $\mathcal{T}'$, and if $S$ is also a collection of simple algebras, then $\mathbf{A}_{\mathcal{T}'}$ is a subdirectly irreducible algebra. 
\end{lemma}

\begin{proof}
    It suffices to show that $\mathbf{A}_\mathcal{T}$ belongs to the pseudo-variety generated by $\{\mathbf{A}_{\mathcal{T}'}\}$,  because then there is a minion homomorphism from $\Pol(\mathfrak{S}_{\mathbf{A}_\mathcal{T'}}) = \Clo(\mathbf{A}_{\mathcal{T}'})$ 
    to $\Pol(\mathfrak{S}_{\mathbf{A}_{\mathcal{T}}}) = \Clo(\mathbf{A}_\mathcal{T})$ (see Remark~\ref{rem:polclo} and Example~\ref{expl:hsp})
    and hence 
    the statement follows from Theorem~\ref{thm:pp-constr}. We use the following two key facts.
    \begin{itemize}
     \item $\mathbf{A}_\mathcal{T}$ is isomorphic to a subdirect product of its subdirectly irreducible homomorphic images (see~\cite[Theorem 8.6]{BS}).
     \item Any homomorphic image of a conservative minority algebra $\mathbf{A}$ is isomorphic to a subalgebra of $\mathbf{A}$ (see Lemma~\ref{lem:quotientsaresubalgebras}).
    \end{itemize}
    Therefore, the appropriate $\mathcal{T}'$ is one for which $\mathbf{A}_{\mathcal{T}'}$ contains as a subalgebra each subdirectly irreducible subalgebra of $\mathbf{A}_\mathcal{T}$. We know, for every subdirectly irreducible $\mathbf{D} \leq \mathbf{A}_\mathcal{T}$ be subdirectly irreducible, that $\mathcal{T}_D$ is a sapling. It is easy to see that such $\mathcal{T}_D$ can be extended to a maximal sapling $\mathcal{T}_D^M$ (see Section~\ref{sec:saplings}) over $S$ such that $\mathbf{A}_{\mathcal{T}_D}$ is isomorphic to a subalgebra of $\mathbf{A}_{\mathcal{T}_D^M}$. Therefore, we let $\mathcal{T}''$ be the conservative minority tree obtained by arranging the maximal saplings of $\mathcal{T}$ in some sequence, attaching the root of one to the vertex of another so that a sapling is produced. There are many ways to do this, but the details are not important, since the obtained $\mathcal{T}'$ will contain all saplings which represent subdirectly irreducible algebras isomorphic to subdirectly irreducible subalgebras of $\mathbf{A}_\mathcal{T}$. To finish, we let $\mathcal{T}'$ be the  conservative minority tree obtained from $\mathcal{T}''$ by contracting all paths which have no branching (see Section~\ref{sec:treetransformations} for details on this transformation). The remaining statements are straightforward consequences of the definitions and Lemma~\ref{lem:saplingsrepresentsubdirectirreducible}.
\end{proof}

\begin{figure}
    \centering
    \includegraphics[width=0.9\linewidth]{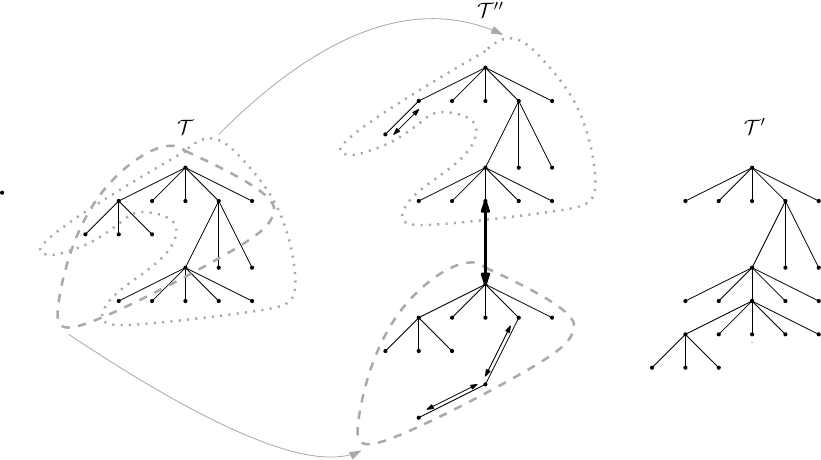}
    \caption{Concatenation of saplings}
    \label{fig:concatenateSaplings}
\end{figure}

We include Figure~\ref{fig:concatenateSaplings} to illustrate the procedure outlined in the proof of Lemma~\ref{lem:SaplingTreesConstructGeneralOnes}. On the left side of the figure we show some the underlying tree structure of some conservative minority tree $\mathcal{T}$. This particular $\mathcal{T}$ has two maximal saplings which could potentially represent nonisomorphic algebras. These are shown in the middle, with differently patterned outlines indicating from where in $\mathcal{T}$ they originate. Also depicted in the middle of the figure is a double arrow indicating where these two saplings will be attached and some shorter double arrows indicating which vertices are trivial and can be discarded. The sapling $\mathcal{T}'$ is shown on the right.

\subsection{The class of algebras $\mathbf{P}^{n,k}$}\label{sec:algebrasP_nk}
For the remainder of the article we will focus on subdirectly irreducible conservative minority algebras which are represented by projection conservative minority trees. In this subsection we define the algebras $\mathbf{P}^{n,k}$ for $n \geq 2$ and $k \geq 0$. In the next subsection we will define structures $\mathfrak{P}^{n,k}$, so that $\Inv(\mathbf{P}^{n,k}) = \Inv(\Pol(\mathfrak{P}^{n,k}))$. The collection of these structures is important for two reasons. The first is the main result of this paper: for all $n \geq 2$ and $k \geq 0$, there exists a $\oplus$L algorithm for $\Csp(\mathfrak{P}^{n,k})$ (Corollary~\ref{cor:OplusAlgforPnk}). The second reason this class of structures is interesting is that it primitively positively constructs the class of all structures which admit a conservative Maltsev polymorphism (Theorem~\ref{thm:DL}). Taken together, these two facts imply that there exists a $\oplus$L algorithm for the CSP of any structure which admits a conservative Maltsev polymorphism.

\begin{definition}\label{def:projminTree}
    Let $n \geq 2$ and let $k \geq 1$. We let $\mathcal{P}_{n,k}$ be the conservative minority tree over $S= \{\mathbf{P}_n\}$ whose underlying tree vertices $P_{n,k}$ consists of the following sequences:
    \[
      P_{n,k}:= \{ (0^{j_1})^\frown (l^{j_2}) : l \in \{0, \dots, n-1 \}, 0 \leq j_1 \leq k-1, \text{ and } 0\leq j_2 \leq 1\}.
    \]
    We then define the algebra $\mathbf{P}^{n,k}$ to be $\mathbf{A}_{\mathcal{P}_{n,k}}$. 
    
    For technical reasons, we also define a trivial conservative minority tree and the algebra it represents. Set $\mathcal{P}_{n,0}$ to be the height zero tree with single vertex $\epsilon$ and $\mathbf{P}^{n,0} = \fA_{\mathcal{P}_{n,0}}$ to be the trivial one element algebra.
\end{definition}

\begin{remark}
Note that under this labeling convention, the projection minority algebra $\mathbf{P}_{n}$ is isomorphic to $\mathbf{P}^{n,1}$, but the two algebras are formally not equal because $\mathbf{P}^{n,1}$ has a domain which consists of length one sequences of elements of the domain of $\mathbf{P}_{n}$. We also remark that the distinction between subscripts and superscripts is intentional and meant to troubleshoot the following annoyance. Formally, we are dealing with algebras represented by conservative minority trees, i.e., algebras $\fA_{\mathcal{T}}$ for a conservative minority tree $\mathcal{T}$. It is customary in the literature to label the domain of an algebra $\fA$ as $A$. If we defined $\mathbf{P}_{n,k} = \fA_{\mathcal{P}_{n,k}}$, then the domain would naturally be referred to as $P_{n,k}$, but this label is already implicitly taken by the underlying set of the conservative minority tree $\mathcal{P}_{n,k}$. Hence, for the algebras we put this information in a superscript, so that $P^{n,k} = A_{\mathcal{P}_{n,k}}$ and there is no conflict. 
\end{remark}
The first few of the trees and algebras described in Definition~\ref{def:projminTree} for the value $n=3$ are depicted in Figure \ref{fig:hereditarilysimpletrees}. 
\begin{figure}
    \centering
    \includegraphics[width=0.75\linewidth]{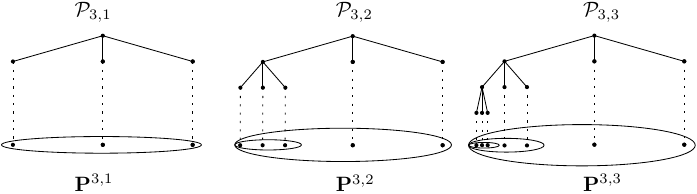}
    \caption{The first few $\mathcal{P}_{n,k}$ and $\mathbf{P}^{n,k}$ for $n=3$.}
    \label{fig:hereditarilysimpletrees}
\end{figure}
In view of the relations listed in Theorem \ref{thm:consminorityrelbasis}, it should come as no surprise that our next step is to analyze the subdirectly irreducible subalgebras of $\mathbf{P}_{n,k}$, the isomorphisms between them, and their transversal endomorphism graphs. It is to this task that we devote the remainder of this subsection. 

\begin{remark}\label{rem:subalgebrasareSI}
Notice that $\mathbf{B}$ is subdirectly irreducible, for every $\mathbf{B} \leq \mathbf{P}^{n,k}$. Indeed, since $\mathcal{P}_{n,k}$ is a sapling, it follows that the subtree $(\mathcal{P}^{n,k})_B$ representing $\mathbf{B}$ (see Lemma~\ref{lem:subalgebrarepresentingtree}) is also a sapling, so we can apply the third item of Lemma~\ref{lem:saplingsrepresentsubdirectirreducible} to deduce that $\mathbf{B}$ is subdirectly irreducible. 
Even though it is redundant, we still emphasize that a subalgebra $\mathbf{B} \leq \mathbf{P}^{n,k}$ is subdirectly irreducible in what follows.
\end{remark}

Of particular importance are those (subdirectly irreducible) subalgebras of $\mathbf{P}^{n,k}$  which are full, i.e., those subalgebras represented by full saplings of $\mathcal{P}^{n,k}$ (see Lemma \ref{lem:hsimplefullsaplingssuffice}). Therefore, we develop some notation for their specification. Such subalgebras are almost completely determined by their trunks, 
which are easily seen to be subsets of 
the trunk of $\mathbf{P}^{n,k}$,
\begin{align*}
\Trunk(\mathbf{P}^{n,k}) & = \{\epsilon, (0), \dots, (\underbrace{0, \dots, 0}_{k-1}) \}\\
                         &= \{\epsilon, 0^1, \dots, 0^{k-1} \} \text{ (here we remind the reader of this shorter notation).}
\end{align*}
Given some $X \subseteq \Trunk(\mathbf{P}^{n,k})$, there is one additional piece of data needed to determine a (subdirectly irreducible) subalgebra $ \fA \leq \mathbf{P}^{n,k}$ that is represented by a full sapling, because the minimal element of $X$ may have a child $v \in P_{n,k}$ which is not a leaf.  In this case, we must choose among the elements of $P^{n,k} \cap \{ w: w \leq v \}$ a particular leaf $w$ to include in the domain of $\fA$. Therefore, we denote by $\mathcal{F}^{n,k}_{(X,w)}$ the full sapling of $\mathcal{P}_{n,k}$ determined by the data $X$ and $w$. For convenience, we denote by $\mathbf{P}^{n,k}_{(X,w)}$ the algebra $\mathbf{A}_{\mathcal{F}^{n,k}_{(X,w)}}$ which is represented by $\mathcal{F}^{n,k}_{(X,w)}$ and refer to its domain as $P^{n,k}_{(X,w)}$. The reader can consult Figure \ref{fig:fullsubalgebra} for an example.

\begin{figure}
    \centering
    \includegraphics[width=0.65\linewidth]{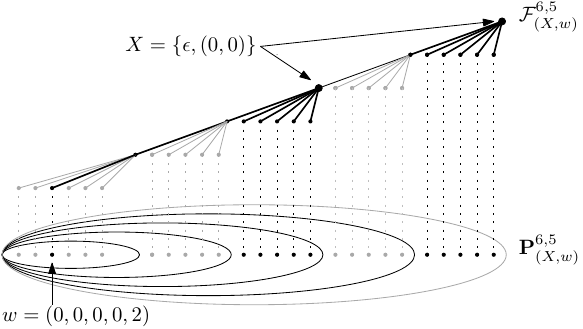}
    \caption{A full subalgebra $\mathbf{P}^{6,5}_{(X,w)} \leq \mathbf{P}^{6,5}$. }
    \label{fig:fullsubalgebra}
\end{figure}

\begin{remark}\label{rem:subalgebrasofP_{n,k}} Notice that, in general, there will be vertices $v$ of the sapling $\mathcal{F}^{n,k}_{(X,w)}$ such that $|C_v| = 1$. In subsection \ref{sec:treetransformations}, we described how the identification of such a vertex with its only child does not change the represented algebra. Performing this procedure iteratively on $\mathcal{F}^{n,k}_{X,w}$, we obtain a simple reduced conservative minority tree which is easily seen to be isomorphic to $\mathcal{P}_{n, |X|}$. It follows that $\mathbf{P}^{n,k}_{(X,w)}$ is isomorphic to $\mathbf{P}^{n,|X|}$.
\end{remark}

We name the embedding $\psi^{n,k}_{(X,w)} \colon \mathbf{P}^{n,|X|} \to \mathbf{P}^{n,k}$ whose image is equal to $\mathbf{P}^{n,k}_{(X,w)}$. Intuitively, this embedding acts like the identity mapping when restricted to $\mathbf{C}_v$ for each $v\in \Trunk(\mathbf{P}^{n, |X|})$, but this intuition is imprecise regarding the leaf $w$. Formally, we set 
\[
\psi^{n,k}_{(X,w)}( (0, \dots, 0)) = w
\]
and require that for every $v \in P^{n,|X|} \setminus \{(0, \dots, 0)\}$ of the form 
$(0^i, c)$ for some $i \in \{0,\dots,|X|-1\}$, 
there exists $j$ so that
\[
\psi^{n, k}_{(X,w)} ( v) = (0^j, c ).
\]
We now define $T^{n,k}_{(X,w)} \leq \mathbf{P}^{n,|X|} \times \mathbf{P}^{n,k}$ to be the graph of $\psi^{n,k}_{(X,w)}$.
Given isomorphic $\mathbf{P}^{n,k}_{(X,w)}$ and $\mathbf{P}^{n,k}_{(X',w')}$, we define the isomorphism graph $T^{n,k}_{(X,w),(X',w')} \leq (\mathbf{P}^{n,k})^2$ between $\mathbf{P}^{n,k}_{(X,w)}$ and $\mathbf{P}^{n,k}_{(X',w')}$ as $(T^{n,k}_{(X,w)})^{-1} \circ T^{n,k}_{(X',w')}$. Hence, the domains $P^{n,k}_{(X,w)}$ and $P^{n,k}_{(X',w')}$ considered as unary relations are defined by the formulas $\exists y (T^{n,k}_{(X,w),(X',w')}(x,y))$ and $\exists x (T^{n,k}_{(X,w),(X',w')}(x,y))$, respectively.

We now indicate some ways to abbreviate the notation we just introduced.
If all children of the minimal element of $X$
are leaves 
or if $w= (0, \dots, 0)$, we simply write $\mathcal{F}_X^{n,k}$ instead of $\mathcal{F}_{(X,w)}^{n,k}$. This abbreviation propagates through all other objects. For example, we could write $T^{n,k}_{X,X'}$ instead of $T^{n,k}_{(X,w),(X',w')}$ whenever both $w = w' = 0^k$. In the special case when $|X| = k-1$, we allow ourselves an even greater abuse of notation and use the unique $0 \leq i \leq k-1$ so that $0^i$ is excluded from $X$ to identify the above objects. That is, we write 
\begin{itemize}
    \item
$\mathbf{P}^{n,k}_{i}$ instead of $\mathbf{P}^{n,k}_{\{\epsilon, 0^1, \dots, 0^{k-1} \} \setminus \{ 0^i \} }$ and $P^{n,k}_i$ for the domain of $\mathbf{P}^{n,k}_i$, 
\item $\psi^{n,k}_{i}$ instead of $\psi^{n,k}_{\{\epsilon, 0^1, \dots, 0^{k-1} \} \setminus \{ 0^i \}}$, and 
\item
$T^{n,k}_{i,j}$ to denote the canonical isomorphism graph between $\mathbf{P}^{n,k}_{i}$ and $\mathbf{P}^{n,k}_{j}$. 
\end{itemize}
We distinguish $T^{n,k}_{0,k-1}$ and call it the \emph{canonical transfer} relation.  The meaning of these definitions is more easily apprehended with a picture and the reader can consult Figure \ref{fig:transferrelation} for some examples.

\begin{remark}
    We emphasize how some of the objects behave in the case $k=1$. Notice that $\mathcal{F}^{n,1}_0$ is, according to our notation, the sapling of $\mathcal{P}_{n,1}$ whose trunk is equal to $\Trunk(\mathcal{P}_{n,1}) \setminus \{ 0^0\}$. Since $\Trunk(\mathcal{P}_{n,1}) = \{\epsilon\}$ and $0^0 = \epsilon$, this means that $\mathcal{F}^{n,1}_0$ has empty trunk, which in view of Remark~\ref{rem:emptytrunk} means that it represents a trivial one element algebra. Since no information about a leaf $w$ is supplied, we apply the convention that $w= (0)$. Hence, $\mathcal{F}^{n,1}$ has an underlying tree consisting of a root $\epsilon$ and a leaf $(0)$. Clearly, $\mathcal{F}^{n,1}_0$ represents the trivial one element algebra, which according to our notation is denoted $\mathbf{P}^{n,1}_0$. Evidently, $\psi^{n,1}_0 \colon \mathbf{P}^{n,0} \to \mathbf{P}^{n,1}$ is an embedding with image $P^{n,1}_0 = \{(0)\}$ and the canonical transfer relation $T^{n,1}_{0,0}$ is the binary relation consisting of the single edge $((0), (0))$.
\end{remark}

\begin{figure}
    \centering
    \includegraphics[width=0.5\linewidth]{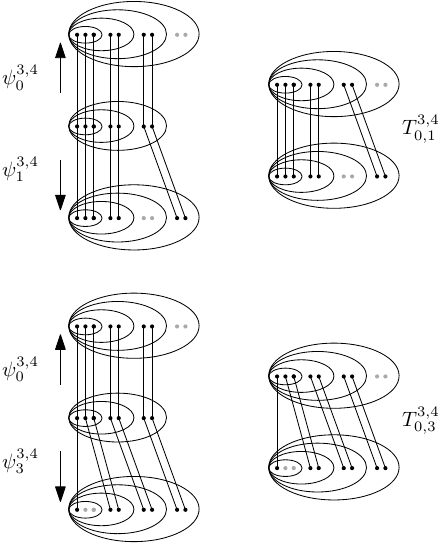}
    \caption{Examples for $n=3, k=4$ of $\psi^{n,k}_i$ and $T^{n,k}_{i,j}$. A gray dot means that the element does not belong to $P^{n,k}_i$.  }
    \label{fig:transferrelation}
\end{figure}

\begin{lemma}\label{lem:hsimplefullsaplingssuffice}
    Let $n \geq 2$ and $k \geq 1$. Let $\mathbf{B}_1, \mathbf{B}_2 \leq \mathbf{P}^{n,k}$ be two isomorphic (subdirectly irreducible) subalgebras and let $S \leq \mathbf{B}_1 \times \mathbf{B}_2$ be the graph of an isomorphism. Then there exist full (subdirectly irreducible) 
    subalgebras $\mathbf{P}^{n,k}_{(X_1,w_1)}, \mathbf{P}^{n,k}_{(X_2, w_2)} \leq \mathbf{P}^{n,k}$ 
    into which $\mathbf{B}_1$ and $\mathbf{B}_2$ respectively embed
     so that 
    \begin{enumerate}
        \item there exists a full isomorphism graph $S' \leq \mathbf{P}^{n,k}_{(X_1,w_1)} \times \mathbf{P}^{n,k}_{(X_2, w_2)}$, and 
        \item $S$ is equal to the restriction of $S'$ to $B_1 \times B_2$.
    \end{enumerate}
\end{lemma}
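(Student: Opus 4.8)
The plan is to reduce the statement, via the parametrisation of full subalgebras recalled above, to a single clean extension property of the canonical algebras $\mathbf{P}^{n,h}$, and then to prove that property by induction on $h$. For the reduction, for $i\in\{1,2\}$ let $\mathcal{S}_i:=(\mathcal{P}_{n,k})_{B_i}$ be the subtree of $\mathcal{P}_{n,k}$ representing $\mathbf{B}_i$ (Lemma~\ref{lem:subalgebrarepresentingtree}); by Remark~\ref{rem:subtreesofhsimpletreesarehsimple} it is a projection conservative minority tree, and since $\mathbf{B}_i$ is subdirectly irreducible (Remark~\ref{rem:subalgebrasareSI}) it is a sapling (Lemma~\ref{lem:saplingsrepresentsubdirectirreducible}). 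Put $X_i:=\Trunk(\mathcal{S}_i)$; as $\mathcal{P}_{n,k}$ is reduced its non-leaves are exactly $\Trunk(\mathbf{P}^{n,k})$, so $X_i\subseteq\Trunk(\mathbf{P}^{n,k})$, and since height is an isomorphism invariant of subdirectly irreducible conservative minority algebras and $\mathbf{B}_1\cong\mathbf{B}_2$ we get $|X_1|=|X_2|=:h$. The vertices of $\mathcal{S}_i$ lying strictly below $\min X_i$ form at most one branch, and I take $w_i$ to be its bottom leaf (any leaf of $\mathcal{P}_{n,k}$ below $\min X_i$ if there is no such branch; the unique element of $\mathbf{B}_i$ if $X_i=\emptyset$). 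Then $\mathcal{S}_i$ is a subtree of the full sapling $\mathcal{F}^{n,k}_{(X_i,w_i)}$, so by conservativity $\mathbf{B}_i\le\mathbf{P}^{n,k}_{(X_i,w_i)}$, and $\Trunk(\mathbf{B}_i)=X_i=\Trunk(\mathbf{P}^{n,k}_{(X_i,w_i)})$, i.e.\ $\mathbf{B}_i$ has the same height $h$ as the full subalgebra containing it. Since $\psi^{n,k}_{(X_i,w_i)}\colon\mathbf{P}^{n,h}\to\mathbf{P}^{n,k}$ is an isomorphism onto $\mathbf{P}^{n,k}_{(X_i,w_i)}$ (recall $\mathbf{P}^{n,k}_{(X,w)}\cong\mathbf{P}^{n,|X|}$), pulling $\mathbf{B}_i$ and $S$ back along it produces height-$h$ subalgebras $\mathbf{C}_i\le\mathbf{P}^{n,h}$ and an isomorphism $\sigma\colon\mathbf{C}_1\to\mathbf{C}_2$.

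The key step is then the following \emph{Extension Lemma}: every isomorphism between two height-$h$ subalgebras of $\mathbf{P}^{n,h}$ extends to an automorphism of $\mathbf{P}^{n,h}$. Granting it, choose $\hat\sigma\in\operatorname{Aut}(\mathbf{P}^{n,h})$ extending $\sigma$ and set $S':=\psi^{n,k}_{(X_2,w_2)}\circ\hat\sigma\circ\bigl(\psi^{n,k}_{(X_1,w_1)}\bigr)^{-1}$. This is an isomorphism between the full subalgebras $\mathbf{P}^{n,k}_{(X_1,w_1)}$ and $\mathbf{P}^{n,k}_{(X_2,w_2)}$, hence a full isomorphism graph, and unwinding the definitions (using that $\hat\sigma$ agrees with $\sigma$ on $C_1$) gives $S'\cap(B_1\times B_2)=S$, which is conclusion~(2) of the lemma; conclusion~(1) is witnessed by $S'$ itself. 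The hypothesis ``height $h$'' is exactly what we have and is genuinely needed: an isomorphism between smaller-height subalgebras of $\mathbf{P}^{n,h}$ that lie in different fibres of the monolith need not extend to an automorphism of $\mathbf{P}^{n,h}$.

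For the Extension Lemma I would induct on $h$. For $h\le 1$, $\mathbf{P}^{n,1}\cong\mathbf{P}_n$, every permutation of $\{0,\dots,n-1\}$ is an automorphism of $\mathbf{P}_n$, and every bijection between two subsets of equal size is an isomorphism of the induced projection minority algebras and extends to such a permutation. For $h\ge 2$, I would use that, up to isomorphism, there is a surjection $\rho\colon\mathbf{P}^{n,h}\to\mathbf{P}_n$ with $\ker\rho=\sim_{0^1}$, with $\rho^{-1}(0)$ the domain of a subalgebra isomorphic to $\mathbf{P}^{n,h-1}$ and each fibre $\rho^{-1}(c)$ ($c\ne 0$) a singleton; this follows from the recursive definition of $\mathcal{P}_{n,k}$ together with Lemmas~\ref{lem:treerepresentationofquotients} and~\ref{lem:simplereducedrepresentationsareunique}. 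Given an isomorphism $\sigma\colon\mathbf{C}_1\to\mathbf{C}_2$ of height-$h$ subalgebras, each $\mathbf{C}_i$ is subdirectly irreducible with linearly ordered congruences (Proposition~\ref{prop:subdirectlyirreduciblearelinearchains}), and having height $h$ this chain is $0\subsetneq(\sim_{0^{h-1}}\cap C_i^2)\subsetneq\cdots\subsetneq(\sim_{0^1}\cap C_i^2)\subsetneq 1$ by Lemma~\ref{lemma:congruencesoftreealgebras}; an isomorphism matches the two chains position by position, so $\sigma(\sim_{0^1}\cap C_1^2)=\sim_{0^1}\cap C_2^2$, equivalently $\sigma$ carries $\rho^{-1}(0)\cap C_1$ onto $\rho^{-1}(0)\cap C_2$. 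Hence $\sigma$ splits into an isomorphism $\sigma_0$ between $\rho^{-1}(0)\cap C_1$ and $\rho^{-1}(0)\cap C_2$ — a pair of height-$(h-1)$ subalgebras of $\mathbf{P}^{n,h-1}$ after transport through $\rho^{-1}(0)\cong\mathbf{P}^{n,h-1}$ — together with a bijection between $\rho(C_1)\setminus\{0\}$ and $\rho(C_2)\setminus\{0\}$. I would extend $\sigma_0$ to an automorphism of $\mathbf{P}^{n,h-1}$ by the induction hypothesis, extend the bijection to a permutation of $\{1,\dots,n-1\}$ (so that, with $0\mapsto 0$, it becomes an automorphism of $\mathbf{P}_n$), and glue the two: the permutation acting on the singleton fibres, and the extended $\sigma_0$ on the fibre over $0$. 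Since the leaf operation of $\mathbf{P}^{n,h}$ on a triple is determined by the $\mathbf{P}_n$-structure of its $\rho$-values when these are not all equal, and otherwise by the structure inside a single fibre, this glued map is an automorphism of $\mathbf{P}^{n,h}$ extending $\sigma$.

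The main obstacle is twofold. First is the bookkeeping of the reduction: one must choose $(X_i,w_i)$ so that simultaneously $\mathbf{B}_i$ embeds into $\mathbf{P}^{n,k}_{(X_i,w_i)}$ with the \emph{same} height and $\mathbf{P}^{n,k}_{(X_1,w_1)}\cong\mathbf{P}^{n,k}_{(X_2,w_2)}$, which requires the branch-tree formalism and the height invariant. Second, and conceptually central, is the step inside the induction where the linear order of the congruences of a subdirectly irreducible conservative minority algebra forces $\sigma$ to respect the top projection $\rho$; this is the only place where subdirect irreducibility, rather than plain conservativity, is used, and it is precisely what makes the fibrewise induction go through.
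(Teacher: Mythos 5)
Your proposal is correct and follows essentially the same route as the paper: reduce to the case of subalgebras whose trunk equals the whole trunk of $\mathbf{P}^{n,h}$ by transporting $S$ through the canonical embeddings $\psi^{n,k}_{(X_i,w_i)}$, then extend the resulting isomorphism to an automorphism of $\mathbf{P}^{n,h}$ using the fact that the linearly ordered congruence chain forces the isomorphism to respect the level structure. The only difference is organizational — the paper extends all levels $C_{0^i}$ to automorphisms of $\mathbf{P}_n$ simultaneously (each chosen to fix $0^{i+1}$) and glues once, whereas you peel off the top level and recurse on height — but the underlying argument is the same.
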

\begin{proof}
We first establish the following claim.

\begin{claim}\label{SIclaim:specialcase}
Lemma~\ref{lem:hsimplefullsaplingssuffice} holds for the special case when $\Trunk(\mathbf{B}_1) = \Trunk(\mathbf{B_2}) = \Trunk(\mathbf{P}^{n,k})$. 
\end{claim}

\begin{claimproof}
    Suppose we are given the data in the assumptions of Lemma~\ref{lem:hsimplefullsaplingssuffice} along with the assumption of the claim, which is that $\Trunk(\mathbf{B}_1) = \Trunk( \mathbf{B}_2) = \Trunk(\mathbf{P}^{n,k})$ are all equal, evidently to the set 
    $
    \{\epsilon, 0^1, \dots, 0^{k-1} \}.
    $
    Let $\zeta \colon \mathbf{B}_1 \to \mathbf{B}_2$ be an isomorphism and let $S \leq \mathbf{B}_1 \times \mathbf{B}_2$ be the graph of $\zeta$. First, we notice that $\zeta$ preserves the height of all elements of $\mathbf{B}_1$ (the height of an element is equal to its tuple length, or equivalently, its height in $\mathcal{P}_{n,k}$). Specifically, we mean that for every $ 0^{i\frown} (a) \in B_1 $, there exists $b \in \{0, \dots, n \}$ so that 
    $
    \zeta (0^i, a) = (0^i, b)
    $. 
    Indeed, the algebra $\mathbf{P}^{n,k}$ is subdirectly irreducible, and its congruences are the block congruences 
    $\sim_\epsilon, \sim_{(0)}, \dots, \sim_{0^{k-1}}$, 
    which are linearly ordered by inclusion. Observe that the height of an element of $\mathbf{P}^{n,k}$ is equal to the number of nontrivial classes of block congruences to which it belongs, i.e., leaves that belong to $C_\epsilon$ are only collapsed by $\sim_{\epsilon}$, leaves that belong to $C_{(0)}$ are collapsed by $\sim_{\epsilon}$ and $\sim_{(0)}$, and so on. The current assumption that $\Trunk(\mathbf{B}_1) = \Trunk(\mathbf{B_2}) = \Trunk(\mathbf{P}^{n,k})$ implies that each has a congruence lattice isomorphic to that of $\mathbf{P}^{n,k}$. Since isomorphisms preserve the congruence lattice of an algebra, we conclude that $\zeta$ preserves the height of all elements of $\mathbf{B}_1$. 

    Therefore, we define $\zeta_i$ to be the restriction of $\zeta$ to $C_{0^i}$, for each $0 \leq i \leq k-1$. Then each $\zeta_i$ is a partially defined injective mapping on $C_{0^i}$. Since each every local algebra of $\mathcal{T}_{n,k}$ is isomorphic to $\mathbf{P}_n$, it follows that each $\zeta_i$ can be extended to some automorphism $\zeta_i'$ of $\mathbf{C}_{0^i}$. Furthermore, $\zeta_i'$ can be chosen so that it fixes $0^{i+1}$ whenever $0 \leq i < k-1$, because these elements are not leaves of $\mathcal{P}_{n,k}$ and therefore belong to neither $B_1$ nor $B_2$. For each $1 \leq i < k-1$, let $\zeta_i^*$ be the restriction of $\zeta_i'$ to $C_{0^i} \setminus \{0^{i+1}\}$. We now define $\zeta' = \zeta_{k-1}' \cup \bigcup_{0 \leq i <k-1} \zeta_i^*$ and $S' \leq \mathbf{P}^{n,k} \times \mathbf{P}^{n,k}$ to be the graph of $\zeta'$. It is straightforward to check that $S'$ has the desired properties. 
\end{claimproof}

Now we will prove the lemma. Take $\mathbf{B}_1$, $\mathbf{B}_2$, and $S$ as given in the statement and let $X_1 = \Trunk(\mathbf{B}_1)$ and $X_2 = \Trunk(\mathbf{B}_2)$.  There exist $w_1, w_2 \in \mathcal{P}_{n,k}$ such that $\mathbf{B}_1 \leq \mathbf{P}^{n,k}_{(X_1, w_1)}$ and $\mathbf{B}_2 \leq \mathbf{P}^{n,k}_{(X_2, w_2)}$. Let $j = |X_1| = |X_2|$. From our earlier Remark \ref{rem:subalgebrasofP_{n,k}}, we know that both $\mathbf{P}^{n,k}_{(X_1, w_1)}$ and $\mathbf{P}^{n,k}_{(X_2, w_2)}$  are isomorphic to $\mathbf{P}^{n,j}$. Let $\psi^{n,k}_{(X_1,w_1)}$ and $\psi^{n,k}_{(X_2, w_2)}$ be the canonical embeddings that we introduced before the lemma statement. Then the graph $\tau = \psi^{n,k}_{(X_1,w_1)} \circ S \circ (\psi^{n,k}_{(X_2,w_2)})^{-1} \leq \mathbf{P}^{n,j} \times \mathbf{P}^{n,j}$ is the graph of an isomorphism between (subdirectly irreducible) subalgebras of $\mathbf{P}^{n,j}$ whose trunks are equal to $\Trunk(\mathbf{P}^{n,j})$. Applying Claim \ref{SIclaim:specialcase}, there exists an automorphism graph $\tau' \leq \mathbf{P}^{n,j} \times \mathbf{P}^{n,j}$ extending $\tau$. Now define $S' = (\psi^{n,k}_{(X_1,w_1)})^{-1} \circ \tau' \circ \psi^{n,k}_{(X_2,w_2)} \leq \mathbf{P}^{n,k}$. It is straightforward to check that $S'$ satisfies the properties stated in the lemma.
\end{proof}

\begin{figure}
    \centering
    \includegraphics[width=0.80\linewidth]{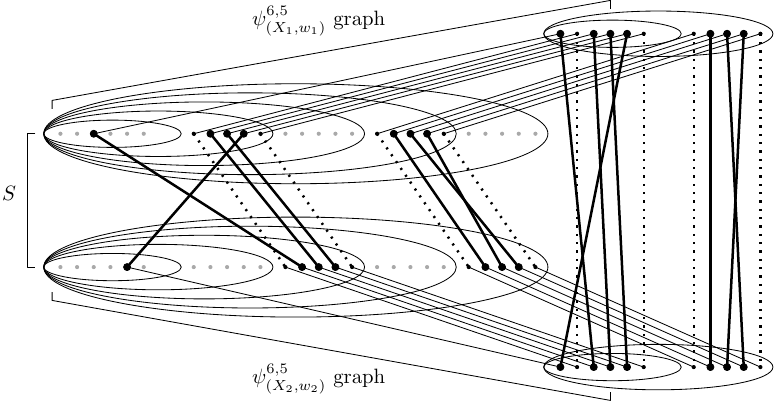}
    \caption{Extending an isomorphism graph to a full isomorphism graph between subalgebras of ${\mathbf P}^{6,5}$.}
    \label{fig:extendlocalisomorphism}
\end{figure}

The reader can consult Figure \ref{fig:extendlocalisomorphism} for an example which illustrates some of the main ideas in the proof of Lemma \ref{lem:hsimplefullsaplingssuffice}. Pictured there are four potato diagrams and some binary relations between them. The two larger potatoes on the left each represent the algebra $\mathbf{P}^{6,5}$, where dots represent elements and closed regions represent nontrivial classes of block congruences. The large black dots represent the elements of (subdirectly irreducible) subalgebras $\mathbf{B}_1, \mathbf{B}_2 \leq \mathbf{P}^{6,5}$. The smaller black dots represent the additional elements of the (subdirectly irreducible) subalgebras $\mathbf{P}^{6,5}_{(X_1, w_1)}, \mathbf{P}^{6,5}_{(X_2, w_2)}$. All other elements are depicted in gray. In this example, both $\mathbf{B}_1$ and $\mathbf{B}_2$ are isomorphic to $\mathbf{P}^{6,2}$, two copies of which are depicted as the smaller potatoes on the right. The graphs of the canonical embeddings $\psi^{6,5}_{(X_1,w_1)}$ and $\psi^{6,5}_{(X_2,w_2)}$ are depicted with ordinary lines and the graph of $S$ is depicted with bold lines. The bold vertical edges on the right of the figure result from transferring the relation $S$ to a relation on $\mathbf{P}^{6,5}$ by means of the canonical embeddings. The dotted edges on the right of the figure then follow from Claim \ref{SIclaim:specialcase} and the canonical embeddings are again used to define the dotted edges on the left, which along with the original bold edges comprise $S'$.

Finally, we prove an analogue of Lemma \ref{lem:hsimplefullsaplingssuffice} for transversal endomorphism graphs instead of isomorphism graphs between subdirectly irreducible subalgebras. 

\begin{lemma}\label{lem:fullcongruencessuffice}
    Let $n \geq 2$ and $k \geq 0$. Let $S \leq \mathbf{B} \times \mathbf{C}$ be a transversal endomorphism graph, for (subdirectly irreducible) $\mathbf{C} \leq \mathbf{B} \leq \mathbf{A}$. Then there exists $\mathbf{C}' \leq \mathbf{P}^{n,k}$ with $\mathbf{C} \leq \mathbf{C}'$ and a transversal endomorphism graph $S' \leq \mathbf{P}^{n,k} \times \mathbf{C}'$ such that $S$ equals the restriction of $S'$ to the set $B \times C$.
\end{lemma}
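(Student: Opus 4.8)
The plan is to mimic the structure of the proof of Lemma~\ref{lem:hsimplefullsaplingssuffice}: first handle the case where the relevant trunks are as large as possible, then reduce the general case to it via the canonical embeddings $\psi^{n,k}_{(X,w)}$. Recall that by Remark~\ref{rem:subalgebrasareSI} every subalgebra of $\mathbf{P}^{n,k}$ is automatically subdirectly irreducible, so the parenthetical hypotheses are free. Write $\theta = S \circ S^{-1}$ for the kernel of the endomorphism $\mu\colon \mathbf B \to \mathbf C$ whose graph is $S$; since $S \subseteq \theta$, the congruence $\theta$ is a block congruence of $\mathbf B$ (by Corollary~\ref{cor:blocksarecongruences} applied inside $\mathbf B$, or directly: the condition $S\subseteq\theta$ together with functionality of a bijection graph forces $\mathbf C$ to be a transversal of $\theta$, and $\mathbf B$ conservative). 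Because $\mathbf B \leq \mathbf P^{n,k}$, the tree $(\mathcal P_{n,k})_B$ representing $\mathbf B$ is a (projection) sapling, its congruences are the $\sim_v$ for $v \in \Trunk(\mathbf B)$, linearly ordered; hence $\theta = \mathord{\sim_v}$ for a unique trunk vertex $v$ of $\mathbf B$, and $\mathbf C$ is obtained from $\mathbf B$ by choosing one leaf below $v$ to represent the nontrivial block $[v]$ and keeping all leaves not below $v$ fixed.

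\textbf{Step 1: the special case.} Suppose first that $\mathbf B$ is a full subalgebra, say $\mathbf B = \mathbf P^{n,k}_{(X,w)}$ with $X = \Trunk(\mathbf B)$, and that the kernel $\theta = \mathord{\sim_v}$ is the monolith, i.e.\ $v$ is the minimal element of $X$. (The general full case is then obtained by composing with the quotient maps for coarser block congruences, or — more simply — by induction on $|X\setminus\{$vertices below $v\}|$, one monolith quotient at a time.) In this situation $\mathbf C$ is obtained by collapsing the bottom local algebra $\mathbf C_v \cong \mathbf P_n$ to a single chosen leaf $w'$. I want to extend $\mu$ to an endomorphism $\mu'\colon \mathbf P^{n,k}\to \mathbf C'$. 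The point is that the bottom local projection-minority algebra $\mathbf P_n$ admits, for any choice of target leaf, a retraction-type map: collapse $C_v$ to $\{w'\}$ and act as the identity above. Formally, build $\mathbf C'$ by taking the full sapling $\mathcal F^{n,k}_{(X',w')}$ where $X'$ is $X$ with $v$'s block contracted, and define $\mu'$ leaf-by-leaf on $\mathbf P^{n,k}$: every leaf below $v$ maps to $w'$, every other leaf is fixed. One checks $\mu'$ is a homomorphism using the tree description of $m_{\mathcal T}$ exactly as in Lemma~\ref{lemma:congruencesoftreealgebras}: for any triple of leaves, either all three lie below $v$ (then all map to $w'$ and the output, being $\leq v$, also maps to $w'$), or the join $d=\bigvee\{a,b,c\}$ satisfies $v\leq d$, $v\neq d$, so $\mu'$ does not disturb the $d^\rightarrow$-children and $m_d$ is computed identically. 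Then $S' := \mathrm{graph}(\mu')\leq \mathbf P^{n,k}\times\mathbf C'$ is a transversal endomorphism graph (its kernel is $\sim_v$ on $\mathbf P^{n,k}$, and $\mathbf C'$, being a transversal of that block congruence and conservatively closed, is a subalgebra with $S'\subseteq \ker$), and its restriction to $B\times C$ is $S$ by construction since $\mu'|_B = \mu$.

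\textbf{Step 2: reduction to the special case.} For general $\mathbf B\leq\mathbf P^{n,k}$: let $X = \Trunk(\mathbf B)$ and pick $w$ with $\mathbf B \leq \mathbf P^{n,k}_{(X,w)}$, and let $j = |X|$, so $\psi := \psi^{n,k}_{(X,w)}\colon \mathbf P^{n,j}\to\mathbf P^{n,k}$ is the canonical embedding with image $\mathbf P^{n,k}_{(X,w)}$. Pull $S$ back along $\psi$ to a transversal endomorphism graph $\widehat S \leq \psi^{-1}(\mathbf B)\times\psi^{-1}(\mathbf C)$ inside $\mathbf P^{n,j}$; by Step~1 (applied inside $\mathbf P^{n,j}$ after first passing, if necessary, through intermediate monolith quotients to make the kernel a monolith, or just by the inductive form of Step~1) there is a transversal endomorphism graph $\widehat S'\leq \mathbf P^{n,j}\times\widehat{\mathbf C'}$ extending $\widehat S$. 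Now push forward: set $\mathbf C' := \psi'(\widehat{\mathbf C'})$ for a suitable canonical embedding $\psi'\colon \mathbf P^{n,|\widehat X'|}\to \mathbf P^{n,k}$ of $\widehat{\mathbf C'}$ into $\mathbf P^{n,k}$ (extending the inclusion $\mathbf C\hookrightarrow\mathbf P^{n,k}$), and $S' := \psi^{-1}\circ \widehat S'\circ \psi'\leq \mathbf P^{n,k}\times\mathbf C'$. One checks $S'$ is again a transversal endomorphism graph and restricts to $S$ on $B\times C$, just as in the last paragraph of the proof of Lemma~\ref{lem:hsimplefullsaplingssuffice}.

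\textbf{Main obstacle.} The routine parts (homomorphism check, kernel computation) are exactly parallel to earlier lemmas. The one genuinely delicate point is bookkeeping the target: unlike the isomorphism-graph case, here $\mathbf C$ is a proper quotient, and when extending $\mathbf B\subsetneq\mathbf P^{n,k}$ to all of $\mathbf P^{n,k}$ one must simultaneously enlarge $\mathbf C$ to a $\mathbf C'$ that (i) still contains $\mathbf C$, (ii) is a full subalgebra in the sense needed downstream, and (iii) is a legitimate transversal of the block congruence $\sim_v$ on all of $\mathbf P^{n,k}$. Ensuring compatibility of the choice of representative leaf $w'$ across the pull-back/push-forward — i.e.\ that the $\psi'$ used to reinsert $\widehat{\mathbf C'}$ genuinely extends the given inclusion $\mathbf C\hookrightarrow\mathbf P^{n,k}$ and does not conflict with $\psi$ on the overlapping trunk — is where care is required; but this is precisely the kind of leaf-labelling argument already carried out for $\psi^{n,k}_{(X,w)}$, so no new idea is needed beyond what the $\oplus_v$ formalism of Section~\ref{sec:treetransformations} provides.
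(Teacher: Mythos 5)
Your Step~1 construction is, in substance, the paper's entire proof: identify the kernel of the endomorphism as $\sim_v\cap B^2$ for a vertex $v$ of the sapling representing $\mathbf B$, note that it is contained in the block congruence $\sim_v$ of all of $\mathbf P^{n,k}$, and define $\mu'$ on every leaf of $\mathbf P^{n,k}$ by sending everything below $v$ to the representative $w\in C$ and fixing everything else (equivalently, restrict the right coordinate of $\sim_v$ to $C'=\{w\}\cup\{a: a\nleq v\}$). Crucially, nothing in that construction or in the homomorphism check uses that $\mathbf B$ is full or that $\theta$ is the monolith: $\mu'$ is defined on all of $P^{n,k}$ from the data $(v,w)$ alone, and $\mu'|_B=\mu$ because $S\subseteq\theta$ forces $\mu$ to fix every trivial $\theta$-class and to send the nontrivial block to $w$. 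So the special-case hypotheses you impose in Step~1 are vacuous, and the paper simply states the construction once, in full generality, in a single paragraph.

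The problem is that, having restricted Step~1, you rely on Step~2 to recover the general case, and Step~2 as written does not do what the lemma requires. The relation $\psi^{-1}\circ\widehat S'\circ\psi'$ has its left coordinate confined to the image of $\psi=\psi^{n,k}_{(X,w)}$, i.e.\ to $P^{n,k}_{(X,w)}$, since $\psi^{-1}$ is only defined there. You therefore obtain a transversal endomorphism graph on the proper subalgebra $\mathbf P^{n,k}_{(X,w)}$, not on $\mathbf P^{n,k}$ itself, which is what the lemma demands ($S'\leq\mathbf P^{n,k}\times\mathbf C'$). This pull-back/push-forward pattern works in Lemma~\ref{lem:hsimplefullsaplingssuffice} because there the target is only a \emph{full} isomorphism graph between full subalgebras, whereas here the left factor must be enlarged all the way to $\mathbf P^{n,k}$. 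The gap is real but easy to close: delete Step~2 entirely and run your Step~1 construction for arbitrary $\mathbf B$ and arbitrary block congruence $\sim_v$, which is exactly the paper's argument. (The composition-of-monolith-quotients detour you sketch is likewise unnecessary, though it would also work.)
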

\begin{proof}
    Take an $S$ as in the lemma statement. Let $(\mathcal{P}_{n,k})_B$ be the sapling which represents $\mathbf{B}$. It follows from Proposition \ref{prop:subdirectlyirreduciblearelinearchains} that the kernel of the endomorphism $\lambda$ encoded by $S$ is a block congruence. By Lemma \ref{lemma:congruencesoftreealgebras} there exists $v \in (\mathcal{P}_{n,k})_B$ so that the kernel of $\lambda$ is equal to $\{(a,b) \in B: \text{ $a=b$ or $a,b \leq v$} \}$. This kernel is obviously a subset of $\sim_v = \{(a,b) \in (P^{n,k})^2: \text{ $a=b$ or $a,b \leq v$} \}$. Let $w \in C$ be the representative of the nontrivial class of the kernel of $\lambda$. Define $C' = \{w\} \cup \{a \in (P^{n,k})^2 : a \nleq v\}$ and let $\mathbf{C}'$ be the subalgebra of $\mathbf{P}^{n,k}$ with domain $C'$. Finally, let $S'$ be the transversal endomorphism graph obtained by restricting the right coordinate of $\sim_v$ to $C'$. 
    \end{proof}
    
The reader can consult Figure \ref{fig:extendtransversalendomorphism} for a picture of an extension of a particular transversal endomorphism graph of a subalgebra of $\mathbf{P}^{6,5}$. The relation $S \leq \mathbf{B} \times \mathbf{C}$ is drawn with bold black lines and the domains of $\mathbf{B}$ and $\mathbf{C}$ are drawn with bigger black dots. The extension $S'$ is drawn with dotted lines. 
\begin{figure}
    \centering
    \includegraphics[width=0.5\linewidth]{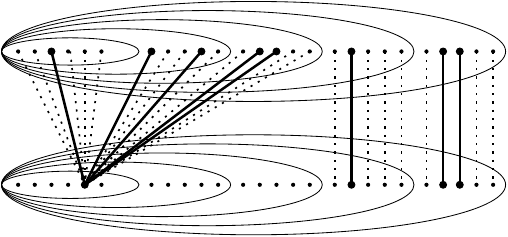}
    \caption{Extending a transversal endomorphism graph of a subalgebra of $\mathbf{P}^{6,5}$.}
\label{fig:extendtransversalendomorphism}
\end{figure}

\subsection{The class of structures $\mathfrak{P}^{n,k}$ }\label{sect:structureP_nk}
Again, we fix some $n \geq 2$. Note that $\Inv(\mathbf{P}^{n,k})$ equals the set of relations with a primitive positive 
definition over the at most ternary relations 
given by Theorem~\ref{thm:consminorityrelbasis}, for every $k \geq 0$. In this section we refine these sets of relations and label these refinements with explicit signatures, producing a sequence of structures $\mathfrak{P}^{n,0}, \mathfrak{P}^{n,1}, \dots, \mathfrak{P}^{n,k}, \dots$ which is amenable to recursive analysis.

We first explain the main concept underlying the definitions that follow. Let $k \geq 1$. Recall that $\sim_{(0)}$ is the congruence of $\mathbf{P}^{n,k}$ which collapses the elements of $P^{n,k}$ which are less than or equal to $(0)$ in $\mathcal{P}_{n,k}$, which in this case is equal to the set $P^{n,k}_0$. Notice that the complement of $P^{n,k}_0$ in $P^{n,k}$ is equal to the set $P^{n,1} \setminus \{(0)\} = \{(1), \dots, (n-1) \}$, which we will henceforth denote by $O_{n}$. We denote by $\mathbf{O}_n \leq \mathbf{P}^{n,k}$ the subalgebra with domain $O_n$ (note that this definition is independent of $k \geq 1$). We also define a structure with domain $O_n$, by first specifying the signature
\[
\tau_{O_n} = \{ R_Y: Y \subseteq O_n \} \cup \{R_\sigma: \text{ $\sigma$ is a permutation of $O_n$ } \} \cup \{\Eq_3 \}
\]
and then setting $\mathfrak{O}_n$ to be the $\tau_{O_n}$-structure with
\begin{itemize}
    \item $R_Y^\mathfrak{O} = Y$,
    \item $R_\sigma^{\mathfrak{O}} = G_\sigma$, where $G_\sigma$ is the graph of $\sigma$, and
    \item $\Eq_3^\mathfrak{O}$ is the ternary equality relation on $O_n$.
\end{itemize}

We say that a relation $R \leq X^l$ is \emph{strongly functional} if $R$ is subdirect and each coordinate of $R$ determines all other coordinates in the sense that 
for every $i \in \{1,\dots,l\}$ and every $v \in \pi_i(R)$ we have 
$|\{(t_1,\dots,t_l) \in R \mid t_i = v\}| = 1$.
Note that every unary relation is trivially strongly functional.

\begin{definition}\label{def:0uniform}
    Let $n\geq 2$ and $ k \geq 0$. We say that $R \in \Inv(\mathbf{P}^{n,k+1})$ is \emph{uniform} if 
    \begin{enumerate}
    \item
    $R$ is the disjoint union of $R$ restricted to $P^{n,k+1}_0$ and $R$ restricted to $O_n$, and
    \item $R$ restricted to $O_n$ is strongly functional. 
    \end{enumerate}
    Any $R \in \Inv(\mathbf{P}^{n,k+1})$ that is not uniform is called a \emph{nonuniform} relation. For $S \in \Inv(\mathbf{P}^{n,k})$ and $Y$ a strongly functional relation on $O_n$ of the same arity as $S$, we denote by $S \oplus Y \subseteq P^{n,k+1}$ the union of $\psi^{n,k+1}_0(S)$ and $Y$. 
\end{definition}

\begin{lemma}\label{lem:0uniformcharacterization}
    Let $n \geq 2$ and let $k \geq 0$. Let $R \subseteq (P^{n,k+1})^l$ for some $l \geq 1$. Then $R$ is a uniform invariant relation of $\mathbf{P}^{n,k+1}$ if and only if there exist  relations $S \in \Inv(\mathbf{P}^{n,k})$ and $Y$ a strongly functional relation on $O_{n}$, each of arity $l$, such that $R = S \oplus Y$.
\end{lemma}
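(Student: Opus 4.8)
The statement to prove is Lemma~\ref{lem:0uniformcharacterization}, an iff characterization of uniform invariant relations of $\mathbf{P}^{n,k+1}$ as exactly the relations of the form $S \oplus Y$ with $S \in \Inv(\mathbf{P}^{n,k})$ and $Y$ a strongly functional relation on $O_n$. The plan is to prove the two directions separately, with the ``only if'' direction being the substantive one.

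For the ``if'' direction, suppose $R = S \oplus Y = \psi^{n,k+1}_0(S) \cup Y$ with $S \in \Inv(\mathbf{P}^{n,k})$ strongly... (actually $S$ arbitrary invariant) and $Y$ strongly functional on $O_n$. First I would check that $R \in \Inv(\mathbf{P}^{n,k+1})$: since $\psi^{n,k+1}_0$ is an embedding of $\mathbf{P}^{n,k}$ onto the subalgebra $\mathbf{P}^{n,k+1}_0$ with domain $P^{n,k+1}_0$, the image $\psi^{n,k+1}_0(S)$ is invariant under $m$ restricted to $P^{n,k+1}_0$; the part $Y$ is invariant under $m$ restricted to $O_n$ because $Y \leq \mathbf{O}_n^l$ (a strongly functional relation on $O_n$ is by definition a subalgebra). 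The one point requiring care is that applying $m$ to tuples drawn from a mixture of $\psi^{n,k+1}_0(S)$ and $Y$ stays inside $R$: here I would invoke the fact that $\sim_{(0)}$ is a block congruence of $\mathbf{P}^{n,k+1}$ (Lemma~\ref{lemma:congruencesoftreealgebras}) with nontrivial block $P^{n,k+1}_0$, together with conservativity — if the three input tuples are not all in the same piece, then the output tuple, computed coordinatewise, lands (by conservativity and the minority law on the quotient $\mathbf{P}^{n,k+1}/\sim_{(0)}$, which is essentially $\mathbf{O}_n$ with an added zero) in the piece determined by the ``minority'' piece; and since $Y$ is strongly functional and $O_n$-valued while $S$ is a full-arity relation, the relevant output tuple is forced to be one of the inputs. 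This is the mechanism already seen in the proof of Proposition~\ref{prop:blocksinsideblocks}. Finally, $R$ is the disjoint union of its restrictions to $P^{n,k+1}_0$ and $O_n$ by construction, and its restriction to $O_n$ is $Y$, which is strongly functional; so $R$ is uniform.

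For the ``only if'' direction, suppose $R \in \Inv(\mathbf{P}^{n,k+1})$ is uniform of arity $l$. Set $Y := $ the restriction of $R$ to $O_n$, which is strongly functional by hypothesis. Set $S := (\psi^{n,k+1}_0)^{-1}(R \cap (P^{n,k+1}_0)^l)$; this is well-defined as a subset of $(P^{n,k})^l$ because $\psi^{n,k+1}_0$ is a bijection onto $P^{n,k+1}_0$. Then $R = \psi^{n,k+1}_0(S) \cup Y = S \oplus Y$ follows immediately from the disjoint-union condition~(1) in the definition of uniform. It remains to check $S \in \Inv(\mathbf{P}^{n,k})$: since $R \cap (P^{n,k+1}_0)^l$ is invariant under the operation $m$ restricted to the subalgebra $\mathbf{P}^{n,k+1}_0$ (being the intersection of an invariant relation with a power of a subuniverse), and $\psi^{n,k+1}_0 \colon \mathbf{P}^{n,k} \to \mathbf{P}^{n,k+1}_0$ is an isomorphism, its preimage $S$ is invariant under $m_{\mathbf{P}^{n,k}}$, i.e., $S \in \Inv(\mathbf{P}^{n,k})$.

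The main obstacle I anticipate is the ``if'' direction's closure check — verifying that $S \oplus Y$ is genuinely closed under the conservative minority operation $m$ of $\mathbf{P}^{n,k+1}$, specifically the mixed case where the three input tuples are not all drawn from the same piece. The key insight making this go through is that strong functionality of $Y$ (combined with conservativity) prevents $m$ from ever producing a ``new'' $O_n$-valued tuple: the coordinatewise minority computation, read modulo $\sim_{(0)}$, selects exactly one of the three input tuples as the answer in every coordinate simultaneously, so the output is literally equal to one of the inputs and hence already in $R$. I would state this as a short internal claim and prove it by the coordinatewise argument, exactly paralleling the case analysis in Proposition~\ref{prop:blocksinsideblocks}; everything else is bookkeeping with the embedding $\psi^{n,k+1}_0$ and the disjointness hypothesis.
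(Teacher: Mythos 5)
Your proposal is correct and follows essentially the same route as the paper's proof: the "only if" direction is the same bookkeeping with $\psi^{n,k+1}_0$ and conservativity, and the "if" direction's closure check is resolved by the same key observation that in the mixed cases the block congruence $\sim_{(0)}$ together with strong functionality of $Y$ forces the coordinatewise minority output to coincide with one of the three input tuples. No gaps.
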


\begin{proof}
We first prove the forward implication. Let $R \subseteq (P^{n,k+1})^l$ be a uniform invariant relation of $\mathbf{P}^{n,k+1}$. It follows immediately from the definition of a uniform relation that $R$ is the disjoint union of its restriction to $P^{n,k+1}_0$ (which we denote by $S'$) and its restriction to $O^{n,1}$ (which we denote by $Y$). It is immediate from the definition of a uniform relation that $Y$ is strongly functional. Since $\mathbf{P}^{n,k+1}$ is conservative, it follows that $S'$ is also an invariant relation of $\mathbf{P}^{n,k+1}$. Therefore, we define $S= (\psi^{n,k+1}_0)^{-1}(S')$ to obtain the desired invariant relation of $\mathbf{P}^{n,k}$.

For the backwards implication, take some arity $l$ relation $S \in \Inv(\mathbf{P}^{n,k})$ and arity $l$ strongly functional $Y$ on $O^{n,1}$, and let $R$ be the union of $\psi^{n,k+1}_0(S)$ and $Y$. We argue that $R \in \Inv(\mathbf{P}^{n,k+1})$. Recall that $m_{\mathcal{P}_{n,k+1}}$ is the conservative minority operation for $\mathbf{P}^{n,k+1}$. Consider three tuples $(a_1, \dots, a_l), (b_1, \dots, b_1), (c_1, \dots, c_l) \in R$. Set 
\[
(d_1, \dots, d_l) = (m_{\mathcal{P}_{n,k+1}}(a_1, b_1, c_1), \dots, m_{\mathcal{P}_{n,k+1}}(a_l, b_l, c_l).
\]
We argue that $(d_1, \dots, d_l) \in R$. First, we may assume that the three input tuples are pairwise distinct, because if two input tuples are equal then $(d_1, \dots, d_l)$ is equal to the remaining third input tuple. 
It follows from the definition of $R$ that each of the input tuples has entries entirely in $P^{n,k+1}_0$ or $O^{n,1}$. If all three tuples satisfy either the former or latter condition, then  $(d_1, \dots, d_l) \in R$, since clearly $\psi_0^{n,k+1}(S)$ and $Y$ are invariant relations of $\mathbf{P}^{n,k+1}$. Therefore, we are left to consider the cases where the input tuples are pairwise distinct and two have entries belonging to either $P^{n,k+1}_0$ or $O^{n,1}$, while the remaining tuple has entries belonging to $O^{n,1}$ or $P^{n,k+1}_0$, respectively. We treat two typical cases. 
\begin{itemize}
    \item Suppose $a_1, \dots, a_l, b_1, \dots, b_l \in P^{n,k+1}_0$, while $c_1, \dots, c_l \in O^{n,1}$. If $k\geq 1$, then $P^{n,k+1}_0$ is the nontrivial block of the congruence $\sim_{(0)}$, so it follows that $(d_1, \dots, d_l) = (c_1, \dots, c_l)$. If $k=0$, then $a_i = b_i$ for all $1\leq i \leq l$ and we have already argued that in this case $(d_1, \dots, d_l) = (c_1, \dots, c_l)$.
    \item Suppose $a_1, \dots, a_l, b_1, \dots, b_l \in O^{n,1} $, while $c_1, \dots, c_l \in P^{n,k+1}_0$. Since $Y$ is a strongly functional relation, it follows that the elements $a_1, \dots, a_l, b_1, \dots, b_l, c_1, \dots, c_l$ are all pairwise distinct. In this case, it is easy to see that $\{a_i, b_i, c_i \}$ is the domain of a subalgebra of $\mathbf{P}^{n,k+1}$ that is isomorphic to $\mathbf{P}^{3,1}$, hence $(d_1, \dots, d_l) = (b_1, \dots, b_l)$ (recall that the minority operation for $\mathbf{P}^{3,1}$ outputs the second entry on an input triple with pairwise distinct entries).
\end{itemize}
This finishes the proof, since it is now obvious from the definition of $R$ that it is a uniform relation. 
\end{proof}

 Now we define the structures $\mathfrak{P}^{n,k}$ along with their signatures $\tau_{n,k}$. We first specify $\mathfrak{P}^{n,0}$ and its signature. Let $\tau_{n,0} = \tau_{n,0}^U := \{  R_\emptyset, R_{\{\epsilon\}}, \Eq_2, \Eq_3 \}$ and $\mathfrak{P}^{n,0}$ be the $\tau_{n,0}$-structure with domain $\{\epsilon\}$ and relations
\begin{itemize}
    \item $R_\emptyset^{\mathfrak{P}^{n,0}} := \emptyset$ and $R_{\{\epsilon\}}^{\mathfrak{P}^{n,0}} := \{\epsilon \}$, and
    \item $\Eq_2^{\mathfrak{P}^{n,0}} $ and $\Eq_3^{\mathfrak{P}^{n,0}}$ are the binary and ternary equality relations.

\end{itemize}
Obviously, $\Inv(\mathbf{P}^{n,0}) = \Inv(\Pol(\mathfrak{P}^{n,0}))$.

Having defined the trivial structure, we proceed with a recursive definition. For the basis we take $k=1$ and set
$\tau_{n,1} = \tau_{n,1}^U \cup \tau_{n,1}^N$, where $\tau_{n,1}^{U} = \tau_{n,1}^{U,1} \cup \tau_{n,1}^{U,2} \cup \tau_{n,1}^{U,3}$ and $\tau_{n,1}^N = \tau_{n,1}^{N,2} \cup \tau_{n,1}^{N,3}$ are the sets of unary, binary, and ternary relation symbols defined as follows.

\begin{itemize}
\item $\tau_{n,1}^{U,1} := \{ (\rho_1, \rho_2): \text{ for unary $\rho_1 \in \tau_{n,0} $ and  $\rho_2 \in \tau_{O_n}$} \}$
\item $\tau_{n,1}^{U,2 } := \{ (\rho_1, \rho_2)  : \text{for binary $\rho_1 \in \tau_{n,0}$ and $\rho_2 \in \tau_{O_n}$}\}$
\item $\tau_{n,1}^{U,3} := \{ (\rho_1, \rho_2)  : \text{for ternary $\rho_1 \in \tau_{n,0}$ and $\rho_2 \in \tau_{O_n}$}\}$
\item $\tau_{n,1}^{N,2} := \{ \Full, T^{n,1}_{0,0}, R_{((0)(1))} \}$
\item $\tau_{n,1}^{N,3} := \{ \Lin \}$
\end{itemize}
Now we set $\mathfrak{P}^{n,1}$ to be the $\tau_{n,1}$-structure whose domain is $P^{n,1}$ with
\begin{itemize}
\item $(\rho_1, \rho_2)^{\mathfrak{P}^{n,1}} := \rho_1^{\mathfrak{P}^{n,0}} \oplus \rho_2^{\mathfrak{O}_n}$,
\item $\Full^{\mathfrak{P}^{n,1}} := P^{n,1} \times P^{n,1}$ is the full relation, 
\item $(T^{n,1}_{0,0})^{\mathfrak{P}^{n,1}} := T^{n,1}_{0,0} $ is the canonical transfer relation
of $\mathbf{P}^{n,1}$, 
\item $R_{((0)(1))}^{\mathfrak{P}^{n,1}}$ is the graph of the permutation of $P^{n,1}$ which switches $(0)$ and $(1)$, and
\item $\Lin^{\mathfrak{P}^{n,1}} := \{ \left((1),(1),(1)\right), \left((1),(0),(0)\right), \left((0),(1),(0)\right), \left((0),(0),(1)\right) \}$ is the set of solutions to the $\mathbb{Z}_2$-linear equation $x_1+x_2+x_3= 1$.
\end{itemize}

For the recursion, suppose that $\tau_{n,k}$ and $\mathfrak{P}^{n,k}$ have been defined for $k \geq 1$. We set $\tau_{n,k+1} = \tau_{n,k+1}^U \cup \tau_{n,k+1}^N $, where where $\tau_{n,k+1}^{U} = \tau_{n,k+11}^{U,1} \cup \tau_{n,k+1}^{U,2} \cup \tau_{n,k+1}^{U,3}$ and $\tau_{n,k+1}^N = \tau_{n,k+1}^{N,2} $ are the sets of unary, binary, and ternary relation symbols defined as follows.
\begin{itemize}
    \item $\tau_{n,1}^{U,1} := \{ (\rho_1, \rho_2): \text{ for unary $\rho_1 \in \tau_{n,0} $ and  $\rho_2 \in \tau_{O}$} \}$ 
\item $\tau_{n,1}^{U,2 } := \{ (\rho_1, \rho_2)  : \text{for binary $\rho_1 \in \tau_{n,0}$ and $\rho_2 \in \tau_{O_n}$}\}$
\item $\tau_{n,1}^{U,3} := \{ (\rho_1, \rho_2)  : \text{for ternary $\rho_1 \in \tau_{n,0}$ and $\rho_2 \in \tau_{O_n}$}\}$
    \item $\tau_{n,k+1}^{N,2} = \{ \Full, T^{n,k+1}_{ 0, k}\}$
\end{itemize}
Now we define $\mathfrak{P}^{n,k+1}$ to be the $\tau_{n,k+1}$ structure with domain $P^{n,k+1}$, where 
\begin{itemize}
\item $(\rho_1, \rho_2)^{\mathfrak{P}^{n,k+1}} := \rho_1^{\mathfrak{P}^{n,k}} \oplus \rho_2^{\mathfrak{O}_n}$,
\item $\Full^{\mathfrak{P}^{n,k+1}} := P^{n,k+1} \times P^{n,k+1}$ is the full relation, and
\item $(T^{n,k+1}_{0,k})^{\mathfrak{P}^{n,k+1}} := T^{n,k+1}_{0,k}$ is the canonical transfer relation of $\mathbf{P}^{n,k}$.
\end{itemize}
 
The signature $\tau_{n,k+1}$ consists of unary, binary, and ternary symbols which name either uniform or nonuniform $\mathbf{P}^{n,k+1}$ relations. By Definition~\ref{def:0uniform} and Lemma~\ref{lem:0uniformcharacterization}, every uniform relation $R$ is equal to $S \oplus Y$, for some matching arity relations $S \in \Inv(\mathbf{P}^{n,k})$ and strongly functional $Y$ on $O_n$. This is why we have chosen to name such relations with pairs, e.g.\ $( \Eq_2, R_{((1)(2))} ) \in \tau_{4,1}$. The collection of uniform relations is not adequate to primitively positively define all of $\Inv(\mathbf{P}^{n,k+1})$. Hence, at each stage of the recursion we include some extra nonuniform relation symbols. The kinds of added nonuniform relations needed are different for $\tau_{n,1}$ and $\tau_{n,k}$ for $k > 1$, and therefore  our recursive definition has three parts. 

Upon unpacking the definition of $\mathfrak{P}^{n,1}$, the reader will discover a structure whose relations are practically identical to the relations for $\mathbf{P}_n$ identified in~\ref{lem:brady1ishsimple}, which may lead them to wonder why the apparently complicated signature $\tau_{n,1}$ is needed. Indeed, it would be possible to choose a simpler signature for a structure which primitively positively defines $\Inv(\mathbf{P}^{n,1})$, but in our opinion this would involve a disproportionate sacrifice of clarity later on when we present our reduction scheme and prove that it is correct. Hence, our definitions are chosen so that the simplicity of the recursive reduction scheme is prioritized over the simplicity of specifying the basis structure for the recursion. The reader can consult Figure~\ref{fig:manypotatoes} for a picture of the relations for the structures $\mathfrak{P}^{3.k}$ for $0 \leq k \leq 3$. In the figure, an edge between two potato diagrams means that the top relation is  labeled by some symbol $\rho_1$ and the bottom relation is labeled by $(\rho_1, \rho_2)$ for some $\rho_2 \in \tau_{O_n}$.

\begin{figure}
    \centering
    \includegraphics[width=1\linewidth]{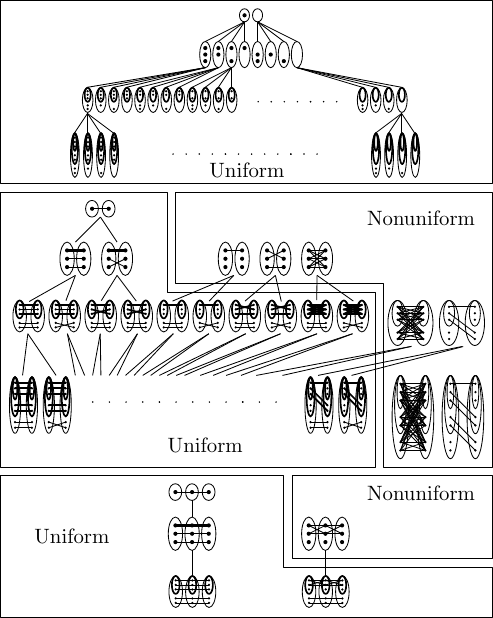}
    \caption{The relations of $\mathfrak{P}^{3,k}$ for small $k$. The unary, binary, and ternary relations are shown in the top, middle, and bottom thirds of the figure, respectively.}
    \label{fig:manypotatoes}
\end{figure}


\begin{lemma}\label{lem:refinedbasiseasydirection}
    For every $n \geq 2$ and $k \geq 0$, $\Inv(\Pol(\mathfrak{P}^{n,k}) \subseteq \Inv(\mathbf{P}^{n,k})$.
\end{lemma}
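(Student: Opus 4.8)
The plan is to show that every relation named in the signature $\tau_{n,k}$ belongs to $\Inv(\mathbf{P}^{n,k})$, i.e.\ is preserved by the conservative minority operation $m_{\mathcal{P}_{n,k}}$ of $\mathbf{P}^{n,k}$; once this is done, $\Pol(\mathfrak{P}^{n,k})$ contains $m_{\mathcal{P}_{n,k}}$, hence $\Inv(\Pol(\mathfrak{P}^{n,k})) \subseteq \Inv(\{m_{\mathcal{P}_{n,k}}\}) = \Inv(\mathbf{P}^{n,k})$, which is exactly the claim. I would carry this out by induction on $k$, following the recursive structure of the definition of $\tau_{n,k}$ and $\mathfrak{P}^{n,k}$.

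For the base case $k=0$, the four relations $R_\emptyset, R_{\{\epsilon\}}, \Eq_2, \Eq_3$ over the one-element domain are trivially preserved by any operation, so $\Inv(\mathbf{P}^{n,0}) = \Inv(\Pol(\mathfrak{P}^{n,0}))$ as already observed in the text. For the case $k=1$, I would check the generating relations of $\mathfrak{P}^{n,1}$ one family at a time: the uniform relations $(\rho_1,\rho_2)^{\mathfrak{P}^{n,1}} = \rho_1^{\mathfrak{P}^{n,0}} \oplus \rho_2^{\mathfrak{O}_n}$ are preserved by Lemma~\ref{lem:0uniformcharacterization} (a relation of the form $S \oplus Y$ with $S \in \Inv(\mathbf{P}^{n,0})$ and $Y$ strongly functional on $O_n$ is a uniform invariant relation of $\mathbf{P}^{n,1}$), noting that each $\rho_2^{\mathfrak{O}_n}$ — a unary subset, a permutation graph, or the ternary equality — is indeed strongly functional on $O_n$; the full relation $\Full^{\mathfrak{P}^{n,1}}$ is preserved by everything; the transfer relation $T^{n,1}_{0,0}$ is a bijection graph on $\mathbf{P}^{n,1}$, hence in $\Inv(\mathbf{P}^{n,1})$ by Lemma~\ref{lem:brady1ishsimple}(1b) (or directly, since it is a single edge $((0),(0))$); $R_{((0)(1))}^{\mathfrak{P}^{n,1}}$ is again a bijection graph, preserved by the same lemma; and $\Lin^{\mathfrak{P}^{n,1}}$ is the $\mathbb{Z}_2$-linear relation $L$, which is in $\Inv(\mathbf{P}^{n,1}) = \Inv(\mathbf{P}_n)$ by Lemma~\ref{lem:brady1ishsimple}(1c). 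For the inductive step $k \to k+1$, the uniform relations $(\rho_1,\rho_2)^{\mathfrak{P}^{n,k+1}} = \rho_1^{\mathfrak{P}^{n,k}} \oplus \rho_2^{\mathfrak{O}_n}$ are handled by combining the induction hypothesis ($\rho_1^{\mathfrak{P}^{n,k}} \in \Inv(\mathbf{P}^{n,k})$) with Lemma~\ref{lem:0uniformcharacterization}; $\Full^{\mathfrak{P}^{n,k+1}}$ is trivially preserved; and the canonical transfer relation $T^{n,k+1}_{0,k}$, being an isomorphism graph between the full subalgebras $\mathbf{P}^{n,k+1}_0$ and $\mathbf{P}^{n,k+1}_{k}$ of $\mathbf{P}^{n,k+1}$, lies in $\Inv(\mathbf{P}^{n,k+1})$.

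I do not expect a serious obstacle here; the lemma is the easy half of the correspondence (the hard half, $\Inv(\mathbf{P}^{n,k}) \subseteq \Inv(\Pol(\mathfrak{P}^{n,k}))$, is where Theorem~\ref{thm:consminorityrelbasis} and the analysis of full saplings do real work). The only point requiring minor care is keeping straight that each primitive relation of $\mathfrak{P}^{n,k}$ is literally \emph{defined} as a relation of $\mathbf{P}^{n,k}$ or as $S \oplus Y$ with the right hypotheses, so that invariance under $m_{\mathcal{P}_{n,k}}$ follows either by definition (for the nonuniform ones: $\Full$, the transfer relations, $R_{((0)(1))}$, $\Lin$), from Lemma~\ref{lem:brady1ishsimple} (for $k=1$), or from Lemma~\ref{lem:0uniformcharacterization} together with the induction hypothesis (for the uniform ones). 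Assembling these observations gives $m_{\mathcal{P}_{n,k}} \in \Pol(\mathfrak{P}^{n,k})$ and hence the desired inclusion.
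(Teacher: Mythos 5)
Your proposal is correct and follows essentially the same route as the paper: reduce to showing every basic relation of $\mathfrak{P}^{n,k}$ lies in $\Inv(\mathbf{P}^{n,k})$, then induct on $k$, handling the uniform relations via Lemma~\ref{lem:0uniformcharacterization} and the nonuniform ones ($\Full$, the transfer relations, $R_{((0)(1))}$, $\Lin$) directly, with Lemma~\ref{lem:brady1ishsimple} covering the $k=1$ base case. The only cosmetic difference is that you deduce the inclusion from $m_{\mathcal{P}_{n,k}} \in \Pol(\mathfrak{P}^{n,k})$ and monotonicity of $\Inv$, whereas the paper invokes Theorem~\ref{thm:inv-pol}; both are fine.
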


\begin{proof}
Since $\Inv(\Pol(\mathfrak{P}^{n,k}))$ equals
the set of relations that are primitively positively definable in $\mathfrak{P}^{n,k}$ (Theorem~\ref{thm:inv-pol}), it suffices to show that every relation of $\mathfrak{P}^{n,k}$ belongs to $\Inv(\mathbf{P}^{n,k})$. The proof of this proceeds inductively. We identified in Lemma~\ref{lem:brady1ishsimple} a collection of (all subsets, bijection graphs, and a ternary `linear' relation) and noted that it is easy to prove that each of these relations belongs to $\Inv(\mathbf{P}_n)$. Now, $\mathbf{P}^{n,1}$ is trivially isomorphic to $\mathbf{P}_n$, and it is similarly easy to verify that the relations of $\mathfrak{P}^{n,1}$ all belong to $\Inv(\mathbf{P}^{n,1})$. In order to gain some familiarity with the signature $\tau_{n,1}$, we will do this carefully. It is obvious that every $\tau_{n,0}$ relation of $\mathfrak{P}^{n,0}$ is an invariant relation of $\mathbf{P}^{n,0}$. For $k=1$, we consider each type of relation symbol. 

\begin{itemize}
    \item Let $(\rho_1, \rho_2) \in \tau_{n,1}^{U,1}$. By definition, $\rho_1 = R_{\emptyset} $ or $\rho_1 = R_{\{\epsilon \}}$ and $\rho_2= R_X$ for some $X \subseteq O_n$. Then $(\rho_1, \rho_2)^{\mathfrak{P}^{n,1}} = X$ or $(\rho_1, \rho_2)^{\mathfrak{P}^{n,1}} = \{(0)\} \cup X$, depending on whether $\rho_1= \emptyset$ or $\rho_1= \{\epsilon\}$, respectively. 
    \item Let $(\rho_1, \rho_2) \in \tau_{n,1}^{U,2}$. By definition, $\rho_1 = \Eq_2$ and $\rho_2 = R_\sigma$ for $\sigma$ a permutation of $O_n$, so here in fact $(\rho_1, \rho_2) = (\Eq_2, R_\sigma)$. By definition, $(\Eq_2, R_\sigma)^{\mathfrak{P}^{n,1}}$ is the graph of a permutation of $P^{n,1}$ which fixes $(0)$ and is equal to $\sigma$ on $O_n$. Hence, all $\tau_{n,1}^{U,2}$ relations of $\mathfrak{P}^{n,1}$ belong to $\Inv(\mathbf{P}^{n,1})$. 
    \item Let $(\rho_1, \rho_2) \in \tau_{n,1}^{U,3}$. By definition, $\rho_1 = \Eq_3$ and $\rho_2= \Eq_3$, so in this case $(\Eq_3, \Eq_3)^{\mathfrak{P}^{n,1}}$ is the ternary equality relation on $P^{n,1}$ and is therefore obviously among the invariant relations of $\mathbf{P}^{n,1}$. 
    \item Each of the symbols $\Full, T^{n,1}_{0,0}, R_{((0)(1))}$ and $\Lin$ is easily seen to name an invariant relation of $\mathbf{P}^{n,1}$. 

\end{itemize}
We remark that, since every relation of $\mathfrak{O}_n$ is strongly functional, we could have deduced that all relations of $\mathfrak{P}^{n,1}$ named by $\tau_{n,1}^U$ symbols belong to $\Inv(\mathbf{P}^{n,1})$ by applying Lemma~\ref{lem:0uniformcharacterization}. 

Suppose now for $k \geq 1$ that every $\tau_{n,k}$ relation of $\mathfrak{P}^{n,k}$ belongs to $\Inv(\mathbf{P}_{n,k})$. Applying Lemma~\ref{lem:0uniformcharacterization}, we conclude that all $\tau_{n,k+1}^{U}$ relations of $\mathfrak{P}^{n,k+1}$ belong to $\Inv(\mathfrak{P}^{n,k+1})$. The full relation is obviously invariant, and the canonical transfer relation $T^{n,k+1}_{0,k}$ is by definition a full isomorphism graph between (subdirectly irreducible) subalgebras $\mathbf{P}^{n,k+1}_0$ and $\mathbf{P}^{n,k+1}_k$, hence is also invariant. 
\end{proof}

To finish the section, we prove that the relations of $\mathfrak{P}^{n,k}$ primitively positively define all the relations in $\Inv(\mathbf{P}^{n,k})$. To do this, we will argue that the $\tau_{n,k}$ relations of $\mathfrak{P}^{n,k}$ primitively positively define the relations belonging to $\Inv(\mathbf{P}^{n,k})$ listed in Theorem \ref{thm:consminorityrelbasis}. We first record the main technical parts of the proof as lemmas before collecting the final results in Proposition \ref{prop:hsimpletreerecursivebasis}.

\begin{lemma}\label{lem:blockcongruencesarenamed}
Let $n \geq 2$ and $k \geq 0$. Each block congruence $\sim_v \leq (\mathbf{P}^{n,k})^2$ for $v \in \mathcal{P}_{n,k}$ is named by a basic $\tau_{n,k}$ relation symbol.  
\end{lemma}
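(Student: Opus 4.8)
The statement is Lemma~\ref{lem:blockcongruencesarenamed}: every block congruence $\sim_v$ of $\mathbf{P}^{n,k}$, for $v$ a vertex of $\mathcal{P}_{n,k}$, is one of the basic relations of $\mathfrak{P}^{n,k}$. First I would recall what the vertices of $\mathcal{P}_{n,k}$ and the congruences $\sim_v$ actually look like. Since $\mathcal{P}_{n,k}$ is a sapling whose trunk is $\{\epsilon, 0^1, \dots, 0^{k-1}\}$, the block congruences come in a few flavours: (i) $\sim_\epsilon = P^{n,k}\times P^{n,k}$, the full relation; (ii) $\sim_{0^i}$ for $1 \le i \le k-1$, whose unique nontrivial block is the downset of $0^i$; (iii) $\sim_v$ for $v$ a leaf, which is the equality relation; and (iv) $\sim_v$ for $v$ a non-trunk internal vertex, whose nontrivial block is a proper subset of one of the $C_{0^i}$'s — but since every local algebra is $\mathbf{P}_n$, which is hereditarily simple (Lemma~\ref{lem:brady1ishsimple}), such internal non-trunk vertices do not exist: every child of $0^i$ other than $0^{i+1}$ is a leaf. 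So the only block congruences are the full relation, the equality relation, and the $\sim_{0^i}$ for $1 \le i \le k-1$.

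Next I would match each of these to a named relation, proceeding by induction on $k$. The full relation is named by $\Full$ (present in $\tau_{n,k}$ for every $k \ge 1$; for $k=0$ it is $\Eq_2 = R_{\{\epsilon\}}^2$ on a one-point domain, which coincides with equality). The equality relation $\sim_v$ for a leaf $v$ is named by $\Eq_2$ — more precisely by $(\Eq_2, R_{\id})^{\mathfrak{P}^{n,k}}$, using that $\Eq_2 \in \tau_{n,0}$ and the identity permutation lies in $\tau_{O_n}$, so that $(\Eq_2, R_{\id})^{\mathfrak{P}^{n,k}} = \psi^{n,k}_0(\Eq_2^{\mathfrak{P}^{n,k-1}}) \cup G_{\id}$ unwinds to the equality relation on $P^{n,k}$. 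The remaining case is $\sim_{0^i}$ for $1 \le i \le k-1$. Here I use the recursive structure: $\sim_{0^i}$ restricted to $P^{n,k}_0$ (the downset of $0^1$) is, under the embedding $\psi^{n,k}_0$, exactly the block congruence $\sim_{0^{i-1}}$ of $\mathbf{P}^{n,k-1}$ (the trunk index drops by one when we pass through the top level), while $\sim_{0^i}$ restricted to $O_n$ is the equality relation on $O_n$ — i.e.\ $\Eq_2^{\mathfrak{O}_n}$. Hence $\sim_{0^i} = (\sim_{0^{i-1}})^{\mathfrak{P}^{n,k-1}} \oplus \Eq_2^{\mathfrak{O}_n}$. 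By the induction hypothesis $\sim_{0^{i-1}}$ is named by some $\rho_1 \in \tau_{n,k-1}$, and since $\Eq_2 \in \tau_{O_n}$, the pair $(\rho_1, \Eq_2)$ lies in $\tau_{n,k}^{U,2}$ and names $\sim_{0^i}$ — provided $\rho_1$ is a binary symbol, which it is in all the relevant base and inductive cases. The one remaining subtlety is $i = k-1$: here $\sim_{0^{k-1}}$ could alternatively be described via the canonical transfer relation, but the $\oplus$-description above still applies and is cleaner.

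The base case $k=1$ must be checked by hand against the explicit list: $\sim_\epsilon = \Full^{\mathfrak{P}^{n,1}}$ is named, $\sim_{(i)}$ for a leaf $(i)$ is the equality relation named by $(\Eq_2, R_{\id})$, and there are no $\sim_{0^i}$ with $1 \le i \le 0$, so nothing else is required. The case $k=0$ is trivial since the only congruences are $0 = 1$ on a one-point set, named by $\Eq_2$. The main obstacle — really a bookkeeping obstacle rather than a conceptual one — is verifying precisely that $\sim_{0^i}$ decomposes as $(\sim_{0^{i-1}})^{\mathfrak{P}^{n,k-1}} \oplus \Eq_2^{\mathfrak{O}_n}$, i.e.\ that the $\oplus$ construction from Definition~\ref{def:0uniform} correctly glues the two pieces; this amounts to unwinding the definition of $\mathcal{P}_{n,k} = \mathcal{P}_{n,k-1} \oplus_{0^{k-1}} \mathcal{P}_{n,1}$ and the embedding $\psi^{n,k}_0$, together with the observation from Lemma~\ref{lem:0uniformcharacterization} that such a $\oplus$ of a congruence with the equality relation on $O_n$ is again a (block) congruence. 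Once that identification is made, the induction closes immediately.
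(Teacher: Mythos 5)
Your proposal is correct and follows essentially the same route as the paper: induction on $k$, with $\Full$ naming $\sim_\epsilon$, $(\Eq_2,R_{\id})$ naming the equality relation, and $\sim_{0^i}$ for $i\ge 1$ recognised as the uniform relation $(\sim_{0^{i-1}})^{\mathfrak{P}^{n,k-1}}\oplus$ (equality on $O_n$), hence named by $(\rho,R_{\id})$ with $\rho$ supplied by the induction hypothesis. One cosmetic slip: the binary equality on $O_n$ is not a symbol $\Eq_2\in\tau_{O_n}$ (only $\Eq_3$ is) but rather $R_{\id}$, the graph of the identity permutation — which you in fact use correctly elsewhere in the argument.
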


\begin{proof}
    As expected, the case $k=0$ is trivial. We therefore proceed by induction on $k \geq 1$. When $k=1$ there are exactly two block congruences: either $\sim_{(0)}$, which is the binary equality relation, or $\sim_\epsilon$, which is the full relation. The full relation is named by the $\tau_{n,1}^N$ symbol $\Full$. The binary equality relation is named by the $\tau_{n,1}^U$ symbol $(\Eq_2, R_{\id})$, where $R_{\id}$ names the graph of the identity permutation of $O_n$ and $\Eq_2$ names the equality relation on the one element algebra $\mathbf{P}^{n,0}$. 

    For the induction step, suppose that the lemma holds for $k \geq 1$. We prove it holds also for $k+1$. Consider without loss of generality the block congruence $\sim_{0^i}$ for $0 \leq i \leq k$. If $i=0$, then $\sim_{0^i}$ is the full relation, which is named by the $\tau_{n,k+1}^N$ relation symbol $\Full$. If $i > 0$, then $\sim_{0^i}$ is a uniform relation and is named by $(\rho, R_{\id})$, where $\rho \in \tau_{n,k} $ names $ \sim_{0^{i-1}}$ and $R_{\id}$ is the graph of the identity permutation of $O_n$. 
\end{proof}

\begin{lemma}\label{lem:fullisomorphismgraphsdefinable}
    Let $n \geq 2$ and $k \geq 1$. Let $\mathbf{A}_1 = \mathbf{P}^{n,k}_{X_1, w_1}$ and $\mathbf{A}_2= \mathbf{P}^{n,k}_{X_2,w_2}$ be full (subdirectly irreducible) subalgebras of  $\mathbf{P}^{n,k}$ and let $S \leq \mathbf{A}_1 \times \mathbf{A}_2$ be an isomorphism graph. Then $S$ has a primitive positive  
    definition with the free variables $(x,y)$ which is of the form   
    \begin{align*}
    \exists z_1 \dots z_l \big( \left(\rho_1(x,z_1) \wedge \rho_2(z_1, z_2) \dots \wedge \rho_l (z_{l-1}, z_l) \wedge \rho_{l+1} (z_l, y)\right) \\
    \wedge  \left( \lambda_1(x_1) \wedge \lambda_2(z_1) \wedge \dots \wedge \lambda_{l+1}(z_l) \wedge \lambda_{l+2}(y)
    \right) \big)
    \end{align*}
    where $\rho_i \in \tau_{n,k}^{U,2} \cup \tau_{n,k}^{N,2}$ for all $1 \leq i \leq l$ and $\lambda_j \in \tau_{n,k}^{U,1}$ for all $1 \leq j \leq l+2$.
\end{lemma}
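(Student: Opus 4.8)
The plan is to reduce everything, via the canonical transfer and transversal relations already constructed in Section~\ref{sec:algebrasP_nk}, to the case where $\mathbf{A}_1 = \mathbf{A}_2 = \mathbf{P}^{n,k}$ and $S$ is (the graph of) an automorphism of $\mathbf{P}^{n,k}$; the general case then follows by prepending/appending suitable $\tau_{n,k}^{U,2}$-conjuncts that cut down the domain to $P^{n,k}_{(X_1,w_1)}$ and $P^{n,k}_{(X_2,w_2)}$. Indeed, by Remark~\ref{rem:subalgebrasofP_{n,k}} each full subalgebra $\mathbf{P}^{n,k}_{(X_i,w_i)}$ is isomorphic to some $\mathbf{P}^{n,j}$ via the canonical embedding $\psi^{n,k}_{(X_i,w_i)}$, so $S$ can be transported to an isomorphism between full subalgebras of $\mathbf{P}^{n,j}$ with trunk equal to $\Trunk(\mathbf{P}^{n,j})$, and Claim~\ref{SIclaim:specialcase} extends that isomorphism to an automorphism of $\mathbf{P}^{n,j}$. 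The key observation is that the canonical embeddings are built from transfer relations $T^{n,k}_{i,j}$, and the construction in Claim~\ref{SIclaim:specialcase} produces an automorphism $\zeta' = \zeta'_{k-1}\cup\bigcup_{i}\zeta_i^*$ assembled level-by-level from permutations of the local algebras $\mathbf{C}_{0^i}\cong\mathbf{P}_n$; so it suffices to show that each such level-wise building block, and each transfer step, is named by or primitively positively definable from a $\tau_{n,k}^{U,2}$- or $\tau_{n,k}^{N,2}$-symbol, all in a manner that is compatible with the chain form of the display.

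First I would induct on $k$, with the base case $k=1$: here $\mathbf{P}^{n,1}\cong\mathbf{P}_n$, its automorphisms are exactly the permutations fixing $\sim_v$-classes, every permutation of $O_n$ that extends to $P^{n,1}$ by fixing $(0)$ is named by a $(\Eq_2,R_\sigma)\in\tau_{n,1}^{U,2}$ symbol, the transposition of $(0)$ and $(1)$ is named by $R_{((0)(1))}\in\tau_{n,1}^{N,2}$, and an arbitrary automorphism is a composition of at most two such --- which already has the required shape with $l\le 1$. For the induction step from $k$ to $k+1$, I would write any automorphism $\zeta'$ of $\mathbf{P}^{n,k+1}$ as a composition $\zeta' = \zeta'_k\circ\hat\zeta$, where $\hat\zeta$ acts as the identity on $C_\epsilon$ and permutes everything below $0^1$ --- i.e.\ is (via $\psi^{n,k+1}_0$ and its inverse) the image of an automorphism of $\mathbf{P}^{n,k}$, hence by induction has a chain-shaped definition over $\tau_{n,k}^{U,2}\cup\tau_{n,k}^{N,2}$, which lifts to a chain over $\tau_{n,k+1}^{U,2}$ by Lemma~\ref{lem:0uniformcharacterization} (each conjunct $\rho$ becomes $(\rho,R_{\id})$) --- while $\zeta'_k$ is a permutation of $C_\epsilon\cong\mathbf{P}_n$ fixing $0^1$, and is therefore $\mathrm{Full}$-free and expressible as a short chain of $(\Eq_2,R_\sigma)$-type symbols together with, if necessary, one use of the transfer relation $T^{n,k+1}_{0,k}$ (this is the role of the canonical transfer symbol: it lets the ``missing level'' $0^0$ versus $0^k$ be interchanged). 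The domain restrictions to the full subalgebras $\mathbf{A}_1,\mathbf{A}_2$ are then imposed by the $\lambda_j$ unary conjuncts, which name the relevant $P^{n,k}_{(X,w)}$ as unary relations $\exists y\, T^{n,k}_{(X,w),(X,w)}(x,y)$ --- but these are themselves $\tau_{n,k}^{U,1}$ symbols once one checks that each $P^{n,k}_{(X,w)}$ is a uniform unary relation, which is immediate from Lemma~\ref{lem:0uniformcharacterization} since every unary relation is strongly functional.

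The main obstacle I anticipate is bookkeeping the \emph{chain form} of the displayed formula: composing isomorphism graphs corresponds exactly to relational composition, which is why a sequence of binary conjuncts $\rho_1(x,z_1)\wedge\cdots\wedge\rho_{l+1}(z_l,y)$ is the natural normal form, but I must verify (i) that the extension procedure of Claim~\ref{SIclaim:specialcase} genuinely decomposes $\zeta'$ into a \emph{bounded} number of named automorphisms --- here one should note that a permutation of $\mathbf{P}_n$ fixing a designated non-leaf element is a product of at most, say, two named permutations, so the length $l$ is $O(k)$ --- and (ii) that transporting along $\psi^{n,k+1}_0$ and along the transfer relations $T^{n,k}_{i,j}$ can always be realized by \emph{binary} primitive positive conjuncts of the allowed sort, rather than introducing ternary atoms or auxiliary relations outside $\tau_{n,k}^{U,2}\cup\tau_{n,k}^{N,2}$. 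Once these two points are settled, the inductive assembly is routine, and the $\lambda_j$'s can be inserted uniformly since every intermediate variable $z_i$ ranges over a full subalgebra whose domain is a named unary relation.
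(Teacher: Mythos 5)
There is a genuine gap in the architecture of your argument: the proposed reduction to the case $\mathbf{A}_1=\mathbf{A}_2=\mathbf{P}^{n,k}$ with $S$ an automorphism, followed by ``cutting down the domain'' with unary conjuncts, cannot produce all the isomorphism graphs the lemma covers. As you yourself note (via Claim~\ref{SIclaim:specialcase}), an automorphism of $\mathbf{P}^{n,k}$ preserves the congruence chain and hence the height of every element; consequently its restriction to $P^{n,k}_{(X_1,w_1)}$ lands in a full subalgebra with the \emph{same} trunk $X_1$. But the lemma concerns isomorphisms between full subalgebras whose trunks $X_1,X_2$ may omit different levels: e.g.\ in $\mathbf{P}^{n,3}$, an isomorphism from $\mathbf{P}^{n,3}_{\{\epsilon,0^2\}}$ onto $\mathbf{P}^{n,3}_{\{\epsilon,0^1\}}$ sends height-$3$ elements to height-$2$ elements and is therefore not the restriction of any automorphism. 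Your single appeal to $T^{n,k+1}_{0,k}$ does not repair this, both because you attach it to the wrong piece (a permutation of $C_\epsilon$, which never needs it) and because level shifts can also be required at \emph{intermediate} depths, where the relevant relations are the lifted transfers $(\cdots(T^{n,j}_{0,j-1},R_{\id}),\dots,R_{\id})\in\tau^{U,2}_{n,k}$ rather than the top-level canonical transfer. An induction whose inductive statement only concerns automorphisms never meets these cases and cannot recover them afterwards.

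The fix is to make the inductive statement exactly the one in the lemma (isomorphism graphs between arbitrary pairs of full subalgebras) and to organize the induction step by cases on whether $\epsilon\in X_1$ and $\epsilon\in X_2$. When $\epsilon$ lies in both or in neither trunk, $S$ is uniform, so by Lemma~\ref{lem:0uniformcharacterization} it decomposes as $S'\oplus Y$ with $S'$ an isomorphism graph between full subalgebras of $\mathbf{P}^{n,k}$; the inductive hypothesis gives a chain for $S'$, each conjunct $\rho$ lifts to $(\rho,R_{\id})$, and one extra $\tau^{U,2}$-conjunct adjusts the outer permutation $Y$ (this is where the intermediate-level shifts are absorbed, since the recursion at level $j$ may use $T^{n,j}_{0,j-1}$, which lifts into $\tau^{U,2}_{n,k}$). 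Only when $\epsilon$ lies in exactly one of the trunks is $S$ nonuniform, and there a single occurrence of $T^{n,k}_{0,k-1}$, composed with two uniform isomorphism graphs already handled by the previous cases, yields the required chain. Your base case, the lifting mechanism $(\rho,R_{\id})$, and the unary conjuncts naming the domains are all correct and match what is needed; it is only the top-level decomposition that must be reorganized around the trunks rather than around automorphisms.
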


\begin{proof}
    The proof proceeds by induction on $k \geq 1$. For the basis, let $k=1$ and take $S$ as in the lemma statement. Since the permutations $((1) (2) \dots (n-1)) $ and $((0)(1))$ generate the symmetric group on $P^{n,1}$, it follows that $S$ can be defined by a formula of the kind in the lemma statement, where the unary symbols $\lambda_i$ are all equal to $(R_{\{\epsilon \}}, R_{\{(1), \dots, (n-1)\}} )$  (i.e.\ the symbol which names $P^{n,1}$) and the binary relations $\rho_i$ are either $(\Eq_2,((1) (2) \dots (n-1))) $ or $R_{((0)(1))}$ (i.e.\ the symbols which name the graphs of $((0)(1))$ and $((1)\dots(n-1))$).

    For the induction step, suppose that the lemma holds for $k \geq 1$. We will deduce that it holds for $k+1$ as well. Take $S \leq \fA_1 \times \fA_2$ for $\fA_1, \fA_2 \leq \mathbf{P}^{n,k+1}$ as in the lemma statement. We distinguish cases depending on $X_1= \Trunk(\fA_1)$ and $X_2= \Trunk(\fA_2)$. 
    \begin{itemize}
    \item
    If $\epsilon \in X_1$ and $\epsilon \in X_2$, then it is easy to see that in this case $S$ is a uniform relation. It follows from Lemma~\ref{lem:0uniformcharacterization} that $S = S' \oplus Y$, where $S'$ is a binary invariant relation of $\mathbf{P}^{n,k}$ and $Y$ is a strongly functional binary relation on $O_{n}$. Since $S$ is the graph of a one-to-one function, it follows that there exist full (subdirectly irreducible) subalgebras $\fA_1', \fA_2' \leq \mathbf{P}^{n,k}$ with $S' \leq \fA_1' \times \fA_2'$ an isomorphism graph. By the inductive hypothesis, there is a formula with the free variables $(x,w)$ of the form 
    \begin{align*}
    \exists z_1 \dots z_l \big( \left(\rho'_1(x,z_1) \wedge \rho'_2(z_1, z_2) \dots \wedge \rho'_l (z_{l-1}, z_l) \wedge \rho'_{l+1} (z_l, w)\right) \\
    \wedge  \left( \lambda_1'(x_1) \wedge \lambda_2'(z_1) \wedge \dots \wedge \lambda_{l+1}'(z_l) \wedge \lambda_{l+2}'(w)
    \right) \big)
    \end{align*}
    which defines the relation $S'$. For each $1 \leq i \leq l+1$, set $\rho_i = (\rho_i', R_{\id})$, where $\id$ is the identity permutation on $O_n$. For each $1 \leq j \leq l+2$, set $\lambda_j= (\lambda_j', R_{O_{n}})$. 
    Let $S''$ be the relation defined by
    \begin{align*}
    \chi_1(x,w) := \exists z_1 \dots z_l \big( \left(\rho_1(x,z_1) \wedge \rho_2(z_1, z_2) \dots \wedge \rho_l (z_{l-1}, z_l) \wedge \rho_{l+1} (z_l, w)\right) \\
    \wedge  \left( \lambda_{1}(x_1) \wedge \lambda_{2}(z_1) \wedge \dots \wedge \lambda_{l+1}(z_l) \wedge \lambda_{l+2}(w)
    \right) \big).
    \end{align*}
   
    Notice that $S'' = S' \oplus \Eq_2$, where $\Eq_2$ is the equality bijection graph on $O_{n}$. To finish, we simply need to apply $\sigma$ to the elements of $O_{n}$ without disturbing the other elements of $P^{n,k+1}$, where $\sigma$ is the permutation whose graph is equal to the strongly functional $Y$ from earlier. This is accomplished using the atomic formula
    \[
    \chi_2(w,y):= ( ((\dots (\Eq_2, R_{\id}), \dots R_{\id}), R_{\id}), R_\sigma)(w,y),
    \] 
    so $S$ is defined by the formula 
    \[
    \exists w  \left( \chi_1(x,w) \wedge \chi_2(w,y) \right).
    \]
    Clearly, this formula can be rewritten to have the form specified in the lemma statement. 

    \item Suppose that $\epsilon \notin X_1$ and $ \epsilon \notin X_2$. In this case we define the uniform relation $S' = (\psi^{n,k+1}_0)^{-1}(S) \oplus R_{\id}$, use the formula from the previous case for $S'$, and then update each unary relation by intersecting with $P^{n,k+1}_0$. We leave it to the reader to check these details. 

    \item Suppose that $\epsilon \notin X_1$ and $\epsilon \in X_2$, i.e., that $S$ is nonuniform. The intuition is that, along with uniform relations, only one nonuniform relation is needed to define $S$ and any maximal size nonuniform isomorphism graph could be used. We always use what we have called the canonical transfer relation $T^{n,k+1}_{0,k}$.

   So, let $X' = \Trunk(\mathbf{P}^{n,k+1}_{k}) = \{\epsilon, (0), \dots, 0, \dots,0^{k-1} \}$ and let $T^{n,k+1}_{X',(X_2,w_2)}$ be the canonical isomorphism graph between $\mathbf{P}^{n,k+1}_{k}$ and $\mathbf{P}^{n,k+1}_{(X_2,w_2)}$. Since $\epsilon \in X'$ and $\epsilon \in X_2$, we have already shown there exists a formula $\chi_1(z_2,y)$ defining $T^{n,k+1}_{X', (X_2,w_2)}$ of the kind asserted in the lemma statement. If we then set $Y'= (T^{n,k+1}_{X',(X_2,w_2)})^{-1} \circ (T^{n,k+1}_{0,k-1})^{-1} \leq \mathbf{P}_{0}^{n,k+1} \times \mathbf{P}^{n,k+1}_{(X_2,w_2)}$, it is clear that $\chi_1$ can be modified to produce a formula $\chi_2(z_1,y)$ of the kind asserted in the lemma statement which defines $Y'$. Now consider the relation $Y = S \circ (T^{n,k+1}_{X',(X_2,w_2)})^{-1} \circ (T^{n,k+1}_{0,k-1})^{-1} \leq \mathbf{P}_{(X_1,w_1)} \times \mathbf{P}_0$. Since $\epsilon \notin X_1$ and $\epsilon \notin \Trunk(\mathbf{P}_0)$, we also obtain a formula $\chi(x,z_1)$ of the kind asserted in the lemma statement which defines $Y$. Since $S = Y \circ Y'$, it follows that $\exists z_1 (\chi(x,z_1) \wedge \chi_2(z_1, y))$ defines $S$. This finishes the proof, since $\exists z_1 (\chi(x,z_1) \wedge \chi_2(z_1, y))$ can be rewritten to the form asserted in the lemma statement. \qedhere 
    \end{itemize}
\end{proof}

\begin{proposition}\label{prop:hsimpletreerecursivebasis}

For every $n \geq 2$ and $k \geq 0$, $\Inv(\mathbf{P}^{n,k}) = \Inv(\Pol(\mathfrak{P}^{n,k}))$.
\end{proposition}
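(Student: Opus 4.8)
The plan is to prove the equality $\Inv(\mathbf{P}^{n,k}) = \Inv(\Pol(\mathfrak{P}^{n,k}))$ by a double inclusion. The inclusion $\Inv(\Pol(\mathfrak{P}^{n,k})) \subseteq \Inv(\mathbf{P}^{n,k})$ is exactly Lemma~\ref{lem:refinedbasiseasydirection}, so the entire content lies in the reverse inclusion $\Inv(\mathbf{P}^{n,k}) \subseteq \Inv(\Pol(\mathfrak{P}^{n,k}))$. By Theorem~\ref{thm:inv-pol}, the set $\Inv(\Pol(\mathfrak{P}^{n,k}))$ coincides with the relations primitively positively definable in $\mathfrak{P}^{n,k}$, so it suffices to show that every relation of $\Inv(\mathbf{P}^{n,k})$ is pp-definable from the basic relations of $\mathfrak{P}^{n,k}$. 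By Theorem~\ref{thm:consminorityrelbasis}, $\Inv(\mathbf{P}^{n,k})$ is pp-generated by a finite list of at most ternary relations: all unary relations, all subdirect transversal endomorphism graphs $S \leq \mathbf{B}\times\mathbf{C}$ with $\mathbf{C}\leq\mathbf{B}\leq\mathbf{P}^{n,k}$ and $\mathbf{C}$ subdirectly irreducible, all isomorphism graphs between subdirectly irreducible subalgebras, and the ternary relations $\Lin_{\mathbf{C}}$ for subdirectly irreducible $\mathbf{C}\leq\mathbf{P}^{n,k}$ whose monolith has a two-element nontrivial block. (Recall from Remark~\ref{rem:subalgebrasareSI} that \emph{every} subalgebra of $\mathbf{P}^{n,k}$ is subdirectly irreducible, so ``subdirectly irreducible'' is automatic here.) Thus the task reduces to pp-defining each of these four families from $\tau_{n,k}$.

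The proof proceeds by induction on $k$, with $k=0$ and $k=1$ handled directly from the explicit definitions of $\mathfrak{P}^{n,0}$ and $\mathfrak{P}^{n,1}$ (for $k=1$ this is essentially the remark following Lemma~\ref{lem:brady1ishsimple}, using that the permutations $((1)(2)\cdots(n-1))$ and $((0)(1))$ generate $\mathrm{Sym}(P^{n,1})$ and that $\Lin$ and $\Full$ are present). For the inductive step, the key reductions have already been isolated: Lemma~\ref{lem:hsimplefullsaplingssuffice} lets me reduce an arbitrary isomorphism graph between subdirectly irreducible subalgebras to a \emph{full} isomorphism graph between full subalgebras (the original graph being a restriction, and restriction to a subdomain is pp-expressible by conjoining unary relations for the subdomains, which are themselves pp-definable once the full isomorphism graph is); Lemma~\ref{lem:fullisomorphismgraphsdefinable} then gives an explicit pp-definition of every full isomorphism graph from the symbols of $\tau_{n,k}$. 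Symmetrically, Lemma~\ref{lem:fullcongruencessuffice} reduces a transversal endomorphism graph $S\leq\mathbf{B}\times\mathbf{C}$ to a full one of the form ``restrict the right coordinate of $\sim_v$ to $C'$'', and Lemma~\ref{lem:blockcongruencesarenamed} guarantees every $\sim_v$ is a basic symbol of $\tau_{n,k}$; combining these with the pp-definability of the unary domain relations handles family (b'). Unary relations: every unary $X\subseteq P^{n,k}$ decomposes as $(X\cap P^{n,k}_0)\cup(X\cap O_n)$, and by Lemma~\ref{lem:0uniformcharacterization} such a uniform relation equals $S\oplus Y$ with $S\in\Inv(\mathbf{P}^{n,k-1})$ unary (pp-definable by induction, then transported via $T^{n,k}_{0,k-1}$ or $\psi^{n,k}_0$) and $Y\subseteq O_n$ named directly by an $R_Y$ symbol of $\tau_{O_n}$; so every unary relation is pp-definable. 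Finally the $\Lin_{\mathbf{C}}$ relations: for a subdirectly irreducible $\mathbf{C}$ with two-element monolith block, realize $\mathbf{C}$ as a full subalgebra up to isomorphism, reduce (via isomorphism graphs, now known pp-definable) to the canonical copy sitting at the bottom of $\mathbf{P}^{n,k}$, and observe that the ``$\Lin$ part'' lives entirely inside the bottom one-element-extended block and is obtained from the basic symbol $\Lin$ of $\tau_{n,1}$ transported upward by the canonical transfer relations $T^{n,k}_{0,k-1}$, while the ``diagonal part'' $\{(c,c,c)\}$ comes from $\Eq_3$ intersected with the relevant unary domain; this is the argument already sketched at the end of the proof of Theorem~\ref{thm:consminorityrelbasis} (the passage producing the relation $U$ of line~\eqref{eq:U}), now carried out inside $\mathfrak{P}^{n,k}$.

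The main obstacle is bookkeeping rather than a genuine mathematical difficulty: one must carefully track how a relation of $\mathbf{P}^{n,k}$ that is ``uniform'' in the sense of Definition~\ref{def:0uniform} is assembled from a $\mathbf{P}^{n,k-1}$-part and an $O_n$-part, and verify that the pairing convention for relation symbols in $\tau_{n,k}^{U}$ — together with the three nonuniform symbols $\Full$, $T^{n,k}_{0,k-1}$ (and, at level $1$, $R_{((0)(1))}$ and $\Lin$) — genuinely suffices to realize \emph{all} the reductions above as pp-formulas, in particular that non-uniform isomorphism graphs with $\epsilon\notin\Trunk$ on one side and $\epsilon\in\Trunk$ on the other are handled by the single canonical transfer relation. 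All of the hard structural work, however, has been front-loaded into Lemmas~\ref{lem:hsimplefullsaplingssuffice}, \ref{lem:fullcongruencessuffice}, \ref{lem:blockcongruencesarenamed}, \ref{lem:fullisomorphismgraphsdefinable} and \ref{lem:0uniformcharacterization}, so the proof of the proposition itself is a matter of assembling these pieces in the order: (i) unary relations, (ii) block congruences and transversal endomorphism graphs, (iii) isomorphism graphs, (iv) the $\Lin_{\mathbf{C}}$ relations, and concluding by Theorem~\ref{thm:consminorityrelbasis}.
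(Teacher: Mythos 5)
Your proposal is correct and follows essentially the same route as the paper's proof: one inclusion is Lemma~\ref{lem:refinedbasiseasydirection}, and the other is obtained by pp-defining each of the four families of Theorem~\ref{thm:consminorityrelbasis} from $\tau_{n,k}$ via Lemmas~\ref{lem:fullcongruencessuffice}, \ref{lem:blockcongruencesarenamed}, \ref{lem:hsimplefullsaplingssuffice} and \ref{lem:fullisomorphismgraphsdefinable}, with $\Lin_{\mathbf C}$ handled by transporting the canonical bottom copy through an isomorphism graph. The only cosmetic differences are that you frame the assembly as a fresh induction on $k$ (the induction is really carried inside the cited lemmas) and that you describe $\Lin_{\mathbf C}$ as a union of a separately defined ``linear part'' and ``diagonal part'' --- which, read literally, is not a primitive positive construction --- whereas the paper's point is that this union is already named by a single iterated-pair symbol of $\tau_{n,k}^{U,3}$, so no union ever needs to be formed.
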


\begin{proof}

The proof proceeds by proving that each set of relations is contained in the other. The case when $k=0$ is trivial, so we suppose that $k \geq 1$. We already showed that $\Inv(\Pol(\mathfrak{P}^{n,k})) \subseteq \Inv(\mathbf{P}^{n,k}) $ in Lemma~\ref{lem:refinedbasiseasydirection}. So, we prove that $\Inv(\mathbf{P}^{n,k}) \subseteq \Inv(\Pol(\mathfrak{P}^{n,k}))$ for all $k \geq 1$. Our strategy is to show that for each $k \geq 1$, the $\tau_{n,k}$ relations of $\mathfrak{P}^{n,k}$ primitively positively define the relations for $\mathbf{P}^{n,k}$ listed in Theorem \ref{thm:consminorityrelbasis}. 

\begin{itemize}
    \item As usual, it is obvious that the unary relations for $\mathbf{P}^{n,k+1}$ listed in \emph{(a$'$)} of Theorem \ref{thm:consminorityrelbasis} are definable from the unary relations of $\mathfrak{P}^{n,k+1}$.
     \item For the relations of $\mathbf{P}^{n,k+1}$ listed in \emph{(b$'$)} of Theorem \ref{thm:consminorityrelbasis}, it suffices by 
     Lemma~\ref{lem:fullcongruencessuffice} to consider only subdirect transversal endomorphism graphs $S \leq \mathbf{P}^{n,k} \times \mathbf{C}$, since all others can be obtained by restricting these to a smaller domain. However, each of these is definable in $\mathfrak{P}^{n,k}$ by 
     Lemma~\ref{lem:blockcongruencesarenamed}, since any transversal endomorphism graph can be obtained by restricting a congruence to a transversal in the second coordinate. 
    \item To show that all isomorphism graphs $G \leq \fA_1 \times \fA_2 $ for (subdirectly irreducible) $\fA_1, \fA_2 \leq \mathbf{P}^{n,k}$ (the relations for $\mathbf{P}^{n,k}$ listed in \emph{(c$'$)} of Theorem \ref{thm:consminorityrelbasis}) can be defined from the relations of $\mathfrak{P}^{n,k}$, note first that by Lemma~\ref{lem:hsimplefullsaplingssuffice} we only need to consider isomorphisms between full (subdirectly irreducible) subalgebras. We then apply Lemma~\ref{lem:fullisomorphismgraphsdefinable} to obtain the result. 
    \item Let us argue that all ternary `linear' relations (the relations listed in \emph{(d$'$)} of Theorem \ref{thm:consminorityrelbasis}) are definable in $\mathfrak{P}^{n,k}$. Let $\Lin_\mathbf{C}$ be such a relation, where $\mathbf{C} \leq \mathbf{P}^{n,k}$ is a (subdirectly irreducible) subalgebra whose monolith has nontrivial class $\{a,b\}$. Suppose without loss of generality that $\Lin_\mathbf{C}$ is such that $a$ is labeled by $0$ and $b$ is labeled by $1$. It is easy to see that the function with domain $\mathbf{C}$ which maps $a$ to $0^{k-1\frown}(0)$ and $b$ to $0^{k-1\frown}(1)$ and is the identity elsewhere is an isomorphism. Let us call the image of this isomorphism $\mathbf{B}$ and let us denote the graph of this isomorphism by $S \leq \mathbf{C} \times \mathbf{B}$. Clearly, $\Lin_\mathbf{C}$ can be defined by restricting $P^{n,k}$ to the domain of $\mathbf{B}$ and then transporting this restriction to a relation on $\mathbf{C}^3$ using the relation $S$. \qedhere 
\end{itemize}
\end{proof}

\section{Main primitive positive-construction}\label{sect:MainPP}

In this section we present the main pp-construction on which our main theorem crucially relies. To keep the presentation as simple as possible, the definitions are only given in the context of subdirectly irreducible minimal Taylor conservative minority algebras, although we remark that this machinery could be extended to minimal Taylor conservative minority algebras. In Section~\ref{sec:mutations}, we develop some technical machinery for eliminating simple but not hereditarily simple algebras from a conservative minority tree, which will culminate in a pp-construction. In Section~\ref{sec:recursiveconstruction}, we recursively apply this pp-construction until all problematic local algebras are eliminated. 

\subsection{Mutations}\label{sec:mutations}

Let $\mathcal{T}$ be a sapling full reduced conservative minority tree over a collection of simple minimal Taylor conservative minority algebras $S = \{\mathbf{A}_1, \dots, \mathbf{A}_s \} \cup \{\mathbf{B}\}$. We distinguish the local algebra $\mathbf{B}$, since our aim is to produce a conservative minority tree $\mathcal{T}^{\mathbf{B} \downarrow}$ in which $\mathbf{B}$ does not appear as a local algebra, such that the corresponding structure $\mathfrak{S}_{\mathbf{A}_{\mathcal{T}^{\mathbf{B} \downarrow}}}$ pp-constructs $\mathfrak{S}_{\mathbf{A}_\mathcal{T}}$. Our first task is to define this tree, which we call the \emph{$\mathbf{B}$-unpacking transformation} of $\mathcal{T}$. We remark that this definition does not always produce a pp-construction, but we will later prove that it works provided that $\mathbf{B}$ is not hereditarily simple and has maximal cardinality among the local algebras of $\mathcal{T}$ which are not hereditarily simple. Suppose that the domain of $\mathbf{B}$ is $B = \{b_0, \dots, b_{n-1}\}$ and that 
$
| \{ v \in \mathcal{T}: \mathbf{v}^+ = \mathbf{B}\}| = m.
$ 
We call the algebra $(\mathbf{P}_{n+1})^m$ the \emph{algebra of genes} for $\mathcal{T}$ and $ \mathbf{B}$. A particular tuple $i=(i_1, \dots, i_{m}) \in \{0, \dots, n\}^m$ is called a \emph{gene}. 


\begin{definition}\label{def:splicing}
Let $\mathcal{T}$ be a sapling reduced conservative minority tree over a collection of simple minimal Taylor conservative minority algebras $S = \{\mathbf{A}_1, \dots, \mathbf{A}_s \} \cup \{\mathbf{B}\}$. Suppose that the domain of $\mathbf{B}$ is $B = \{b_0, \dots, b_{n-1}\}$ and 
$
| \{ v \in \mathcal{T}: \mathbf{v}^+ = \mathbf{B}\}| = m.
$  Let $\mathbf{B}_j \leq \mathbf{B}$ be the subalgebra of $\mathbf{B}$ whose domain is $B \setminus \{b_j\}$, for each $0 \leq j \leq n-1$. Suppose that $w= (w_1, \dots, w_p) \in \mathcal{T}$ is vertex $\mathcal{T}$ and $i \in (\mathbf{P}_{n+1})^m$ is a gene. The \emph{$(w,i)$-splice} is the tuple $w^i$ obtained by inserting in order the entries in $i$ in front of each occurrence of an element $b_j$ belonging to $B$ and changing $b_j$ to $j$ in the event that $i_k = n$. More specifically, we decompose $w$ as a concatenation of tuples 
 \[
    w = u_1^\frown(b_{j_1})^\frown
u_2^\frown(b_{j_2})^\frown 
\dots ^\frown (b_{j_l})^\frown
u_{l +1},
    \] 
where $1 \leq l \leq m$, $b_{j_k} \in B$ for each $1 \leq k \leq l$, and $u_k \in (A_1 \cup \dots \cup A_s)^{< \omega}$ for each $1\leq k \leq l$ (note that some of the $u_k$ could be the empty tuple). The $(w,i)$-splice is then defined to be the tuple 
\[
w^i:= u_1^\frown(i_1)^\frown(b'_{j_1})^\frown
u_2^\frown(i_2)^\frown(b'_{j_2})^\frown 
\dots ^\frown (i_l)^\frown(b'_{j_l})^\frown
u_{l +1},
\]
such that for each $1 \leq k \leq l$,
\[
b'_{j_k} = 
\begin{cases}
    b_{j_k} &\text{ if $0\leq i_k \leq n-1$}\\
    j_k     &\text{ if $i_k = n$}.
\end{cases}
\]
Given the definition of a splice, we further define the following terminology and objects. 
\begin{enumerate}
    \item If $i_j \neq j_k$ for all $1 \leq k \leq l$, we say that the gene $i$ is \emph{valid} for $w$ and call $w^i$ a \emph{mutation} of $w$. For each $w \in \mathcal{T}$, we define the set of \emph{$\mathbf{B}$-mutations} of $w$ as
    \[
    \Mut(w):= \{w^i: i \in (\mathbf{P}_{n+1})^m \text{ is valid for $w$} \}.
    \]
    We then define the \emph{$\mathbf{B}$-unpacking} of $\mathcal{T}$ to be the unique conservative minority tree $\mathcal{T}^{\mathbf{B} \downarrow}$ over $S^{\mathbf{B} \downarrow}: = \{\mathbf{A}_1, \dots, \mathbf{A}_s \} \cup \{\mathbf{B}_1, \dots, \mathbf{B}_n\} \cup \{\mathbf{P}_{n}, \mathbf{P}_{n+1}\}$ whose leaves are 
    \[
    A_{\mathcal{T}^{\mathbf{B}\downarrow}} := \bigcup_{w \in A_\mathcal{T}} \Mut(w)
    \]
    \item We say that a gene $i$ is \emph{valid} for a subset $D \subseteq A_\mathcal{T}$ if $i$ is valid for every $w \in D$. If $i$ is valid for $D$, then we define the \emph{$\mathbf{B}$-mutation} of $D$ to be the set $D^i:= \{w^i: w\in D\}$. The \emph{minimal gene} for $D$ is the lexicographically least gene $i$ among the valid genes for $D$. If $i$ is a minimal gene for $D$, then we say that $D^i$ is a \emph{minimal $\mathbf{B}$-mutation} of $D$. 
    
    When it is useful to emphasize that these sets are the domains of a subalgebras, we may write $\mathbf{D}$ and $\mathbf{D}^i$ instead of $D$ and $D^i$.

    \item We say that an $M$-tuple of genes $(i_1, \dots, i_M)$ is \emph{valid} for an $M$-ary relation $R \subseteq (A_\mathcal{T})^M$ if $i_j$ is valid for $\pi_j(R)$, for each $1 \leq j \leq M$. If $(i_1, \dots, i_M)$ is valid for $R$, then we define a \emph{$\mathbf{B}$-mutation} of $R$ to be the relation 
    $R^{(i_1, \dots, i_m)}:= \{(w_1^{i_1}, \dots, w_M^{i_M}): (w_1, \dots, w_M) \in R \}$. The tuple $(i_1, \dots, i_M)$ which is lexicographically minimal in each coordinate and is valid for $R$ is called the \emph{minimal gene sequence}  for $R$.

\end{enumerate}

\end{definition}

\begin{figure}
    \centering
    \includegraphics[width=.98\linewidth]{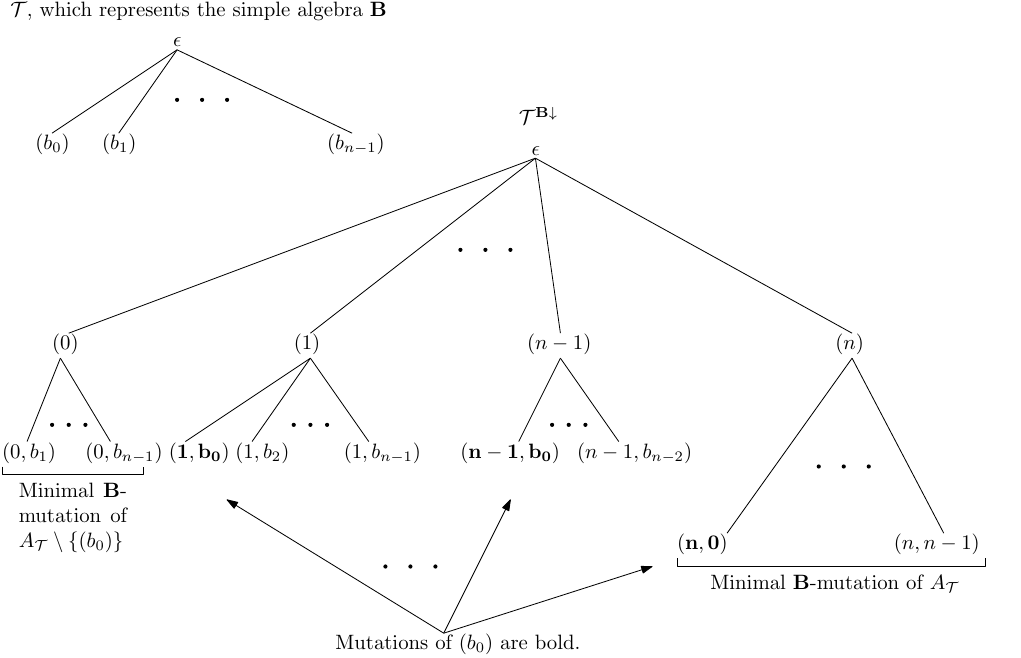}
    \caption{Unpacking a tree representing a simple algebra $\mathbf{B}$}
    \label{fig:unpacking1}
\end{figure}

We discuss Definition~\ref{def:splicing}, first with the aid of Figure~\ref{fig:unpacking1}, where the reduced conservative minority tree $\mathcal{T}$ represents a simple algebra $\mathbf{B}$. Hence, $\mathcal{T}$ is a trivial height one tree whose leaves consist of length one tuples of elements coming from $B = \{b_0, \dots, b_{n-1}\}$. Not depicted in the figure is the algebra of genes, which is in this case $\mathbf{P}_{n+1}$, since $\mathbf{B}$ occurs exactly once as a successor algebra of $\mathcal{T}$. The $\mathbf{B}$-unpacking $\mathcal{T}^{\mathbf{B}\downarrow}$ is depicted below $\mathcal{T}$. There, the leaves all of the valid splices of the leaves of $\mathcal{T}$, which we are calling $\mathbf{B}$-mutations. For example, the mutations of $(b_0)$ are depicted in bold. The successor algebra $\epsilon^{+_{\mathcal{T}^{\mathbf{B} \downarrow}}}$ is equal to the projection minority algebra $\mathbf{P}_{n+1}$, the successor algebra $\mathbf{(j)}^{+_{\mathcal{T}^{\mathbf{B} \downarrow}}}$ is equal to $\mathbf{B}_j$ for each $0 \leq j \leq n-1$, and the successor algebra $\mathbf{(n)}^{+_{\mathcal{T}^{\mathbf{B} \downarrow}}}$ is equal to $\mathbf{P}_{n}$. Hence, the unique maximal congruence $\lambda$ of the algebra $\mathbf{A}_{\mathcal{T}^{\mathbf{B} \downarrow}}$ has $(n+1)$-many classes, where each of the first $n$ of the $\lambda$-classes encloses one of the maximal proper subalgebra $\mathbf{B}_j$ of $\mathbf{B}$, and the other remaining class encloses $\mathbf{P}_n$. 

\begin{figure}
    \centering
    \includegraphics[width=0.85\linewidth]{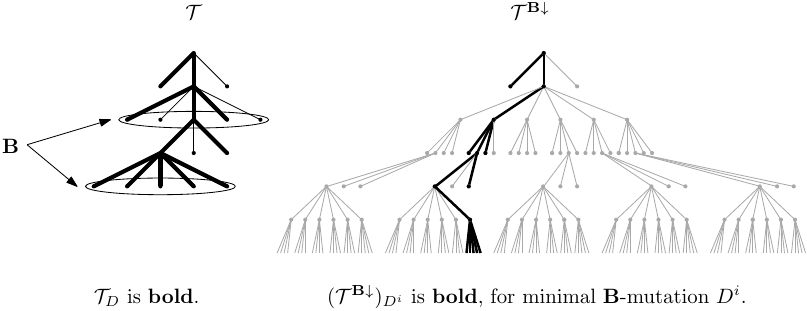}
    \caption{Unpacking a tree with two successor algebras equal to $\mathbf{B}$}
    \label{fig:unpacking2}
\end{figure}

The picture of $\mathcal{T}^{\mathbf{B} \downarrow}$ is more complicated when the number of successor algebras in $\mathcal{T}$ is greater than one, since the number of genes grows exponentially. Nevertheless, we provide one in Figure~\ref{fig:unpacking2}. The reader will find on the left of the figure a picture of a conservative minority tree with two successor algebras equal to $\mathbf{B}$. On the right is a complete picture of the $\mathbf{B}$-unpacking transformation of $\mathcal{T}$. A subtree $\mathcal{T}_D$ of $\mathcal{T}$ determined by some $\mathbf{D} \leq \mathbf{A}_\mathcal{T}$ is indicated with bold lines on the left of the figure. Also in bold, we indicate in the $\mathbf{B}$-unpacking of $\mathcal{T}$ the subtree determined by the minimal $\mathbf{B}$-mutation of $\mathbf{D}$. 

The following sequence of lemmas develops some important properties of the $\mathbf{B}$-unpacking transformation $\mathcal{T}^{\mathbf{B}\downarrow}$ and its connection to the invariant relations of $\mathbf{A}_\mathcal{T}$.

\begin{lemma}\label{lem:B-fullnessispreservedbyIsomorphism}
     Let $\mathcal{T}$ be a sapling full reduced conservative minority tree over a pairwise nonisomorphic collection of simple minimal Taylor conservative minority algebras $S = \{\mathbf{A}_1, \dots, \mathbf{A}_s \} \cup \{\mathbf{B}\}$. Suppose that domain of $\mathbf{B}$ is $B = \{b_0, \dots, b_{n-1}\}$, that $\mathbf{B}$ is not hereditarily simple, and that $|B|$ is an upper bound to the set of cardinalities
     \[
     \{ |A_i| : 1 \leq i \leq s \text{ and $\mathbf{A}_i$ is not hereditarily simple }\}.
     \] Let $\mathbf{U}_1$ and $\mathbf{U}_2$ be isomorphic subalgebras of $\mathbf{A}_\mathcal{T}$ have domains $U_1$ and $U_2$, respectively. Let $\psi: \mathbf{U}_1 \to \mathbf{U}_2$ be an isomorphism. Let $u,v,w \in U_1$. Suppose that $d_1 = \bigvee \{u,v,w \}$ is such that $\mathbf{d}_1^{+_{\mathcal{T}_{U_1}}}$ is isomorphic to $\mathbf{B}$. The following hold. 

     \begin{enumerate}
         \item The vertex $d_1 \in \mathcal{T}_{U_1}$ is full in $\mathcal{T}$ (meaning $\mathbf{d}_1^{+_{\mathcal{T}_{U_1}}}=\mathbf{d}_1^{+_{\mathcal{T}}}$) and $\mathbf{d}_1^{+_{\mathcal{T}_{U_1}}} = \mathbf{B}$.
         \item The above property is preserved by the isomorphism $\phi$. Specifically, we mean that the vertex $d_2= \bigvee \{\psi(u), \psi(v), \psi(w) \}$ is also full in $\mathcal{T}$ and $\mathbf{d}_2^{+_{\mathcal{T}_{U_2}}} = \mathbf{B}$.
     \end{enumerate}
     
\end{lemma}

\begin{proof}
   The proof of \emph{1.}\ is more or less immediate. If $\mathbf{d}_1^{+_{\mathcal{T}_{U_1}}}$ is not equal to $\mathbf{B}$, then it must be a subalgebra of one of the $\mathbf{A}_1, \dots, \mathbf{A}_s$. Now, $\mathbf{B}$ is not hereditarily simple, so $\mathbf{d}_1^{+_{\mathcal{T}_{U_1}}}$ cannot be a subalgebra of a hereditarily simple local algebra. Since $\mathbf{B}$ has maximal cardinality among the local algebras of $\mathcal{T}$ which are not hereditarily simple, it would follow that some $\mathbf{A}_k$  is isomorphic to $ \mathbf{B}$, which is impossible since we assume that the local algebras of $\mathcal{T}$ are pairwise nonisomorphic. Therefore, the successor algebra $\mathbf{d}_1^{+_{\mathcal{T}_{U_1}}}$ is equal to $\mathbf{B}$, and again the assumption that $\mathbf{B}$ has maximum size among the local algebras of $\mathcal{T}$ forces $d_1$ to be full in $\mathcal{T}$. 

   To prove \emph{2.}, we first observe that the definition of $d_1$ implies that the least block congruence $\theta_1$ of $\mathbf{U}_1$ whose nontrivial class contains $\{u,v,w\}$ is equal to $\sim_{}d_1$ restricted to $U_1$. Indeed, this follows from Lemma~\ref{lem:CongruencesofSubalgebras} along with the fact that $\mathbf{B}$ is simple. Now, let $D_1$ be the nontrivial class of $\theta_1$ and let $\mathbf{D}_1 \leq \mathbf{U}_1$ be the corresponding subalgebra. It follows from Lemma~\ref{lem:subalgebrasuccessor} and the simplicity of $\mathbf{B}$ that $\sim_{C_{d_1}}$ restricted to $\mathbf{D}_1$ is the unique maximal congruence of $\mathbf{D}_1$. Putting this all together, we see that $\{u,v,w\}$ satisfy the following property:
   \begin{itemize}
       \item Let $D_1$ be the nontrivial class of the least block congruence $\theta_1$ collapsing $\{u,v,w\}$. The quotient of the corresponding subalgebra $\mathbf{D}_1$ by its unique maximal congruence is isomorphic to $\mathbf{B}$. 
   \end{itemize}
   The above property is obviously preserved by isomorphisms. So, let $\theta_2$ be the least block congruence of $\mathbf{U}_2$ which collapses $\{\psi(u), \psi(v), \psi(w) \}$, let $d_2 = \bigvee \{\psi(u), \psi(v), \psi(w) \}$, and let $D_2$ be the nontrivial $\theta_2$-class. By Lemma~\ref{lem:CongruencesofSubalgebras}, it follows that $d_2^{+_{\mathcal{T}_{D_2}}}$ is the nontrivial class of $\overline{\theta_2}$ of the subalgebra $\mathbf{d}_2^{+_{\mathcal{T}_{U_2}}} \leq \mathbf{C}$ for some $\mathbf{C}$ among the $\mathbf{A}_1, \dots, \mathbf{A}_s, \mathbf{B}$. 
   
       We will show that $\mathbf{C} = \mathbf{B}$. 
       To see this, we will first argue that there is a subalgebra of $\mathbf{d}_2^{+_{\mathcal{T}_{U_2}}} \leq \mathbf{C}$ which is isomorphic to $\mathbf{B}$. Indeed, let $U_2^{<d_2} = \{ u \in U_2: u \leq_\mathcal{T} d_2\}$. By Lemma~\ref{lem:subalgebrasuccessor}, the subalgebra prefix successor map $\phi$ maps $\mathbf{U}_2$ onto $\mathbf{d}_2^{+_{\mathcal{T}_{U_2}}}$ and the kernel of $\phi$ is $\sim_{C_{d_2}}$. Then $\phi(D_2)$ is a subalgebra of $\mathbf{d}_2^{+_{\mathcal{T}_{U_2}}}$ which is isomorphic to $D_2/ \mu$, where $\mu$ is the restriction of $\sim_{C_{d_2}}$ to $D_2$. 
       
       Let $\lambda$ be the unique maximal congruence of $D_2$. We know that $\mathbf{D}_2/\lambda$ is isomorphic to $\mathbf{B}$, and since $\lambda$ is maximal, it follows that $\mu \subseteq \lambda$. It follows from the correspondence theorem that $\phi(\lambda)$ is a congruence of $\phi(\mathbf{D}_2)$ and that $\phi(\mathbf{D}_2)/\phi(\lambda)$ is isomorphic to $\mathbf{B}$. Since $\phi(\mathbf{D}_2)$ is conservative, any set of representatives of the $\phi(\lambda)$-classes is a subalgebra of $\phi(\mathbf{D}_2)$, which is evidently isomorphic to $\mathbf{B}$. Hence, we have argued that there is a subalgebra of $\mathbf{d}_2^{+_{\mathcal{T}_{U_2}}} \leq \mathbf{C}$ which is isomorphic to $\mathbf{B}$. 

       Now we can finish the proof. If $\mathbf{C}$ is a local algebra of $\mathcal{T}$ which is not hereditarily simple, then it must be equal to $\mathbf{B}$. Indeed, suppose that $\mathbf{C}$ is hereditarily simple. We assume that all such local algebras have size at most $\mathbf{B}$, so the fact that $\mathbf{d}_2^{+_{\mathcal{T}_{U_2}}} $ has a subalgebra isomorphic to $\mathbf{B}$ forces $\mathbf{d}_2^{+_{\mathcal{T}_{U_2}}} = \mathbf{C}$, and this is isomorphic to $\mathbf{B}$. Since we assume the local algebras of $\mathcal{T}$ are pairwise nonisomorphic, this case forces $\mathbf{B} = \mathbf{C}$. The other possibility is that $\mathbf{C}$ is a hereditarily simple local algebra of $\mathcal{T}$, but this is impossible, since $\mathbf{B}$ is not hereditarily simple. 
\end{proof}

\begin{lemma}\label{lem:subalgebraIsomorphismMutationLemma}
    Let $\mathcal{T}$ be a sapling full reduced conservative minority tree over a pairwise nonisomorphic collection of simple minimal Taylor conservative minority algebras $S = \{\mathbf{A}_1, \dots, \mathbf{A}_s \} \cup \{\mathbf{B}\}$. Suppose that domain of $\mathbf{B}$ is $B = \{b_0, \dots, b_{n-1}\}$, that $\mathbf{B}$ is not hereditarily simple, and that $|B|$ is an upper bound to the set of cardinalities
     \[
     \{ |A_i| : 1 \leq i \leq s \text{ and $\mathbf{A}_i$ is not hereditarily simple}\}.
     \]  Let $\mathbf{U}_1$ and $\mathbf{U}_2$ be isomorphic subalgebras of $\mathbf{A}_\mathcal{T}$, let $i_1$ and $i_2$ be their respective minimal genes, and let $G \leq \mathbf{U}_1 \times \mathbf{U}_2$ be a subdirect relation which is the graph of an isomorphism. Then $G^{(i_1, i_2)}$ is an invariant relation of $\mathbf{A}_{\mathcal{T}^{\mathbf{B}\downarrow}}$. 

\end{lemma}

\begin{proof}
    We call the isomorphism of which $G$ is a graph $\psi$ and we call the mapping of which $G^{(i_1,i_2)}$ is a graph $\psi^{(i_1,i_2)}$. We will be finished if we can show that $\psi^{(i_1, i_2)}$ is an isomorphism between the minimal $\mathbf{B}$-mutated subalgebras $\mathbf{U}_1^{i_1} $ and $\mathbf{U}_2^{i_2}$. First, observe that $\psi^{(i_1,i_2)}$ is a bijection. Indeed, $\psi$ is a bijection and it follows easily from the definition of a $(w,i)$-splice (see Definition~\ref{def:splicing}) that the mappings which send an element of $u_1 \in \mathbf{U}_1$ to its mutation $u_1^{i_1}$ and an element $u_2 \in \mathbf{U}_1$ to its mutation $u_2^{i_2}$ are bijections. Hence, $\psi^{(i_1,i_2)}$ is a bijection. 

    So, we just need to show that $\psi^{(i_1,i_2)}$ satisfies the homomorphism property. Take three-elements $u^{i_1},v^{i_1},w^{i_1} \in D_1^{i_1}$, so $u,v,w \in D_1$. Let $d_1 = \bigvee \{u,v,w\}$ and $d_2 = \bigvee \{\psi(u), \psi(v), \psi(w) \}$. We treat two cases.
    \begin{itemize}
        \item Suppose that $\mathbf{d}_1^{+_{\mathcal{T}_{U_1}}} = \mathbf{B}$. By Lemma~\ref{lem:B-fullnessispreservedbyIsomorphism}, it follows that $\mathbf{d}_2^{+_{\mathcal{T}_{U_2}}} = \mathbf{B}$. So, $\mathcal{T}_{U_1}$ is full at $d_1$ and $\mathcal{T}_{U_2}$ is full at $d_2$, and their respective successor algebras are equal to $\mathbf{B}$. First, this means there exist $j_1, j_2, j_3,  \in \{0, \dots, n-1 \}$ and $j'_1, j'_2, j'_3,  \in \{0, \dots, n-1\}$ such that 
        \begin{align*}
            u^{d_1^+}&=  b_{j_1} &&\psi(u)^{d_2^+} =  b_{j'_1}\\
            v^{d_1^+}&=  b_{j_2} &&\psi(v)^{d_2^+} =  b_{j'_2}\\
            w^{d_1^+}&=  b_{j_3} &&\psi(w)^{d_2^+} =  b_{j'_3}.
        \end{align*}
    By the definition of a valid gene, it follows that the entries of $i_1$ and $i_2$ that are inserted after $d_1$ and $d_2$ respectively must each be $n$. Hence, we observe that
    \begin{align*}
            (u^{i_1})^{+_{d_1^*}}&= j_1 &&(\psi(u)^{i_2})^{+_{d_2^*}} = j'_1\\
            (v^{i_1})^{+_{d_1^*}}&= j_2 &&(\psi(v)^{i_2})^{+_{d_2^*}} = j'_2\\
            (w^{i_1})^{+_{d_1^*}}&= j_3 &&(\psi(w)^{i_2})^{+_{d_2^*}} = j'_3, 
        \end{align*}
    where $d_1^*:=\bigvee \{u^{i_1},v^{i_1},w^{i_1}\} = d_1^\frown (n)$ and  $d_2^*:= \bigvee \{\psi(u)^{i_2}, \psi(v)^{i_2}, \psi(w)^{i_2} \} = d_2^\frown(n)$. 

    We want to see that 
    \[
    \psi^{(i_1, i_2)}(m_{\mathcal{T}^{\mathbf{B} \downarrow}} ( u^{i_1},v^{i_1},w^{i_1})) = m_{\mathcal{T}^{\mathbf{B} \downarrow}}(  \psi^{(i_1, i_2)}(u^{i_1}), \psi^{(i_1, i_2)}(v^{i_1}), \psi^{(i_1, i_2)}(w^{i_1})),
    \]
    but this is now clear, since the of $m_{\mathcal{T}^{\mathbf{B} \downarrow}}$ involves consulting the $d_1^*$ or $d_2^*$ successor algebras. In the present case, each successor algebra is equal to the projection minority algebra $\mathbf{P}_n$ and all bijection graphs on $\mathbf{P}_n$ are invariant, so the homomorphism property for $\psi^{(i_1,i_2)}$ follows.
    \item Suppose that $\mathbf{d}_1^{+_{\mathcal{T}_{U_1}}} \neq \mathbf{B}$. By Lemma~\ref{lem:B-fullnessispreservedbyIsomorphism}, it follows that $\mathbf{d}_2^{+_{\mathcal{T}_{U_2}}} \neq \mathbf{B}$ also. It follows from the definition of a minimal gene (Definition~\ref{def:splicing}) that the successor algebras used to compute $m_{\mathcal{T}^{\mathbf{B} \downarrow}} ( u^{i_1},v^{i_1},w^{i_1})$ and $m_{\mathcal{T}^{\mathbf{B} \downarrow}}(  \psi^{(i_1, i_2)}(u^{i_1}), \psi^{(i_1, i_2)}(v^{i_1}), \psi^{(i_1, i_2)}(w^{i_1}))$ are respectively equal to the successor algebras used to compute $m_\mathcal{T}(u,v,w)$ and $m_\mathcal{T}(\psi(u), \psi(v), \psi(w))$, so the homomorphism property for $\psi^{(i_1,i_2)}$ follows in this case also. 
    \qedhere 
    \end{itemize} 
\end{proof}

\begin{lemma}\label{lem:MutationsofBlockCongruencesAreInvariant}
    Let $\mathcal{T}$ be a sapling full reduced conservative minority tree over a collection of simple minimal Taylor conservative minority algebras $S = \{\mathbf{A}_1, \dots, \mathbf{A}_s \} \cup \{\mathbf{B}\}$, where $\mathbf{B}$ has domain $B = \{b_0, \dots, b_{n-1}\}$. Let $\mathcal{T}^{\mathbf{B}\downarrow}$ be the $\mathbf{B}$-unpacking of $\mathcal{T}$. Suppose that $\mathbf{U} \leq \mathbf{A}_\mathcal{T}$ has a block congruence $\theta$ whose nontrivial class is $D$. If $i$ is the minimal gene for $\mathbf{U}$, then $\theta^{(i,i)}$ is a block congruence of $\mathbf{U}^i$ whose nontrivial class is $D^i$. 
\end{lemma}

\begin{proof}
    Let the domain of $\mathbf{U}$ be $U = D \cup V$, where $D = \{d_1, \dots, d_k\}$ is the unique nontrivial $\theta$-block, and $V = \{v_1, \dots, v_l \}$. Let $d = \bigvee D$, i.e.\ $d$ is the maximal prefix common of every element of $D$. Applying the forward direction of Lemma~\ref{lem:CongruencesofSubalgebras}, we obtain that $d^{+_{\mathcal{T}_D}}$ is the nontrivial block of a block congruence $\overline{\theta}$ of the subalgebra $\mathbf{d}^{+_{\mathcal{T}_U}}$. On the other hand, consider the equivalence relation $\theta^{(i,i)}$ over $U^i = D^i \cup V^i$. Observe that $\theta^{(i,i)}$ has exactly one nontrivial class $D^i$. We will apply the backwards direction of Lemma~\ref{lem:CongruencesofSubalgebras} to conclude that $\theta^{(i,i)}$ is a block congruence of $\mathbf{U}^i$. Let $d^*$ be the least upper bound of $D^i$ in $\mathcal{T}^{\mathbf{B} \downarrow}$. We treat two cases.

    \begin{itemize}
        \item Suppose that $\mathbf{d}^{+_\mathcal{T}} \in \{\mathbf{A}_1, \dots, \mathbf{A}_s \}$, i.e.\, that $\mathbf{d}^{+_\mathcal{T}}$ is not equal to $\mathbf{B}$. In this case, it is easy to see that the maximal common prefix among the elements of $D^i$ is exactly $d^i$, i.e.\ that 
    \[
    d^* = \bigvee D^i = \left(\bigvee D \right)^i = d^i.
    \]
    Furthermore, although the $\mathbf{B}$-mutation transformation may change the isomorphism type of $\mathbf{D}$, it does not perturb the local structure which makes $\theta$ a block congruence. Specifically, we observe that 
    \begin{align*}
    (\mathbf{d^*})^{+_{\mathcal{T}^\mathbf{B} \downarrow}} &= \mathbf{d}^{+_\mathcal{T}},\\
    (\mathbf{d}^*)^{+_{(\mathcal{T}^{\mathbf{B} \downarrow})_{U^i}}}&= \mathbf{d}^{+_{\mathcal{T}_U}} , \text{ and}\\
     (\mathbf{d}^*)^{+_{(\mathcal{T}^{\mathbf{B} \downarrow})_{D^i}}}&= \mathbf{d}^{+_{\mathcal{T}_D}} .
    \end{align*}
    We deduced from the forward direction of Lemma~\ref{lem:CongruencesofSubalgebras} that $\mathbf{d}^{+_{\mathcal{T}_D}} $ is the nontrivial class of a block congruence $\overline{\theta}$ of $\mathbf{d}^{+_{\mathcal{T}_U}} $, so in view of the above equalities, we can apply the backwards direction of the lemma to conclude that the equivalence relation $\theta^i$ is a congruence of $\mathbf{D}^i$. 

    \item Suppose that $\mathbf{d}^{+_\mathcal{T}} = \mathbf{B}$. In this case, we have  $d^{+_{\mathcal{T}_U}} \subseteq B$, and it is straightforward to see that the maximal common prefix among elements of $D^i$ is
    \[
     d^*:=\bigvee U^i  = 
     \begin{cases}
         (d^i)^\frown(j) &\text{if $d^{+_{\mathcal{T}_U}} \subsetneq B$, where $0\leq j \leq n-1$ is minimal such that $b_j \notin U^{C_u}$} \\
         (d^i)^\frown(n) &\text{if $d^{+_{\mathcal{T}_U}} = B$ (i.e.\ $\mathcal{T}_U$ is full in $\mathcal{T}$ at $d$}).
     \end{cases}
    \]
    In either case, the backwards direction of Lemma~\ref{lem:CongruencesofSubalgebras} can again be applied to conclude that $\theta^i$ is invariant. Here are the details of each case.

    \begin{itemize}
        \item If $d^* = (d^i)^\frown(j)$ for some $0 \leq j \leq n-1$, then it again follows from the definition of the local structure of $\mathcal{T}^{\mathbf{B}\downarrow}$ that 
         \begin{align*}
    (\mathbf{d^*})^{+_{\mathcal{T}^\mathbf{B} \downarrow}} &= \mathbf{d}^{+_\mathcal{T}},\\
    (\mathbf{d}^*)^{+_{(\mathcal{T}^{\mathbf{B} \downarrow})_{U_i}}}&= \mathbf{d}^{+_{\mathcal{T}_U}} , \text{ and}\\
     (\mathbf{d}^*)^{+_{(\mathcal{T}^{\mathbf{B} \downarrow})_{D^i}}}&= \mathbf{d}^{+_{\mathcal{T}_D}} .
    \end{align*}
    so we conclude that $\theta^i$ is a congruence of $\mathbf{D}^i$ like we did above. 
    \item On the other hand, if $d^* = (d^i)^\frown (n)$, then $(\mathbf{d}^*)^{+_{(\mathcal{T}^{\mathbf{B} \downarrow})_{U_i}}} = \mathbf{P}_n$. This is the only situation where something could go wrong, since the relevant local algebra has been changed from $\mathbf{B}$ to $\mathbf{P}_n$. However, this is not a problem, because we know that $\mathbf{d}^{+_{\mathcal{T}_D}}$ is the nontrivial class of a congruence $\overline{\theta}$ on $\mathbf{d}^{+_{\mathcal{T}_U}}  = \mathbf{B}$, which is simple by assumption. Therefore, $\mathbf{d}^{+_{\mathcal{T}_D}} = \mathbf{d}^{+_{\mathcal{T}_U}} = \mathbf{d}^{+_\mathcal{T}}$, and it follows that $\theta$ is equal to $\sim_{u}$ restricted to $D$. We conclude that $\theta^i$ is equal to $\sim_{u^*}$ restricted to $D^i$.  \qedhere 
    \end{itemize}
    \end{itemize}
\end{proof}

\begin{corollary}\label{cor:importantBlockCongruenceMutationCorollary}
     Let $\mathcal{T}$ be a sapling reduced full conservative minority tree over a collection of simple minimal Taylor conservative minority algebras $S = \{\mathbf{A}_1, \dots, \mathbf{A}_s \} \cup \{\mathbf{B}\}$, where $\mathbf{B}$ has domain $B = \{b_0, \dots, b_{n-1}\}$. Let $\mathcal{T}^{\mathbf{B}\downarrow}$ be the $\mathbf{B}$-unpacking of $\mathcal{T}$. Suppose that $\theta$ is a block congruence of a subalgebra $\mathbf{U} \leq \mathbf{A}_\mathcal{T}$ whose domain $U = D \cup V$, where $D$ is the nontrivial $\theta$-class. Let $D_1 \subseteq D$, $D_2 \subseteq D$, and $D_3 \subseteq U$ be nonempty, and let $i_1$, $i_2$, and $i_3$ be the minimal genes for $U_1 = D_1 \cup V$, $U_2 = D_2 \cup V$, and $U_3 = D_3 \cup V$ respectively. The following hold.
     \begin{enumerate}
         \item The relation 
        \[
        R = D_1 \times D_2\times D_3 \cup \{ (v,v,v): v \in V\} \subseteq U_1 \times U_2 \times U_3
        \]
        is an invariant relation of $\mathbf{A}_{\mathcal{T}}$.
         \item The relation $R^{(i_1,i_2,i_3)}$, which equals
         \[
     (D_1)^{i_1} \times (D_2)^{i_2}\times (D_3)^{i_3} \cup \{ (v^{i_1},v^{i_2},v^{i_3}): v \in V\} \subseteq (U_1)^{i_1} \times (U_2)^{i_2} \times (U_3)^{i_3}
     \]
     is an invariant relation of $\mathbf{A}_{\mathcal{T}^{\mathbf{B}\downarrow}}$.
     \end{enumerate}
     
\end{corollary}

\begin{proof}
    We prove \emph{1.} first. We let $\theta_1$ be the block congruence of the subalgebra $\mathbf{U}_1$ whose nontrivial class is $D_1$ and define $\theta_2$ and $\theta_3$ similarly. Since $D_1, D_2, D_3$ are all subsets of $D$, which is the nontrivial $\theta$-class, it follows that $\mathbf{U} / \theta$, $\mathbf{U}_1 / \theta_1$,$\mathbf{U}_2 / \theta_2$, and $\mathbf{U}_3 / \theta_3$ are pairwise isomorphic, via the mapping which identifies the nontrivial class of each algebra and acts as the identity otherwise. This means that 
    \[
    \overline{\mathbf{R}}: = \{(D_1, D_2, D_3) \} \cup \{ (v,v,v) : v\in V\} \leq (\mathbf{U}_1 / \theta_1) \times (\mathbf{U}_2 / \theta_2) \times (\mathbf{U}_3 / \theta_3)
    \]
    is a subalgebra of the product of the three isomorphic quotients. Let $\eta_1, \eta_2$, and $\eta_3$ be the respective natural maps from $\mathbf{U}$ onto $\mathbf{U}_1 / \theta_1$,$\mathbf{U}_2 / \theta_2$, and $\mathbf{U}_3 / \theta_3$. Let $\eta: = \eta_1 \times \eta_2 \times \eta_3$. Evidently, $R = \eta^{-1}(\overline{R})$, hence $R$ is a subalgebra of $\mathbf{U}^3$. 

    To prove \emph{2.}, we apply the same reasoning. By Lemma~\ref{lem:MutationsofBlockCongruencesAreInvariant}, we know that $\theta_j^{(i_j,i_j)}$ is a block congruence of $\mathbf{U}_j^{i_j}$ whose nontrivial class is $D^{i_j}$, for each $1\leq j \leq 3$. Let $i$ be the minimal gene for $U$. To make the argument given for \emph{1.} work here, we just need to see that $\mathbf{U}^i / \theta^{(i,i)}$, $\mathbf{U}_1^{i_1} / \theta_1^{(i_1,i_1)}$,$\mathbf{U}_2^{i_2} / \theta_2^{(i_2,i_2)}$, and $\mathbf{U}_3^{i_3} / \theta_3^{(i_3,i_3)}$ are pairwise isomorphic. This follows from the fact that, while the respective minimal genes $i$, $i_1$, $i_2$, and $i_3$ can be different, they do not differ in coordinates which are relevant for splicing which occurs above $d = \bigvee D$ in $\mathcal{T}$, and this is the only information which is relevant for the quotients. 
\end{proof}

Next, we define a mapping from the clone of the algebra represented by the $\mathbf{B}$-unpacking transformation of $\mathcal{T}$ into the clone of all operations on $A_\mathcal{T}$. While this mapping will preserve arity and commute with taking minors, it does not necessarily map into $\Clo(\mathbf{A}_\mathcal{T})$. However, we will later show that this is the case for a suitable choice of $\mathbf{B}$ among the local algebras of $\mathcal{T}$.

\begin{definition}\label{def:minorpreservingmapping}
    Let $\mathcal{T}$ be a sapling reduced conservative minority tree over a collection of simple minimal Taylor conservative minority algebras $S = \{\mathbf{A}_1, \dots, \mathbf{A}_s \} \cup \{\mathbf{B}\}$ and let $\mathcal{T}^{\mathbf{B}\downarrow}$ be the $\mathbf{B}$-unpacking of $\mathcal{T}$. We define a function 
    \[
    \Delta_{\mathcal{T}^{\mathbf{B} \downarrow}} \colon \Clo(\mathbf{A}_{\mathcal{T}^{\mathbf{B}\downarrow}}) \to \bigcup_{N \geq 1} A_\mathcal{T}^{(A_\mathcal{T}^N)}
    \]
    in the following manner. Let $f \in \Clo(\mathbf{A}_{\mathcal{T}^{\mathbf{B}\downarrow}})$ be of arity $N$, let $(w_1, \dots, w_N)$ be an $N$-tuple of leaves of $\mathcal{T}$, and let $i$ be the minimal gene for $C = \{w_1, \dots, w_N \}$. We then set 
    \[
    \Delta_{\mathcal{T}^{\mathbf{B} \downarrow}}(f)(w_1, \dots, w_N) := w_z,
    \]
    where $1\leq z \leq N$ is such that $f(w_1^i, \dots, w_N^i) = w^i_z$. 
\end{definition}

\begin{lemma}\label{lem:DeltaIsMinorPreserving}
    Let $\mathcal{T}$ be a sapling reduced conservative minority tree over a collection of simple minimal Taylor conservative minority algebras $S = \{\mathbf{A}_1, \dots, \mathbf{A}_s \} \cup \{\mathbf{B}\}$ and let $\mathcal{T}^{\mathbf{B}\downarrow}$ be the $\mathbf{B}$-unpacking of $\mathcal{T}$. The function $\Delta_{\mathcal{T}^{\mathbf{B}\downarrow}}$ defined in Definition~\ref{def:minorpreservingmapping} preserves arities and commutes with taking minors.
\end{lemma}

\begin{proof}
    To make the notation less cluttered, we write $\Delta$ instead of $\Delta_{\mathcal{T}^{\mathbf{B}\downarrow}}$. It is immediate from the definition that $\Delta$ preserves the arity of functions. Showing that $\Delta$ commutes with taking minors is also straightforward, but we include the argument for completeness. So, let $f \in \Clo(\mathbf{A}_\mathcal{T})$ be an $N$-ary term function, let $\sigma: [M] \to [N]$, and let $f^\sigma$ be the $M$-ary minor of $f$ defined by $f^\sigma(x_1, \dots, x_M) = f(x_{\sigma(1)}, \dots, x_{\sigma(M)})$. To show that 
    \[
    \Delta(f)^\sigma = \Delta(f^\sigma),
    \]
    we will directly show that the two functions have the same evaluation on every input tuple. Let $(w_1, \dots, w_M)$ be an $M$-tuple of leaves of $\mathcal{T}$ and let $i$ be the minimal gene of $\{w_{1}, \dots, w_{M}\}$. We then have by definition of $\Delta$ that
    $
     \Delta(f)^\sigma((w_1, \dots, w_M)) = \Delta(f)(w_{\sigma(1)}, \dots, w_{\sigma(M)}) = w_{\sigma(z)},
    $
    where $1\leq z \leq M$ some (not necessarily unique) index such that $f(w^i_{\sigma(1)}, \dots, w^i_{\sigma(M)})= w_{\sigma(z)}^i$. On the other hand, 
    $
    f(w^i_{\sigma(1)}, \dots, w^i_{\sigma(M)}) = f^\sigma(w^i_1, \dots, w^i_M),
    $
    so $\Delta(f^\sigma)(w_1, \dots, w_M) = w_{\sigma(z)}$ also.
\end{proof}

\begin{theorem}\label{thm:DeltaisMinionHom}
    Let $\mathcal{T}$ be a sapling full reduced conservative minority tree over a collection of simple minimal Taylor conservative minority algebras $S = \{\mathbf{A}_1, \dots, \mathbf{A}_s \} \cup \{\mathbf{B}\}$ and let $\mathcal{T}^{\mathbf{B}\downarrow}$ be the $\mathbf{B}$-unpacking of $\mathcal{T}$. Suppose that domain of $\mathbf{B}$ is $B = \{b_0, \dots, b_{n-1}\}$, that $\mathbf{B}$ is not hereditarily simple, and that $|B|$ is an upper bound to the set of cardinalities
     \[
     \{ |A_i| : 1 \leq i \leq s \text{ and $\mathbf{A}_i$ is not hereditarily simple}\}.
     \] 
     Then the function $\Delta_{\mathcal{T}^{\mathbf{B}\downarrow}}$ defined in Definition~\ref{def:minorpreservingmapping} is a minion homomorphism from $\Clo(\mathbf{A}_{\mathcal{T}^{\mathbf{B}\downarrow}})$ into $\Clo(\mathbf{A}_{\mathcal{T}})$.
\end{theorem}

\begin{proof}

    As before, we write $\Delta$ instead of $\Delta_{\mathcal{T}^{\mathbf{B}\downarrow}}$
    In view of Lemma~\ref{lem:DeltaIsMinorPreserving}, we need to show that the image of $\Delta$ is contained in $\Clo(\mathbf{A}_\mathcal{T})$. 
    So, take $f$ an $N$-ary term function of $\mathbf{A}_{\mathcal{T}^{\mathbf{B} \downarrow}}$. We want to see that $\Delta_{\mathcal{T}^{\mathbf{B} \downarrow}}(f)$ is also a term function of $\mathbf{\mathbf{A}_\mathcal{T}}$, which is equivalent to showing that it preserves all of the relations for $\mathfrak{S}_{\mathbf{A}_\mathcal{T}}$ (recall this is the structure which has all of the relations listed in Theorem~\ref{thm:consminorityrelbasis} for the algebra $\mathbf{A}_\mathcal{T}$). 

    \begin{itemize}
    \item[\emph{(a')}] Let $(w_1, \dots, w_N) \in A_\mathcal{T}^N$ and let $W=\{w_1, \dots, w_N\} \subsetneq A_\mathcal{T}$. Let $i$ be the minimal gene for $W$. Since $\mathbf{A}_{\mathcal{T}^{\mathbf{B} \downarrow }}$ is conservative, it follows that $f(w_1^{i}, \dots, w_N^{i}) = w_j^{i}$ for some $1 \leq j \leq N$. By definition, we have that $\Delta_{\mathcal{T}^{\mathbf{B} \downarrow}}(f)(w_1, \dots, w_N) = w_j$, hence all the unary relations of $\mathfrak{S}_{\mathcal{T}}$ are preserved by $\Delta_{\mathcal{T}^{\mathbf{B} \downarrow}}(f) $.

    \item[\emph{(b')}] Since we have already shown that unary relations are preserved above, it is enough to show that congruences of subalgebras are preserved, since every subdirect transversal endomorphism graph can be pp-defined from a congruence by restricting one factor to the transversal. We can further restrict to just showing that block congruences of subalgebras are preserved, since the other congruences are joins of block congruences, and congruence joins are pp-definable in Maltsev algebras by congruence permutability. 

    So, let $\mathbf{U} \leq \mathbf{A}_\mathcal{T}$ be a subalgebra with underlying set $U = D \cup V$, where $D = \{d_1, \dots, d_k \} $ is the nontrivial class of a block congruence $\theta$ and $V = \{v_1, \dots, v_l\}$. Take an $N$-many $\theta$-pairs $(a_1, b_1), \dots, (a_N, b_N) \in \theta$. We may assume without loss of generality that every pair of the form $(v,v)$ for $v \in V$ occurs in this list, since otherwise we can restrict the domain to obtain a subalgebra $\mathbf{U}$ and a congruence $\theta$ for which this is the case. Now set $D_1 = D \cap \{a_1, \dots, a_N\}$, $D_2 = D \cap \{b_1, \dots, b_N\}$, $U_1 = D_1 \cup V$, and $U_2 = D_2 \cup V$. Let $i_1$ be the minimal gene for $U_1$ and let $i_2$ be the minimal gene for $U_2$. Observe that 
    \[
    (a_1, b_1), \dots, (a_N, b_N) \in R:= D_1 \times D_2 \cup \{(v,v): v\in V\},
    \]
    and by Corollary~\ref{cor:importantBlockCongruenceMutationCorollary}, we know that $R^{(i_1,i_2)} \in \Inv(\mathbf{A}_{\mathcal{T}^{\mathbf{B} \downarrow}})$ (the corollary applies to three factors, but clearly holds also for two factors). In particular, we deduce that
    \[
    \left(
    f(a_1^{i_1}, \dots, a_N^{i_1}), f(b_1^{i_2}, \dots, b_N^{i_2})
    \right)  \in R^{(i_1,i_2)},
    \]
    and so it follows that 
    \[
    \left(
    \Delta_{\mathcal{T}^{\mathbf{B} \downarrow}}(f)(a_1, \dots, a_N), \Delta_{\mathcal{T}^{\mathbf{B} \downarrow}}(f)(b_1, \dots, b_N) 
    \right) \in R.
    \]
    Since $R \subseteq \theta$, the proof of this case is finished.

    \item[\emph{(c')}] The proof that isomorphism graphs between subdirectly irreducible subalgebras are preserved by $\Delta_{\mathcal{T}^{\mathbf{B} \downarrow}}(f)$ is similar to the proof given above for \emph{(b')}, except we use Lemma~\ref{lem:subalgebraIsomorphismMutationLemma} instead of Corollary~\ref{cor:importantBlockCongruenceMutationCorollary}. We leave these details to the reader. 

    \item[\emph{(d')}] Let $\mathbf{U} \leq \mathbf{A}_\mathcal{T}$ be a subalgebra with underlying set $U = D \cup V $, where $D = \{d_1, d_2\}$ is the nontrivial two-element block of a minimal block congruence $\theta$ on $\mathbf{U}$. We point out that we are proving something more general, since we do not assume here that $\mathbf{U}$ is subdirectly irreducible. We want to show that $ \Delta_{\mathcal{T}^{\mathbf{B} \downarrow}}(f)$ preserves 
    \[
    \Lin_\mathbf{U} := \{ (d_1, d_2, d_2), (d_2, d_1, d_2), (d_2, d_2, d_1), (d_1, d_1, d_1) \} \cup \{(v,v,v): v\in V\}
    \]
    Let us name the first above set of triples by $L$ and the second by $E$. Consider a list of tuples 
    \[
    (a_1, b_1, c_1) \dots, (a_N, b_N, c_N) \in  \Lin_\mathbf{U}.
    \]
    Without loss of generality, we may assume that every tuple from $E$ occurs in the above list, as otherwise we can restrict $\mathbf{U}$ to obtain $\mathbf{U'}$ for which this is the case. We now argue based on which tuples from $L$ occur in the above list. We now argue by case analysis
    \begin{itemize}
        \item Suppose that every tuple from $\Lin_\mathbf{U}$ occurs among the $(a_1, b_1, c_1) \dots, (a_N, b_N, c_N)$. In this case $\{a_1, \dots, a_N \} = \{b_1, \dots, b_N\} = \{c_1, \dots, c_N\} = U$, so the minimal gene for each of the sets is the same and we call this minimal gene $i$. It follows from Lemma~\ref{lem:MutationsofBlockCongruencesAreInvariant} that $\theta^{(i,i)}$ is block congruence of $\mathbf{U}^i$ with nontrivial class $D^i = \{ d_1^i, d_2^i\}$. We conclude that $\Lin_\mathbf{U}^{(i,i,i)}$ is an invariant relation of $\mathbf{A}_{\mathcal{T}^{\mathbf{B} \downarrow}}$. 

        Now the argument is similar to the previous ones given. We know that 
         \[
        \left(
        f(a_1^{i}, \dots, a_N^{i}), f(b_1^{i}, \dots, b_N^{i}), f(c_1^{i}, \dots, c_N^{i})
        \right)  \in \Lin_\mathbf{U}^{(i,i,i)},
        \]
        and so it follows that 
        \[
        \left(
        \Delta_{\mathcal{T}^{\mathbf{B} \downarrow}}(f)(a_1, \dots, a_N), \Delta_{\mathcal{T}^{\mathbf{B} \downarrow}}(f)(b_1, \dots, b_N) , \Delta_{\mathcal{T}^{\mathbf{B} \downarrow}}(f)(c_1, \dots, c_N)
        \right) \in R.
        \]
        \item Suppose that three tuples from $L$ occur among the $(a_1, b_1, c_1) \dots, (a_N, b_N, c_N)$. This case is identical to the previous one, since it must also hold that $\{a_1, \dots, a_N \} = \{b_1, \dots, b_N\} = \{c_1, \dots, c_N\} = U$. The rest of the argument is identical to the previous case. 
        \item Suppose that two tuples from $L$ occur among the $(a_1, b_1, c_1) \dots, (a_N, b_N, c_N)$. A typical example of this situation is that the tuples $(d_2, d_1, d_2)$ and $(d_2, d_2, d_1)$ are listed and the remaining listed tuples are of the form $(v,v,v)$ for $v \in V$. Hence, $\{a_1, \dots, a_N\} = \{d_2 \} \cup V$, while $\{b_1, \dots, b_N\} = \{c_1, \dots, c_N\} = U$. Let $U_1 = \{d_2\}  \cup V$, $U_2 = U$, and $U_3 = U$ and let $i_1$, $i_2$, and $i_3$ be their respective minimal genes. If we let $R$ be the relation
        \[
         R:= \{d_2\} \times \{d_1, d_2\}\times \{d_1, d_2 \} \cup \{ (v,v,v): v \in V\} \subseteq (U_1) \times (U_2) \times (U_3),
        \]
        then it follows from Corollary~\ref{cor:importantBlockCongruenceMutationCorollary} that $R^{(i_1, i_2,i_3)}$, which is the following set of tuples
         \[
      \{d_2\}^{i_1} \times \{d_1, d_2\}^{i_2}\times \{d_1, d_2 \}^{i_3} \cup \{ (v^{i_1},v^{i_2},v^{i_3}): v \in V\} \subseteq (U_1)^{i_1} \times (U_2)^{i_2} \times (U_3)^{i_3},
     \]
     is an invariant relation of $\mathbf{A}_{\mathcal{T}^{\mathbf{B}\downarrow}}$. Furthermore, the function $\psi \colon U_2 \to U_3$ which switches $d_1$ and $d_2$ is an isomorphism. If we denote the graph of $\psi$ by $G$, it follows from Lemma~\ref{lem:subalgebraIsomorphismMutationLemma} that $G^{(i_2,i_3)} \subseteq (U_2)^{i_2} \times (U_3)^{i_3} $ is also an invariant relation. 

     Now define the ternary relation 
     \[
     S = \{(x,y,z) : (x,y,z) \in R^{(i_1, i_2,i_3)} \text{ and } (y,z) \in G^{(i_2,i_3)}.
     \]
     Since $S$ is pp-definable from invariant relations of $\mathbf{A}_{\mathcal{T}^{\mathbf{B} \downarrow}}$, it is also invariant. Evidently, we also have that 
     \[
     S = \{ (a_1^{i_1}, b_1^{i_2}, c_1^{i_3}), \dots, (a_N^{i_1}, b_N^{i_2}, c_N^{i_3}) \},
     \]
     so once again finish the argument like before. We know that
         \[
        \left(
        f(a_1^{i}, \dots, a_N^{i}), f(b_1^{i}, \dots, b_N^{i}), f(c_1^{i}, \dots, c_N^{i})
        \right)  \in \Lin_\mathbf{U}^{(i,i,i)},
        \]
        and so it follows that 
        \[
        \left(
        \Delta_{\mathcal{T}^{\mathbf{B} \downarrow}}(f)(a_1, \dots, a_N), \Delta_{\mathcal{T}^{\mathbf{B} \downarrow}}(f)(b_1, \dots, b_N) , \Delta_{\mathcal{T}^{\mathbf{B} \downarrow}}(f)(c_1, \dots, c_N)
        \right) \in R.
        \]
    \item If only one tuple from $L$ occurs among the $(a_1, b_1, c_1),  \dots, (a_N, b_N, c_N)$, then the argument is easier, since now the tuples all belong to a relation which is pp-definable with isomorphism graphs and so Lemma~\ref{lem:subalgebraIsomorphismMutationLemma} applies. We leave the details of this case to the reader.  \qedhere 
    \end{itemize}
    \end{itemize}
\end{proof}

\begin{corollary}\label{cor:BUnpackingPPConstructs}
    Let $\mathcal{T}$ be a sapling full reduced conservative minority tree over a collection of simple minimal Taylor conservative minority algebras $S = \{\mathbf{A}_1, \dots, \mathbf{A}_s \} \cup \{\mathbf{B}\}$ and let $\mathcal{T}^{\mathbf{B}\downarrow}$ be the $\mathbf{B}$-unpacking of $\mathcal{T}$. Suppose that $\mathbf{B}$ is not hereditarily simple, and that $|B|$ is an upper bound to the set of cardinalities
     \[
     \{ |A_i| : 1 \leq i \leq s \text{ and $\mathbf{A}_i$ is not hereditarily simple}\}.
     \] 
    The structure $\mathfrak{S}_{\mathcal{T}^{\mathbf{B}\downarrow}}$ pp-constructs the structure $\mathfrak{S}_{\mathcal{T}}$.
\end{corollary}

\begin{proof}
    This is an immediate consequence of Theorem~\ref{thm:DeltaisMinionHom} along with Theorem~\ref{thm:pp-constr}. 
\end{proof}

\subsection{Iterative construction}\label{sec:recursiveconstruction}

In this section we prove one of our main results. The idea is to apply the $\mathbf{B}$-unpacking construction and then extract from the resulting conservative minority tree a sapling conservative minority tree which represents a subdirectly irreducible algebra. This procedure then continues iteratively until no local algebras which are not hereditarily simple remain. So, let $\Simple$ be the class of finite simple minimal Taylor conservative minority algebras. To accurately formulate the induction, we introduce a norm on the class
$\SimpleLocal$, which we define to be the class of all finite subsets of $\Simple$.

We start by fixing some $\Lambda \colon \Simple \to \mathbb{N}$ which satisfies the following properties:
\begin{itemize}
    \item $\Lambda(\mathbf{A}) = \Lambda(\mathbf{B})$ if and only if $\mathbf{A}$ and $\mathbf{B}$ are isomorphic or $\mathbf{A}$ and $\mathbf{B}$ are each hereditarily simple, in which case $\Lambda(\mathbf{A}) = \Lambda(\mathbf{B}) = 0$. 
    \item If the $|A| < |B|$, then $\Lambda(\mathbf{A}) < \Lambda(\mathbf{B})$. 
\end{itemize}
Given $S \in \SimpleLocal$, we then define the \emph{norm} of $S$ as
$
\| S \| := \max( \{ \Lambda(\mathbf{A}): \mathbf{A} \in S\}) 
$. It is straightforward to see that this norm induces a transitive order on $\SimpleLocal$ which has no infinite descending chains. 

\begin{lemma}\label{lem:ppConstructInductionStep}
    Let $\mathcal{T}$ be a full conservative minority tree over a collection of pairwise nonisomorphic minimal Taylor simple conservative minority algebras $S \in \SimpleLocal$. Suppose that $\| S \| > 0$ 
    (i.e.\ there exists $\mathbf{A} \in S$ which is not hereditarily simple.) Then, there exists a full conservative minority tree $\mathcal{T}''$ over $S''$ such that the following hold. 
    \begin{enumerate}
    \item $S''$ is a collection of pairwise nonisomorphic minimal Taylor simple conservative minority algebras.
        \item $\| S'' \| < \| S \|$.
        \item $\mathfrak{S}_{\mathbf{A}_{\mathcal{T}''}}$ pp-constructs $\mathfrak{S}_{\mathbf{A}_{\mathcal{T}}}$.
    \end{enumerate}
\end{lemma}
\begin{proof}
    To begin, we may assume that every algebra in $S$ is used in $\mathcal{T}$, otherwise we can discard these extraneous local algebras and obtain $\mathcal{T}''$ over $\mathcal{S}'$ with $\| S ''\| \leq \| S\|$ and argue instead within this new setting. We first label the local algebras so that $S = \{ \mathbf{A}_1, \dots, \mathbf{A}_s \} \cup \mathbf{B}$, where $\mathbf{B}$ is not hereditarily simple and its domain $B$ has cardinality $|B| = \|S \| = n$.  

    First, we invoke Lemma~\ref{lem:SaplingTreesConstructGeneralOnes} to obtain a sapling full reduced conservative minority tree $\mathcal{T}'$ over $S$ such that $\mathfrak{S}_{\mathbf{A}_{\mathcal{T}'}}$ pp-constructs $\mathfrak{S}_{\mathbf{A}_\mathcal{T}}$. Next, we set (see Definition~\ref{def:splicing})
    \begin{align*}
    \mathcal{T}''' &:= (\mathcal{T}')^{\mathbf{B} \downarrow} \text{, and }\\
     S''' &:= S^{\mathbf{B} \downarrow} = \{\mathbf{A}_1, \dots, \mathbf{A}_s \} \cup \{\mathbf{B}_1, \dots, \mathbf{B}_n\} \cup \{\mathbf{P}_{n}, \mathbf{P}_{n+1}\}.
    \end{align*}
    Now we invoke Corollary~\ref{cor:BUnpackingPPConstructs} to see that $\mathfrak{S}_{\mathbf{A}_{\mathcal{T}'''}}$ pp-constructs $\mathfrak{S}_{\mathbf{A}_{\mathcal{T}}}$. We are not finished yet though, since there could now be redundancies among the members of $S'''$ and, more importantly, the maximal proper subalgebras $\mathbf{B}_1, \dots, \mathbf{B}_n \leq \mathbf{B}$ may not be simple. To finish, we refine $\mathcal{T}'''$ and $S'''$ by replacing any occurrence of a nonsimple successor algebra $\mathbf{B}_j$ in $\mathcal{T}'''$ by a full conservative minority tree over a collection of simple algebras which represents $\mathbf{B}_j$ (for details on this refinement, see Section~\ref{sec:treetransformations}). This last procedure does not change the isomorphism type of the represented algebra and cannot increase the norm of $S'''$ (see Remark~\ref{rem:NormofSdoesNotIncrease}), hence we obtain $\mathcal{T}''$ and $S''$ as asserted in the lemma statement. 
\end{proof}
\begin{theorem}\label{thm:FinalPPConstructionTheorem}
    Let $\mathfrak{A}$ be a finite structure with a conservative Maltsev polymorphism. There exists $n \geq 2$ and $k \geq 1$ such that $\mathfrak{P}^{n,k}$ pp-constructs $\mathfrak{A}$. 
\end{theorem}

\begin{proof}
   We invoke Corollary~\ref{cor:minTaylorConsMinTreeConstOthers} to obtain $\mathcal{T}$ a full conservative minority tree over a collection of pairwise nonisomorphic minimal Taylor simple conservative minority algebras $S \in \SimpleLocal$. We proceed inductively on $\| S\|$.
   
   If $\| S \| = 0$, then all local algebras are hereditarily simple and minimal Taylor, hence by \emph{2.}\ of Proposition~\ref{prop:MinTayConsMalChar}, each local algebra is isomorphic to $\mathbf{P}_n$ for some $ n \geq 2$ (recall our convention that all minimal Taylor conservative minority algebras are of type $i=2$). In this case, there exists $n$ and $k$ so that every subdirectly irreducible subalgebra of $\mathbf{A}_\mathcal{T}$ is embeddable in $\mathbf{P}^{n,k}$, so $\mathbf{A}_\mathcal{T}$ belongs to the pseudo-variety generated by $\{\mathbf{P}^{n,k}\}$ (cf.\ the proof of Lemma~\ref{lem:SaplingTreesConstructGeneralOnes} and \emph{3.}\ of Lemma~\ref{lem:saplingsrepresentsubdirectirreducible}). It follows that $\mathfrak{S}_{\mathbf{A}_{\mathcal{P}_{n,k}}}$ pp-constructs $\mathfrak{S}_{\mathbf{A}_\mathcal{T}}$. Now we invoke Proposition~\ref{prop:hsimpletreerecursivebasis}, which indicates that $\mathfrak{S}_{\mathbf{A}_{\mathcal{P}_{n,k}}}$ is pp-definable in $\mathfrak{P}^{n,k}$. Hence, we have proved the basis of the induction. If $\| S \| > 0$, then we invoke Lemma~\ref{lem:ppConstructInductionStep}, apply the induction hypothesis, and compose the pp-constructions. 
\end{proof}


\section{Solving conservative Maltsev CSPs}
\label{sect:alg} 
\begin{figure} 
\begin{center} 
\begin{tikzcd}
    \Solve_{n,k+1} \arrow[r] \arrow[d] & \Reduce_{n,k+1} \arrow[d] \\
    \Solve_{n,k} \arrow[r] \arrow[d] & \Reduce_{n,k}  \arrow[d] \\
    \vdots  \arrow[d] & \vdots \arrow[d] \\
     \Solve_{n,2} \arrow[r] \arrow[d] & \Reduce_{n,2} \arrow[d] \\
    \Solve_{n,1}  & \arrow[l] \Reduce_{n,1}
\end{tikzcd}
\end{center}
\caption{Dependencies between the subprocedures of the algorithm.}
\label{fig:alg} 
\end{figure} 
In this section we present, for every $n \geq 2$ and $k \geq 1$, an algorithm $\Solve_{n,k}$ that solves $\Csp(\mathfrak{P}^{n,k})$. To this end we present, for every $n \geq 2$ and $k \geq 1$, an algorithm $\Reduce_{n,k}$ that reduces $\Csp(\mathfrak{P}^{n,k})$ to $\Csp(\mathfrak{P}^{n,k-1})$.
 We first specify the procedure $\Solve_{n,1}$, which involves solving a system of linear equations over ${\mathbb Z}_2$. The reduction procedures are then recursively specified. The output of $\Solve_{n,1}$ is consulted on each connected component of an input of $\Reduce_{n,1}$ to produce an instance of $\Csp(\mathfrak{P}^{n,0})$. For $k > 1$, the procedure $\Reduce_{n,k}$ computes an auxiliary instance and then combines the output of $\Reduce_{n,k-1}$ on this instance with information obtained from the original instance to obtain an output. To solve $\Csp(\mathfrak{P}^{n,k})$ for $k >1$, we may then compose reductions until we obtain an instance of $\mathfrak{P}^{n,1}$ and then apply $\Solve_{n,1}$.  The reader can consult Figure \ref{fig:alg} for a picture of the dependencies of these procedures.

\subsection{The procedure Solve } 
\label{sect:solve}
See Algorithm~\ref{alg:solve} for the pseudo-code of the procedure Solve$_{n,1}$. 
As we have already explained, 
if $k \geq 2$ the procedure repeatedly calls $\Reduce$ so that we are left with deciding the satisfiability 
of a given instance $\bI$ of $\Csp(\mathfrak{P}^{n,k})$ for $k=1$. 

In this case, it will be more convenient to work with the following structure $\bS$. 
This is sufficient, because 
up to renaming elements $(i)$ of $\bP^{n,1}$ to $i$, for $i \in \{0,\dots,n-1\}$, every relation of $\bS$ 
is primitively positively definable in $\bP^{n,k}$, and vice versa. 

\begin{definition}\label{def:S}
    Let $\bS$ be the structure 
    with the domain $S := \{0,\dots,n-1\}$ and the signature 
$$\tau := \big \{R_X \mid X \subseteq S \big \} \cup \big \{ G_{\sigma} \mid \sigma \in S_n \big \} \cup \{L\}$$ 
whose symbols denote the relations 
specified in Lemma~\ref{lem:brady1ishsimple}.
\end{definition}


Let $\bI$ be an instance of 
$\Csp(\bS)$. 
A \emph{bijection path in $\bI$ (from $x_0$ to $x_k$)} is a sequence $$x_0,\sigma_1,x_1,\dots,\sigma_k,x_k$$ of elements of $I$ 
such that $(x_{i-1},x_{i}) \in G^{\bI}_{\sigma_i}$ for all $i \in \{1,\dots,k\}$. 

\begin{definition}\label{def:bool-sys}
    The \emph{system of $\mathbb{Z}_2$-linear equations associated to $\bI$} 
    is the $\{L, G_{\id},G_{(01)},R_{\{0\}},R_{\{1\}}\}$-structure
    $\bE$ with domain $I$ and 
    \begin{itemize} 
    \item $L^{\bE} := \Lin^{\bI}$, 
    \item $x_0 \in R^{\bE}_{\{c\}}$, for $c \in \{0,1\}$
    and $x_0 \in I$, 
    if and only if in $\bI$ there exists a bijection 
    path $x_0,\sigma_1,x_1,\dots,\sigma_k,x_k$ 
    such that for every $a \in \{0,\dots,n-1\} \setminus \{c\}$
    there exists $R \in \tau_{n,1}$
    and $t \in R^{\bI}$ 
    with entry $x_k$ such that all tuples in 
    $R^{\bS}$ do not have the value 
     $\sigma_k \circ \cdots \circ \sigma_1(a)$ at the same entry, and 
     \item $(x_0,x_k) \in G^{\bE}_{\id}$, for $x_0,x_1 \in I$, 
     if and only if 
    if there exists a bijection path 
    $x_0,\sigma_1,x_1,\dots,\sigma_k,x_k$ in $\bI$ 
    such that 
    $\sigma_k \circ \cdots \circ \sigma_1(0)=0$. Similarly, $(x_0,x_k) \in G^{\bE}_{(01)}$, for $x_0,x_1 \in I$, 
     if and only if 
    if there exists a bijection path 
    $x_0,\sigma_1,x_1,\dots,\sigma_k,x_k$ in $\bI$ 
    such that 
    $\sigma_k \circ \cdots \circ \sigma_1(0)=1$.
    \end{itemize}
\end{definition}

    Note that $\bE$ from Definition~\ref{def:bool-sys} might  indeed be viewed as a system of $\mathbb{Z}_2$-linear equations.


\begin{definition}\label{def:perm-sys}
    The \emph{permutational system associated to $\bI$} is the instance of
    $\Csp(\bS)$ obtained from
    $\bI$ by replacing $\Lin^{\bI}$ by $\emptyset$. 
\end{definition}

    Note that the permutational system associated to $\bI$ may be viewed as a CSP for the structure obtained from $\bS$ by removing the relation $\Lin$ from the signature; 
    it is easy to see that the resulting structure also has 
    the polymorphism which 
    is the majority operation 
    that returns 
    the first argument if
    the three arguments are distinct. 
    We will see in Section~\ref{sect:symlin} that 
    it can be solved by symmetric linear Datalog. 

\begin{algorithm}
\KwIn{Instance $\bI$ of $\Csp(\mathfrak{P}^{n,k})$}

\While{$k \geq 2$}{
$k \gets k-1$\\
Let $\overline{\mathfrak{I}}$ be the instance of $\Csp(\mathfrak{P}^{n,k})$ 
computed by $\Reduce_{n,k}(\bI)$ \\
$\bI \gets \bar \bI$ 
}

\Make $\bI$ into instance of $\Csp(\bS)$ \tcp*{\parbox[t]{2in}{ see Definition~\ref{def:S}}} 
\Make $\bE :=$ system of $\mathbb{Z}_2$-linear equations for $\bI$ \tcp*{\parbox[t]{2in}{ see Definition~\ref{def:bool-sys}}}
\Make $\bT :=$ permutational system for $\bI$ \tcp*{\parbox[t]{2in}{ see Definition~\ref{def:perm-sys}}}

\KwOut{REJECT if $\bE$ is unsatisfiable 
\tcp*{\parbox[t]{2in}{Gaussian Elimination}} \\
REJECT if $\bT$ is unsatisfiable 
\tcp*{\parbox[t]{2in}{symmetric linear Datalog}} \\
otherwise ACCEPT.
}

\caption{$\Solve_{n,k}$}
\label{alg:solve}



\end{algorithm}

\begin{theorem}\label{thm:solveiscorrect}
    Let $\bI$ be an instance of $\Csp(\bP^{n,k})$. 
    Algorithm~\ref{alg:solve} returns 
    REJECT if $\bI$ has no homomorphism to $\bS$, and returns ACCEPT otherwise. 
\end{theorem}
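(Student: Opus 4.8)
The plan is to prove correctness of Algorithm~\ref{alg:solve} by separating the two sources of unsatisfiability that can arise for $\Csp(\bS)$: the ``permutational'' constraints (captured by the $G_\sigma$ and $R_X$ relations) and the ``linear'' constraints (captured by $\Lin$). First I would reduce to the case $k=1$: the while-loop repeatedly applies $\Reduce_{n,j}$, and by the (anticipated) correctness statements for $\Reduce$ each step preserves satisfiability, so it suffices to argue for the instance $\bI$ of $\Csp(\bP^{n,1})$, which after the \textbf{Make} step we regard as an instance of $\Csp(\bS)$ via the renaming $(i)\mapsto i$. Since every relation of $\bS$ is primitively positively definable in $\bP^{n,1}$ and vice versa, $\bI$ has a homomorphism to $\bS$ iff it (originally) has one to $\bP^{n,1}$.

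The core of the argument is then: \emph{$\bI$ is satisfiable over $\bS$ if and only if both $\bT$ (the permutational system) and $\bE$ (the $\mathbb Z_2$-linear system) are satisfiable.} For the forward direction, a homomorphism $h\colon \bI\to\bS$ obviously also solves $\bT$ (which has fewer constraints), and one checks it induces a solution of $\bE$: along any bijection path $x_0,\sigma_1,\dots,\sigma_k,x_k$ we have $h(x_k)=\sigma_k\circ\cdots\circ\sigma_1(h(x_0))$, so the conditions defining $R^{\bE}_{\{c\}}$ force $h(x_0)\in\{0,1\}$ with the correct value (the witnessing unary/relational constraint at $x_k$ rules out the value $\sigma_k\circ\cdots\circ\sigma_1(a)$ for each $a\ne c$), and the edges of $G^{\bE}_{\id},G^{\bE}_{(01)}$ are respected because they merely record the relative action of composed permutations on $\{0,1\}$. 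Restricting $h$ to the variables occurring in $\bE$ and post-composing with the bijection $\{0,1\}\to\{0,1\}$ thus gives a solution of $\bE$. The substantive direction is the converse: given a solution $t\colon\bI\to\bT$ of the permutational system and a solution of $\bE$, I would glue them. Propagating $t$ along bijection paths determines, for each variable, a coset of the subgroup of $S_n$ fixing $0$ (equivalently, determines $t(x)\bmod\{0,1\}$-bookkeeping); the $\bE$-solution then pins down, for each variable touched by a $\Lin$-constraint or by a unary constraint forcing a value in $\{0,1\}$, whether its value is $0$ or $1$, and consistency of this is exactly what satisfiability of $\bE$ provides. One combines the ``rigid'' assignment forced by $\bE$ on the $\{0,1\}$-part with the freedom left by $t$ on the rest, using that $\bP^{n,1}$ restricted to $\{0,1\}$ carries the $\mathbb Z_2$-affine structure of $\Lin$ (item 1(c) of Lemma~\ref{lem:brady1ishsimple}) and that outside $\{0,1\}$ the only constraints are permutational, preserved by $t$.

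Concretely I would argue component by component of the Gaifman graph of $\bI$: on each connected component, fix a base variable $x_0$; the permutational solution $t$ assigns $t(x_0)$, and bijection-path propagation then assigns every other variable consistently iff $\bT$ is satisfiable; whether any variable in the component is forced into $\{0,1\}$, and which $\mathbb Z_2$-equation the $\Lin$-constraints impose, is recorded faithfully in $\bE$ by Definition~\ref{def:bool-sys}; so satisfiability of $\bE$ yields a consistent choice of the $\{0,1\}$-values, which together with $t$ assembles into a homomorphism $\bI\to\bS$. Conversely if either system is unsatisfiable there can be no such homomorphism, so Gaussian elimination on $\bE$ and the (later-justified) symmetric-linear-Datalog test on $\bT$ correctly detect all obstructions, and Algorithm~\ref{alg:solve} outputs \texttt{REJECT} exactly when $\bI\not\to\bS$.

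The main obstacle I anticipate is the ``gluing'' step in the converse direction: one must check that the $\{0,1\}$-valued assignment forced by a solution of $\bE$ never collides with the permutational propagation forced by a solution of $\bT$, and that when a variable lies simultaneously in the scope of $\Lin$-constraints and of $G_\sigma$-constraints that move $\{0,1\}$ around, the two bookkeeping mechanisms in Definition~\ref{def:bool-sys} (the $R^{\bE}_{\{c\}}$ membership and the $G^{\bE}_{\id}/G^{\bE}_{(01)}$ edges) were set up precisely so that no information is lost. Verifying that Definition~\ref{def:bool-sys} is exactly the right translation — neither too weak (missing a constraint) nor too strong (over-constraining) — is where the real care is needed; the rest is essentially bookkeeping together with the elementary fact that a relation preserved by the projection-minority operation $p_n$ and restricted to a two-element block is an affine $\mathbb Z_2$-subspace.
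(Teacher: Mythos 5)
Your proposal is correct and follows essentially the same route as the paper: reduce to $k=1$ via the (separately proved) correctness of $\Reduce_{n,k}$, then show that $\bI$ maps to $\bS$ iff both the linear system $\bE$ and the permutational system $\bT$ are satisfiable, with the converse obtained by gluing an $\bE$-solution on the bijection-path-connected variables touching $\Lin$-constraints with a $\bT$-solution on the remaining variables. The paper's own verification of the gluing step is exactly as terse as yours ("it is straightforward to verify that $s$ satisfies all the constraints"), so the level of detail matches.
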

\begin{proof}
    We proceed by induction on $k \geq 1$.
    For the basis, suppose that $k =1 $ and let $\mathfrak{I}$ be an instance of $\Csp(\mathfrak{P}^{n,1})$. 
    Let $\bE$ be the system  of $\mathbb{Z}_2$-linear equations 
    for $\bI$. 
    If $\bE$, viewed as a system of ${\mathbb Z}_2$-linear equations, is unsatisfiable in 
    ${\mathbb Z}_2$ (which can for instance be checked with Gaussian elimination),  
    then $\bI$ has no homomorphism to 
    $\bS$, because every equation of $\bE$ holds for every homomorphism from $\bI$ to 
    $\bS$. 
    Conversely, if $\bE$ is satisfiable, 
    then we may obtain a homomorphism from $\bI$ to 
    $\bS$
    as follows: 
    partition the elements of $I$ into $X$ and $Y$ where
    $X$ are the elements that are connected via some bijection path in $\bI$ to $x_k$, and $x_k$ appears in a tuple 
    of $\Lin^{\bI}$, 
    and $Y := I \setminus X$ are the remaining variables. 
    Let $s \colon X  \cup Y \to \{0,\dots,n-1\}$ be the map that coincides on $X$ with a solution to $\bE$, and on $Y$ with a homomorphism from $\bT$ to $\bS$. It is straightforward to verify that $s$ satisfies all the constraints.

    Now suppose that $k >1$. We will show in Proposition~\ref{prop:proofofreduction} that the instance $\overline{\mathfrak{I}}$ output by $\Reduce_{n,k}(\mathfrak{I})$ is satisfiability equivalent to $\mathfrak{I}$ (NB Proposition~\ref{prop:proofofreduction} only relies on the case $k=1$ of this theorem, which we have just established) hence the result follows. 
\end{proof}

\begin{remark}
    The inductive proofs of Theorem~\ref{thm:solveiscorrect} and Proposition~\ref{prop:proofofreduction} are superficially entangled, but the reader can consult Figure~\ref{fig:alg} to assure themselves that the inductions are well-founded. The basis is shown in the bottom left of the corner, the induction for Proposition~\ref{prop:proofofreduction} proceeds up the right column, and finally the induction for Theorem~\ref{thm:solveiscorrect} proceeds up the left column.
\end{remark}

We will explain in Section~\ref{sect:descr} how to implement this algorithm in ${\mathbb Z}_2$-Datalog, which then implies that 
$\Csp(\bS)$ and $\Csp(\bP^{n,k})$ are in $\oplus L$.

\subsection{A reduction example}
Before we begin our formal presentation of the procedure $\Reduce$, we describe how the algorithm $\Reduce_{n,k}$ works by providing a toy example of $\Reduce_{3,2}$. For $\mathfrak{I}$ an instance of $\Csp(\mathfrak{P}^{n,k})$, we will often denote by $\overline{\mathfrak{I}}$ the output of $\Reduce_{n,k}$ on input $\mathfrak{I}$. Let $\mathfrak{I}$ be an instance of $\Csp(\mathfrak{P}^{n,2})$. The idea is to first find the components of $\mathfrak{I}$ which are connected by uniform relations, because on these components the values of a solution must either belong entirely to $P^{n,2}_{0}$ or its complement $O_n$. If such a component contains a variable $x$ for which the constraint $(x,y) \in (T^{n,2}_{0,1})^\mathfrak{I}$ holds, then it is impossible on this component for a solution to $\mathfrak{I}$ to take a value outside of $P^{n,2}_0$, hence the instance $\mathfrak{I}$ restricted to this component is really an instance of $\Csp(\mathfrak{P}^{n,1})$. We call such a component a \emph{transfer source} component (see Definition~\ref{def:components}). For any component that is not a transfer source component, we still consider the instance which results from restricting to $P^{n,2}_{0}$, but here we run $\Reduce_{n,1}$, which allows us to collapse $P^{n,2}_0$ to a one element set and then replace all uniform relations on the component with uniform $\tau_{n,1}$ relations. So, for each uniform connected component there are two ways to find an instance of $\Csp(\mathfrak{P}^{n,1})$ which depend on whether the component is a transfer source component or not. The $\tau_{n,2}^N$ constraints of $\mathfrak{I}$ can then be assigned to $\tau_{n,1}$-constraints in a manner so that the obtained instance $\overline{\mathfrak{I}}$ of $\Csp(\mathfrak{P}^{n,1})$ is the reduced instance corresponding to $\mathfrak{I}$.

\begin{figure}
    \centering
    \includegraphics[width=0.70\linewidth]{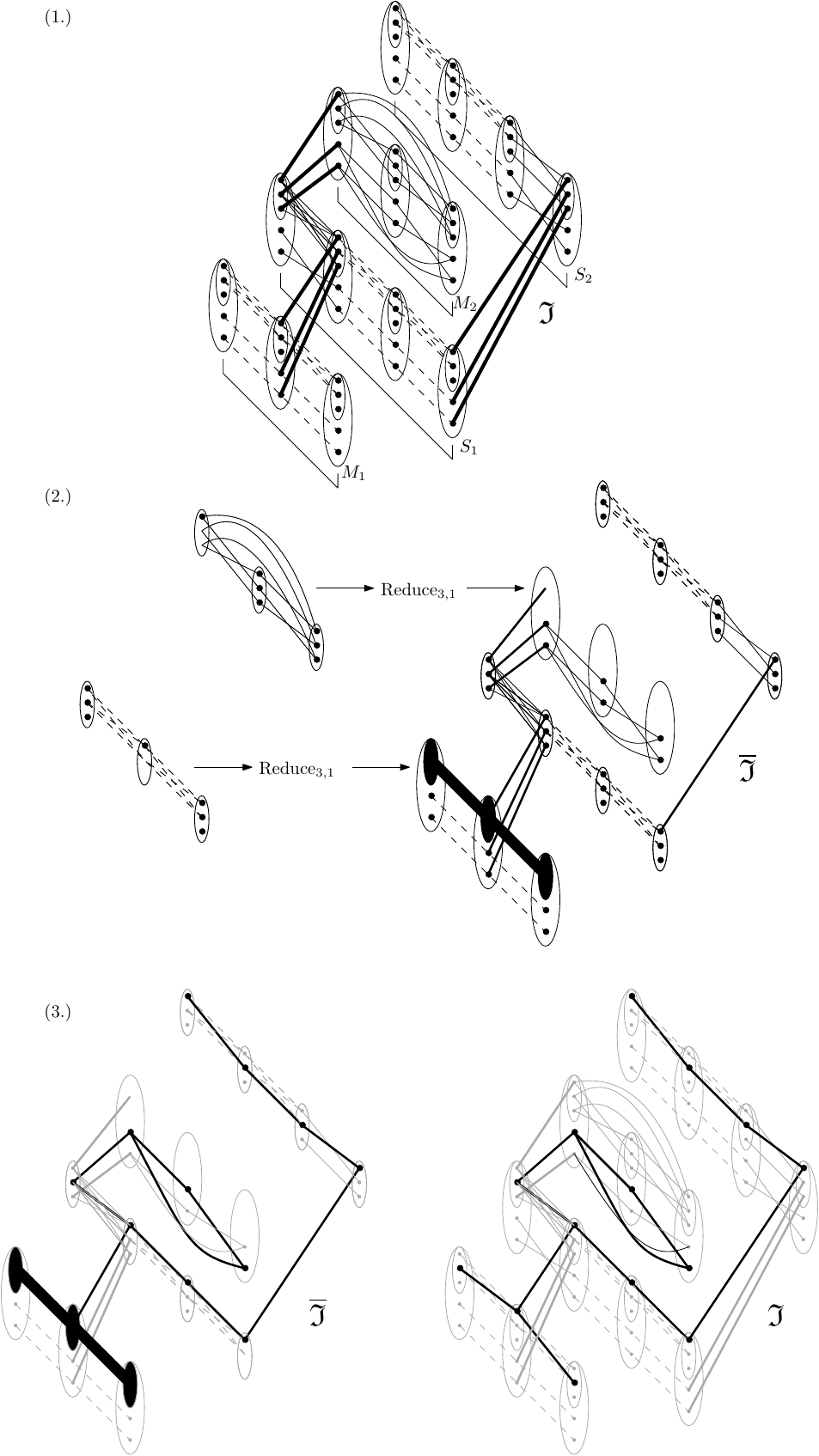}
    \caption{Reducing an instance of $\Csp(\mathfrak{P}^{3,2})$.}
    \label{fig:cspinstance}
\end{figure}


We explain this further by considering the example instance of $\Csp(\mathfrak{P}^{3,2})$ shown in item (1.) of Figure \ref{fig:cspinstance}. Each variable of the instance is depicted as the domain of $\mathbf{P}^{3,2}$ (a potato diagram). The elements of the domain are drawn as solid dots. Each potato is drawn with an inner potato whose inside three elements are elements of $\mathbf{P}^{3,2}_0$, which is of course isomorphic to $\mathbf{P}^{3,1}$. Relations that hold between variables are drawn in three ways. The ternary linear relation is drawn with dashed lines and binary relations are drawn with solid lines, except for the canonical transfer relation $T^{3,2}_{0,1}$, which is drawn with heavier solid lines. The depicted instance has four uniform connected components with respect to the uniform relations (see Definition~\ref{def:components}). These components are arranged on diagonals and labeled $M_1, M_2$ and $S_1, S_2$, where $S_1$ and $S_2$ are transfer source components.

The next thing we do is add some unary constraints to the instance by looking at the canonical transfer relation $T^{3,2}_{0,1}$. Suppose that $x,y \in (T^{3,2}_{0,1})^\mathfrak{I}$ holds in the instance. We call such $y$ a \emph{transfer target} variable and add the unary constraint $y \in ((R_{\{\epsilon\}}, R_{\emptyset}), R_{O_3})^\mathfrak{I}$ for all target variables $y$ (note that $((R_{\{\epsilon\}}, R_{\emptyset}), R_{O_3})^{\mathfrak{P}^{3,2}} = P^{3,2}_1$, which is defined by the formula $\exists x T^{3,2}_{0,1}(x,y)$).

In item (2.) of the figure we show the reduced instance $ \overline{\mathfrak{I}}$ is formed. On each uniform connected component an instance is formed by restricting to the set $P^{3,2}_0$. On the $(0)$-source components $S_1$ and $S_2$, this restriction is already a reduction, since a solution to the original instance $\mathfrak{I}$ must take values in this set. On the other components $M_1$ and $M_2$, the reduced instance is obtained by first calling $\Reduce_{3,1}$ on the restricted instances and then combining the outputs with the strongly functional parts of the uniform $\tau_{n,2}^U$ constraints to obtain uniform $\tau_{3,1}$ constraints. Then, the constraints $T^{3,2}_{0,1}$ are assigned to either $\Eq_2$ or $T^{3,1}_{0,0}$, depending on whether $y$ is a target variable or a source variable, respectively.

In item (3.) of Figure \ref{fig:cspinstance}, we demonstrate the relationship between a solution of $\overline{\mathfrak{I}}$ and a solution of $\mathfrak{I}$. While it is clear that a solution of $\mathfrak{I}$ must correspond to a solution of $\overline{\mathfrak{I}}$, it not immediately obvious that a solution of $\overline{\mathfrak{I}}$ `lifts' to a solution of $\mathfrak{I}$. The potential problem is that since the $\sim_{(0)}$-class $P^{3,2}_0$ has been collapsed in the connected component $M_1$, there could be constraint information that is lost. However, at the beginning of the reduction the additional unary constraints that are consequences of the canonical transfer relation are added and the instance restricted to $P^{3,2}_0$ was found to be satisfiable.

Of course, the general reduction scheme contains subtleties that exceed the scope of this informal explanation. So, we conclude this informal exposition and move on to the detailed exposition. 

\subsection{The reduction from $\Csp(\mathfrak{P}^{n,k+1})$ to $\Csp(\mathfrak{P}^{n,k})$} 
The procedure $\Reduce_{n,k+1}$ computes for a given instance 
$\bI$ of $\Csp(\mathfrak{P}^{n,k+1})$ 
not only a satisfiability-equivalent instance $\overline{\bI}$ 
of $\Csp(\mathfrak{P}^{n,k})$, but also a function $\Pi$ with domain $I$ which assigns to each domain element a number belonging to $\{0, \dots, k\}$ which encodes the relationship between solutions to $\mathfrak{I}$ and the output instance $\overline{I}$. The function $\Pi$, which we call the \emph{reduction atlas} (see Definition~\ref{def:(k-1)reduction}), is then used recursively to reduce constraints named by the canonical transfer relation. 

In this section we present our sequence of reductions, first by presenting the basis algorithm $\Reduce_{n,1}$ (Algorithm~\ref{alg:reducebasis}), and then algorithms $\Reduce_{n,k+1}$, each of which recursively calls its predecessor (Algorithm~\ref{alg:reduce}). The pseudocode for the algorithms refers to several of the definitions given in this section. We then inductively prove that our reduction scheme is correct in Proposition~\ref{prop:proofofreduction}.

\begin{algorithm}
\label{alg:basisreduction}
\caption{$\Reduce_{n,1}$}
\label{alg:reducebasis}
\KwIn{Instance $\mathfrak{I}$ of $\Csp(\mathfrak{P}^{n,1})$}

\Make Connected components $C_1, \dots, C_v$ \tcp*{\parbox[t]{2in}{ see Definition~\ref{def:components}}}
\Make $\mathfrak{C}_1, \dots, \mathfrak{C}_v$ \tcp*{\parbox[t]{2in}{ see (2) of Definition~\ref{def:listofdifferentinstances(k=0also)}}}

\For{$1 \leq j \leq v$}{

\Make Instance $\overline{\mathfrak{C}_j}$ with domain $\overline{C_j}=C_j$, and
$\Eq_2^{\overline{\mathfrak{C_j}}} := \Sup_2(\mathfrak{C_j}) $ and
$\Eq_3^{\overline{\mathfrak{C_j}}} := \Sup_3(\mathfrak{C_j}) $

\If{$\Solve_{n,1}(\mathfrak{C_j})$ \textnormal{returns} \textnormal{ACCEPT}
}{$R_\emptyset^{\overline{\mathfrak{C}_j}} := \emptyset$, 
$R^{\overline{\mathfrak{C}_j}}_{\{\epsilon\}} := C_j$  
}

\If{$\Solve_{n,1}(\mathfrak{I})$ \textnormal{returns} \textnormal{REJECT}
}{$R_\emptyset^{\overline{\mathfrak{C}_j}} :=  C_j$, 
$R^{\overline{\mathfrak{C}_j}}_{\{\epsilon\}} := \emptyset $  
}

}

\Make $\overline{\mathfrak{I}} := \overline{\mathfrak{C}_1} \cup \dots \cup \overline{\mathfrak{C}_j}$\\

\Make Function $\Pi \colon \overline{I} \to \{0\} $\\
\KwOut{Instance $\overline{\mathfrak{I}}$ and function $\Pi$.
}
\end{algorithm}

\begin{algorithm}\label{alg:recursivereduction}
\caption{$\Reduce_{n,k+1}$ for $k \geq 1$}\label{alg:reduce}
\KwIn{Instance $\mathfrak{I}$ of $\Csp(\mathfrak{P}^{n,k+1})$}\

$\mathfrak{I} \gets \mathfrak{I}^*$ \tcp*{\parbox[t]{3in}{\raggedright 
see (3) of Definition \ref{def:innerouterinstances+targetcompatible}}}   \

$\mathfrak{I} \gets \mathfrak{I}^C$ \tcp*{\parbox[t]{3in}{\raggedright 
see (3) of Definition \ref{def:listofdifferentinstances(k=0also)}}}   \

\Make $\mathfrak{I}^U, \mathfrak{I}^N$ \tcp*{\parbox[t]{3in}{
see (4) and (5) of Definition \ref{def:listofdifferentinstances(k=0also)}}} \

\Make Uniform connected components $S_1, \dots, S_r, M_1, \dots, M_v$ \tcp*{\parbox[t]{1in}{ 
see Definition \ref{def:components}}} \

\Make $\mathfrak{S}^U_1, \dots, \mathfrak{S}^U_r, \mathfrak{M}^U_1, \dots, \mathfrak{M}^U_v$ \tcp*{\parbox[t]{3in}{ 
see Definition \ref{def:listofdifferentinstances(k=0also)}}} \

\Make $\Pi$ \tcp*{\parbox[t]{3in}{
see Definition \ref{def:(k-1)reduction}}} \

\hrulefill 

\Make $\mathfrak{S}^U := \mathfrak{S}_1^U \cup \dots \cup \mathfrak{S}_r^U$ \tcp*{\parbox[t]{3in}{see (1) of Definition \ref{def:listofdifferentinstances(k=0also)}}} \

\Set $\Pi(x) := 0 $ for all $x \in S = S_1 \cup \dots \cup S_r$\

\Make $\overline{\mathfrak{S}^U} \coloneq (\mathfrak{S}^U)^{(0)} $
\tcp*{\parbox[t]{3in}{ 
see Definition \ref{def:innerouterinstances+targetcompatible}}}

\hrulefill

\Make $\mathfrak{M}^U := \mathfrak{M}_1^U \cup \dots \cup \mathfrak{M}_v^U$\

\Make $\overline{(\mathfrak{M}^U)^{(0)}} := \text{ instance output by } \Reduce_{n,k}( (\mathfrak{M}^U)^{(0)})$\\
\Make $\Pi_M:= \text{ reduction atlas output by } \Reduce_{n,k}( (\mathfrak{M}^U)^{(0)})$\\
\Set $\Pi(x) := \Pi_M(x)+1$ for all $x \in M = M_1 \cup \dots \cup M_v$ \\

\Make $\overline{\mathfrak{M}^U} \coloneqq  \overline{(\mathfrak{M}^U)^{(0)}} \cdot (\mathfrak{M}^U)^{\lnot(0)}$ 
\tcp*{\parbox[t]{1.5in}{
see Definition~\ref{def:uniformproduct}}}

\hrulefill

\Make $\overline{\mathfrak{I}^N} \coloneq \NReduce_{n,k+1}(\mathfrak{I}^N, \Pi)$ 
\tcp*{\parbox[t]{2.5in}{
see Algorithm~\ref{alg:outerreduce}}} \

\hrulefill

\Make {$\overline{\mathfrak{I}} \coloneq \overline{\mathfrak{S}^U} \cup  \overline{\mathfrak{M}^U} \cup \overline{\mathfrak{I}^N}   $}
\

\KwOut{Instance $\overline{\mathfrak{I}}$ of $\Csp(\mathfrak{P}^{n,k})$ and function $\Pi$} 
\end{algorithm}

\begin{definition}\label{def:supportconstraintssize}
   Let $\tau$ be a relational signature and let $\mathfrak{I}$ be a $\tau$-structure. 
       For $x \in I$, $1\leq k \leq 3$, and $x \in I^k$ we define $\Cons^\mathfrak{I}(x):= \{\rho \in \tau: x \in \rho^\mathfrak{I} \}$.
       We set $\Sup_k^\mathfrak{I} := \{ x \in I^k: |\Cons^\mathfrak{I}(x)| \geq 1 \}$ and refer to $\Sup_k^{\mathfrak{I}}$ as the \emph{($k$-ary) support} of $\mathfrak{I}$.
       We define the \emph{size} of $\mathfrak{I}$ as \[        |\mathfrak{I}| = |I| + \sum_{x \in I} |\Cons^\mathfrak{I}(x)| +         \sum_{(x,y) \in I^2 }|\Cons^\mathfrak{I}(x,y)|+        \sum_{(x,y,z) \in I^3} |\Cons^\mathfrak{I}(x,y,z)| .       \] 
\end{definition}

\begin{definition}\label{def:listofdifferentinstances(k=0also)}
Let $n \geq 2$ and $k \geq 0$.
\begin{enumerate}
    \item Given a collection of instances $\mathfrak{I}_1, \dots, \mathfrak{I}_l$ of $\Csp(\mathfrak{P}^{n,k})$, we denote by $\mathfrak{I}_1 \cup \dots \cup \mathfrak{I}_l$ the instance of $\Csp(\mathfrak{P}^{n,k})$ with domain $I_1 \cup \dots \cup I_l$ and constraints the union of the constraints of the $\mathfrak{I}_1, \dots, \mathfrak{I}_l$. 

    \item Given an instance $\mathfrak{I}$ of $\Csp(\mathfrak{P}^{n,k})$ and a subset $M \subseteq I$, we denote by $\mathfrak{M}$ the \emph{induced subinstance} of $\mathfrak{I}$, which we define as the induced $\tau_{n,k}$-substructure of $\mathfrak{I}$ with domain $M$.

    \item We say an instance $\mathfrak{I}$ of $\Csp(\mathfrak{P}^{n,k})$ is \emph{concise} if for each $x \in I$, there is exactly one unary symbol $\rho \in \tau_{n,1}^{U,1}$ that constrains $x$ (i.e.,  $\Sup_1^\mathfrak{I} = I$ and $|\Cons^\mathfrak{I}(x)| = 1$ for each $x \in I$). Given an instance $\mathfrak{I}$ of $\Csp(\mathfrak{P}^{n,k})$, we denote by $\mathfrak{I}^C$ the instance with domain $I$ so that $\rho^{\mathfrak{I}^C} = \rho^{\mathfrak{I}}$ for every binary or ternary symbol $\rho \in \tau_{n,k}$, and $\Cons^{\mathfrak{I}^C}(x) = \{\lambda\}$, where $\lambda \in \tau_{n,k}$ is such that 
    \[
    \lambda^{\mathfrak{P}^{n,k}} = P^{n,k} \cap \left(\bigcap_{\rho \in \Cons^{\mathfrak{I}}(x)} \rho^{\mathfrak{P}^{n,k}}\right).
    \]

    \item We say an instance $\mathfrak{I}$ of $\Csp(\mathfrak{P}^{n,k})$ is a \emph{uniform instance} if it only contains constraints named by $\tau^U_{n,k}$ symbols. Given an instance $\mathfrak{I}$ of $\Csp(\mathfrak{P}^{n,k})$, we denote by $\mathfrak{I}^U$ the \emph{uniform reduct} of $\mathfrak{I}$, which has domain $I$ but only contains those constraints from $\mathfrak{I}$ that are named by $\tau_{n,k}^U$ symbols. 
\item We say an instance $\mathfrak{I}$ of $\Csp(\mathfrak{P}^{n,k})$ is a \emph{nonuinform instance} if it only contains constraints named by $\tau^N_{n,k}$ symbols. Given an instance $\mathfrak{I}$ of $\Csp(\mathfrak{P}^{n,k})$, we denote by $\mathfrak{I}^N$ the \emph{nonuniform reduct} of $\mathfrak{I}$, which has domain $I$ but only contains those constraints from $\mathfrak{I}$ that are named by $\tau_{n,k}^N$ symbols. 
\end{enumerate}
\end{definition}

 One of the main structural features of a $\Csp(\mathfrak{P}^{n,k+1})$-instance $\mathfrak{I}$ that is used to reduce it to an instance of $\Csp(\mathfrak{P}^{n,k})$ is the decomposition into connected components of $I$ with respect to the uniform relations.

\begin{definition}\label{def:components}
Let $n \geq 2$ and $k \geq 0$. Let $\mathfrak{I}$ be an instance of $\Csp(\mathfrak{P}^{n,k})$. We define the \emph{connectivity graph} of $\mathfrak{I}$ to be the undirected graph with underlying set $I$ and edge set equal to the union the following sets:
\begin{itemize}
    \item $\{ (x,y) : \text{there exists binary $\rho \in \tau_{n,k}$ such that } (x,y) \in \rho^\mathfrak{I} \}$
    \item $\{(x,y) :\text{there exists ternary $\rho \in \tau_{n,k}$ and $z \in I$ such that } (x,y,z), (x,z,y), \text{or } (z,x,y) \in \rho^\mathfrak{I} \} $
\end{itemize}
 A \emph{connected component} of $\mathfrak{I}$ is a connected component of the connectivity graph of $\mathfrak{I}$. A \emph{uniform connected component} of $\mathfrak{I}$ is a connected component of $\mathfrak{I}^U$. Suppose that $(x,y) \in (T^{n,k}_{0,k-1})^\mathfrak{I}$. In this situation we call $x$ a \emph{transfer source variable} and $y$ a \emph{transfer target variable}. We call any uniform connected component containing a transfer source variable a \emph{transfer source} component. 
 \end{definition}

 We adopt the following convention for listing the uniform connected components of a $\Csp(\mathfrak{P}^{n,k})$ instance $\mathfrak{I}$.  The transfer source components and the other uniform connected components will be enumerated separately, with the transfer source components listed first. Specifically, if we say that $S_1, \dots, S_r, M_1, \dots, M_v$ are the uniform connected components of $\mathfrak{I}$, we mean that $S_1, \dots, S_r$ are exactly the transfer source components.

\begin{definition}\label{def:innerouterinstances+targetcompatible}
Let $n \geq 2$ and $k \geq 1$.
\begin{enumerate}

\item Given a uniform instance $\mathfrak{I}$ of $\Csp(\mathfrak{P}^{n,k})$, we denote by $\mathfrak{I}^{(0)}$ the \emph{inner restriction} of $\mathfrak{I}$, which is the following instance of $\Csp(\mathfrak{P}^{n,k-1})$:
\begin{itemize}
    \item The domain of $\mathfrak{I}^{(0)}$ is denoted $I^{(0)}$ and is equal to $I$.
    \item $\rho^{\mathfrak{I}^{(0)}} = \bigcup_{\lambda \in \tau_{O_n}} (\rho, \lambda)^\mathfrak{I} $, for each $\rho \in \tau_{n,k-1}$.
    
\end{itemize}

\item Given a uniform instance $\mathfrak{I}$ of $\Csp(\mathfrak{P}^{n,k})$, we denote by $\mathfrak{I}^{ \lnot (0)}$ the \emph{outer restriction} of $\mathfrak{I}$, which is the following instance of $\Csp(\mathfrak{O}_n)$:
\begin{itemize}
    \item The domain of $\mathfrak{I}^{\lnot (0)}$  is denoted $I^{\lnot(0)}$ and is equal to $I$.
    \item $\lambda^{\mathfrak{I}^{(0)}} = \bigcup_{\rho \in \tau_{n,k-1}} (\rho, \lambda)^\mathfrak{I} $, for each $\lambda \in \tau_{O_n}$.
\end{itemize}

\item We say that an instance $\mathfrak{I}$ of $\Csp(\mathfrak{P}^{n,k})$ is \emph{transfer target compatible} if for every transfer target variable $y$, there is a unary symbol $\rho \in \tau_{n,k}$ with $\rho^{\mathfrak{P}^{n,k}} \subseteq P^{n,k}_{k-1}$ and $y \in \rho^\mathfrak{I}$. Given an instance $\mathfrak{I}$ of $\Csp(\mathfrak{P}^{n,k})$, we denote by $\mathfrak{I}^*$ the instance obtained from $\mathfrak{I}$ by asserting for all transfer target variables $y$ the extra unary constraint $y \in \lambda^\mathfrak{I}$, where 
\[
\lambda = ( (\dots ((R_{\{\epsilon\}}, R_\emptyset), R_{O_n}), \dots, R_{O_n}),R_{O_n}) \in \tau_{n,k}
\]
is the symbol naming $P^{n.k}_{k-1} \subseteq P^{n,k}$.
\end{enumerate}
\end{definition}

\begin{lemma}\label{lem:uniformsolutionssplit}
    Let $n \geq 2$ and $k \geq 1$. Let $\mathfrak{I}$ be a uniform instance of $\Csp(\mathfrak{P}^{n,k})$ with only one uniform connected component.  Then, every solution $s$ of $\mathfrak{I}$ satisfies exactly one of the following:
    \begin{enumerate}
    \item  $s(x) \in P^{n,k}_0$  for all $x \in I$, which is the case if and only if $(\psi^{n,k}_0)^{-1}\circ s$ is a solution of the inner restriction $\mathfrak{I}^{(0)}$.
    \item $s(x) \in  O_n$ for all $x \in I$, which is the case if and only if $s$ is a solution of the outer restriction $\mathfrak{I}^{\lnot (0) }$.
    
\end{enumerate}

\end{lemma}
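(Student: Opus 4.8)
## Proof plan for Lemma~\ref{lem:uniformsolutionssplit}

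The plan is to first reduce to the statement that every solution takes all its values either inside $P^{n,k}_0$ or entirely outside of it (i.e.\ in $O_n$), and then to match each of the two cases against the defining property of the inner and outer restrictions. The key structural fact to exploit is that $\mathfrak{I}$ is \emph{uniform}: every constraint of $\mathfrak{I}$ is named by a symbol $(\rho_1,\rho_2) \in \tau_{n,k}^U$, so by the definition of $(\rho_1,\rho_2)^{\mathfrak{P}^{n,k}} = \rho_1^{\mathfrak{P}^{n,k-1}} \oplus \rho_2^{\mathfrak{O}_n}$ and by Lemma~\ref{lem:0uniformcharacterization}, every such relation $R$ is a disjoint union of $\psi^{n,k}_0(S)$ with $S \in \Inv(\mathbf P^{n,k-1})$ of the same arity and a strongly functional relation $Y$ on $O_n$. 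In particular each tuple of $R$ lies entirely in $P^{n,k}_0$ or entirely in $O_n$.

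First I would prove the dichotomy: if $s$ is a solution of $\mathfrak{I}$ and $x_0 \in I$ has $s(x_0) \in P^{n,k}_0$, then $s(x) \in P^{n,k}_0$ for all $x \in I$; symmetrically for $O_n$. Since $\mathfrak{I}$ has a single uniform connected component, it suffices to propagate along the edges of its connectivity graph. Take an edge $\{x,y\}$ coming from a binary constraint $(x,y) \in R^{\mathfrak{I}}$ (the ternary case is analogous, projecting onto any two coordinates): the tuple $(s(x),s(y))$ lies in $R^{\mathfrak{P}^{n,k}}$, hence, by the disjoint-union description above, both entries are in $P^{n,k}_0$ or both in $O_n$. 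So the property ``$s$-value lies in $P^{n,k}_0$'' is constant along edges, hence constant on the (unique) connected component, which is all of $I$. This already shows that exactly one of the two alternatives in the statement holds, since $P^{n,k}_0$ and $O_n$ partition $P^{n,k}$ and $I$ is nonempty on each component.

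Next I would verify the two biconditionals. For case~1: suppose $s(x) \in P^{n,k}_0$ for all $x$. Recall $\mathfrak{I}^{(0)}$ has domain $I$ and $\rho^{\mathfrak{I}^{(0)}} = \bigcup_{\lambda \in \tau_{O_n}} (\rho,\lambda)^{\mathfrak{I}}$ for $\rho \in \tau_{n,k-1}$. Given a constraint $x \in (\rho_1,\rho_2)^{\mathfrak{I}}$ (stated for the binary/ternary case with the obvious arity), we have $s(x) \in (\rho_1,\rho_2)^{\mathfrak{P}^{n,k}} = \rho_1^{\mathfrak{P}^{n,k-1}} \oplus \rho_2^{\mathfrak{O}_n}$; since $s(x)$ lies in $P^{n,k}_0 = \psi^{n,k}_0(P^{n,k-1})$, the definition of $\oplus$ (Definition~\ref{def:0uniform}) forces $(\psi^{n,k}_0)^{-1}(s(x)) \in \rho_1^{\mathfrak{P}^{n,k-1}}$. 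Conversely, if $(\psi^{n,k}_0)^{-1}\circ s$ satisfies every constraint $\bar x \in \rho_1^{\mathfrak{I}^{(0)}}$, then for the witnessing $\lambda$ we recover $s(\bar x) \in \psi^{n,k}_0(\rho_1^{\mathfrak{P}^{n,k-1}}) \subseteq (\rho_1,\lambda)^{\mathfrak{P}^{n,k}}$ and need that it also meets the $\rho_2$-part; but here I use that $(\psi^{n,k}_0)^{-1}\circ s$ being a solution of $\mathfrak{I}^{(0)}$ already places all values in $P^{n,k}_0$, so the $O_n$-component is vacuous and the constraint is satisfied. Case~2 is the mirror image: when $s(x) \in O_n$ for all $x$, the $\psi^{n,k}_0$-part of each uniform relation is irrelevant and $s$ restricted to $O_n$ satisfies exactly the constraints of $\mathfrak{I}^{\lnot(0)}$, whose relation $\lambda^{\mathfrak{I}^{\lnot(0)}} = \bigcup_{\rho \in \tau_{n,k-1}} (\rho,\lambda)^{\mathfrak{I}}$ collects precisely the $\mathfrak{O}_n$-parts.

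The main obstacle I anticipate is bookkeeping rather than conceptual: being careful that the ``inner restriction'' and ``outer restriction'' are stated over the possibly different signatures $\tau_{n,k-1}$ and $\tau_{O_n}$, and that the pairing $(\rho_1,\rho_2)$ indexing the uniform symbols is handled consistently across arities $1,2,3$. One subtlety worth flagging explicitly in the write-up is the ``only if'' directions of the biconditionals: a solution $t$ of $\mathfrak{I}^{(0)}$ is by definition a map $I \to P^{n,k-1}$, and $\psi^{n,k}_0 \circ t$ then automatically has image in $P^{n,k}_0$, so one must check that this lifted map satisfies \emph{all} uniform constraints of $\mathfrak{I}$ (not only those whose $\lambda$-part witnessed membership) — which holds because, the values being in $P^{n,k}_0$, the $O_n$-summand of every $(\rho_1,\rho_2)^{\mathfrak{P}^{n,k}}$ imposes no further condition. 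Everything else is a direct unwinding of Definition~\ref{def:0uniform}, Lemma~\ref{lem:0uniformcharacterization}, and Definition~\ref{def:innerouterinstances+targetcompatible}.
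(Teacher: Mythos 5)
Your proposal is correct and follows essentially the same route as the paper: the dichotomy is obtained by propagating the property ``$s(x)\in P^{n,k}_0$'' along edges of the connectivity graph, using that every uniform relation is a disjoint union of a part inside $P^{n,k}_0$ and a part inside $O_n$, and then invoking connectedness. The biconditionals are, as you say, a direct unwinding of Definitions~\ref{def:0uniform} and~\ref{def:innerouterinstances+targetcompatible}; the paper leaves this as an immediate consequence, and your extra care with the lifting direction is a harmless elaboration of the same argument.
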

\begin{proof}
    Let $\mathfrak{I}$ be such an instance and let $s \colon I \to P^{n,k}$ be a solution of $\mathfrak{I}$. We show that either $s(x) \in P^{n,k}_0$ for all $x \in I$, or $s(x) \notin P^{n,k}_0$ for all $x \in I$. Indeed, since $s$ is a solution, one of these cases must hold for some $y \in I$, suppose it holds that $s(y) \in P^{n,k}_0$. Now let $z \in I$ be connected to $y$ by a binary or ternary $\tau^U_{n,k}$ relation, for example, suppose $(y,z)\in (\rho_1, \rho_2)^\mathfrak{I}$. Since there are no edges that connect $P^{n,k}_0$ to $O_n$ in $(\rho_1, \rho_2)^{\mathfrak{P^{n,k}}}$, we conclude that $s(z)$ must also belong to $P^{n,k}_0$. Since $\mathfrak{I}$ is connected by uniform relations, an easy induction on path length proves that $s(x) \in P^{n,k}_0$ for all $x \in I$. Obviously, the argument works the same way for the case when $s(y) \in  O_n$. The remaining statements of the lemma are now immediate consequences of what we just proved and the definitions of $\mathfrak{I}^{(0)}$, $\mathfrak{I}^{\lnot(0)}$, $\mathfrak{P}^{n,k}$, and $\mathfrak{O}_n$.     
\end{proof}

Lemma \ref{lem:uniformsolutionssplit} establishes a close connection between uniform instances $\mathfrak{I}$ of $\Csp(\mathfrak{P}^{n,k})$ with a single uniform connected component and the inner and outer restrictions $\mathfrak{I}^{(0)}$ and $\mathfrak{I}^{\lnot(0)}$. The main idea underlying our recursive reduction scheme is to replace the inner restriction $\mathfrak{I}^{(0)}$ of such instances with its reduced instance and then combine this data with the outer restriction $\mathfrak{I}^{\lnot (0)}$. Hence, our next task is to understand how an instance $\mathfrak{I}_1$ of $\Csp(\mathfrak{P}^{n,k-1})$ may be combined with an instance of $\mathfrak{I}_2$ of $\Csp(\mathfrak{O}_n)$ to obtain a uniform instance $\mathfrak{I}$ of $\Csp(\mathfrak{P}^{n,k})$ with the property that $\mathfrak{I}_1 = \mathfrak{I}^{(0)}$ and $\mathfrak{I}_2 = \mathfrak{I}^{\lnot (0)}$. A moment's reflection reveals that, for fixed $\mathfrak{I}_1$ and $\mathfrak{I}_2$, there are potentially many instances $\mathfrak{I}$ with the above property. This is illustrated in Example~\ref{ex:differentinstancesums}.

\begin{example}\label{ex:differentinstancesums}
Let $I = \{x,y\}$. Let $\mathfrak{I}_1$ be the instance of $\Csp(\mathfrak{P}^{n,1})$ with domain $I$ where $(x,y) \in (T^{n,1}_{0,0})^{\mathfrak{I}_1}$ and $(x,y) \in (\Eq_2, R_{((1)(2))})^{\mathfrak{I}_1}$, while all other relations are empty. Let $\mathfrak{I}_2$ be the instance of $\Csp(\mathfrak{O}_n)$ with domain $I$ where $(x,y) \in (R_{((1)(2))})^{\mathfrak{I}_2}$ and $(x,y) \in (R_{((2)(3))})^{\mathfrak{I}_2}$, while all other relations are empty. Consider the following instances $\mathfrak{I}$, $\mathfrak{I}'$, and $\mathfrak{I}''$ of $\Csp(\mathfrak{P}^{n,2})$, each with domain $I$. 
\begin{itemize}
    \item Let $\mathfrak{I}$ be the instance where $(x,y)$ belongs to 
    $(T^{n,1}_{0,0}, R_{((1)(2))})^\mathfrak{I} $ and $((\Eq_2, R_{((1)(2))}), R_{((2)(3))})^\mathfrak{I}$.
    \item Let $\mathfrak{I}'$ be the instance where $(x,y)$ belongs to 
    $((\Eq_2, R_{((1)(2))}) , R_{((1)(2))})^{\mathfrak{I}'} $ and $(T^{n,1}_{0,0}, R_{((2)(3))})^{\mathfrak{I}'}$.
    \item Let $\mathfrak{I}''$ be the instance where $(x,y)$ belongs to 
    $((\Eq_2, R_{((1)(2))}) , R_{((1)(2))})^{\mathfrak{I}''} $, $(T^{n,1}_{0,0}, R_{((1)(2))})^{\mathfrak{I}''}$, and $(T^{n,1}_{0,0}, R_{((2)(3))})^{\mathfrak{I}''}$.
\end{itemize}
Clearly, $\mathfrak{I}^{(0)} = (\mathfrak{I}')^{(0)} = (\mathfrak{I}'')^{(0)} = \mathfrak{I}_1$ and $\mathfrak{I}^{\lnot (0)} = (\mathfrak{I}')^{\lnot(0)} = (\mathfrak{I}'')^{\lnot(0)} = \mathfrak{I}_2$.
\end{example}
In view of Example~\ref{ex:differentinstancesums}, we make the following definitions.

\begin{definition}\label{def:sumofinstances}
    Let $n \geq 2$ and $k \geq 0$. Let $\mathfrak{I}_1$ and $\mathfrak{I}_2$ be instances of $\Csp(\mathfrak{P}^{n,k})$ and $\Csp(\mathfrak{O}_n)$, respectively. We define
    \[
    \mathfrak{I}_1 \oplus \mathfrak{I}_2  := \{ \text{ Instance  $\mathfrak{I}$ of $\Csp(\mathfrak{P}^{n,k+1})$ } : \mathfrak{I}^{(0)} = \mathfrak{I}_1 \text{ and } \mathfrak{I}^{\lnot(0)} = \mathfrak{I}_2 \}.
    \]
   Notice that $\mathfrak{I}_1 \oplus \mathfrak{I}_2$ may be empty.
\end{definition}

\begin{definition}\label{def:uniformproduct}
    Let $n \geq 0$, $k \geq 0$, and let $\mathfrak{I}_1$ and $\mathfrak{I}_2$ be instances of $\Csp(\mathfrak{P}^{n,k})$ and $\Csp(\mathfrak{O}_n)$, respectively, such that $I_1 = I_2$. We define an instance $\mathfrak{I}_1 \cdot \mathfrak{I}_2$ of $\Csp(\mathfrak{P}^{n,k+1})$ with domain $I = I_1 = I_2$ and constraints defined by $(x_1, \dots,  x_m) \in (\rho_1, \rho_2)^{\mathfrak{I}_1 \cdot \mathfrak{I}_2}$ if and only if $(x_1, \dots, x_m) \in \rho_1^{\mathfrak{I}_1}$ and $(x_1, \dots, x_m) \in \rho_2^{\mathfrak{I}_2}$, for each $1 \leq m \leq 3$. 
\end{definition}

\begin{lemma}\label{lem:nonemptyuniformsum}
    Let $n \geq 0$, $k \geq 0$, and let $\mathfrak{I}_1$ and $\mathfrak{I}_2$ be instances of $\Csp(\mathfrak{P}^{n,k})$ and $\Csp(\mathfrak{O}_n)$, respectively. Then, $\mathfrak{I}_1 \cdot \mathfrak{I}_2 \in \mathfrak{I}_1 \oplus \mathfrak{I}_2$ if and only if $I_1 = I_2$ and $\Sup_j(\mathfrak{I}_1) = \Sup_j(\mathfrak{I}_2)$ for each $1\leq j \leq 3$.  
\end{lemma}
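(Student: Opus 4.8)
The statement is an ``if and only if'' connecting a structural condition ($I_1 = I_2$ and matching supports in all arities) to membership $\mathfrak{I}_1 \cdot \mathfrak{I}_2 \in \mathfrak{I}_1 \oplus \mathfrak{I}_2$. By Definition~\ref{def:sumofinstances}, the latter membership means exactly that $(\mathfrak{I}_1 \cdot \mathfrak{I}_2)^{(0)} = \mathfrak{I}_1$ and $(\mathfrak{I}_1 \cdot \mathfrak{I}_2)^{\lnot(0)} = \mathfrak{I}_2$; note also that $\mathfrak{I}_1 \cdot \mathfrak{I}_2$ is only defined when $I_1 = I_2$, so that equality is a standing requirement in both directions. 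The plan is to unwind the three definitions (inner restriction, outer restriction, and the product $\cdot$) and compare relations symbol by symbol.

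\textbf{Forward direction.} Assume $\mathfrak{I}_1 \cdot \mathfrak{I}_2 \in \mathfrak{I}_1 \oplus \mathfrak{I}_2$; then in particular $I_1 = I_2$. I would show $\Sup_j(\mathfrak{I}_1) = \Sup_j(\mathfrak{I}_2)$ by using that $\Cons^{\mathfrak{I}_1 \cdot \mathfrak{I}_2}$ of a tuple $x$ consists precisely of the pairs $(\rho_1,\rho_2)$ with $x \in \rho_1^{\mathfrak{I}_1}$ and $x \in \rho_2^{\mathfrak{I}_2}$. From $(\mathfrak{I}_1 \cdot \mathfrak{I}_2)^{(0)} = \mathfrak{I}_1$ we get, for each $j$-ary $\rho \in \tau_{n,k-1}$ (using the notation of Definition~\ref{def:innerouterinstances+targetcompatible}), that $\rho^{\mathfrak{I}_1} = \bigcup_{\lambda \in \tau_{O_n}} (\rho,\lambda)^{\mathfrak{I}_1 \cdot \mathfrak{I}_2} = \bigcup_{\lambda} (\rho^{\mathfrak{I}_1} \cap \lambda^{\mathfrak{I}_2}) = \rho^{\mathfrak{I}_1} \cap \Sup_j(\mathfrak{I}_2)$, hence $\rho^{\mathfrak{I}_1} \subseteq \Sup_j(\mathfrak{I}_2)$; taking the union over all $\rho$ yields $\Sup_j(\mathfrak{I}_1) \subseteq \Sup_j(\mathfrak{I}_2)$. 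The symmetric computation with $(\mathfrak{I}_1 \cdot \mathfrak{I}_2)^{\lnot(0)} = \mathfrak{I}_2$ gives the reverse inclusion.

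\textbf{Backward direction.} Assume $I_1 = I_2$ and $\Sup_j(\mathfrak{I}_1) = \Sup_j(\mathfrak{I}_2) =: \Sigma_j$ for $1 \le j \le 3$. Then $\mathfrak{I}_1 \cdot \mathfrak{I}_2$ is defined, and I compute its inner restriction: for $j$-ary $\rho \in \tau_{n,k-1}$,
\[
\rho^{(\mathfrak{I}_1 \cdot \mathfrak{I}_2)^{(0)}} = \bigcup_{\lambda \in \tau_{O_n}} \bigl(\rho^{\mathfrak{I}_1} \cap \lambda^{\mathfrak{I}_2}\bigr) = \rho^{\mathfrak{I}_1} \cap \Bigl(\bigcup_{\lambda} \lambda^{\mathfrak{I}_2}\Bigr) = \rho^{\mathfrak{I}_1} \cap \Sigma_j = \rho^{\mathfrak{I}_1},
\]
the last equality because $\rho^{\mathfrak{I}_1} \subseteq \Sup_j(\mathfrak{I}_1) = \Sigma_j$. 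A symmetric computation gives $\lambda^{(\mathfrak{I}_1 \cdot \mathfrak{I}_2)^{\lnot(0)}} = \lambda^{\mathfrak{I}_2}$ for every $\lambda \in \tau_{O_n}$, using $\lambda^{\mathfrak{I}_2} \subseteq \Sup_j(\mathfrak{I}_2) = \Sigma_j = \Sup_j(\mathfrak{I}_1)$. Hence $(\mathfrak{I}_1 \cdot \mathfrak{I}_2)^{(0)} = \mathfrak{I}_1$ and $(\mathfrak{I}_1 \cdot \mathfrak{I}_2)^{\lnot(0)} = \mathfrak{I}_2$, so $\mathfrak{I}_1 \cdot \mathfrak{I}_2 \in \mathfrak{I}_1 \oplus \mathfrak{I}_2$. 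The only mild subtlety — and the place to be careful — is bookkeeping the bijection between $\tau_{n,k}^U$ symbols and pairs $(\rho,\lambda)$ with $\rho \in \tau_{n,k-1}$, $\lambda \in \tau_{O_n}$, and making sure the unions in the definitions of $\mathfrak{I}^{(0)}$, $\mathfrak{I}^{\lnot(0)}$ range over exactly the index sets that appear; once that correspondence is pinned down, everything is a direct unwinding and no real obstacle remains.
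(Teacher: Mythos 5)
Your proof is correct and follows the same route as the paper, which simply asserts that both directions are "straightforward" from Definitions~\ref{def:innerouterinstances+targetcompatible}, \ref{def:sumofinstances}, and \ref{def:uniformproduct}; you have supplied the explicit symbol-by-symbol computation the paper omits. The only cosmetic issue is the index shift (the product lives at level $k+1$, so its inner restriction ranges over $\rho \in \tau_{n,k}$ rather than $\tau_{n,k-1}$), which you already flag as a notational convention and which does not affect the argument.
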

\begin{proof}
    Suppose that $\mathfrak{I}_1 \cdot  \mathfrak{I}_2 \in \mathfrak{I}_1 \oplus \mathfrak{I}_2$. By Definition~\ref{def:sumofinstances}, we have that $(\mathfrak{I}_1 \cdot \mathfrak{I}_2)^{(0)} = \mathfrak{I}_1$ and $(\mathfrak{I}_1 \cdot \mathfrak{I}_2)^{\lnot (0) } = \mathfrak{I}_2$, hence $I_1 = I_2$. It is straightforward to apply \emph{1.}\ and \emph{2.}\ of Definition~\ref{def:innerouterinstances+targetcompatible} to conclude that $\Sup_j(\mathfrak{I}_1) = \Sup_j(\mathfrak{I}_2)$ for each $1\leq j \leq 3$. 

    For the other direction, take $\mathfrak{I}_1$ and $\mathfrak{I}_2$ so that $I_1 = I_2$ and $\Sup_j(\mathfrak{I}_1) = \Sup_j(\mathfrak{I}_2)$ for each $1\leq j \leq 3$.
    It is also similarly straightforward to verify that $\mathfrak{I}_1 \cdot \mathfrak{I}_2 \in \mathfrak{I}_1 \oplus \mathfrak{I}_2$.
\end{proof}






We now define a central property of our reduction from
$\Csp(\mathfrak{P}^{n,k+1})$ to $\Csp(\mathfrak{P}^{n,k})$, namely that it comes with a \emph{reduction atlas} which relates the solutions 
to an instance $\mathfrak{I}$ of $\Csp(\mathfrak{P}^{n,k+1})$ to the solutions of the computed instance of $\Csp(\mathfrak{P}^{n,k})$. 

\begin{definition}\label{def:(k-1)reduction}
Let $k \geq 0$, let $\overline{\mathfrak{I}}$ be an instance of $\Csp(\mathfrak{P}^{n,k+1})$,
and let $\overline{\mathfrak{I}}$ be an instance of $\Csp(\mathfrak{P}^{n,k})$ such that $\overline{I} = I$.
Then a \emph{reduction atlas}
 $\Pi$ 
 for $(\bI,\overline{\mathfrak{I}})$ is 
  a function $\Pi \colon I \to \{0, \dots, k \}$
    which encodes the relationship between solutions of $\mathfrak{I}$ and the solutions of $\overline{\mathfrak{I}}$ in the following way.
    \begin{enumerate}
    \item If $ 0 \leq \Pi(x) < k$, then $s(x) \in P^{n,k+1}_{\Pi(x)}$ for every solution $s \colon I \to P^{n,k+1}$ of $\mathfrak{I}$. 
    \item For every solution $s \colon I \to P^{n,k+1}$ of $\mathfrak{I}$, there is a solution $\overline{s} \colon \overline{I} \to P^{n,k}$ of $\overline{\mathfrak{I}}$  defined as
    \[
    \overline{s}(x) = 
    \begin{cases}
        \phi_{0^{k}}(s(x)) & \text{ if $\Pi(x) = k$, otherwise}\\
        (\psi^{n,k+1}_{\Pi(x)})^{-1}(s(x)) & \text{ if $0 \leq \Pi(x) \leq k-1$.}
    \end{cases}
    \label{eq:R1} \tag{R1}
    \]
    \item For every solution $\overline{s} \colon \overline{I} \to P^{n,k}$, there is a solution $s \colon I \to P^{n,k+1} $ which satisfies 
    \[
    s(x) \in 
    \begin{cases}
        \phi^{-1}_{0^{k}}(\overline{s}(x)) & \text{ if $\Pi(x) = k$, otherwise}\\
        \{\psi^{n,k+1}_{\Pi(x)}(\overline{s}(x))\} & \text{ if $0 \leq \Pi(x) \leq k-1$.}
    \end{cases}
     \label{eq:R2} \tag{R2}
    \]
    \end{enumerate}
    Notice that if $k=0$, then the definitions in 2.\ and 3.\ above prioritize the first case.
\end{definition}

We provide some explanation of the reduction atlas $\Pi$ of Definition~\ref{def:(k-1)reduction} with Figure \ref{fig:reductionexplanation}. Roughly speaking, the partition $\Pi^{-1}(0), \dots, \Pi^{-1}(k)$ of $I = \overline{I}$ provides the information necessary to translate between solutions $s$ of $\mathfrak{I}$ and solutions $\overline{s}$ of $\overline{\mathfrak{I}}$. In the figure we show four variables $x_1, \dots, x_4 \in I$ for some instance $\mathfrak{I}$ of $\Csp(\mathfrak{P}^{3,4})$, where the instance itself is not depicted. Each potato in the top row is a diagram of the algebra $\mathbf{P}^{3,4}$, while each potato in the bottom row is a diagram of the algebra $\mathbf{P}^{3,3}$. If a variable $x \in I$ satisfies $\Pi(x) = i$ for $0 \leq i \leq 2$, then $s(x)$ and $\overline{s}(x)$ are related by the isomorphism $\psi^{3,4}_j$. When $\Pi(x) = 3$, they are related by the graph of the homomorphism $\phi_{(0,0,0)}$, i.e., the homomorphism which collapses monolith of $\mathbf{P}^{3,4}$. 

\begin{figure}
    \centering
    \includegraphics[width=0.8\linewidth]{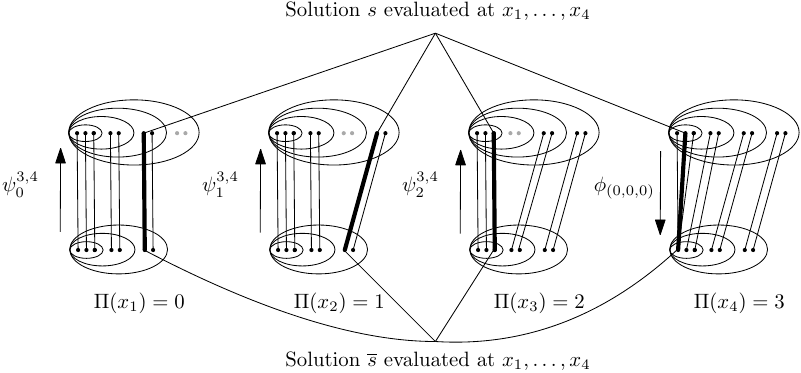}
    \caption{Example for $n=3, k=3$ of the connection between solutions $s$ of $\mathfrak{I}$ and $\overline{s}$ of $\overline{\mathfrak{I}}$.}
    \label{fig:reductionexplanation}
\end{figure}

Now we explain how the information provided by the reduction atlas $\Pi$ is used during the run of $\Reduce_{n,k+1}(\mathfrak{I})$. The idea is straightforward: all uniform relations are reduced using recursive calls to $\Reduce_{n,k}$ and combining the outputs with the original constraints. The reduction atlas $\Pi$ is already produced at this stage of the computation and it is then used to reduce the remaining nonuniform constraints (Algorithm~\ref{alg:outerreduce}). The constraints using the full relation are obviously reduced again to the full relation, so the function $\Pi$ is only used to reduce constraints involving the canonical transfer relation. For each constraint $(x,y)\in (T^{n,k+1}_{0,k})^{\mathfrak{I}}$ in the instance $\mathfrak{I}$, we have $\Pi(x) =0$ and $\Pi(y) = i$ for some $0 \leq i \leq k$. Each possible output value of $\Pi$ indicates a way that the set of solution values $P^{n,k+1}$ for $\mathfrak{I}$ can be interpreted as a set of solution values $P^{n,k}$ for the reduction $\overline{I}$. When $0 \leq i \leq k-1$ solutions must evaluate to an element of $P^{n,k+1}_i$, which is isomorphic to $P^{n,k}$. When $i = k$, we take a quotient by the monolith of $\mathbf{P}^{n,k+1}$. The new constraint in $\overline{\mathfrak{I}^N}$ is obtained by analyzing the interaction between the canonical transfer relation $T^{n,k+1}_{0,k}$ and each of the possibilities. The resulting relations are each named by some $\tau_{n,k}$ symbol. Since $\tau_{n,k}$ is built recursively, we first give these relation symbols alternative names so that they are more easily identified. 

\begin{definition}\label{def:specialsignaturenames}
Let $n \geq 2$ and let $ r \geq 1$. We recursively define

\begin{enumerate}\label{def:reducetransfersymbols}
    \item $S^r_0 := T^{n,r}_{0,y-1} \in \tau_{n,r}$
    \item $S^r_{i+1} := (S^y_i, R_{\id}) \in \tau_{n,r+i+1}$ for $0 \leq i$.
\end{enumerate}
We also define $\Eq_2:= ((\dots(\Eq_2, R_{\id}) \dots, R_{\id}), R_{\id} )$ to be the symbol which interprets as the binary equality relation.
\end{definition}
Informally, the relation $S^y_l$ is the canonical transfer relation for $\mathfrak{P}^{n,y}$, but now with $l$-many additional block congruences enclosing it and additional constant pairs for the extra elements.  Now we explain how to reduce canonical transfer relation constraints. The reader can consult Figure \ref{fig:reductionoftransferinstance} for a complete picture of how the reduction works for the case $n=3$ and $k=5$.
\begin{figure}
    \centering
    \includegraphics[width=0.75\linewidth]{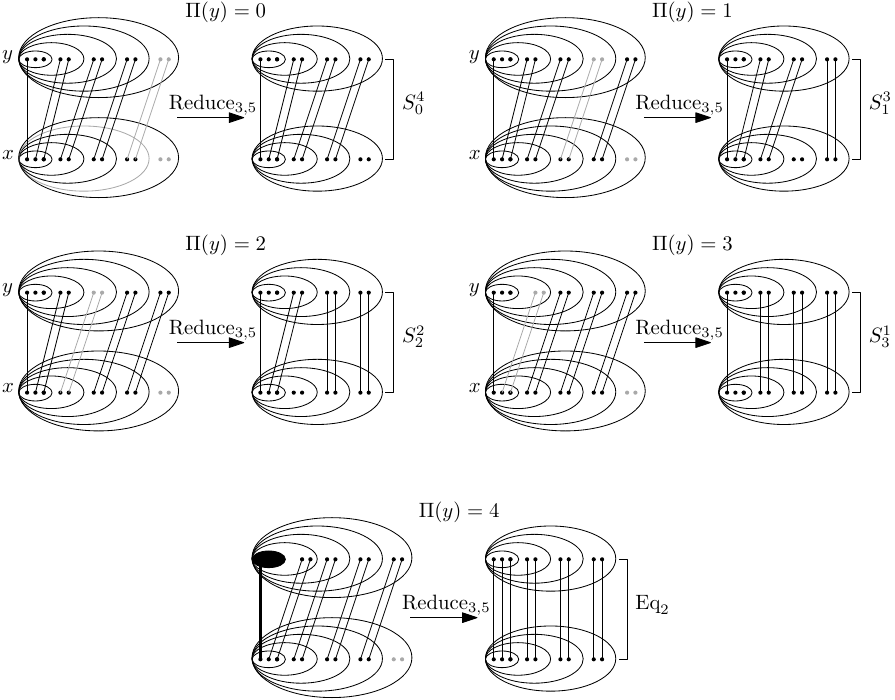}
    \caption{Assignment of constraints to $\overline{\mathfrak{I}^N}$ for $k=4,n=5$.}
    \label{fig:reductionoftransferinstance}
\end{figure}
There, each of the five possible situations that can occur are shown. For example, in the top left part of the figure two variables $x, y$ for which $T^{3,5}_{0,4}(x,y)$ holds are shown along with their respective $\mathbf{P}^{3,5}$ potatoes. We suppose that this particular $y$ is such that $\Pi(y) = 0$, which means that it belongs to some transfer source component and hence the two elements depicted in gray cannot equal $s(y)$ for a solution $s$ of $\overline{I}$. Similarly, the two elements depicted in gray in the potato for $x$ cannot equal $s(x)$ and neither of the two edges depicted in gray can be edges connecting $s(x)$ to $s(y)$. Deleting this information from these two potatoes produces the relation $S_0^4$, which we show further to the right. The explanation for the remaining situations is similar. We formalize this with the procedure $\NReduce_{n,k+1}$, which is called as a subroutine at the end of $\Reduce_{n,k+1}$.

\begin{algorithm}
\label{alg:outerreduce}
\caption{$\NReduce_{n,k+1}$ for $k \geq 1$}
\KwIn{nonuniform instance $\mathfrak{I}$ of $\Csp(\mathfrak{P}^{n,k+1})$ and a function $\Pi: I \to \{0, \dots, k\}$}\

\Make Instance $\overline{\mathfrak{I}}$ of $\Csp(\mathfrak{P}^{n,k-1})$ with domain $\overline{I}= I$ and all relations initially empty.

$\Full^{\overline{\mathfrak{I}}} := \Full^\mathfrak{I}$ \\
\For{ $ 0 \leq l < k $}
{
$(S^{k-l}_l)^{\overline{\mathfrak{I}}} := \{ (x,y) : \Pi(y) = l \} \cap  (T^{n,k+1}_{0,k})^\mathfrak{I}$
}
$\Eq_2^{\overline {\mathfrak{I}}} := \{ (x,y) : \Pi(y) = k \} \cap  (T^{n,k+1}_{0,k})^\mathfrak{I}$\\

\KwOut{$\overline{\mathfrak{I}}$} \
\end{algorithm}

\begin{proposition}\label{prop:proofofreduction}
    Let $\mathfrak{I}$ be an instance of $\Csp(\mathfrak{P}^{n,k})$, for $n \geq 2$ and $k \geq 1$. Let $\overline{\mathfrak{I}}$ be the instance computed by $\Reduce_{n,k}(\mathfrak{I})$ and let $\Pi$ be the function computed by $\Reduce_{n,k}(\mathfrak{I})$. The following hold:
    \begin{enumerate}
        \item $\Pi$ is a reduction atlas for $(\mathfrak{I}, \overline{\mathfrak{I}})$. 
        \item 
    $\Sup_2(\overline{\mathfrak{I}}) = \Sup_2(\mathfrak{I})$ and $\Sup_3(\overline{\mathfrak{I}}) = \Sup_3(\mathfrak{I})$.
    If $\mathfrak{I}$ is concise, then also $\Sup_1(\overline{\mathfrak{I}}) = \Sup_1(\mathfrak{I})$.
   \end{enumerate}
\end{proposition}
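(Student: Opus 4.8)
The plan is to prove Proposition~\ref{prop:proofofreduction} by induction on $k \geq 1$, following the same recursive structure as the definition of the reduction procedures. The base case $k=1$ concerns $\Reduce_{n,1}$ (Algorithm~\ref{alg:reducebasis}): there $\Pi$ is the constant zero function, and since the trunk of $\mathbf{P}^{n,1}$ has length one, condition~(1) of Definition~\ref{def:(k-1)reduction} is vacuous (there is no $i$ with $0 \le i < 1$ and $i < k = 1$ simultaneously constraining a solution nontrivially other than via $k$ itself), while conditions (2) and (3) reduce to the statement that each uniform connected component $\mathfrak{C}_j$ is satisfiability-equivalent to the one-element instance $\overline{\mathfrak{C}_j}$ that records, via $R_\emptyset$ versus $R_{\{\epsilon\}}$, whether $\Solve_{n,1}(\mathfrak{C}_j)$ accepts. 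This is exactly the content of Theorem~\ref{thm:solveiscorrect} for $k=1$, which is established independently (per the remark following that theorem). For the support claim in the base case: $\Eq_2^{\overline{\mathfrak{C}_j}}$ and $\Eq_3^{\overline{\mathfrak{C}_j}}$ are set to $\Sup_2(\mathfrak{C}_j)$ and $\Sup_3(\mathfrak{C}_j)$ by construction, and the unary symbol $R_\emptyset$ or $R_{\{\epsilon\}}$ is placed on every $x \in C_j$, so $\Sup_1(\overline{\mathfrak{I}}) = \overline{I} = I$, which equals $\Sup_1(\mathfrak{I})$ when $\mathfrak{I}$ is concise.

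For the inductive step, assume the proposition holds for $\Reduce_{n,k}$ and consider $\Reduce_{n,k+1}(\mathfrak{I})$ as laid out in Algorithm~\ref{alg:reduce}. First I would normalize: passing to $\mathfrak{I}^*$ makes the instance transfer target compatible (Definition~\ref{def:innerouterinstances+targetcompatible}(3)) and this does not change the solution set, since a transfer target variable $y$ must anyway take a value in $P^{n,k+1}_{k}$; passing to $\mathfrak{I}^C$ makes it concise without changing solutions. Then I would verify the atlas conditions component by component. On the transfer source components $S_1,\dots,S_r$ we set $\Pi \equiv 0$; Lemma~\ref{lem:uniformsolutionssplit} together with transfer target compatibility forces every solution there to land in $P^{n,k+1}_0$, giving condition (1), and $\overline{\mathfrak{S}^U} = (\mathfrak{S}^U)^{(0)}$ is literally the inner restriction, so (2) and (3) follow by transporting via $(\psi^{n,k+1}_0)^{-1}$. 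On the components $M_1,\dots,M_v$ we recursively call $\Reduce_{n,k}$ on $(\mathfrak{M}^U)^{(0)}$ to get $\overline{(\mathfrak{M}^U)^{(0)}}$ and an atlas $\Pi_M$, then set $\Pi := \Pi_M + 1$ and $\overline{\mathfrak{M}^U} := \overline{(\mathfrak{M}^U)^{(0)}} \cdot (\mathfrak{M}^U)^{\lnot(0)}$. Here the key is to combine the inductive atlas property for $\Pi_M$ with Lemma~\ref{lem:uniformsolutionssplit} (a solution on such a component either lies entirely in $P^{n,k+1}_0$, whence $\Pi_M$ governs its behaviour shifted up by one level, or lies in $O_n$, whence $\Pi(x) = k$ never occurs but the outer restriction handles it); one checks that the shift $\Pi_M+1$ correctly matches the composition of the level-$0$ embedding $\psi^{n,k+1}_0$ with the lower embeddings $\psi^{n,k}_{\Pi_M(x)}$, using that $\psi^{n,k+1}_0 \circ \psi^{n,k}_j = \psi^{n,k+1}_{j+1}$ up to the relabelling in Remark~\ref{rem:subalgebrasofP_{n,k}}. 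Finally, the nonuniform part $\overline{\mathfrak{I}^N} = \NReduce_{n,k+1}(\mathfrak{I}^N,\Pi)$ must be checked: each canonical transfer constraint $(x,y) \in (T^{n,k+1}_{0,k})^{\mathfrak{I}}$ has $\Pi(x)=0$ and $\Pi(y)=l$ for some $l \in \{0,\dots,k\}$, and one verifies directly from the picture (Figure~\ref{fig:reductionoftransferinstance}) that the relation obtained by intersecting the bijection graph $T^{n,k+1}_{0,k}$ with the constraint $x \in P^{n,k+1}_0$, $y \in P^{n,k+1}_l$ and then translating through the atlas maps is exactly $S^{k-l}_l$ (for $l < k$) or $\Eq_2$ (for $l=k$), which is what $\NReduce$ places. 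Combining $\overline{\mathfrak{S}^U}$, $\overline{\mathfrak{M}^U}$, $\overline{\mathfrak{I}^N}$ into $\overline{\mathfrak{I}}$ and checking that solutions glue gives (1)–(3) for $k+1$.

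For the support claim in the inductive step: $\Full^{\overline{\mathfrak{I}^N}} = \Full^{\mathfrak{I}^N}$ and the constraints placed by $\NReduce$ cover the same pairs as $(T^{n,k+1}_{0,k})^{\mathfrak{I}}$, so the binary and ternary supports coming from the nonuniform part are preserved; on the uniform side, $\overline{\mathfrak{S}^U}$ preserves support because inner restriction does (Definition~\ref{def:innerouterinstances+targetcompatible}(1) keeps the domain and reindexes constraints without erasing supported tuples), and $\overline{\mathfrak{M}^U} = \overline{(\mathfrak{M}^U)^{(0)}} \cdot (\mathfrak{M}^U)^{\lnot(0)}$ has the claimed support by Lemma~\ref{lem:nonemptyuniformsum} once we know $\Sup_j(\overline{(\mathfrak{M}^U)^{(0)}}) = \Sup_j((\mathfrak{M}^U)^{(0)})$, which is the inductive hypothesis (and conciseness of $(\mathfrak{M}^U)^{(0)}$, inherited from $\mathfrak{I}^C$, handles the unary case). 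Taking the union over all pieces over a common domain $\overline{I} = I$ then yields $\Sup_2(\overline{\mathfrak{I}}) = \Sup_2(\mathfrak{I})$, $\Sup_3(\overline{\mathfrak{I}}) = \Sup_3(\mathfrak{I})$, and $\Sup_1(\overline{\mathfrak{I}}) = \Sup_1(\mathfrak{I})$ when $\mathfrak{I}$ is concise.

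The main obstacle I anticipate is the bookkeeping in the $M$-component case: correctly tracking how the reduction atlas $\Pi_M$ at level $k$ composes with the level-$0$ inclusion to produce a valid atlas at level $k+1$, in particular verifying equations~\eqref{eq:R1} and \eqref{eq:R2} of Definition~\ref{def:(k-1)reduction} with the index-shift $\Pi = \Pi_M+1$, and simultaneously reconciling this with the alternative branch where a solution on the component lands in $O_n$ and is instead constrained by $(\mathfrak{M}^U)^{\lnot(0)}$ via the uniform product. This requires a careful case split using Lemma~\ref{lem:uniformsolutionssplit} and the explicit description of the embeddings $\psi^{n,k+1}_i$, and is where a subtle off-by-one error (e.g.\ conflating $\Pi(x)=k$ with $\Pi(x)=k-1$, which the final sentence of Definition~\ref{def:(k-1)reduction} already flags as delicate when the inner level is $0$) would be easiest to make.
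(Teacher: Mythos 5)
Your proposal follows essentially the same route as the paper's proof: induction on $k$ with the base case resting on the $k=1$ correctness of $\Solve_{n,1}$, normalization to $(\mathfrak{I}^*)^C$, a component-wise verification of the atlas conditions split into transfer source components, the $M$-components (via the recursive call, the shift $\Pi = \Pi_M + 1$, and the composition identities for the embeddings $\psi$), and the nonuniform part via $\NReduce$, with the support claim handled through Lemma~\ref{lem:nonemptyuniformsum} and the inductive hypothesis. The delicate points you flag — the off-by-one in the atlas shift, the role of transfer target compatibility in excluding spurious pairs when $\Pi(y)=k$, and the case split from Lemma~\ref{lem:uniformsolutionssplit} — are exactly the ones the paper's proof works through in its claims, so the plan is sound as stated.
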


\begin{proof}
   The proof proceeds by induction on $k$. For the basis, suppose that $k=1$. Let $\mathfrak{I}$ be an instance of $\Csp(\mathfrak{P}^{n,1})$. We are referring to $\Reduce_{n,1}$ here, see Algorithm~\ref{alg:basisreduction}. Let us show that \emph{1.} of the proposition holds, that is, let us verify that Definition~\ref{def:(k-1)reduction} is satisfied. First, we note that \emph{1.} of Definition~\ref{def:(k-1)reduction} is trivially satisfied, since the function $\Pi$ takes constant value $0$ and in this case $k=0$. Next, let $C_1, \dots, C_v$ be the connected components of $\mathfrak{I}$ and let $\mathfrak{C}_1, \dots, \mathfrak{C}_v$ be the respective induced subinstances of $\mathfrak{I}$. Clearly, $\mathfrak{I}$ has a solution if and only if each $\mathfrak{C}_j$ has a solution, for each $1 \leq j \leq v$. 
   
   Since we established in the basis of the induction argument of Theorem~\ref{thm:solveiscorrect} that $\Solve_{n,1}$ solves $\Csp(\mathfrak{P}^{n,1})$, it follows that $\phi_{0^0} \circ s$ is a solution to $\overline{\mathfrak{I}}$ whenever $s$ is a solution to $\mathfrak{I}$. Conversely, there exists a solution $s\in \phi_{0^0}^{-1} \circ \overline{s}$ of $\mathfrak{I}$ for every solution $\overline{s}$ to $\overline{\mathfrak{I}}$ (recall that $0^0= \epsilon$ and $\phi_{\epsilon}$ is the function which maps every element of $P^{n,1}$ to $\epsilon \in P^{n,0}$). Hence, items \emph{2.} and \emph{3.} of Definition~\ref{def:(k-1)reduction} hold, so \emph{1.} of the proposition is proved. Upon inspection of Algorithm~\ref{alg:basisreduction}, it is clear that \emph{2.} of the proposition also holds, so the basis for the induction is established.

     Now suppose the proposition holds for $k \geq 1$. We show that it also holds for $k+1$, so here we refer to Algorithm \ref{alg:recursivereduction}. Let $\mathfrak{I}$ be an instance of $\Csp(\mathfrak{P}^{n,k+1})$. We will first argue that \emph{1.} of the proposition holds. The first steps of $\Reduce_{n,k+1}$  are to replace $\mathfrak{I}$ with $(\mathfrak{I}^*)^C$. It is easy to see that the solutions sets to $\mathfrak{I}$ and $(\mathfrak{I}^*)^C$ are equal. Henceforth, we assume that $\mathfrak{I}$ is both transfer target compatible and concise. i.e., that $\mathfrak{I} = (\mathfrak{I}^*)^C$. 
     
     We first fix some notation for the remainder of the proof. Let $S_1, \dots, S_r, M_1, \dots, M_v$ be the uniform connected components of $I$, with $S_1, \dots, S_r$ distinguished as the source components (see Definition~\ref{def:components}). Let $\mathfrak{S}_1, \dots, \mathfrak{S}_r, \mathfrak{M}_1, \dots, \mathfrak{M}_v$ be the induced subinstances of $\mathfrak{I}$ with respective domains the uniform connected components listed above and let $\mathfrak{S}_1^U, \dots, \mathfrak{S}_r^U, \mathfrak{M}^U_1, \dots, \mathfrak{M}^U_v$ be their respective uniform reducts. Let $\mathfrak{S}^U = \mathfrak{S}^U_1 \cup \dots \cup \mathfrak{S}^U_r$ and $\mathfrak{M}^U = \mathfrak{M}^U_1 \cup \dots \cup \mathfrak{M}^U_v$. Let $\overline{(\mathfrak{M}_j^U)^{(0)}}$ and $\Pi_j$ be the instance and reduction atlas output by $\Reduce_{n,k}((\mathfrak{M}_j^U)^{(0)})$ for each $1 \leq j \leq v$  and let $\overline{(\mathfrak{M}^U)^{(0)}}$ and $\Pi_M$ be the instance and reduction atlas output by $\Reduce_{n,k}((\mathfrak{M}^U)^{(0)})$. Let $\overline{\mathfrak{M}^U} = \overline{(\mathfrak{M}^U)^{(0)}}  \cdot (\mathfrak{M}^U)^{\lnot(0)}$ and let $\overline{\mathfrak{M}^U_j} = \overline{(\mathfrak{M}_j^U)^{(0)}}  \cdot (\mathfrak{M}_j^U)^{\lnot(0)}$, for each $1 \leq j \leq v$.

     \begin{claim}\label{claim:reductiondistributes}
          The following hold. 
         \begin{enumerate}
         \item $\overline{\mathfrak{M}^U_j} \in \overline{(\mathfrak{M}_j^U)^{(0)}}  \oplus (\mathfrak{M}_j^U)^{\lnot(0)}$, for each $1 \leq j \leq v$.
         \item $\overline{(\mathfrak{M}^U)^{(0)}} = \overline{(\mathfrak{M}_1^U)^{(0)}} \cup \dots \cup \overline{(\mathfrak{M}_v^U)^{(0)}}  $.
         \item $\Pi_M = \Pi_1 \cup \dots \cup \Pi_v$.
         
         \item  $\overline{\mathfrak{M}^U} = \overline{\mathfrak{M}^U_1} \cup \dots \cup \overline{\mathfrak{M}^U_v}$.
        \end{enumerate}
     \end{claim}

     \begin{claimproof}
         We first show item \emph{1.} of the claim. Since $\mathfrak{I}$ is concise, it follows that $\mathfrak{M}_j^U$ is concise, and then also that its inner restriction and outer restrictions $(\mathfrak{M}_j^U)^{(0)}$ and $(\mathfrak{M}_j^U)^{\lnot (0)}$ are concise. It follows from the inductive hypothesis that 
    \begin{align*}
    \Sup_1(\overline{(\mathfrak{M}_j^U)^{(0)}}) &= \Sup_1((\mathfrak{M}_j^U)^{(0)}),\\
    \Sup_2(\overline{(\mathfrak{M}_j^U)^{(0)}}) &= \Sup_2((\mathfrak{M}_j^U)^{(0)}), \text{ and }\\
    \Sup_3(\overline{(\mathfrak{M}_j^U)^{(0)}}) &= \Sup_3((\mathfrak{M}_j^U)^{(0)}).
    \end{align*}
    Moreover, it follows readily from the definitions that 
     \begin{align*}
    \Sup_1((\mathfrak{M}_j^U)^{(0)}) &= \Sup_1((\mathfrak{M}_j^U)^{\lnot (0)}),\\
    \Sup_2((\mathfrak{M}_j^U)^{(0)}) &= \Sup_2((\mathfrak{M}_j^U)^{\lnot (0)}), \text{ and }\\
    \Sup_3((\mathfrak{M}_j^U)^{(0)}) &= \Sup_3((\mathfrak{M}_j^U)^{\lnot (0)}).
    \end{align*}
    Hence, item \emph{1.} follows from Lemma~\ref{lem:nonemptyuniformsum}.

    Items \emph{2.} and \emph{3.} of the claim follow from the observation that $\Reduce_{n,k}$ only uses information obtained by considering connected components, hence both outputs distribute over a union of disconnected instances. Item \emph{4.} of the claim follows from item \emph{2.} of the claim and Definition~\ref{def:uniformproduct}.
     \end{claimproof}

    \begin{claim}\label{claim:(a)holds}
         Item 1. of Definition~\ref{def:(k-1)reduction} holds, that is, $s(x) \in P^{n,k+1}_{\Pi(x)}$ for every solution $s \colon I \to P^{n,k+1}$ of $\mathfrak{I}$, whenever $0 \leq \Pi(x) < k$.
    \end{claim}
    \begin{claimproof}
         Suppose that $s \colon I \to P^{n,k+1}$ is a solution to $\mathfrak{I}$ and that $\Pi(x)=i$ for some $0 \leq i < k$. We want to see that $s(x)$ belongs to $P^{n,k+1}_i$. We analyze two cases.

    \begin{itemize}
         \item Suppose $i=0$. Evidently, this is the case if and only if $x$ belongs to a transfer source component $S_j$ for some $1 \leq j \leq r$. From Lemma~\ref{lem:uniformsolutionssplit}, we know that $s(y) \in P^{n,k+1}_0$ for all $y \in S_j$ or $s(y) \in O_n$ for all $y \in S_j$. Since $S_j$ is a transfer source component, there exists $z \in S_r$ and $w \in I$ so that $T^{n,k+1}_{0,k}(z,w)$ is a constraint of $\mathfrak{I}$. Since the image of $T^{n,k+1}_{0,k}$ under projection onto the first coordinate is $P^{n,k+1}_0$, we conclude that $s(x) \in P^{n,k+1}_0$. 

         \item Suppose that $i \neq 0$. In this case $x$ belongs to $M$, and hence to $M_j$ for some $1\leq j \leq v$. Again applying Lemma~\ref{lem:uniformsolutionssplit}, we deduce that $s(y) \in P^{n,k+1}_0$ for all $y \in M_j$ or $s(y) \in O_n$ for all $y \in M_j$. If the latter holds, then $s(x) \in P^{n,k+1}_i$, since $O_n \subseteq P^{n,k+1}_i$ whenever $1 \leq i \leq k$. If the former holds, then by \emph{1.} of Lemma~\ref{lem:uniformsolutionssplit} the restriction to $M_j$ of $(\psi^{n,k+1}_0)^{-1}\circ s$ is solution to $(\mathfrak{M}_i^U)^{(0)}$. By the definition of $\Pi$ and \emph{3.} of Claim~\ref{claim:reductiondistributes}, we deduce that $\Pi_j(x) =i-1$. By the inductive hypothesis, this implies that $((\psi^{n,k+1}_0)^{-1}\circ s)(x) \in P^{n,k}_{i-1}$. It follows that $s(x) \in \psi_0^{n,k+1}(P^{n,k}_{i-1})$. We now leave it to the reader to verify that $\psi_0^{n,k+1}(P^{n,k}_{i-1}) \subseteq P^{n,k+1}_i$.
    \end{itemize}
    Hence, the claim is established.
              \end{claimproof}

    \begin{claim}\label{claim:(b)holds}
        Item 2. of Definition~\ref{def:(k-1)reduction} holds, that is, given a solution $s \colon I \to P^{n,k+1}$ of $\mathfrak{I}$, the function $\overline{s}$ defined in \ref{eq:R1} is a solution to $\overline{\mathfrak{I}}$. 
    \end{claim}
    \begin{claimproof}
    Let $s \colon I \to P^{n,k+1}$ be a solution to $\mathfrak{I}$ and let $\overline{s} \colon I \to P^{n,k}$ be as defined in \ref{eq:R1} (note that it follows from Claim~\ref{claim:(a)holds} that $\overline{s}$ is well-defined, since $P^{n,k+1}_i$ is the image of $\psi^{n,k+1}_i$). We argue that $\overline{s}$ is indeed a solution to $\overline{\mathfrak{I}}$. Since $\overline{I} = \overline{\mathfrak{S}^U} \cup \overline{\mathfrak{M}^U} \cup \overline{\mathfrak{I}^N}$, this amounts to showing that $\overline{s}$ is a solution to each of these instances.

    \begin{itemize}
        \item It is straightforward to see that $\overline{s}$ restricted to $S$ for $1\leq j \leq r$ is a solution to $\overline{\mathfrak{S}^U}$. Indeed, the domain of $\mathfrak{S}^U$ is $S = S_1 \cup \dots \cup S_r$ is the union of source components and we have already observed that $s(x) \in P^{n,k+1}_0$ for all $1 \leq j \leq r$ and $x \in S_j$. We assume $s$ restricted to $S$ is a solution to $\mathfrak{S}_j^U$, so it follows from the definition of an inner restriction (see Definition \ref{def:innerouterinstances+targetcompatible}) that $\overline{s}$ is a solution to $\overline{\mathfrak{S}^U} = (\mathfrak{S}^U)^{(0)}$.

        \item Now we show that $\overline{s}$ restricted to $M$ is a solution to $\overline{\mathfrak{M}^U}$, which in view of item \emph{4.} of Claim \ref{claim:reductiondistributes}, is true if and only if $\overline{s}$ restricted to $\overline{\mathfrak{M}_j^U}$, for every $1 \leq j \leq v$. By Lemma \ref{lem:uniformsolutionssplit}, we know that either $s(x) \in P^{n,k+1}_0$, for all $x \in M_j$, or $s(x) \in O_n$, for all $x \in M_j$. 
        \begin{itemize}
            
        \item 
        In the former case, it follows that $(\psi_0^{n,k+1})^{-1} \circ s$ is a solution to $(\mathfrak{M}_j^U)^{(0)}$. By the inductive hypothesis, we know that $\Pi_j$ is a reduction atlas for the pair of instances $((M_j^U)^{(0)}, \overline{(\mathfrak{M}_j^U)^{(0)}})$. It follows that 
        \[
        \overline{(\psi_0^{n,k+1})^{-1} \circ s} = 
        \begin{cases}
        (\phi_{(1)^{k-1}} \circ (\psi_0^{n,k+1})^{-1} \circ s)(x) & \text{ if $\Pi_j(x) = k-1$},\\
        ((\psi^{n,k}_{\Pi_j(x)})^{-1} \circ (\psi_0^{n,k+1})^{-1} \circ s)(x) & \text{ if  $0 \leq \Pi_j(x) \leq k-2$.}
        \end{cases}
        \]
        is a solution to $\overline{(\mathfrak{M}_j^U)^{(0)}}$. Since 
        \[
        \overline{\mathfrak{M}_j^U} = \overline{ (\mathfrak{M}_j^U)^{(0)}}  \cdot (\mathfrak{M}_j^U)^{\lnot(0)},
        \]
        it follows (from Definitions~\ref{def:sumofinstances} and~\ref{def:uniformproduct} and Lemma~\ref{lem:uniformsolutionssplit}) that $\psi^{n,k+1}_0(\overline{(\psi_0^{n,k+1})^{-1} \circ s}) $ is a solution to $\overline{\mathfrak{M}_j^U}$. The reader can check that the following composition rules hold for any $c \in P^{n,k+1}_0$:
        \begin{align*}
            (\psi^{n,k+1}_0\circ \phi_{(1)^{k-1}} \circ (\psi_0^{n,k+1})^{-1})(c) &= \phi_{(1)^{k}}(c)\\
            (\psi^{n,k+1}_0\circ (\psi^{n,k}_i)^{-1} \circ (\psi_0^{n,k+1})^{-1})(c) &= (\psi^{n,k+1}_{i+1})^{-1} (c).
        \end{align*}
        Applying these composition rules along the fact that $\Pi(x)= \Pi_j(x) +1$ (this follows from $\Pi(x) = \Pi_M(x) +1$ and item \emph{3.}\ of Claim~\ref{claim:reductiondistributes}), we can transform the definition of $\psi^{n,k+1}_0(\overline{(\psi_0^{n,k+1})^{-1} \circ s}) $ to match the definition of $\overline{s}$.
        \item In the latter case, it follows from Lemma \ref{lem:uniformsolutionssplit} that $s$ is a solution to $(\mathfrak{M}_j^{U})^{\lnot (0)}$. Notice that in this case, $\overline{s}(x) = s(x)$ for all $x \in M_j$, hence $\overline{s}$ is a solution to $(\mathfrak{M}_j^{U})^{\lnot (0)}$, which is equal to  $ (\overline{\mathfrak{M}_j^U)}^{\lnot (0)}$. Therefore, $\overline{s}$ is a solution to $\overline{\mathfrak{M}_j^U}$. 
        \end{itemize}
    \end{itemize}

    Now we argue that $\overline{s}$ is a solution to $\overline{\mathfrak{I}^N} = \NReduce(\mathfrak{I}^N, \Pi)$. This follows more or less immediately from the definitions and Algorithm~\ref{alg:outerreduce}. Suppose that $(x,y) \in (T^{n,k+1}_{0,k})^\mathfrak{I}$. We know that $\Pi(x) = 0$ and $\Pi(y) =i$ for some $0 \leq i \leq k$ (since $x$ is a transfer source variable). If $i \neq k$, it follows from the definition of $\overline{s}$ that the pair $(\overline{s}(x), \overline{s}(y))$ must belong to the relation
    \[
    \psi^{n,k+1}_0 \circ T^{n,k+1}_{0,k} \circ (\psi^{n,k+1}_i)^{-1} = (S_i^{k-i})^{\mathfrak{P}^{n,k}} ,
    \]
    which is exactly the constraint that is assigned in $\NReduce_{n,k+1}$ to such $x,y$ (note the abuse of notation, where the above composition is a relational composition). If $i = k$, the same kind of reasoning applies, except here the pair $(\overline{s}(x), \overline{s}(y))$ must belong to the relation 
    \[
    (\psi^{n,k+1}_0)^{-1} \circ T^{n,k+1}_{0,k} \circ \phi_{0^k} = \Eq_2^{\mathfrak{P}^{n,k}}.
    \]
    Since the constraints named by the full relation are always satisfiable, 
    this finishes the proof of Claim~\ref{claim:(b)holds}. 
    \end{claimproof}

     \begin{claim}\label{claim:(c)holds}
        Item 3. of Definition~\ref{def:(k-1)reduction} holds, that is, given a solution $\overline{s} \colon \overline{I} \to P^{n,k+1}$ of $\mathfrak{I}$, there exists a solution $s$ to the instance $\mathfrak{I}$ which satisfies the property defined in \ref{eq:R2}. 
    \end{claim}

    \begin{claimproof}
    Let $\overline{s} \colon  \overline{I} \to P^{n,k}$ be a solution of $\overline{I}$. We will argue that there is a solution $s \colon I \to P^{n,k+1}$ that satisfies \ref{eq:R2} of Definition~\ref{def:(k-1)reduction}. By Claim~\ref{claim:reductiondistributes}, we are able to do this independently on each uniform connected component. Let $\overline{s}_1, \dots, \overline{s}_r, \overline{t}_1, \dots, \overline{t}_v$ be the respective restrictions of $\overline{s}$ to $S_1, \dots, S_r, M_1, \dots, M_v$. We construct the solution $s$ as a union of solutions $s_1, \dots, s_r, t_1, \dots, t_v$, which are respective solutions of the subinstances $\mathfrak{S}^U_1, \dots, \mathfrak{S}^U_r, \mathfrak{M}^U_1, \dots, \mathfrak{M}^U_v$. 
    \begin{itemize}
    \item 
    The definition for transfer source components is clear, which is for $1\leq j \leq r$ to set $s_j(x) = \psi_0^{n,k+1}(\overline{s}_j(x))$. 

    \item
     Let $1 \leq j \leq v$ and consider the component $M_j$, the instance $\mathfrak{M}_j^U$, and the solution $\overline{t}_j$ to $\overline{\mathfrak{M}_j^U} = \overline{(\mathfrak{M}_j^U)^{(0)}} \cdot (\mathfrak{M}_j^U)^{\lnot (0)}$. By Lemma \ref{lem:uniformsolutionssplit}, we know that either $\overline{w}_j := (\psi^{n,k}_0)^{-1} \circ \overline{t}_j$ is a solution to $\overline{(\mathfrak{M}_j^U)^{(0)}}$ or $\overline{t}_j$ is a solution to $(\mathfrak{M}_j^U)^{\lnot (0)}$.
     \begin{itemize}
     \item
     In the latter case, we define $t_j = \overline{t}_j$. 
     \item
     In the former case, there exists by the inductive hypothesis a solution $w_j$ to $(\mathfrak{M}_j^U)^{(0)}$ which satisfies
    
    \[
    w_j(x) \in 
    \begin{cases}
        \phi^{-1}_{(1)^{k-1}}(\overline{w}_j(x))  & \text{ if $\Pi_j(x) = k-1$},\\
        \{\psi^{n,k}_{\Pi_j(x)}(\overline{w}_j(x))\} & \text{ if  $0 \leq \Pi_j(x) \leq k-2$.}
    \end{cases}
    \]
    In this case we we define $t_j(x) = \psi_0^{n,k+1} \circ w_j$. Now we define $s$ to the union of the solutions $s_1, \dots, s_r, t_1, \dots, t_v$. 
    \end{itemize}
     \end{itemize}

    Let us check that $s$ satisfies the condition given in \ref{eq:R2}. Suppose that $\Pi(x)= 0$. Then, $x$ belongs to a transfer source component $S_j$ for $1 \leq j \leq r$, so \ref{eq:R2} is satisfied by an immediate application of the definition of $s$. Suppose that $1 \leq \Pi(x) \leq k$. Then $x$ belongs to some $M_j$ for $1\leq j \leq v$, hence it follows that $s(x) = t_j(x)$ and $\Pi_j(x) = \Pi(x)-1$. It follows from the definition of $t_j$ that 

     \[
    t_j(x) \in 
    \begin{cases}
        (\psi_0^{n,k+1} \circ \phi^{-1}_{(1)^{k-1}} \circ (\psi^{n,k}_0)^{-1} \circ \overline{t}_j)(x) \cup O^{n,1} & \text{ if $\Pi_j(x)= k-1$},\\
        \{(\psi_0^{n,k+1} \circ \psi^{n,k}_{\Pi_j(x)} \circ (\psi^{n,k}_0)^{-1} \circ \overline{t}_j)(x)\} \cup O^{n,1} & \text{ if  $0 \leq \Pi_j(x) \leq k-2$.}
    \end{cases}
    \]
    We leave it to the reader to check that the above condition can be rewritten to the condition \ref{eq:R2} of Definition \ref{def:(k-1)reduction}.

    It is clear from its definition that $s$ is a solution of $\mathfrak{I}^U= \mathfrak{S}^U \cup \mathfrak{M}^U$. So, it remains only to show that $s$ is a solution to $\mathfrak{I}^N$. Again, it is obvious that constraints involving the full relation are satisfied. So, let us consider some $(x,y) \in (T^{n,k+1}_{0,k})^\mathfrak{I}$. We know that $\Pi(x) = 0$ and that $ 0 \leq \Pi(y) \leq k$. If $ \Pi(y) \neq k$, it follows from the definition of $\overline{\mathfrak{I}^N}$ that $(\overline{s}(x), \overline{s}(y)) \in (S_i^{k-i})^{\mathfrak{P}^{n,k}}$. In this case

    \[ (s(x),s(y)) \in 
    (\psi^{n,k+1}_0)^{-1} \circ (S_i^{k-i})^{\mathfrak{P}^{n,k}} \circ (\psi^{n,k+1}_i) = T_{0,k}^{n,k+1},
    \]
    as desired. 
    On the other hand, if $i =k$, then  $(\overline{s}(x), \overline{s}(y)) \in \Eq_2^{\mathfrak{P}^{n,k}}$. Observe that
     \[ (s(x),s(y)) \in 
    (\psi^{n,k+1}_0)^{-1} \circ \Eq_2^{\mathfrak{P}^{n,k}} \circ (\phi^{-1}_{(1)^k}),
    \]
    where the above composition is a relational composition.
    The above relation is equal to 
    \[
    T^{n,k+1}_{0, k} \cup \{ (0^k,0  ), (0^k, a) ) : 1 \leq a \leq n-1 \},
    \]
    but $(s(x), s(y))$ cannot take values from the second set, because unary relations were added at the very start of running $\Reduce_{n,k+1}$ to $\mathfrak{I}$ to assure transfer target compatibility. This unary constraint information is inherited by the uniform reduct $\mathfrak{I}^U$ and we already argued that $s$ is a solution to $\mathfrak{I}^U$.
    \qedhere
    \end{claimproof}

    It remains to argue that \emph{2.} of the proposition statement holds, but this is easy. Upon inspecting Algorithm~\ref{alg:reduce}, the reader can convince themselves that the binary and ternary supports of $\mathfrak{I}$ and $\overline{\mathfrak{I}}$ are equal. The only possibility for the support to change is if the input instance $\mathfrak{I}$ is not concise, since the first steps of the reduction replace $\mathfrak{I}$ by $(\mathfrak{I}^*)^C$. If $\mathfrak{I}$ is concise to begin with, then it also holds that $\Sup_1(\mathfrak{I}) = \Sup_1(\overline{\mathfrak{I}})$.
\end{proof}   

For $s \in {\mathbb N}$, let 
$g(s)$ denote the complexity of Gaussian elimination for a ${\mathbb Z}_2$-linear system with $s$ constraints. 

\begin{theorem}\label{thm:solvealgorithmcomplexity}
    Let $n \geq 2$ and $k \geq 1$. Let $\mathfrak{I}$ be an instance of $\Csp(\mathfrak{P}^{n,k})$. Then, $\Solve_{n,k}(\mathfrak{I})$ outputs \textnormal{ACCEPT} if $\mathfrak{I}$ has a solution,
    and outputs \textnormal{REJECT} otherwise.
    The worst-case running time is in 
    $O(k n s + g(s))$ where $s$ denotes the  number of constraints in the input. 
\end{theorem}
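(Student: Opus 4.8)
The plan is to establish correctness and then bound the running time by tracing the recursion in Figure~\ref{fig:alg}. Correctness is already in hand: Theorem~\ref{thm:solveiscorrect} states that $\Solve_{n,k}$ returns \textnormal{ACCEPT} exactly when the input instance admits a homomorphism to $\bS$, and by Proposition~\ref{prop:hsimpletreerecursivebasis} (via the renaming $(i)\mapsto i$) this is equivalent to $\mathfrak{I}$ having a homomorphism to $\mathfrak{P}^{n,k}$, i.e.\ to $\mathfrak{I}$ having a solution. So the only real work is the complexity bound, and for that I would argue by induction on $k$.

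First I would analyze the base case $k=1$. On input $\mathfrak{I}$, the procedure $\Solve_{n,1}$ builds the associated $\mathbb Z_2$-system $\mathfrak{E}$, the permutational system $\mathfrak{T}$, runs Gaussian elimination on $\mathfrak{E}$, and checks $\mathfrak{T}$ by the (to-be-described) symmetric linear Datalog procedure. Building $\mathfrak{E}$ and $\mathfrak{T}$ amounts to computing bijection-path closure, which is a union–find / reachability computation linear in the number of constraints up to the factor $n$ needed to track a permutation of $\{0,\dots,n-1\}$ along each edge; this contributes $O(ns)$. The Gaussian elimination contributes $g(s)$. Thus $\Solve_{n,1}$ runs in $O(ns + g(s))$, which matches the claimed bound with $k=1$.

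Next I would handle the inductive step by bounding $\Reduce_{n,k}$. The key invariant, furnished by item~\emph{2.}\ of Proposition~\ref{prop:proofofreduction}, is that $\Sup_2(\overline{\mathfrak{I}}) = \Sup_2(\mathfrak{I})$ and $\Sup_3(\overline{\mathfrak{I}}) = \Sup_3(\mathfrak{I})$ (and $\Sup_1$ is preserved after conciseness), so the number of constraints does not grow down the recursion: $|\overline{\mathfrak{I}}| = O(s)$. Reading Algorithm~\ref{alg:reduce}, each pass of $\Reduce_{n,k}$ does a bounded number of linear scans over the constraint set — forming $\mathfrak{I}^*$, $\mathfrak{I}^C$, $\mathfrak{I}^U$, $\mathfrak{I}^N$, the uniform connected components (again a reachability computation, $O(ns)$), the inner/outer restrictions, the uniform product of Definition~\ref{def:uniformproduct}, and the constraint reassignment performed by $\NReduce_{n,k}$ using the atlas $\Pi$. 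Each of these is $O(ns)$; the only recursive call is $\Reduce_{n,k-1}$ on $(\mathfrak{M}^U)^{(0)}$, whose size is again $O(s)$. Hence the recurrence is $R_k(s) = R_{k-1}(s) + O(ns)$, giving $\sum_{i=2}^{k} O(ns) = O(kns)$ for the full cascade of reductions, after which $\Solve_{n,1}$ is invoked once at cost $O(ns + g(s))$. Adding these yields $O(kns + g(s))$, as claimed.

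The main obstacle I expect is not any single hard idea but the bookkeeping needed to justify that every step inside $\Reduce_{n,k+1}$ and $\NReduce_{n,k+1}$ really is a single linear scan with only an $O(n)$ overhead, and in particular that computing the atlas $\Pi$ and the uniform components does not blow up — this is exactly where the support-preservation half of Proposition~\ref{prop:proofofreduction} is doing the heavy lifting. A secondary subtlety is that the symmetric linear Datalog check on the permutational system $\mathfrak{T}$ must be accounted for; here I would simply note that the evaluation of a fixed symmetric linear Datalog program on an instance with $s$ constraints runs within the stated budget (formally deferred to Section~\ref{sect:symlin} / Section~\ref{sect:descr}), so it is absorbed into the $O(ns)$ term and does not affect the asymptotics. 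With these two points settled, the additive decomposition $O(kns) + O(ns + g(s)) = O(kns + g(s))$ is immediate.
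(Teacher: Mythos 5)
Your proposal is correct and follows essentially the same route as the paper: correctness is delegated to Theorem~\ref{thm:solveiscorrect}, and the running time is obtained by unrolling the reduction cascade, with each level of $\Reduce_{n,k}$ dominated by the $O(ns)$ connected-component computation and the recursive call bounded by induction, plus the cost of building and solving the linear system at the bottom. Your version is in fact slightly more careful than the paper's, since you make explicit (via item~\emph{2.}\ of Proposition~\ref{prop:proofofreduction}) why the instance size does not grow down the recursion — a point the paper's induction uses silently — and your final bound $O(kns+g(s))$ matches the theorem statement exactly.
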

\begin{proof}
    The correctness of the procedure Solve is shown in Theorem~\ref{thm:solveiscorrect}. 
    It is straightforward to prove that the worst-case complexity of $\Reduce_{n,1}$ is in $O(s)$; the complexity
    of $\Reduce_{n,k}$ for $k \geq 2$ is 
    dominated by the computation of the connected components, which can be done in $O(ns)$, and the recursive call, which 
    is in $O((k-1) n s)$ by induction.
    The computation of $E$ and $\bT$ in 
    $\Solve_{n,1}$ can be done in $O(s)$, whereas testing the satisfiability of $E$ is in $g(s)$. 
    Hence, the overall complexity of $\Solve_{n,k}$ is in $O(k (n s + g(s)))$. 
\end{proof}

\section{Descriptive complexity}
\label{sect:descr}
In this section we introduce $\mathbb{G}$-Datalog programs for cyclic groups $\mathbb{G}$. 
We show that if $\mathbb{G}$ is ${\mathbb Z}_p$, then symmetric linear  $\mathbb{G}$-Datalog programs can be evaluated in the complexity class $\Mod_p L$. 
We will then implement the procedures from the previous section in symmetric linear ${\mathbb Z}_2$-Datalog, which therefore shows that our algorithm is in $\oplus L = \Mod_2 L$. 

\subsection{(Symmetric linear) Datalog}
\label{sect:symlin}
A \emph{Datalog program} $\Pi$ consists of two finite disjoint sets of relation symbols, the EDBs $\tau$ and the IDBs $\sigma$, a distinguished IDB $G$ (the \emph{goal predicate}), and a finite set of \emph{rules}
of the form 
$$P(x_1,\dots,x_n) \; {:}{-} \; \phi(x_1,\dots,x_n,y_1,\dots,y_m)$$ where $P$ is an IDB, and $\phi$ is a conjunction of atomic formulas that can be formed from IDBs and EDBs. We assume that the equality relation $=$ is among the EDBs.   
The atomic formula 
$P(x_1,\dots,x_n)$ is called the \emph{head} of the rule, and $\phi$ is called the \emph{body} of the rule. 
We say that $\Pi$ has \emph{width $(\ell,k)$} if every IDB has arity at most $\ell$, and every rule has at most $k$ variables.

To define the operational semantics of Datalog, 
let $\bA$ and $\bA'$ be $(\tau \cup \sigma)$-structures on the same domain $A$.  
We say that the program $\Pi$ \emph{derives $\bA'$ from $\bA$ in one step}, and write $\bA \vdash \bA'$, if $\Pi$ contains a rule $P (x_1,\dots,x_n) \; {:}{-} \; \phi(x_1,\dots,x_n,y_1,\dots,y_m)$ and a satisfying assignment $h$ of $\phi$ in $\bA$ such that 
\begin{itemize}
    \item $P^{\bA'} = P^{\bA} \cup \{(h(x_1),\dots,h(x_n))\}$; 
    \item $Q^{\bA'} = Q^{\bA}$ for all $Q \in \sigma \cup \tau \setminus \{P\}$. 
\end{itemize}
We write $\bA \vdash^n \bA'$, 
for $n \in {\mathbb N}$, if 
$\bA = \bA'$ and $n=0$,
or $\bA \vdash \bA'$ and $n=1$, 
or, inductively, if $\bA \vdash^{n-1} \bA'$ and $\bA'' \vdash \bA'$ if $n > 1$.  
If $A$ is finite, then 
there exists an $n \in {\mathbb N}$
and a structure $\bA^*$ such that 
\begin{itemize}
\item $\bA\vdash^n \bA^*$, 
\item for all $\bA'$ such that $\bA^* \vdash \bA'$ we have $\bA'= \bA^*$,
\item $\bA^*$ is unique, and can be computed in polynomial time from $\bA$. 
\end{itemize} 
In this case, 
we say that $\Pi$ \emph{computes} $\bA^*$ on $\bA$.  
If $a \in A^k$ and $a \in R^{\bA^*}$, then we say that
$\Pi$ derives $R(a)$ on $\bA$. 
We say that a Datalog program $\Pi$ \emph{solves} $\Csp(\bB)$ if $\Pi$ derives the goal predicate $G$ on an instance $\bA$ of $\Csp(\bB)$ if and only if $\bA$ has no homomorphism to $\bB$.

A rule $P(x_1,\dots,x_n) \; {:}{-} \; \phi$ 
in a Datalog program is called \emph{linear} if the body $\phi$ contains at most one conjunct with an IDB. 
A Datalog program $\Pi$ is called \emph{linear} if all its rules are linear. 
A rule is called \emph{recursion-free} if its body does not contain any IDBs. 
A linear rule 
\begin{align}
    P(x_1,\dots,x_n) \; {:}{-} \; \phi \wedge Q(y_1,\dots,y_m) \label{eq:symrule}
\end{align}
of a Datalog program $\Pi$ where $Q \in \sigma$
is an IDB 
is called \emph{symmetric (in $\Pi$)} if 
$\Pi$ also contains the \emph{symmetric rule} (up to renaming variables) 
$$Q(y_1,\dots,y_m) \; {:}{-} \; \phi \wedge P(x_1,\dots,x_n).$$
A linear Datalog program $\Pi$ is called \emph{symmetric}
if all its rules of the form as in~\ref{eq:symrule} are symmetric. Note that if all rules of $\Pi$ are recursion-free then $\Pi$ is symmetric linear. 
One of the motivations of symmetric linear Datalog is the following result.\footnote{Remarkably, every CSP which is known to be in logarithmic space can be solved by a symmetric linear Datalog program.} 

\begin{theorem}[Egri, Larose, Tesson~\cite{EgriLT08}]
\label{thm:L}
    Symmetric linear Datalog programs can be evaluated in L. 
\end{theorem}

\begin{theorem}[Dalmau and Larose~\cite{DalmauLarose08} (Theorem 1)]\label{thm:DL}
    If $\bB$ is a finite structure with a finite relational signature preserved by 
    a majority and a Maltsev polymorphism. 
    Then $\Csp(\bB)$ can be solved by symmetric linear Datalog. 
\end{theorem}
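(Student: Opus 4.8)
The plan is to combine two classical facts---one about each polymorphism---and then to perform an algebraic symmetrization. I would first reduce the statement to a duality question, using the characterization of symmetric linear Datalog expressibility in terms of duality: $\Csp(\bB)$ is solvable by a symmetric linear Datalog program precisely when $\bB$ has a \emph{symmetric bounded pathwidth duality}, i.e.\ a finite set $\mathcal{O}$ of finite structures of bounded pathwidth such that $\bA \to \bB$ iff no member of $\mathcal{O}$ maps to $\bA$, where $\mathcal{O}$ can moreover be taken closed under reversal and under the ``concatenation-inverse'' operation on path-like structures (cf.~\cite{LaroseTesson,CarvalhoDalmauKrokhin}). Thus it suffices to produce such a symmetric obstruction set.

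From the majority polymorphism I would extract ordinary bounded pathwidth duality. A $3$-ary near-unanimity (majority) polymorphism forces $\bB$ to be $2$-decomposable---every pp-definable relation is the conjunction of its binary projections---and, by Dalmau's theorem on near-unanimity polymorphisms and bounded path duality, $\Csp(\bB)$ already lies in linear Datalog. Hence there is a finite obstruction set $\mathcal{O}_0$ of path-like structures, and by $2$-decomposability I may assume each obstruction is ``binary'': a path of variables equipped with binary constraint relations that are pp-definable in $\bB$.

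The core of the argument is to symmetrize $\mathcal{O}_0$ using the Maltsev polymorphism $m$. The key point is \emph{rectangularity}: since $m$ is Maltsev, every binary pp-definable relation $R$ of $\bB$ is rectangular, so (by reasoning as in Lemma~\ref{lem:congr}) it decomposes as a union of ``boxes'' $[a]_{\theta_1}\times[b]_{\theta_2}$ indexed by a bijection between the classes of a congruence $\theta_1$ on its left projection and a congruence $\theta_2$ on its right projection. Propagation along such a constraint is therefore reversible: a partial assignment can be pushed forward exactly when the corresponding box exists, and the same box witnesses the backward step. Using this I would show that a minimal path obstruction $\bP$ is still an obstruction when read backwards, and that its concatenation-inverse (glue $\bP$ to a reversed copy of a prefix) is again refuted by $\bB$---here one uses $m$ to ``cancel'' the middle part, exactly as a rectangular relation satisfies $R = R\circ R^{-1}\circ R$. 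Closing $\mathcal{O}_0$ under reversal and concatenation-inverse and discarding any structure that maps to $\bB$ then yields the desired finite symmetric obstruction set, and the duality characterization gives the symmetric linear Datalog program.

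The main obstacle---the technical heart of Dalmau--Larose---is proving that this closure process \emph{terminates with a finite set}: iterating reversal and concatenation-inverse could a priori produce arbitrarily large path structures, and one must use the Maltsev (equivalently, rectangularity / ``few subpowers''-flavoured) structure to bound the relevant ``width'' and to argue that two configurations inducing the same pattern of congruence classes may be identified, so only finitely many genuinely new obstructions arise. Phrased in the Datalog language, the obstacle is \emph{soundness} of the symmetrized program: adjoining the reverse of each recursive rule must not allow the goal predicate to be derived on any instance that maps to $\bB$, and this is exactly the step where the Maltsev polymorphism is indispensable.
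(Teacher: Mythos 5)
There is nothing in the paper to compare your argument against: Theorem~\ref{thm:DL} is not proved in this article at all. It is imported verbatim from Dalmau and Larose~\cite{DalmauLarose08} (their Theorem~1) and used as a black box, e.g.\ in Proposition~\ref{prop:solven1} to dispose of the permutational system $\bT$ and in the final classification theorem. So the only question is whether your blind sketch is a credible reconstruction of the cited result.

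As an outline it is faithful to the known strategy: the equivalence between symmetric linear Datalog solvability and a symmetric bounded-pathwidth duality is correct; a majority polymorphism does give $2$-decomposability (Baker--Pixley) and, by Dalmau's near-unanimity/bounded path duality theorem, membership in linear Datalog; and the Maltsev operation does enter exactly through rectangularity of binary pp-definable relations (the box decomposition along the two linking congruences, as in Lemma~\ref{lem:congr} of this paper), which is what makes propagation along a path constraint reversible. However, as a \emph{proof} your text is incomplete, and you say so yourself: the entire content of Dalmau--Larose is the step you defer, namely that symmetrizing the linear program (equivalently, closing the obstruction set under reversal and concatenation-inverse) remains \emph{sound} --- that no instance admitting a homomorphism to $\bB$ ever derives the goal --- and that the closure stabilizes on a finite family. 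Rectangularity alone, i.e.\ $R = R\circ R^{-1}\circ R$ for a single binary relation, does not immediately give this: the difficulty is controlling what happens when backward and forward steps through \emph{different} constraints are interleaved arbitrarily many times, which requires a genuine induction on derivations (or on obstruction length) using the Maltsev term to contract the resulting zig-zags. Until that argument is supplied, the proposal is a plan, not a proof; since the paper itself offers no proof either, the honest conclusion is that the statement should continue to be treated as a citation.
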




We may also use this formalism to define \emph{(linear, symmetric linear) Datalog transformations}. 
Instead of a particular goal predicate, we then need a subset of the IDBs, the \emph{output predicates}.
If $\rho$ is the set of output predicates, then on input $\fA$, the output structure is the $\rho$-reduct of $\fA^*$. 

\subsection{Cyclic group Datalog}
\label{sect:RDatalog}
Let $\mathbb{G}$ be a cyclic group, i.e., a group that is isomorphic to 
${\mathbb Z}$ or to ${\mathbb Z}_n$ for some $n \in {\mathbb N}$. We introduce 
$\mathbb{G}$-Datalog as an extension of Datalog designed to express polynomial-time algorithms for certain classes of CSPs.  Similarly as usual Datalog, $\mathbb{G}$-Datalog can be further restricted syntactically in order to (hopefully) capture CSPs that fall into a certain complexity class. 

$\mathbb{G}$-Datalog programs are defined in a fixed finite number of stages. 
A $\mathbb{G}$-Datalog-program of stage one 
has IDBs $\sigma \cup \sigma'$, where $\sigma$ is a set of ordinary IDBs, and $\sigma' \subseteq \{L^k_{a_1,a_2,a_3,b} \mid a_1,a_2,a_3,b \in G\}$ 
is a special set of IDBs, for some 
$k \in {\mathbb N}$, 
where $L^k_{a_1,a_2,a_3,b}$
has arity $k+3$. 
For evaluating such a program, 
we first evaluate the Datalog program as usual on a given finite $\tau$-structure $\bA$, reaching a fixed point, which is a  $(\tau \cup \sigma \cup \sigma')$-structure $\bA^*$.
From $\bA^*$, we compute a 
$(\tau \cup \sigma \cup \{L^{\top},L^\bot\})$-structure $\bB$, 
where $L^{\top}$ and $L^{\bot}$ are $k$-ary relation symbols, as follows. 
We consider the system 
consisting of all linear equations 
of the form 
$$a_1 x_{v_1} + a_2 x_{v_2} + a_3 x_{v_3} = b_0$$ for every $(u_1,\dots,u_k,v_1,v_2,v_3) \in A^{k+3}$ such that $\bA^* \models  L^k_{a_1,a_2,a_3,b}(u_1,\dots,u_k,v_1,v_2,v_3)$.
A tuple $(u_1,\dots,u_k) \in A^k$ is contained in 
the relation $(L^\top)^{\bB}$ if this system is satisfiable, otherwise it is contained in $(L^\bot)^{\bB}$. We then allow for a final derivation of the goal predicate from the relation $L^\bot$. As with ordinary Datalog, we may specify an output signature $\rho$, which is a subset of $\sigma \cup \{L^\top, L^\bot\}$.

A $\mathbb{G}$-Datalog program $\Pi_i$ of stage $i$ with EDBs $\tau$, for $i \geq 2$, consists of a $\mathbb{G}$-Datalog program $\Pi_{i-1}$ of stage $i-1$
with input signature $\tau$ and output signature $\rho \subseteq \sigma \cup \{L^\top, L^\bot\}$, and a $\mathbb{G}$-Datalog program $\Pi$ of stage one 
with EDBs $\rho$. On input $\bA$, we first compute
the output $\rho$-structure 
$\bB$ of $\Pi_{i-1}$ on input $\bA$, and then 
the output structure of $\Pi$ on the input structure $\bB$; the output of $\Pi$ on $\bB$ is then defined to be the output of $\Pi_i$ on the input structure $\bA$.

As before, we say that a $\mathbb{G}$-Datalog program \emph{solves} $\Csp(\bB)$ if it derives on a given $\tau$-structure $\bA$ a distinguished goal predicate of arity $0$ in the final stage if and only if there
is no homomorphism from $\bA$ to $\bB$. 
For the CSPs studied in this text, 
it will suffice to work with ${\mathbb Z}_2$-Datalog.  

\begin{lemma}
\label{lem:lin}
    The problem $\Csp(\{0,1\};L,G,E,R_{\{0\}},R_{\{1\}})$,
    where $L$ is defined in 
    Lemma~\ref{lem:brady1ishsimple}, 
    $G := \{(0,1),(1,0)\}$, 
    $E := \{(0,0),(1,1)\}$, 
    and $R_X$ is a unary relation that denotes $X$, 
    can be solved by a symmetric linear ${\mathbb Z}_2$-Datalog $\Pi$ of stage one. 
\end{lemma}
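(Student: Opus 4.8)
The plan is to observe that $\Csp(\{0,1\};L,G,E,R_{\{0\}},R_{\{1\}})$ is exactly the feasibility problem for systems of ${\mathbb Z}_2$-linear equations in which every equation involves at most three variables, and that building and solving such a system is precisely what a stage-one ${\mathbb Z}_2$-Datalog program does. Identifying $\{0,1\}$ with ${\mathbb Z}_2$, a map $h\colon A \to \{0,1\}$ is a homomorphism from an instance $\bA$ to this structure if and only if it satisfies $h(x)+h(y)+h(z)=1$ for every $(x,y,z)\in L^{\bA}$, $\ h(x)+h(y)=1$ for every $(x,y)\in G^{\bA}$, $\ h(x)+h(y)=0$ for every $(x,y)\in E^{\bA}$, $\ h(x)=0$ for every $x\in R_{\{0\}}^{\bA}$, and $h(x)=1$ for every $x\in R_{\{1\}}^{\bA}$. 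So it suffices to exhibit a stage-one program that writes down exactly this system into its special IDBs.

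First I would fix the stage-one program $\Pi$ with parameter $k=0$, ordinary IDBs consisting only of the $0$-ary goal predicate, and special IDBs $L^0_{1,1,1,1},\,L^0_{1,1,0,1},\,L^0_{1,1,0,0},\,L^0_{1,0,0,1},\,L^0_{1,0,0,0}$ (each of arity $3$). The rules are
\begin{align*}
L^0_{1,1,1,1}(x,y,z) &\; {:}{-} \; L(x,y,z),\\
L^0_{1,1,0,1}(x,y,y) &\; {:}{-} \; G(x,y),\\
L^0_{1,1,0,0}(x,y,y) &\; {:}{-} \; E(x,y),\\
L^0_{1,0,0,1}(x,x,x) &\; {:}{-} \; R_{\{1\}}(x),\\
L^0_{1,0,0,0}(x,x,x) &\; {:}{-} \; R_{\{0\}}(x),
\end{align*}
together with the single permitted final rule deriving the goal predicate from $L^\bot$. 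The duplicated variables together with a zero coefficient in the last four rules serve only to pad the at-most-binary constraints into the uniform ternary equation format $a_1 x_{v_1}+a_2 x_{v_2}+a_3 x_{v_3}=b$ required by the formalism. Since every rule other than the final $L^\bot$-rule is recursion-free, $\Pi$ is symmetric linear (recursion-free programs are symmetric linear, see Section~\ref{sect:symlin}).

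Next I would verify correctness. On input $\bA$ the Datalog phase reaches the fixed point in which $L^0_{1,1,1,1}$ holds exactly on $L^{\bA}$, the relation $L^0_{1,1,0,1}$ holds exactly on $\{(x,y,y):(x,y)\in G^{\bA}\}$, and so on for the remaining three. By the semantics of stage-one ${\mathbb Z}_2$-Datalog, the associated linear system over ${\mathbb Z}_2$ is then precisely the system displayed in the first paragraph; hence $L^\top$ holds iff that system is satisfiable, i.e.\ iff $\bA$ admits a homomorphism to $(\{0,1\};L,G,E,R_{\{0\}},R_{\{1\}})$, and $L^\bot$ holds iff there is no such homomorphism. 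Therefore $\Pi$ derives its goal predicate on $\bA$ if and only if $\bA$ has no homomorphism to the structure, which is exactly what it means for $\Pi$ to solve this CSP.

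There is no genuine combinatorial obstacle here: the whole content of the lemma is the direct encoding of linear equations, which is what ${\mathbb Z}_2$-Datalog was designed to perform. The only points that require care are purely bookkeeping — checking that converting unary and binary constraints into ternary ${\mathbb Z}_2$-equations via a zero coefficient and a duplicated variable is sound, and noting that the final rule deriving the goal predicate from $L^\bot$ is legitimate within the stage-one formalism even though, unlike the other rules, it refers to the relation produced by the linear-algebra phase rather than to an ordinary IDB.
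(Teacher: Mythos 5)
Your proposal is correct and matches the paper's proof: both construct a stage-one program with $k=0$ whose recursion-free rules translate each constraint of $L$, $G$, $E$, $R_{\{0\}}$, $R_{\{1\}}$ into the corresponding ${\mathbb Z}_2$-linear equation IDB, padding the lower-arity constraints with zero coefficients. The only cosmetic difference is that you duplicate variables in the heads of the padded rules where the paper leaves the extra argument free, and you spell out the correctness check that the paper omits.
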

\begin{proof}  
    We set $k=0$, $\sigma = \emptyset$, and use $U$ as the goal predicate. The rules of $\Pi$ are 
\begin{align*} L^0_{1,1,1,1}(x_1,x_2,x_3) & \; {:}{-} \; L(x_1,x_2,x_3) \\
L^0_{1,1,0,1}(x_1,x_2,x_3) & \; {:}{-} \; G(x_1,x_2) \\
L^0_{1,1,0,0}(x_1,x_2,x_3) & \; {:}{-} \; E(x_1,x_2) \\
L^0_{1,0,0,0}(x_1,x_2,x_3) & \; {:}{-} \; R_{0}(x_1) \\
L^0_{1,0,0,1}(x_1,x_2,x_3) & \; {:}{-} \; R_{1}(x_1) \qedhere 
\end{align*}
\end{proof}

We may impose the same syntactic restrictions \emph{linear} and \emph{symmetric linear} also for $\mathbb{G}$-Datalog programs.


\subsection{Symmetric linear ${\mathbb Z}_2$-Datalog and $\oplus$L}
Mod$_k$L is the complexity class which consists of all problems that can be solved by a non-deterministic Turing machine with logarithmic work space such that the answer is yes if and only if   the number of accepting paths is  congruent to 
$0$ mod $k$. Mod$_2$L is commonly known as $\oplus$L (parity-L). 
We denote by $\FL^{\oplus\text{L}}$ the class of functions that are computable with a deterministic logspace Turing machine with an oracle belonging to $\oplus$L, and by $\text{L}^{\oplus{\text{L}}}$ the decision problems in this class.

\begin{theorem}\label{thm:Z2Datalog}
    The problem of deciding whether a fixed 
    symmetric linear ${\mathbb Z}_2$-Datalog program derives on a given finite structure the goal predicate is in $\oplus L$. 
\end{theorem}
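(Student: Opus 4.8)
## Proof Proposal for Theorem~\ref{thm:Z2Datalog}

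\textbf{Proof proposal.} I would prove the statement by induction on the (constant) number of stages of the program, resting on two external facts and one closure property. The first fact is Theorem~\ref{thm:L}: a symmetric linear Datalog program can be evaluated in $\mathrm{L}$, so in particular, given a finite structure $\mathbf{A}$ and a tuple $t$, deciding whether $t$ lies in a given IDB of the fixed point $\mathbf{A}^*$ of the Datalog part is in $\mathrm{L}$ (the argument used for the goal predicate in Theorem~\ref{thm:L} applies verbatim to any IDB queried at a point supplied with the input). The second fact I would invoke is classical: deciding feasibility of a system of linear equations over $\mathbb{Z}_2$ is in $\oplus\mathrm{L}$, since feasibility of $Ax=b$ amounts to $\operatorname{rank}(A)=\operatorname{rank}([A\mid b])$, and ranks and determinants over $\mathbb{Z}_2$ are computable in $\oplus\mathrm{L}$ (indeed $\det\bmod 2$ is a canonical $\oplus\mathrm{L}$-complete problem). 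Third, I would use that $\oplus\mathrm{L}$ is closed under logspace Turing reductions, i.e.\ $\mathrm{L}^{\oplus\mathrm{L}}=\oplus\mathrm{L}$; this is well known, and it is precisely why the \emph{symmetric linear} hypothesis matters, as the weaker bound $\mathrm{NL}$ available for ordinary linear Datalog would not recombine with an $\oplus\mathrm{L}$ oracle into $\oplus\mathrm{L}$.

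\textbf{Single-stage case.} For a stage-one symmetric linear $\mathbb{Z}_2$-Datalog program $\Pi$ and input $\mathbf{A}$, I would first run the underlying Datalog part to its fixed point $\mathbf{A}^*$; by the first fact, membership of any tuple in $(L^k_{a_1,a_2,a_3,b})^{\mathbf{A}^*}$ is decidable in logspace. Hence, for each $k$-tuple $\bar u$, the associated $\mathbb{Z}_2$-linear system $\Sigma_{\bar u}$ (the equations $a_1x_{v_1}+a_2x_{v_2}+a_3x_{v_3}=b$ for all $(v_1,v_2,v_3)$ with $L^k_{a_1,a_2,a_3,b}(\bar u,v_1,v_2,v_3)\in\mathbf{A}^*$) has coefficient matrix and right-hand side logspace-computable from $\mathbf{A}$. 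By the second fact together with the closure property, deciding $\bar u\in(L^\top)^{\mathbf{B}}$ and $\bar u\in(L^\bot)^{\mathbf{B}}$ is in $\oplus\mathrm{L}$. The final derivation of the goal predicate from $L^\bot$ (and the other IDBs) is again a symmetric linear Datalog evaluation, now over EDBs one of which is $L^\bot$: this is a logspace computation whose only non-logspace ingredient is querying $L^\bot$-membership, which is in $\oplus\mathrm{L}$, so by $\mathrm{L}^{\oplus\mathrm{L}}=\oplus\mathrm{L}$ the whole thing is in $\oplus\mathrm{L}$. For the induction I would actually prove the slightly stronger statement that membership of a tuple in \emph{any} output predicate of $\Pi$ is decidable in $\oplus\mathrm{L}$, which the same argument gives.

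\textbf{Inductive step.} A stage-$i$ program is a stage-$(i-1)$ program $\Pi_{i-1}$ with output signature $\rho$, followed by a stage-one program $\Pi$ with EDBs $\rho$. By the (strengthened) induction hypothesis, every $\rho$-predicate membership in the output structure $\mathbf{B}$ of $\Pi_{i-1}$ is $\oplus\mathrm{L}$-decidable. I would then rerun the single-stage argument with $\Pi$ on $\mathbf{B}$: the only difference is that the Datalog evaluation inside $\Pi$ now consults EDBs whose memberships lie in $\oplus\mathrm{L}$ rather than being read off directly; since that evaluation is still logspace, it stays — together with the subsequent $\mathbb{Z}_2$-linear-algebra step — inside $\mathrm{L}^{\oplus\mathrm{L}}=\oplus\mathrm{L}$. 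As the number of stages is a fixed constant of the program, the induction terminates and deciding whether the final goal predicate is derived is in $\oplus\mathrm{L}$.

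\textbf{Main obstacle.} The mathematical content here is light; the delicate part is the complexity-theoretic bookkeeping — confirming that the repeated interleaving of logspace computations with $\oplus\mathrm{L}$-computable oracle relations never escapes $\oplus\mathrm{L}$. I expect the two points that must be used with care are (i) that the relevant Datalog fragment is evaluable in $\mathrm{L}$ rather than merely $\mathrm{NL}$, so that the self-lowness $\mathrm{L}^{\oplus\mathrm{L}}=\oplus\mathrm{L}$ is applicable, and (ii) that a $\mathbb{Z}_2$-Datalog program has only a constant number of stages, so that the constant-depth composition of such steps does not accumulate. A secondary (purely bookkeeping) point is to state the induction hypothesis as ``membership in every output predicate is in $\oplus\mathrm{L}$'' rather than just ``the goal is in $\oplus\mathrm{L}$'', since later stages consume the earlier output predicates as EDBs.
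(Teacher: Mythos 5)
Your proposal is correct and follows essentially the same route as the paper: both rest on the $\mathrm{L}$-evaluability of symmetric linear Datalog, the $\oplus\mathrm{L}$-decidability of $\mathbb{Z}_2$-linear feasibility, and the collapse $\mathrm{L}^{\oplus\mathrm{L}}=\oplus\mathrm{L}$ (together with closure of $\mathrm{FL}^{\oplus\mathrm{L}}$ under composition, which is exactly what your stage-by-stage induction with the strengthened hypothesis on output predicates makes explicit). The paper merely lists these ingredients without spelling out the induction, so your write-up is a more detailed rendering of the same argument.
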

\begin{proof} 
The statement follows from the facts that 
    \begin{itemize}
   \item symmetric linear Datalog programs can be evaluated in L~\cite{EgriLaroseTessonLogspace},
   \item 
    L is obviously contained in $\oplus$L, 
    \item the system of ${\mathbb Z}_2$-linear equations from the semantics of rank one ${\mathbb Z}_2$-Datalog can be constructed in $L$, 
   \item solving systems of linear equations over ${\mathbb Z}_2$ is in $\oplus$L (it is even complete for this class~\cite{Damm}), 
   \item the class of functions belonging to $\FL^{\oplus\text{L}}$ with a fixed $\oplus$L oracle is closed under composition (the proof of this is similar to the standard argument that logspace computable functions compose), and 
    \item 
    $L^{\oplus\text{L}}$ is contained in $\oplus$L~\cite{HertrampfReithVollmer}.   \qedhere 
     \end{itemize} 
\end{proof}

\begin{remark}
Given that solving systems of linear equations over ${\mathbb Z}_p$ is complete for Mod$_p$L 
we similarly obtain that deciding whether a fixed 
    symmetric linear ${\mathbb Z}_p$-Datalog program derives goal on a given finite structure is in Mod$_p$L. 
\end{remark} 

\begin{remark}
   We mention that \emph{linear} (rather than symmetric linear)  ${\mathbb Z}_p$-Datalog programs can
    be evaluated in the complexity class $L^{\#L}$, which is the class of problems that can be decided in deterministic logarithmic space with access to an oracle for problems in $\#L$. The class $\#L$ is the class of functions that returns for a given non-deterministic logspace algorithm the number of accepting paths. 
    ${\mathbb Z}_p$-Datalog programs
    can be evaluated in $L^{\#L}$, because finding solutions of linear equations over $\mathbb Z_p$ is in $\Mod_p \Lclass$, which is contained in $L^{\#L}$, 
    and linear Datalog programs can be evaluated in NL, which is also contained in $L^{\#L}$. The class 
    $L^{\#L}$ is contained in NC$^2$.
    In contrast, it is not known whether 
    systems of ${\mathbb Z}$-linear equations can be solved in NC~\cite{AllenderBealsOgihara}. 
\end{remark}

 The following can be shown similarly as for symmetric linear Datalog~\cite[Lemma 4.2]{StarkeDiss}. 

\begin{lemma}\label{lem:pp-construct}
    Let $\bC$ be a structure with a pp-construction in $\bB$,
    and $\Csp(\bB)$ can be solved by (linear, or symmetric linear) $\mathbb{G}$-Datalog. 
    Then $\Csp(\bC)$ can also be solved by
(linear or symmetric linear, respectively) $\mathbb{G}$-Datalog.
\end{lemma}

\begin{question}
   Is there a finite structure $\bB$  such that $\Csp(\bB)$ is in $\oplus L$,
   but 
    $\Csp(\bB)$ 
    cannot be solved by a symmetric linear ${\mathbb Z}_2$-Datalog program? 
\end{question}

We will answer this question negatively for finite structures 
$\bB$ with a conservative Maltsev polymorphism in the next section.

\subsection{Implementations in symmetric linear ${\mathbb Z}_2$-Datalog}\label{sect:implement}
When implementing the procedures from Section~\ref{sect:alg}, we follow the diagram in Figure~\ref{fig:alg} bottom-up, starting with $\Solve_{n,1}$.

\begin{proposition}\label{prop:solven1}
    The procedure $\Solve_{n,1}$ (Algorithm~\ref{alg:solve}) can be implemented in symmetric linear ${\mathbb Z}_2$-Datalog; only one stage is needed.
\end{proposition}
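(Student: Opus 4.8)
The plan is to give an explicit symmetric linear ${\mathbb Z}_2$-Datalog program of stage one that simulates Algorithm~\ref{alg:solve} for $k=1$, and then verify that it derives the goal predicate exactly when the input instance of $\Csp(\mathfrak{P}^{n,1})$ has no homomorphism to $\mathfrak{P}^{n,1}$. Recall that for $k=1$, $\Solve_{n,1}$ first re-reads the instance $\mathfrak{I}$ as an instance $\mathfrak{I}$ of $\Csp(\bS)$ (Definition~\ref{def:S}), then constructs the ${\mathbb Z}_2$-linear system $\bE$ (Definition~\ref{def:bool-sys}) and the permutational system $\bT$ (Definition~\ref{def:perm-sys}), and rejects iff $\bE$ is unsatisfiable or $\bT$ has no homomorphism to $\bS$. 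So the program has two essentially independent jobs: (a) detect unsatisfiability of $\bT$ in symmetric linear Datalog, and (b) build the right ${\mathbb Z}_2$-linear system as the $L^k_{a_1,a_2,a_3,b}$-IDBs of a stage-one program and let the built-in Gaussian elimination step of ${\mathbb Z}_2$-Datalog check feasibility.

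For job (a), I would invoke Theorem~\ref{thm:DL} together with the remark just after Definition~\ref{def:perm-sys}: the structure obtained from $\bS$ by deleting $\Lin$ has both a Maltsev polymorphism (the conservative minority) and a majority polymorphism (the one returning the first argument on distinct triples), hence its CSP is solvable by a symmetric linear Datalog program $\Pi_{\mathrm{perm}}$. Since every rule of $\Pi_{\mathrm{perm}}$ is an ordinary Datalog rule, it is a fortiori a (stage-one) symmetric linear ${\mathbb Z}_2$-Datalog program that never uses the $L^k$-IDBs, and it derives its goal predicate iff $\bT$ is unsatisfiable; we relabel this goal predicate and add a rule deriving the overall goal from it. (One must translate the EDB signature $\tau_{n,1}$ of $\mathfrak{P}^{n,1}$ to the signature of $\bS$; this is harmless because, as noted before Definition~\ref{def:S}, each relation of $\bS$ is primitively positively definable in $\mathfrak{P}^{n,1}$ and vice versa, and primitive positive definitions of EDBs can be folded into recursion-free — hence symmetric linear — Datalog rules.)

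For job (b), the heart of the matter is to produce, for each constraint-tuple of $\bE$, an $L^0_{a_1,a_2,a_3,b}$-atom, where the equations of $\bE$ are precisely: the ternary atoms $x_1+x_2+x_3=1$ coming directly from $\Lin^{\mathfrak{I}}$; the equations $x_{x_0}=x_{x_k}$ and $x_{x_0}+x_{x_k}=1$ coming from bijection paths $x_0,\sigma_1,\dots,\sigma_k,x_k$ that carry $0$ to $0$ resp.\ $0$ to $1$; and the unary equations $x_{x_0}=0$, $x_{x_0}=1$ coming from bijection paths into a variable pinned by support constraints to a single value. The bijection-path reachability is computed by a handful of linear (indeed symmetric, because the relevant generating permutations of $S_n$ come in $\sigma$/$\sigma^{-1}$ pairs and all bijection graphs are symmetric under swapping coordinates) Datalog rules with a binary IDB $P_\sigma(x,y)$ recording ``there is a bijection path from $x$ to $y$ inducing the permutation $\sigma$'' — this needs only the finitely many $\sigma\in S_n$ as a fixed parameter, so the program size stays finite; a similar IDB tracks ``$x$ is pinned to value $c$ via a path and a support constraint at its endpoint,'' using the EDB relations $R_X$ to detect the required ``all tuples avoid value $a$ at entry $x_k$'' condition as a recursion-free precondition. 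Then one rule for each shape converts these IDBs into the appropriate $L^0_{a_1,a_2,a_3,b}$-atoms exactly as in Lemma~\ref{lem:lin}, so that the linear system assembled in the semantics of ${\mathbb Z}_2$-Datalog coincides with $\bE$; the built-in feasibility test then puts the appropriate tuple into $L^\bot$ iff $\bE$ is unsatisfiable, and a final rule derives the goal from $L^\bot$. Combining, the program derives the goal iff $\bE$ is infeasible or $\bT$ is unsatisfiable, which by Theorem~\ref{thm:solveiscorrect} is iff $\mathfrak{I}$ has no homomorphism to $\mathfrak{P}^{n,1}$.

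The main obstacle I anticipate is bookkeeping rather than conceptual: verifying that the path-tracking rules are genuinely \emph{symmetric} linear (one must check that for each generating bijection graph used, its transpose/inverse is also available as an EDB or derivable by a symmetric companion rule, and that the ``pinning'' rules, which read unary EDBs at a path endpoint, can be arranged symmetrically or are recursion-free and hence automatically symmetric), and that the whole construction uses only finitely many IDBs and rules despite quantifying over all $\sigma\in S_n$ and all subsets $X\subseteq S$ appearing in the signature — this is fine since $n$ is fixed, but it must be stated carefully. A secondary point to get right is the faithful encoding of the somewhat intricate Definition~\ref{def:bool-sys} clause for $R^{\bE}_{\{c\}}$ (the condition that some relation $R\in\tau_{n,1}$ has a coordinate on which all tuples avoid the transported value $\sigma_k\circ\cdots\circ\sigma_1(a)$) as a recursion-free body over the EDBs; once the bijection-path IDB is in place this is a finite disjunction over $(R,\text{coordinate},a)$ triples, hence expressible by finitely many recursion-free rules. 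I would then conclude by noting, as the statement of the proposition already anticipates, that only stage one is used, so Theorem~\ref{thm:Z2Datalog} applies and $\Csp(\mathfrak{P}^{n,1})$ is in $\oplus L$.
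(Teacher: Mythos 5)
Your proposal is correct and follows essentially the same route as the paper: handle the permutational system via Theorem~\ref{thm:DL} (majority plus Maltsev gives symmetric linear Datalog) and encode the system $\bE$ through the $L^0_{a_1,a_2,a_3,b}$-IDBs exactly as in Lemma~\ref{lem:lin}, all within a single stage. If anything you are more careful than the paper on one point: the paper asserts that $\bE$ can be computed by a \emph{recursion-free} program, whereas the bijection-path reachability of Definition~\ref{def:bool-sys} genuinely needs the recursive (symmetric linear) IDBs $P_\sigma$ that you introduce, so your version of that step is the more defensible one.
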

\begin{proof}
    The program computes a system of ${\mathbb Z}_2$-linear equations $E$ and a permutational system ${\mathfrak T}$;
    this can be achieved with a recursion-free Datalog program (which is in particular symmetric and linear). The system $E$ can be viewed as an instance of $\Csp(\{0,1\};L,G,R_{\{0\}},R_{\{1\}})$
as in Lemma~\ref{lem:lin} which is in symmetric linear ${\mathbb Z}_2$-Datalog.
The satisfiability of the instance ${\mathfrak T}$ of $\Csp({\mathfrak S})$ (Definition~\ref{def:S}) can even be checked in symmetric linear Datalog, since $\bS$ has a majority polymorphism and a Maltsev polymorphism (Theorem~\ref{thm:DL}). Note that this program can derive the goal predicate before checking satisfiability of the permutational system (in which case a contradiction is found without considering the linear relation), or after (in which case the permutational system is inconsistent).
\end{proof}


\begin{proposition}\label{prop:reduce-symlin}
    The procedure $\Reduce_{n,k}$ (Algorithm~\ref{alg:reducebasis} and Algorithm~\ref{alg:reduce}) can be implemented in symmetric linear ${\mathbb Z}_2$-Datalog using at most $(4(k-1) + 3)$-many stages. 
\end{proposition}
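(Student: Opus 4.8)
The plan is to implement the recursive reduction scheme directly in symmetric linear ${\mathbb Z}_2$-Datalog, mirroring the recursive structure of Algorithm~\ref{alg:reduce} and proving the stage bound by induction on $k$. First I would handle the base case $k=1$: $\Reduce_{n,1}$ (Algorithm~\ref{alg:reducebasis}) computes connected components (which is definable in symmetric linear Datalog since connectivity is a classical example of a problem expressible there), runs $\Solve_{n,1}$ on each component and on the whole instance, and then sets the output unary predicates $R_\emptyset$ and $R_{\{\epsilon\}}$ accordingly, together with the supports $\Sup_2$ and $\Sup_3$; all of this is a transformation composed of a recursion-free part and one invocation of $\Solve_{n,1}$, which by Proposition~\ref{prop:solven1} needs one stage. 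So $\Reduce_{n,1}$ uses at most $3$ stages, matching $4(k-1)+3 = 3$ for $k=1$.

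For the inductive step, assume $\Reduce_{n,k}$ is implementable using at most $4(k-1)+3$ stages. I would walk through Algorithm~\ref{alg:reduce} line by line and account for the stages. The preprocessing steps (forming $\mathfrak{I}^*$, $\mathfrak{I}^C$, the uniform and nonuniform reducts $\mathfrak{I}^U, \mathfrak{I}^N$, the uniform connected components $S_1,\dots,S_r,M_1,\dots,M_v$, the inner and outer restrictions $(\mathfrak{M}^U)^{(0)}$ and $(\mathfrak{M}^U)^{\lnot(0)}$, the source instances $\mathfrak{S}^U$ and $\overline{\mathfrak{S}^U}=(\mathfrak{S}^U)^{(0)}$, and setting $\Pi$ to $0$ on source variables) are all expressible by a combination of recursion-free rules and symmetric-linear reachability/connectivity computations — these require no $G$-stages, so they can be folded into the first stage of the recursive call. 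The recursive call $\Reduce_{n,k}((\mathfrak{M}^U)^{(0)})$ contributes at most $4(k-1)+3$ stages by the inductive hypothesis, and it simultaneously produces $\overline{(\mathfrak{M}^U)^{(0)}}$ and the reduction atlas $\Pi_M$; updating $\Pi$ to $\Pi_M+1$ on $M$, forming the uniform product $\overline{(\mathfrak{M}^U)^{(0)}}\cdot(\mathfrak{M}^U)^{\lnot(0)}$, and running $\NReduce_{n,k+1}(\mathfrak{I}^N,\Pi)$ (Algorithm~\ref{alg:outerreduce}, which only filters the canonical transfer relation by the value of $\Pi$ on the target — a recursion-free operation) can all be absorbed into the stages surrounding the recursive call. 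The one place that forces new $G$-stages is that the reduction atlas $\Pi$ must be \emph{read back} out of the $L^\top/L^\bot$ machinery: $\Pi$ depends on whether $\Solve$ accepted on various restricted instances (for distinguishing the cases $s(x)\in P^{n,k+1}_0$ versus $s(x)\in O_n$ on each component, which ultimately rests on satisfiability of a ${\mathbb Z}_2$-linear system), and each such read-back needs to finalize one stage before the next stage can use the predicate. Tracking these carefully — the restriction step contributes one stage, the recursive reduce contributes $4(k-1)+3$, and the postprocessing of $\Pi$ together with $\NReduce$ contributes at most $3$ more — gives a total of at most $4k+3 = 4((k+1)-1)+3$, completing the induction.

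Throughout I would invoke Proposition~\ref{prop:proofofreduction} to know that the computed $\Pi$ is a genuine reduction atlas and that the supports are preserved, so that the Datalog implementation is faithfully computing the same object as Algorithm~\ref{alg:reduce}; faithfulness is what lets us conclude that composing these transformations with $\Solve_{n,1}$ at the bottom yields a correct decision procedure for $\Csp(\mathfrak{P}^{n,k})$. I would also note that symmetricity and linearity are preserved under composition of Datalog transformations and that recursion-free rules are automatically symmetric and linear (as remarked after Equation~\ref{eq:symrule}), so the only non-trivial symmetric-linear ingredients are: the connectivity computation (standard), the $\Solve_{n,1}$ calls (Proposition~\ref{prop:solven1}), and the stage-composition mechanism of $G$-Datalog.

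The hard part will be the bookkeeping of stages: one must be careful that the ``absorbable'' preprocessing and postprocessing really can be merged into adjacent stages rather than requiring fresh ones, and that the reduction atlas $\Pi$ — which is the genuinely stateful object threaded through the recursion — does not force more than the claimed $4$ extra stages per level. In particular, the interaction between the recursive call's output stage and the stage in which $\NReduce$ consumes $\Pi$ needs to be examined: if $\Pi$ on the $M$-components is only available \emph{after} the recursive call's final $L^\bot$ derivation, then one additional stage is needed to even state the $\NReduce$ rules, and the precise constant $4$ comes from counting (one for the restriction/component computation, one for reading $\Pi$ on sources, one for reading $\Pi$ on $M$ via the recursive atlas, one for $\NReduce$/recombination). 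I would make this explicit with a small table of stages. Everything else — correctness of the individual rules, the fact that the operations ``restrict a relation to a subset'', ``take a disjoint union of instances'', ``compose with a fixed isomorphism graph'' are recursion-free definable — is routine and I would only sketch it.
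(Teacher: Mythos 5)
Your proposal follows essentially the same route as the paper: induction on $k$, decomposing $\Reduce_{n,k}$ into a composition of symmetric linear ${\mathbb Z}_2$-Datalog transformations (connected components, a relativized $\Solve_{n,1}$ for the base case, the recursive call carrying auxiliary predicates through its stages, then $\NReduce$), and arriving at the same count of four fresh stages per level of the recursion. One small correction to your accounting: in the paper the extra ${\mathbb Z}_2$-stages of the inductive step are forced not by reading back acceptance of $\Solve$ on restricted instances but by the need to complement monotonically derived predicates --- e.g.\ $\lnot Z$, the complement of the union of transfer-source components, is obtained by asserting the equations $x=0$ for all $x$ and $x=1$ for $x\in Z$ and consulting $L^\top$/$L^\bot$ --- since the reduction atlas itself is determined purely by the component and transfer structure of the instance, not by satisfiability.
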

\begin{proof}


For Algorithm~\ref{alg:reducebasis}, we compose the following symmetric linear $\mathbb{Z}_2$-Datalog procedures.
\begin{enumerate}
    \item On input instance $\mathfrak{I}$, find the connected components $C_1, \dots, C_v$ of $\mathfrak{I}$. Output $\mathfrak{I}$, along with a binary equivalence relation $C$ whose classes are $C_1, \dots, C_v$.
    \item Now run the following modification of the program from Proposition~\ref{prop:solven1}. Every rule $$P(x_1,\dots,x_n) \; {:}{-} \; \phi(x_1,\dots,x_n,y_1,\dots,y_m)$$ of the original program is replaced by the rule
    \[
    P'(x_1,\dots,x_n) \; {:}{-} \; \phi(x_1,\dots,x_n,y_1,\dots,y_m) \wedge C(z,x_1) \wedge \dots \wedge C(z,x_1) \wedge C(z,y_1) \wedge \dots \wedge C(z,y_m)
    \]
    
    Hence, the primed version of the goal predicate of the program in Proposition~\ref{prop:solven1} is now a unary relation which is the union of those connected components $C_i$ whose induced subinstance $\mathfrak{C}_i$ is unsatisfiable. We call this unary predicate $G'$. Output the original instance $\mathfrak{I}$ and the predicate $G'$.
    \item The predicate $G'$ contains the information necessary to assign empty unary constraints $R_{\emptyset}$ to variables of $\mathfrak{I}$. Also in the program is a rule with empty body and head $R_{\{\epsilon \}}$, hence all variables will belong to this unary relation. The binary and ternary constraints for $\overline{\mathfrak{I}}$ can be inferred from the original instance using symmetric linear Datalog rules. Furthermore, the instance can now be made concise with symmetric linear Datalog rules. We output this obtained instance $\overline{\mathfrak{I}}$ along with a unary predicate $D_0$  that holds for every variable in $\mathfrak{I}$, which codes the constant value $0$ reduction atlas output by $\Reduce_{n,1}$.

\end{enumerate}

Now suppose that $\Reduce_{n,k}$ can be implemented in symmetric linear $\mathbb{Z}_2$-Datalog. Let $\mathfrak{I}$ be an instance of $\Csp(\mathfrak{P}^{n,k+1})$. To implement Algorithm~\ref{alg:reduce}, we compose the following procedures. 

\begin{enumerate}
    \item Create a binary equivalence relation $C$ which codes the uniform connected components of $\mathfrak{I}$. Create two unary relations $S$ and $T$ for transfer source and transfer target variables. Make the instance transfer target compatible and then concise. Output the modified instance $\mathfrak{I}$ along with the relations $C$ and $S$.
    \item Make a unary relation $Z$ which is equal to the union of all transfer source components. We can find the complement of $Z$ in symmetric linear $\mathbb{Z}_2$-Datalog with rules which assert the linear equations $x =0$ for all $x \in I$ and $x=1$ for all $x \in Z$. Output the instance $\mathfrak{I}$, the unary relation $Z$, and its complement $\lnot Z$. 

    \item The next program forms the induced subinstances $\mathfrak{S}^U$, $\mathfrak{M}^U$, and $\mathfrak{I}^N$. Since this program has input unary EDBs $Z$ and $\lnot Z$, this is possible with symmetric linear Datalog rules. The program also distinguishes between the IDBs corresponding to the relations of each induced subinstance with two disjoint copies of the symbols which name the uniform relations, hence we obtain three distinguishable structures $\mathfrak{S}^U, \mathfrak{M}^U$, and $\mathfrak{I}^U$. Output each induced subinstance along with the unary relation $Z$.

    \item Now form $\overline{\mathfrak{S}^U} = (\mathfrak{S}^U)^{(0)}$ and $(\mathfrak{M}^U)^{(0)}$ in the obvious way, preserving the ability to distinguish the two instances. Output $\overline{\mathfrak{S}^U}$, $(\mathfrak{M}^U)^{(0)}$, $\mathfrak{M}^U$, $\mathfrak{I}^N$, and $Z$.
    
    \item Now form $\overline{\mathfrak{M}^U}$. To accomplish this, we run a copy of the symmetric linear $\mathbb{Z}_2$-datalog procedure for $\Reduce_{n,k}((\mathfrak{M}^U)^{(0)})$ which we inductively suppose to exist, but with all symbols and rules recognizing only the relations for $(\mathfrak{M}^U)^{(0)}$. This subprocedure will consist of several stages, so at the end of each stage of the computation we carry along the instances $\overline{\mathfrak{S}^U}$, $\mathfrak{I}^N$, and the predicate $Z$. Note that in the final stage we will obtain some IDBs $Z_1, \dots, Z_{k}$ which partition $M$ and encode the reduction atlas output by $\Reduce_{n,k}(\mathfrak{M}^U)$, so by setting $Z_0 =Z$, we now have a reduction atlas. 
    
    The instance $\overline{\mathfrak{M}^U}= \overline{(\mathfrak{M}^U)^{(0)}} \cdot (\mathfrak{M}^U)^{\lnot(0)}$ can now be formed, again with symmetric linear rules. Furthermore, using the IDBs $Z_0, \dots, Z_k$ which encode the reduction atlas, it now possible to implement Algorithm~\ref{alg:outerreduce} to obtain $\overline{\mathfrak{I}^N}$. 
    
    Form the union of the instances $\overline{\mathfrak{I}} = \overline{\mathfrak{S}^U}$, $\overline{\mathfrak{M}^U}$, $\mathfrak{I}^N$ with symmetric linear rules which remove all distinctions between symbols which name the same relation.
    Output $\overline{\mathfrak{I}}$ and the unary relations $Z_0, \dots, Z_k$. 
\end{enumerate}
Calculating the upper bound on the number of stages necessary in the implementation is straightforward.
\qedhere
\end{proof}

    \begin{theorem}\label{thm:main}
        For every $n,k \in {\mathbb N}$, the problem 
        $\Csp(\bP^{n,k})$ can be solved by a symmetric linear ${\mathbb Z}_2$-Datalog program with at most $2k^2 + k -2$-many stages.
    \end{theorem}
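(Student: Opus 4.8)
The plan is to combine the two key results already established: Theorem~\ref{thm:solveiscorrect} (together with its complexity bound Theorem~\ref{thm:solvealgorithmcomplexity}), which shows that the procedure $\Solve_{n,k}$ correctly decides $\Csp(\mathfrak{P}^{n,k})$, and Propositions~\ref{prop:solven1} and~\ref{prop:reduce-symlin}, which show that the sub-procedures $\Solve_{n,1}$ and $\Reduce_{n,k}$ can each be implemented in symmetric linear ${\mathbb Z}_2$-Datalog. The task reduces to assembling these pieces along the dependency diagram in Figure~\ref{fig:alg} and bookkeeping the total number of stages.

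First I would observe that, by the definition of $\Solve_{n,k}$ (Algorithm~\ref{alg:solve}), on input an instance of $\Csp(\mathfrak{P}^{n,k})$ the procedure first applies $\Reduce_{n,k}$, then $\Reduce_{n,k-1}$ to the output, and so on down to $\Reduce_{n,2}$, producing an instance of $\Csp(\mathfrak{P}^{n,1})$, to which $\Solve_{n,1}$ is applied. By Theorem~\ref{thm:solveiscorrect} this composition correctly decides $\Csp(\mathfrak{P}^{n,k})$ (equivalently $\Csp(\bP^{n,k})$, since the two templates have the same invariant relations up to the renaming $(i)\mapsto i$ discussed before Definition~\ref{def:S}, hence the same CSP). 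Since symmetric linear ${\mathbb Z}_2$-Datalog programs are closed under composition in the staged sense defined in Section~\ref{sect:RDatalog} (each stage feeds its output signature into the next), and each $\Reduce_{n,j}$ for $2 \le j \le k$ as well as $\Solve_{n,1}$ is such a program by Propositions~\ref{prop:reduce-symlin} and~\ref{prop:solven1}, the whole composition is again a symmetric linear ${\mathbb Z}_2$-Datalog program. One should note here that $\Reduce_{n,j}$ as implemented outputs not only the reduced instance but also the unary predicates $Z_0,\dots,Z_{j-1}$ encoding the reduction atlas; these extra predicates are simply ignored by the next procedure in the chain, which only reads the reduced instance, so they cause no difficulty.

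Next I would count stages. By Proposition~\ref{prop:reduce-symlin}, $\Reduce_{n,j}$ uses at most $4(j-1)+3$ stages. Running $\Solve_{n,k}$ means running $\Reduce_{n,k}, \Reduce_{n,k-1}, \dots, \Reduce_{n,2}$ in succession and then $\Solve_{n,1}$, which by Proposition~\ref{prop:solven1} needs one stage. The total is at most
\begin{align*}
\sum_{j=2}^{k}\bigl(4(j-1)+3\bigr) + 1
&= \sum_{j=2}^{k}\bigl(4j-1\bigr) + 1
= 4\cdot\frac{(k+2)(k-1)}{2} - (k-1) + 1\\
&= 2(k^2+k-2) - k + 2
= 2k^2 + k - 2.
\end{align*}
This matches the bound in the statement. (The case $k=1$ is handled directly by $\Solve_{n,1}$, using $1 = 2\cdot 1^2 + 1 - 2$ stage.) Finally, I would invoke Theorem~\ref{thm:Z2Datalog} to conclude that the problem is in $\oplus L$, which, while not part of the displayed statement, is the point of the construction and is worth recording at the end of the proof.

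I do not expect a genuine obstacle here: this is the step where previously proved ingredients are glued together, and the only care needed is (a) making sure the staged composition of symmetric linear ${\mathbb Z}_2$-Datalog programs is legitimate — this follows from the definition in Section~\ref{sect:RDatalog}, where a stage-$i$ program is by construction a stage-$(i-1)$ program followed by a stage-one program, so iterating is immediate — and (b) the arithmetic of the stage count. The mild subtlety, if any, is to double-check that the recursive structure of $\Reduce_{n,k}$ (which itself calls $\Reduce_{n,k-1}$ internally, see Algorithm~\ref{alg:reduce}) is already fully accounted for inside the bound $4(k-1)+3$ of Proposition~\ref{prop:reduce-symlin}, so that composing $\Reduce_{n,k}$ with $\Reduce_{n,k-1}$ at the level of $\Solve$ does not double-count; this is fine because $\Reduce_{n,k}$ as a standalone procedure already performs the full reduction all the way down, whereas in $\Solve_{n,k}$ the chain $\Reduce_{n,k},\dots,\Reduce_{n,2}$ reflects the iterative \textbf{while}-loop of Algorithm~\ref{alg:solve}, and one simply uses the per-call bound for each link of that chain.
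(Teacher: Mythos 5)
Your proposal is correct and follows essentially the same route as the paper: both implement $\Solve_{n,k}$ by chaining the symmetric linear ${\mathbb Z}_2$-Datalog implementations of $\Reduce_{n,k},\dots,\Reduce_{n,2}$ followed by $\Solve_{n,1}$, and bound the stage count by $1+\sum_{l=2}^{k}\bigl(3+4(l-1)\bigr)=2k^2+k-2$. Your additional remarks on ignoring the atlas predicates and on not double-counting the recursive calls inside $\Reduce$ are consistent with (indeed slightly more careful than) the paper's argument.
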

    \begin{proof} 
    We implement 
    $\Solve_{n,k}$ 
    (Algorithm~\ref{alg:solve}) 
    with a symmetric linear $\mathbb Z_2$-Datalog as follows.
    The special case $k=1$ has already been treated 
    in Proposition~\ref{prop:solven1}. 
    Given that $\Reduce_{n,k}$ can be implemented in symmetric linear ${\mathbb Z}_2$-Datalog (Proposition~\ref{prop:reduce-symlin}), we may therefore formulate $\Solve_{n,k}$ 
    (Algorithm~\ref{alg:solve}) 
    with a symmetric linear $\mathbb Z_2$-Datalog program. The number of stages necessary is bounded above by
$1+ \sum_{l=2}^k (3 + 4(l-1))$,
    where the sum adds the number of stages for each run of $\Reduce_{n,l}$ for $2 \leq l \leq k$ and a final stage accounts for the final run of $\Solve_{n,1}$.
    \end{proof} 
    
    \begin{corollary}\label{cor:OplusAlgforPnk}
        For every $n,k \in {\mathbb N}$, the problem 
        $\Csp(\bP^{n,k})$ is in $\oplus$L. 
    \end{corollary}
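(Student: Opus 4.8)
The plan is to chain together the two main results already established in this section. By Theorem~\ref{thm:main}, for every $n,k \in {\mathbb N}$ the constraint satisfaction problem $\Csp(\bP^{n,k})$ is solved by a symmetric linear ${\mathbb Z}_2$-Datalog program (with a bounded finite number of stages). By Theorem~\ref{thm:Z2Datalog}, deciding whether a fixed symmetric linear ${\mathbb Z}_2$-Datalog program derives the goal predicate on a given finite input structure is in $\oplus L$; the same holds for a program of finitely many stages, since each stage is itself evaluated in $\oplus L$ and $\oplus L$ is closed under the relevant composition (this is exactly what is invoked in the proof of Theorem~\ref{thm:Z2Datalog} via the closure of $\FL^{\oplus\text{L}}$ and the containment $L^{\oplus\text{L}} \subseteq \oplus L$).

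First I would fix $n,k \in {\mathbb N}$ and invoke Theorem~\ref{thm:main} to obtain a symmetric linear ${\mathbb Z}_2$-Datalog program $\Pi_{n,k}$ (with at most $2k^2 + k - 2$ stages) that solves $\Csp(\bP^{n,k})$, i.e., $\Pi_{n,k}$ derives its goal predicate on an instance $\bA$ if and only if $\bA$ has no homomorphism to $\bP^{n,k}$. Then I would apply Theorem~\ref{thm:Z2Datalog}, which tells us that the set of finite structures on which $\Pi_{n,k}$ derives the goal predicate is in $\oplus L$. Since $\oplus L$ is closed under complement (a non-deterministic logspace machine counting accepting paths mod $2$ can have its acceptance condition negated), the complementary problem — the positive instances of $\Csp(\bP^{n,k})$ — is also in $\oplus L$. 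Hence $\Csp(\bP^{n,k}) \in \oplus L$.

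There is essentially no obstacle here: the corollary is a direct consequence of the two cited theorems. The only point requiring a word of care is that Theorem~\ref{thm:Z2Datalog} as stated concerns stage-one programs, whereas $\Pi_{n,k}$ has several stages; but its proof already handles this, since composing a bounded number of $\FL^{\oplus\text{L}}$ stages stays within $\oplus L$ (each intermediate output structure is computed in $\FL^{\oplus\text{L}}$, and $L^{\oplus\text{L}} \subseteq \oplus L$ by~\cite{HertrampfReithVollmer}). I would therefore simply remark that Theorem~\ref{thm:Z2Datalog} applies to multi-stage symmetric linear ${\mathbb Z}_2$-Datalog programs with the same conclusion, and conclude.

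\begin{proof}
    By Theorem~\ref{thm:main}, $\Csp(\bP^{n,k})$ is solved by a symmetric linear ${\mathbb Z}_2$-Datalog program with finitely many stages. By Theorem~\ref{thm:Z2Datalog} and the closure of $\oplus L$ under bounded composition of stages used in its proof, the problem of deciding whether such a program derives the goal predicate on a given finite structure is in $\oplus L$; since $\oplus L$ is closed under complement, $\Csp(\bP^{n,k}) \in \oplus L$.
\end{proof}
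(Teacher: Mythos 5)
Your proposal is correct and takes essentially the same route as the paper, which states the corollary as an immediate consequence of Theorem~\ref{thm:main} and Theorem~\ref{thm:Z2Datalog} without further argument. Your two points of care are both sound: Theorem~\ref{thm:Z2Datalog} already covers multi-stage programs (its proof invokes closure of $\FL^{\oplus\text{L}}$ under composition precisely for this), and the complementation step is legitimate since $\oplus L$ is closed under complement, which is needed because the paper's convention is that the program derives the goal predicate exactly on the \emph{negative} instances.
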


\section{Conclusion}
Our results show that the CSP for every structure with a finite domain, finite signature, and a conservative Maltsev polymorphism is in the complexity class $\oplus L$. To this end, we have introduced $\mathbb{G}$-Datalog, which extends 
Datalog by a mechanism for solving linear equations over a cyclic group $\mathbb{G}$. We show that symmetric linear ${\mathbb Z}_2$-Datalog programs can be solved in $\oplus L$ (Theorem~\ref{thm:Z2Datalog}).  

To show that our class of CSPs can be solved by 
${\mathbb Z}_2$-Datalog programs, we have presented  structural results about finite-domain structures with a conservative Maltsev polymorphism. Such structures also have a conservative minority polymorphism (Lemma~\ref{lem:Carbonnel}) and their polymorphism algebras possess a tree-like decomposition along their congruences (Theorem~\ref{thm:representwithtreealgebras}). We have presented a finite set of relations from which all other relations can be defined primitively positively (Theorem~\ref{thm:consminorityrelbasis}). 
We then focused on a fundamental family of structures $\bP_{n,k}$ in our class. 
The importance of this family comes from the fact that all other finite-domain structures with a conservative Maltsev polymorphism have a \emph{pp-construction} in a structure from this family (Theorem~\ref{thm:FinalPPConstructionTheorem}), 
and that the class of CSPs that can be solved by (linear, symmetric linear) $\mathbb{G}$-Datalog programs is closed under primitive positive constructability (Lemma~\ref{lem:pp-construct}).

We also proved that the CSP for every structure $\bP_{n,k}$ can be solved by a ${\mathbb Z}_2$-Datalog program (Theorem~\ref{thm:main}). 
We may now determine the descriptive and computational complexity of conservative Maltsev CSPs. 

\begin{theorem}\label{thm:dicho}
    Let $\bB$ be a finite structure with a conservative Maltsev polymorphism. Then either $\Csp(\bB)$ is $\oplus$L-complete, or in L. 
\end{theorem}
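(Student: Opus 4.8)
The plan is to put together the two halves of the classification. For the containment in $\oplus$L, I would invoke the main theorem of the companion paper~\cite{BodirskyMoorheadPPconstr}: every finite structure $\bB$ with a conservative Maltsev polymorphism admits a primitive positive construction in some $\bP^{n,k}$. By Theorem~\ref{thm:main}, $\Csp(\bP^{n,k})$ is solved by a symmetric linear $\mathbb{Z}_2$-Datalog program, and by the lemma above this property is inherited under primitive positive constructions, so $\Csp(\bB)$ is also solved by a symmetric linear $\mathbb{Z}_2$-Datalog program; Theorem~\ref{thm:Z2Datalog} then places $\Csp(\bB)$ in $\oplus$L. This part of the argument is purely a matter of chaining together results already established in this paper and in~\cite{BodirskyMoorheadPPconstr}.

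For the dichotomy I would argue by cases according to whether $\bB$ primitively positively constructs the affine Boolean structure
\[
\mathbb{L} := \big(\{0,1\};\ \{(x,y,z)\mid x+y+z=0\},\ \{0\},\ \{1\}\big).
\]
If it does, then $\Csp(\mathbb{L})$ is exactly the satisfiability problem for systems of $\mathbb{Z}_2$-linear equations, which is $\oplus$L-complete by Damm~\cite{Damm}; since a primitive positive construction of $\mathbb{L}$ in $\bB$ yields a logspace many-one reduction from $\Csp(\mathbb{L})$ to $\Csp(\bB)$, the latter is $\oplus$L-hard, and together with the containment above it is $\oplus$L-complete. If $\bB$ does not primitively positively construct $\mathbb{L}$, I would use the structural analysis of~\cite{BodirskyMoorheadPPconstr} to conclude that $\bB$ is genuinely tame: concretely, that $\bB$ additionally has a majority polymorphism (equivalently, that in the pp-construction witnessing the reduction to $\bP^{n,k}$ the linear relation $\Lin$ is never needed, so the relevant reduct is conservative of bounded width). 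Then $\bB$ has both a Maltsev and a majority polymorphism, so by Theorem~\ref{thm:DL} of Dalmau and Larose $\Csp(\bB)$ is solvable by symmetric linear Datalog, and hence lies in L by Theorem~\ref{thm:L}.

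The reduction-compatibility of primitive positive constructions and the $\oplus$L-completeness of $\Csp(\mathbb{L})$ are routine. The real content — and the step I expect to be the main obstacle — is the second case: showing that a conservative Maltsev structure that fails to primitively positively construct $\mathbb{L}$ actually lies in L, rather than merely in $\oplus$L. This is exactly where the finer results of~\cite{BodirskyMoorheadPPconstr} about the family $\bP^{n,k}$ enter, and it amounts to verifying that the $\mathbb{Z}_2$-linear mechanism is the \emph{only} source of intractability in this class: conservative Maltsev templates never primitively positively construct a Mod$_p$L-hard structure for an odd prime $p$, nor a P-hard one, so the trichotomy below $\oplus$L collapses to the clean dichotomy asserted.
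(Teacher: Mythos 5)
Your containment argument and your hardness case both match the paper: containment in $\oplus$L is exactly the chain pp-construction from $\bP^{n,k}$ $\to$ Theorem~\ref{thm:main} $\to$ closure of symmetric linear ${\mathbb Z}_2$-Datalog under pp-constructions $\to$ Theorem~\ref{thm:Z2Datalog}, and the hardness direction is the same reduction from ${\mathbb Z}_2$-linear equations via Damm's completeness result. The difference, and the genuine gap, is in the tractable case. You case-split on whether $\bB$ pp-constructs the affine Boolean structure $\mathbb{L}$, and in the negative case you assert that $\bB$ "additionally has a majority polymorphism," deferring the justification to unspecified "finer results" of the companion paper about the family $\bP^{n,k}$. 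You correctly flag this as the main obstacle, but you do not close it, and the mechanism you gesture at (that the pp-construction into $\bP^{n,k}$ "never needs $\Lin$", so some reduct has bounded width) is not an argument: bounded width does not transfer back through a pp-construction in the direction you need without substantially more work.

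The paper closes this step locally and without any appeal to the companion paper. It case-splits on whether, for some pair of elements $a,b$ of $\bB$, the two-element linear relation $L_{\{a,b\}}$ is primitively positively \emph{definable} in $\bB$ (not pp-constructible). If no such relation is pp-definable, then every pp-definable relation on every two-element subset is preserved by the unique majority operation on that subset, and Bulatov's conservative classification~\cite{Conservative} upgrades this to a \emph{global} majority polymorphism of $\bB$. Combined with the Maltsev polymorphism, Dalmau--Larose (Theorem~\ref{thm:DL}) gives solvability in symmetric linear Datalog, hence membership in L by Theorem~\ref{thm:L}. If you want to keep your $\mathbb{L}$-based case split, note that pp-definability of some $L_{\{a,b\}}$ implies pp-constructibility of $\mathbb{L}$ (conservativity makes $\{a,b\}$ pp-definable), so the paper's negative case covers yours; but the local-to-global step via~\cite{Conservative} is the missing ingredient you need to supply either way.
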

\begin{proof}
    Containment in $\oplus$L is the main result of this paper. 
    Suppose that $\bB$ contains two elements $a,b$ such that the relation $L_{\{a,b\}}$ is primitively positively definable in $\bB$. 
    Then there is a logspace reduction from solving linear equations over ${\mathbb Z}_2$ to $\Csp(\bB)$. Since solving linear equations over ${\mathbb Z}_2$ is $\oplus$L-complete~\cite{Damm}, we have that $\Csp(\bB)$ is $\oplus$L-complete. 
    Now suppose that for any two elements $a,b$ of $\bB$, there is no primitive positive definition of  $L_{\{a,b\}}$ in $\bB$. 
    Then all relations over $\{a,b\}$ with a primitive positive definition in $\bB$ are preserved by the (unique) majority operation on $\{a,b\}$, from which it follows that 
    $\bB$ has a (global) majority operation~\cite{Conservative}. 
    Hence, $\Csp(\bB)$ can be expressed in linear Datalog~\cite{DalmauKrokhin08}.
    Hence, since $\bB$ by assumption also has  a Maltsev polymorphism, a result of Larose and Dalmau~\cite{DalmauLarose08} implies that $\Csp(\bB)$ can be solved by a symmetric linear Datalog program, 
    and hence is in L~\cite{EgriLaroseTessonLogspace}. 
\end{proof}


\bibliographystyle{abbrv}
\bibliography{global}

\end{document}